\newtheoremstyle{nonum}{}{}{\itshape}{}{\bfseries}{.}{ }{\thmnote{#3}}
\newtheorem{thm}{Theorem}[section]
\newtheorem*{thm*}{Theorem}
\newtheorem{cor}[thm]{Corollary}
\newtheorem{lem}[thm]{Lemma}
\newtheorem{prop}[thm]{Proposition}
\newtheorem{rem}[thm]{Remark}
\newtheorem{conj}[thm]{Conjecture}
\newtheorem{exm}[thm]{Example}
\newtheorem{definition}[thm]{Definition}
\newtheorem*{definition*}{Definition}
\newtheorem{fact}[thm]{Fact}
\newtheorem*{rems*}{Remarks}
\newtheorem{rems}[thm]{Remarks}
\theoremstyle{nonum}
\newtheorem*{definition-recall}{Definition \ref{def:klens}}
\newcommand{\R}{\mathbb R}
\newcommand{\RR}{\mathbb R}
\newcommand{\N}{\mathbb N}
\def\S{{\cal S}}
\newcommand{\iprod}[2]{\langle #1,#2 \rangle} 
\def\vol{{\rm Vol}}
\def\cvx{{\rm Cvx}}
\def\eps{{\varepsilon}}
\def\conv{{\rm conv}}
\def\cconv{{\rm conv}_c}
\def\outrad{{\rm Outrad}}
\def\diam{{\rm diam}}
\def\inrad{{\rm Inrad}}
\def\bas{{{\rm Basin}}}
\begin{document}
\title{An in-depth study of ball-bodies}
\date{}
\author{S. Artstein-Avidan, D.I. Florentin}
\maketitle
\begin{abstract}
In this paper we study the class of so called ``ball-bodies'' in $\RR^n$, given by intersections of translates of Euclidean unit balls. We study the class along with the natural duality operator defined on it, called $c$-duality. The class is naturally linked to many interesting problems in convex geometry, including bodies of constant width and the Knesser-Poulsen conjecture. We discuss old and new inequalities of isoperimetric type and of Santal\'{o} type, in this class. We study the boundary structure of bodies in the class, Carath\'eodory type theorem and curvature relations. We discuss various symmetrizations with relation to this class, and make some first steps regarding problems for bodies of constant width.   
\end{abstract}

\section{Definitions and First Observations}\label{sec:first}

In this project we study a special class of convex bodies in $\RR^n$, which we denote by ${\cal S}_n$ and call ball-bodies. We will give several equivalent definitions of this class in what follows. In the literature they are sometimes referred to as ``ball bodies'', or ``spindle-convex'' bodies, or as $\lambda$-convex for $\lambda = 1$. By homogeneity all the results easily translate to $\lambda$-convex bodies with any other  parameter $\lambda$ instead of $1$, and we fix $\lambda = 1$ for simplicity of the presentation. 
In what follows we use $\iprod{\cdot}{\cdot}$ to denote the standard inner product on $\RR^n$, we use $\|y\|_2=\sqrt{\iprod{y}{y}}$ to denote the Euclidean norm and $B(x,r) = \{ y: \|y-x\|_2 \le r\}$ to denote the closed Euclidean ball of radius $r$ centered at $x$, and $S(x,r)= \partial B(x,r)$ its boundary. We sometimes use $B_2^n$ instead of $B(0,1)$ for the unit Euclidean ball centered at the origin and use $S^{n-1}$ instead of $S(0,1)$.  

\begin{definition}\label{def:Sn}
Let $n\ge 1$. A set $K\subset \RR^n$ is called a ball-body if there exists some subset 
$A \subset \R^n$ such that 
\[ K = \bigcap_{x\in A} B(x,1). \]
The class of all ball-bodies bodies in $\RR^n$ is denoted by $\S_n$. 
\end{definition}

If $\outrad(A)>1$ then the intersection is empty, thus $\emptyset\in \S_n$. When $A=\emptyset$, the ``empty intersection'' is defined to be $\RR^n$. These two degenerate sets will sometimes be omitted when we discuss properties of sets in $\S_n$.

There are various other ways to describe the class $\S_n$, as we shall demonstrate shortly. We first mention that Definition \ref{def:Sn} corresponds to a description of ${\cal S}_n$ as an image class for an order reversing quasi involution, that is, as the image of a mapping $A\mapsto A^c$ on subsets of $\RR^n$ which reverses the partial order of inclusion and satisfies $A\subseteq A^{cc}$. Such mappings (see \cite{artstein2023zoo}) and their image class have structural properties which will play an important role in this note. 

\begin{definition}
Let $n\ge 1$. For $A\subset \RR^n$, its $c$-dual is defined to be 
	\[
	A^c = \{ y: \forall x\in A, d(x,y) \le 1\}  = \bigcap_{x\in A} B(x,1), 
	\]
and its $c$-hull, denoted $\cconv(A)$ is defined to be 
	\[ 	\cconv (A) : = A^{cc}.\]
	\end{definition}

The mapping $A\mapsto A^c$ is an order reversing quasi involution. Indeed, it reverses order since if $A_1\subseteq A_2$ then $A_2^c\subseteq A_1^c$, as we intersect more balls. The fact that $A\subseteq A^{cc}$ is also immediate, since if $x\in A$ then $x\in B(y,1)$ for any $y\in A^c$ which means that $y\in B(x,1)$ for any
$y\in A^c$ which means $x\in A^{cc}$. The class $\S_n$ is by definition the image of the $c$-duality transform. Therefore (see \cite{artstein2023zoo}) we immediately see that if $K\in {\cal S}_n$ then $K^{cc} = K$ (namely, on ${\cal S}_n$ the $c$-duality is an order reversing involution) and furthermore, for any $A\subset \RR^n$ the set $A^{cc}$ is the smallest member of ${\cal S}_n$ containing $A$, which motivates the name ``$c$-hull''. If no compact set in $\S_n$ includes $A$, which is the case when $\outrad(A) >1$, we get $\cconv (A) = \RR^n$. It is useful to note that the $c$-hull can also be understood as the intersection of all $1$-balls that contain $A$.

\begin{rem}
For every $A\subset \R^n$,
\[
A^{cc} = \bigcap_{\{x: A\subseteq B(x,1)\}} B(x,1) = \bigcap_{\{K\in \S_n : A\subseteq K\} }K . 
\]
Indeed, for the first equality
\begin{eqnarray*}
A^c = \bigcap_{x\in A} B(x,1) = 
\{ y: \forall x\in A,\,\, x\in B(y,1)\} =
\{ y: A\subseteq B(y,1)\},  
\end{eqnarray*}
therefore 	
\begin{eqnarray*}
A^{cc} = \bigcap_{y\in A^c} B(y,1) = \bigcap_{\{ y: A\subseteq B(y,1)\}} B(y,1),  
\end{eqnarray*}
as claimed. The second equality is trivial by order reversion of the $c$-duality. Indeed, $A\subseteq K$ implies $A^{c}\supseteq K^c$ and thus $A^{cc}\subseteq K^{cc} = K$ so that $A^{cc}$ is a subset of the intersection. However clearly $A^{cc}$ includes the intersection, either by our first assertion or simply since $A^{cc}\in {\cal S}_n$ and includes $A$. This completes the demonstration. 
\end{rem}

After the dual pair $\emptyset$ and $\R^n$, the next simplest members of $\S_n$ are the dual pairs $\{x\}$ and $B(x,1)$, for any $x\in \RR^n$. More generally, the $c$-dual of $B(x,r)$ is $B(x,1-r)$, for any $r\in [0,1]$. We mention that for any $A\subseteq B(x,1)$ with out-radius $1$ we have $A^{c} = \{x\}$ and $A^{cc} = B(x,1)$, see Lemma \ref{lem:inplusout}. It is also easy to check that the $c$-duality commutes with rigid motions, namely for $g(x)=x_0 + Ux$ for $U\in O(n)$ we have 
\[ (g(A))^c =
\bigcap_{x\in A} B(x_0 + Ux,1) =
x_0 + U \bigcap_{x\in A}   B(x,1) = g (A^c).
\]

We introduce some special sets in ${\cal S}_n$ which play a prominent role in this survey. A non-empty intersection of two $1$-balls is called a lens, and up to translation and rotation, it is determined by the distance between the two centers of the $1$-balls which are intersected. The dual of such a set is the $c$-hull of the centers of the two balls. It can be equivalently defined as the body of revolution of a $1$-arc connecting these two centers (with axis of revolution along the segment connecting the two centers). We call the $c$-hull of two points (namely of a ``sphere'' in $\RR$) a $1$-lens, and note that the $c$-hull of an $(n-1)$-dimensional sphere is precisely a lens, so a lens will be called an $(n-1)$-lens. More generally, we define a $k$-lens as follows.

\begin{definition}\label{def:klens}
Let $n\ge 2$, $1\le k\le n$, let $E\subset  \R^n$ be some $k$-dimensional  subspace, let $d\in [0,1]$ and let $x\in \RR^n$. The $k$-lens about $x$ of ``radius'' $d$ is defined to be $A^{cc}$ for $A = S(x,d) \cap (x+E)$, and is denoted by 
	$
	L_k(x,E,d)$.  
\end{definition}
\noindent It can be checked that  $L_{n-1}(x,   u^{\perp}, \sqrt{1-d^2}) = B(x+du, 1) \cap B(x-du, 1)$, and $L_1(x,  \RR u, d) = \cconv (x+du, x-du) $ is its dual.  More generally $k$-lenses are mapped by the $c$-duality to $(n-k)$-lenses, for any $k<n$. For more details, as well as a proof of this fact, see Appendix \ref{appendix-examples}.
 
An equivalent definition for ${\cal S}_n$ is captured in the following proposition.
\begin{prop}\label{prop:ball and spindle convexity}
Let $K\subset \RR^n$, then $K\in {\cal S}_n$ if and only if for any $x_0, x_1\in K$ we have $ \{x_0, x_1\}^{cc} \subseteq K$.
\end{prop}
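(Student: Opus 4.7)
For the forward direction, I would use the fact that $K \in \mathcal{S}_n$ is equivalent to $K = K^{cc}$: then $\{x_0, x_1\} \subseteq K$ together with the order-reversal of $c$-duality gives $\{x_0, x_1\}^{cc} \subseteq K^{cc} = K$, handling that direction in one line.

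For the converse my plan is to establish the non-trivial inclusion $K^{cc} \subseteq K$ (the other containment is automatic), tacitly taking $K$ closed since every $\mathcal{S}_n$-element is. First I would extract two easy consequences of the spindle hypothesis: $K$ is convex (each $\{x_0, x_1\}^{cc}$ is an intersection of convex unit balls and so contains $[x_0, x_1]$), and either $\diam K \le 2$ or some pair in $K$ has distance $>2$, in which case $\{x_0, x_1\}^{cc} = \RR^n$ and $K = \RR^n \in \mathcal{S}_n$ trivially. In the nontrivial case, given $z \notin K$, I would take $p \in K$ the nearest point to $z$, set $t = \|z - p\| > 0$, $v = (z - p)/t$, and $y = p - v$. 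Then $\|z - y\| = t + 1 > 1$, so $z \notin B(y, 1)$, and it remains to show $K \subseteq B(y, 1)$, equivalently the inequality $\|q - p\|^2 + 2\iprod{q - p}{v} \le 0$ for every $q \in K$.

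The main technical input I would need is the local structure of the spindle at its corner $p$. Restricted to the $2$-plane through $p$ and $q$, the set $\{p, q\}^{cc}$ is the lens $B(c_+, 1) \cap B(c_-, 1)$ where $c_\pm$ are the two points in that plane at unit distance from both $p$ and $q$; the tangent lines at $p$ to the two bounding unit arcs make angle $\arcsin(\|p - q\|/2)$ with the chord $pq$ on each side, and by rotational symmetry of the spindle about the $pq$-axis the tangent cone at $p$ is (in any dimension) the solid circular cone with axis $(q - p)/\|q - p\|$ and half-angle $\arcsin(\|q - p\|/2)$. Since $v$ is an outward normal to $K$ at $p$ (from the nearest-point construction), the inclusion $\{p, q\}^{cc} \subseteq K \subseteq \{x : \iprod{x - p}{v} \le 0\}$ confines the tangent cone to $\{w : \iprod{w}{v} \le 0\}$; maximizing $\iprod{w}{v}$ over the cone, the extremum is attained at the axis tilted toward $v$ by the full half-angle, and requiring this maximum to be $\le 0$ rearranges exactly to the displayed inequality.

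The main obstacle is the tangent-cone description of the spindle at $p$; once that structural fact is in hand, the rest is routine convex-geometric bookkeeping (nearest-point projection, the choice $y = p - v$, and the inner-product manipulation). I would either carry out the two-dimensional lens calculation in place or defer it to the appendix on $k$-lenses, where the relevant lens geometry is already developed.
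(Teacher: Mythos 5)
Your proof is correct, and while the overall strategy coincides with the paper's (both argue by contraposition/contradiction: locate a boundary point of $K$ with an outer normal $v$, and show that the spindle hypothesis forces $K$ into the unit ball tangent to $K$ there from the outside, which then excludes the would-be point of $K^{cc}\setminus K$), the key containment $K\subseteq B(p-v,1)$ is established by a genuinely different lemma. The paper separates the external point $s$ from $K$ by a supporting hyperplane $H$ and argues qualitatively: a point $y\in H^+\setminus B(x+u,1)$ would force the $1$-arc of $\{x,y\}^{cc}$ to dip into $H^-$, contradicting $\{x,y\}^{cc}\subseteq K\subseteq H^+$. You instead obtain the boundary point via metric projection of $z\notin K$ and prove the containment quantitatively, by computing the tangent cone of the spindle $\{p,q\}^{cc}$ at its tip $p$ (the solid circular cone of half-angle $\arcsin(\|p-q\|_2/2)$ about the chord direction) and observing that its confinement to the halfspace $\{w:\iprod{w}{v}\le 0\}$ forces $\iprod{q-p}{v}\le -\|q-p\|_2^2/2$, which is exactly $\|q-(p-v)\|_2\le 1$. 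Your computation checks out (including the degenerate cases $q=p$ and $\|p-q\|_2=2$, and the $\diam>2$ reduction), and it buys something the paper's argument does not: an explicit, reusable description of the tangent cone of a $1$-lens at a vertex, which quantifies precisely how much the spindle condition improves on the supporting-hyperplane inequality $\iprod{q-p}{v}\le 0$. The paper's route is shorter and avoids the cone computation, at the cost of leaving the ``arc must cross $H^-$'' step as a geometric assertion. The only caveat, which you flag yourself and which the paper shares, is the tacit restriction to closed $K$ (an open ball of radius $1/2$ satisfies the spindle condition but is not in $\S_n$), so no gap is introduced relative to the paper's own treatment.
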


In other words, and in analogy to the fact that convex bodies are characterized by the fact that along with any two points $x_0 ,x_1$, they contain the segment joining them (their convex hull $[x_0, x_1]$), bodies in $\S_n$ are  characterized by the fact that along with any two points $x_0 ,x_1$, they contain their $c$-hull. 
This simple fact was noted in many places, for example \cite{KupitzMartiniPerles,Bezdek2007}. It is interesting to note that when the intersection of translates of Euclidean balls is replaced by the intersection of translates of some other convex body, the equivalence no longer holds,  see \cite{Bezdek2007}, and one has to consider two (dual) families, one corresponding to ``ball bodies'', namely intersections of translates of the original body, and the other corresponding to ``spindle convex bodies'', namely those which include, with any two points inside them, their respective ``hull''.  While ball-bodies are always spindle convex, the opposite is not true in general. In this paper we only consider intersections of Euclidean balls, but many of the questions can be addressed in the geneal settings, and related work has been done, see  for example \cite{JahnMartiniRichter2017}, and \cite{MARYNYCH2022108086} for some applications. For the convenience of readers we include a proof of the above proposition.

\begin{proof}[Proof of Proposition \ref{prop:ball and spindle convexity}]
	One direction is immediate, if $K\in \S_n$ and $\{x_0, x_1\}\subseteq K$ then $K^c\subseteq \{x_0, x_1\}^c$ so that  
$\{x_0, x_1\}^{cc} \subseteq K^{cc} = K$. In the other direction, if for any $x_0, x_1\in K$ we have $\{x_0, x_1\}^{cc} \subseteq K$, then in particular $[x_0,x_1] \subset K$ i.e. $K$ is convex. Suppose towards a contradiction that there exists $s\in K^{cc}\setminus K$. Let $H=\left\{y: \iprod{u}{y} = d\right\}$ be a hyperplane tangent to $K$ at some $x\in K\cap H$, separating $K$ from $s$, i.e. $K\subset H^+=\left\{y: \iprod{u}{y} \ge d\right\}$, and $s\in H^-=\left\{y: \iprod{u}{y} < d\right\}$. We claim that $K\subseteq B(x+u,1)$. Indeed, if $y\in H^+\setminus B(x+u,1)$ then either $\|x-y\|_2>2$ (i.e. $\{x, y\}^{cc} \subseteq \R^n$), or the $1$-arc connecting $x$ and $y$ which lies in the affine span of $x,y,x+u$, intersects $H^{-}$. 
In both cases, $\{x, y\}^{cc}\cap H^- \neq\emptyset$. Thus $x+u\in K^c$ but $\|(x+u)-s\|_2 >1$ which contradicts $s\in K^{cc}$.
\end{proof}

In some sense, the class ${\cal S}_n$ can be thought of as an analogue to the class of convex bodies, when half-spaces (the intersections of which produce the class of convex bodies) are replaced by unit Euclidean balls.  
The following proposition makes the analogy more precise. 

\begin{prop}\label{prop:boundary-points-have-friends}
Let $K\subset \RR^n$, $K\neq \emptyset, \RR^n$. Then $K\in {\cal S}_n$ if and only if $K$ is convex and for any $x\in \partial K$ there exists $y\in K^c$ with $\|x-y\|_2 = 1$.\\
Moreover, in this case $y\in \partial K^c$, $x-y \in N_K(x)$ and $y-x\in N_{K^c}(y)$. Here, $N_T(z)$ denotes the set of unit outer normals to $T$ at $z$, namely the intersection of the outer normal cone of a convex body $T$ at a boundary point $z$ with the sphere. 
\end{prop}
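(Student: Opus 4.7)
The plan is to handle the two directions of the equivalence separately, then read off the ``moreover'' clauses from the same witness $y$.

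For the forward direction, I would exploit the fact that $K = K^{cc} = \bigcap_{y\in K^c}B(y,1)$ is a literal intersection of unit balls. Since $K\neq\emptyset$, picking any $x_0\in K$ shows $K^c\subseteq B(x_0,1)$, so $K^c$ is bounded; being an intersection of closed balls it is closed, hence compact. For $x\in\partial K$, the map $y\mapsto \|x-y\|_2$ on $K^c$ is continuous, bounded above by $1$ (since $x\in K$), and attains its maximum at some $y^*\in K^c$. If this maximum were $c<1$, the triangle inequality would give $B(x,1-c)\subseteq B(y,1)$ for every $y\in K^c$, hence $B(x,1-c)\subseteq K^{cc}=K$, contradicting $x\in\partial K$. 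So $\|x-y^*\|_2=1$. (A geometric alternative is to import the argument embedded in the proof of Proposition \ref{prop:ball and spindle convexity}, which already shows $x+u\in K^c$ for $u$ a unit outer normal at $x$.)

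For the converse, I would assume $K$ is convex and satisfies the boundary condition, and show $K^{cc}\subseteq K$ by contradiction; suppose $p\in K^{cc}\setminus K$. A dichotomy on whether $K$ has interior: if $\mathrm{int}(K)=\emptyset$ then $K$ lies in a hyperplane and every point of $K$ is a boundary point, but a parallelogram-identity check shows that for any two points $x_0,x_1\in K$ at distance $a>0$ the midpoint $m\in\partial K$ admits no $y\in K^c$ with $\|m-y\|_2=1$ (all such $y$ satisfy $\|m-y\|_2\le\sqrt{1-a^2/4}$); so $K$ must be a singleton and trivially $K=K^{cc}$. Otherwise pick $q\in\mathrm{int}(K)$; the segment $[q,p]$ exits $K$ at some $x\in\partial K$, and the hypothesis yields $y\in K^c$ with $\|x-y\|_2=1$. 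Then $K^{cc}\subseteq B(y,1)$, so both endpoints $q,p$ lie in $B(y,1)$; but $q\in\mathrm{int}(K)\subseteq\mathrm{int}(B(y,1))$, and the strict convexity of the Euclidean ball forces the segment to touch $\partial B(y,1)$ only possibly at $p$. Hence $x=p$, contradicting $x\in K$ while $p\notin K$.

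For the ``moreover'' assertions, everything reduces to reading off supporting balls. From $y\in K^c$ we have $K\subseteq B(y,1)$, and together with $x\in K\cap\partial B(y,1)$ this means $B(y,1)$ supports $K$ at $x$, so $x-y\in N_K(x)$. Dually, $x\in K=K^{cc}$ gives $K^c\subseteq B(x,1)$, and $y\in K^c\cap\partial B(x,1)$ shows $B(x,1)$ supports $K^c$ at $y$; this simultaneously forces $y\in\partial K^c$ (every neighborhood of $y$ escapes $B(x,1)$ hence $K^c$) and $y-x\in N_{K^c}(y)$.

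I do not anticipate a serious obstacle. The sharpest ingredient is the compactness of $K^c$ in the forward direction and the strict-convexity pinning of the segment in the reverse direction; the only bookkeeping is the empty-interior case, handled by the one-line parallelogram-identity observation at a midpoint.
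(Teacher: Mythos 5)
Your proof is correct, and it deviates from the paper's in both directions of the equivalence in ways worth noting. For the forward direction, the paper approaches $x$ by a sequence $x_k\notin K$ and extracts witnesses $y_k\in K^c$ with $\|y_k-x_k\|_2>1$, passing to a limit; you instead maximize $y\mapsto\|x-y\|_2$ over the compact set $K^c$ and rule out a maximum $c<1$ via the triangle-inequality inclusion $B(x,1-c)\subseteq K$. Both are compactness arguments, but yours is more direct and avoids the sequence bookkeeping. (You should note in passing why $K^c\neq\emptyset$ — e.g.\ $K=\bigcap_{a\in A}B(a,1)$ with $A\neq\emptyset$ forces $A\subseteq K^c$ — but this is a one-line patch.) For the converse, the paper intersects all the supporting balls $B(y(x),1)$ to form a set $L\in\S_n$ and argues $L=K$ by noting that a point of $L\setminus K$ would place some boundary point of $K$ in the interior of $L$; you show $K^{cc}\subseteq K$ directly by following the segment from an interior point $q$ to a hypothetical $p\in K^{cc}\setminus K$, locating its exit point $x\in\partial K$, and using the supporting ball at $x$ to pin $x=p$. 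The two arguments rest on the same geometric fact, but yours is more explicit about the degenerate case $\mathrm{int}(K)=\emptyset$ (handled cleanly by the parallelogram identity at a midpoint), which the paper glosses over. Both your proof and the paper's implicitly assume $K$ is closed in the "if" direction — an open ball of radius $1/2$ satisfies the stated hypotheses but is not in $\S_n$ — so this shared convention is not a defect of your argument relative to the paper's. The "moreover" clause is handled identically in both: reading the normals off the tangent balls $B(y,1)\supseteq K$ and $B(x,1)\supseteq K^c$.
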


\begin{proof}
	Assume $K\in \S_n\setminus \{\emptyset, \RR^n\}$ (so it is obviously convex) and let $x\in \partial K$. Since  $K = (K^c)^c$ and since there are points $x_k\not\in K$ with $x_k\to x$ there is a sequence $y_k \in K^c$ with $\|y_k - x_k\|_2 > 1$ and $\|y_k - x\|_2\le 1$ so that by the triangle inequality $\|y_k - x\|_2\to 1$ and by compactness we find some $y\in K^c$ with $\|x-y\|_2 = 1$. Since $x\in K$ we see that $y\in \partial K^c$. 

Next assume $K$ satisfies the condition in the statement of the proposition and take all $x\in \partial K$ and their corresponding $y(x) \in \partial K^c$. Intersect all the corresponding balls $B(y(x), 1)$. The result is a convex set $L\in \S_n$ which clearly contains $K$ as it contains $K^{cc}\supseteq K$. On the other hand, it cannot contain any point which is not in $K$ since then some boundary point $x\in\partial K$ would be in the interior of $L$, contradicting the fact that the ball $B(y(x), 1)$, participating in the intersection, included $x$ as a boundary point. Thus $K=L\in\S_n$ and the proof of the first part is complete.
	
Moreover, note that if $K\subset B(y,1)$ and the two bodies are touching at a boundary point $x\in \partial K\cap \partial B(y,1)$, then the normal  of  $B(y,1)$ at the point $x$, which is $x-y$, is also a normal of $K$ at $x$. Since $K = K^{cc}$, the same argument holds for $y$.   
\end{proof}

\begin{rem}
    A strengthening of Proposition \ref{prop:boundary-points-have-friends} is given in Lemma \ref{lem:normals map boundary to boundary}, where we show that   $u\in N_K(x)$, if an only if $y = x-u \in \partial K^c$. \end{rem}

It turns out that $\S_n$ is closed under Minkowski averages, and that projections and sections of elements in $\S_n$ with a lower dimensional subspace (say of dimension $k$) belong to the corresponding class ${\cal S}_k$. We next show the closed-ness with respect to sections. For projections and for Minkowski addition we defer the proof until after we have presented another useful description of $\S_n$ and the duality,  and these appear in Theorem \ref{thm:minkowski-sum-c-is-linear} and in Corollary  \ref{cor:cosed-under-projection} below.

\begin{lem}\label{lem:cosed-under-section} Let $K\in {\cal S}_n$ and let $H$ be a hyperplane which we identify with $\RR^{n-1}$. Then $K\cap H\in \S_{n-1}$.
\end{lem}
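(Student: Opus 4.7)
The plan is to write $K$ as an intersection of unit balls (using Definition \ref{def:Sn}), intersect with $H$, and identify each intersection $B(x,1)\cap H$ as itself being a ball-body of the lower dimension. This reduces the lemma to the trivial fact that intersections of ball-bodies are ball-bodies.

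First I would write $K = \bigcap_{x\in A} B(x,1)$ for some $A\subset \RR^n$, so that
\[
K\cap H = \bigcap_{x\in A}\bigl(B(x,1)\cap H\bigr).
\]
For each $x\in A$, the set $B(x,1)\cap H$ is either empty, or a Euclidean ball $B_H(p_x, r_x)$ in $H$ of radius $r_x=\sqrt{1-d(x,H)^2}\le 1$ centered at the orthogonal projection $p_x$ of $x$ onto $H$; terms where the intersection is empty can be dropped from the intersection defining $K\cap H$ (unless they force $K\cap H=\emptyset$, which is a member of $\S_{n-1}$).

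Next, the key observation is that any Euclidean ball of radius $r\le 1$ in $H\cong\RR^{n-1}$ is itself a ball-body, i.e. belongs to $\S_{n-1}$. This is the analogue in $H$ of the identity $B(x,r)^c = B(x,1-r)$ already noted in the excerpt: one verifies directly, using the triangle inequality, that
\[
B_H(p_x,r_x)=\bigcap_{\{y\in H\,:\,\|y-p_x\|_2=1-r_x\}} B_H(y,1),
\]
which exhibits $B_H(p_x,r_x)$ explicitly as an intersection of unit balls in $H$.

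Finally, I would combine these observations. Writing each factor $B(x,1)\cap H$ as such an intersection of unit balls in $H$ and collecting all the corresponding centers into a single subset $A'\subset H$, one gets
\[
K\cap H = \bigcap_{y\in A'} B_H(y,1),
\]
which is a ball-body in $H\cong\RR^{n-1}$, proving $K\cap H\in \S_{n-1}$. There is no genuine obstacle; the only tiny care-point is bookkeeping for the degenerate cases $r_x=0$ (a single point, which is $\{p_x\}=B_H(p_x,0)\in\S_{n-1}$) and $B(x,1)\cap H=\emptyset$.
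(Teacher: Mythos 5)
Your proof is correct and follows essentially the same route as the paper: write $K=\bigcap_{x\in A}B(x,1)$, observe that each $B(x,1)\cap H$ is a ball in $H$ of radius $\sqrt{1-d(x,H)^2}\le 1$ and hence lies in $\S_{n-1}$, and use closure of $\S_{n-1}$ under intersections. The only difference is that you spell out explicitly why a ball of radius $r\le 1$ is an intersection of unit balls (and handle the degenerate cases), which the paper leaves as an already-noted fact.
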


\begin{proof}
	
Assume $K = \cap_{x\in A} B(x, 1)$ and let $H$ be some affine hyperplane. Letting $d_x = d(x,H)$ we note that $B(x,1) \cap H = B_H(P_Hx, \sqrt{1-d_x^2})$ is a ball in $H$ of some radius at most $1$, and in particular  belongs to $\S_{n-1}$. Therefore, as $\S_{n-1}$ is closed under intersections,  
\[ K\cap H = \bigcap_{x\in A} B(P_Hx, \sqrt{1-d_x^2}) \in \S_{n-1}.\]
\end{proof}

\begin{rem}
	We mention that the  $c$-hull does not commute with Minkowski average, in contrast to standard convex hull. However, for $X,Y\subset \RR^n$ it holds that 
	\[
	\cconv\left(\frac{X+Y}{2}\right) \subseteq 
	\frac{\cconv(X) + \cconv(Y)}{2},
	\]
since, as we shall see in Theorem \ref{thm:minkowski-sum-c-is-linear}, the right hand side is a set in $\S_n$ which contains $\frac{X+Y}{2}$. To see that in certain cases this inclusion can be strict one may consider $X = \{\pm e_1\}$ and $Y = \{\pm e_2\}$. In this case, $\cconv(X) = \cconv (Y) = B(0,1)$, but $\frac{X+Y}{2}$ is a set of out-radius $1/\sqrt{2}$ (it consists of the vertices of the centered square of side length $1$) and its $c$-hull is thus clearly a   subset of $B(0,1/\sqrt{2})$.  
\end{rem}

Let us present yet another description of $\S_n$. Sets in $\S_n$ (excluding $\emptyset$ and $\R^n$) are precisely summands of $B(0,1)$, namely $K\in \S_n\setminus \{\emptyset, \RR^n\}$ if and only if there is some convex $T\subset \R^n$ such that $K+T=B(0,1)$. Moreover, in this case $T=-K^c$. To see this, let us first recall some definitions and results from convexity.

\begin{definition}
We say that a compact convex set $K\subset \R^n$ slides freely inside $B(0,1)$, if for every $x\in \partial B(0,1)$ there exists $y \in \R^n$ such that $x \in y + K \subseteq B(0,1)$.
\end{definition}

The following theorem regarding bodies sliding freely in $B(0,1)$ is well known (see e.g. \cite[Theorem 3.2.2]{schneider2013convex}). 

\begin{thm}\label{thm:sliding-iff-summand}
Let $K \subset \RR^n$ be a convex body. Then $K$ {\em slides freely} inside $B(0,1)$ if and only if $K$ is a summand of $B(0,1)$. Moreover, if $K+T=B(0,1)$ then
\begin{equation}\label{eq:intersections-over-your-summand}
K = \bigcap_{-x\in T} B(x,1).
\end{equation}
\end{thm}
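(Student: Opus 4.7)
The plan is to treat the two directions separately and then extract \eqref{eq:intersections-over-your-summand} as a short consequence of the Minkowski cancellation law. For the easy direction, assume $K+T=B(0,1)$ and let $x\in\partial B(0,1)$. Write $x=k+t$ with $k\in K$ and $t\in T$, and set $y=t$. Then $x=t+k\in y+K$, while $y+K=t+K\subseteq T+K=B(0,1)$, so the sliding property is verified.

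For the converse, the natural candidate summand is the Minkowski difference
\[
T := \bigl\{y\in\RR^n : y+K\subseteq B(0,1)\bigr\} = \bigcap_{k\in K}\bigl(B(0,1)-k\bigr),
\]
which is closed, convex, nonempty (by the sliding hypothesis), and bounded (since $T\subseteq B(0,1)-k_0$ for any fixed $k_0\in K$), hence a convex body. The defining inclusion $y+K\subseteq B(0,1)$ for $y\in T$ immediately gives $h_T+h_K\le h_{B(0,1)}$. To upgrade this to equality on $S^{n-1}$, fix $u\in S^{n-1}$; the sliding hypothesis supplies $y_u\in T$ with $u\in y_u+K$. Since $y_u+K\subseteq B(0,1)$, the halfspace $\{z:\langle z,u\rangle\le 1\}$ that supports $B(0,1)$ at $u$ also contains $y_u+K$, and $u\in y_u+K$ attains $\langle u,u\rangle = 1$; hence $h_{y_u+K}(u)=1$, which rearranges via $h_{y_u+K}=\langle y_u,\cdot\rangle + h_K$ into $\langle y_u,u\rangle = 1-h_K(u)$. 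Thus $h_T(u)\ge h_{B(0,1)}(u)-h_K(u)$, and by $1$-homogeneity of both sides $h_T=h_{B(0,1)}-h_K$ on all of $\RR^n$, i.e.\ $K+T=B(0,1)$.

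For the identity \eqref{eq:intersections-over-your-summand}, note that $K+T\subseteq B(0,1)$ rewrites as $K\subseteq\bigcap_{t\in T}(B(0,1)-t)=\bigcap_{-x\in T}B(x,1)$. Conversely, any $z$ in this intersection satisfies $z+T\subseteq B(0,1)=K+T$; passing to support functions gives $\langle z,\cdot\rangle + h_T \le h_K + h_T$, and subtracting the finite function $h_T$ (valid since $T$ is bounded) yields $\langle z,\cdot\rangle\le h_K$, so $z\in K$.

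The main obstacle is the converse direction, and the key insight it rests on is that sliding freely is exactly the condition needed to upgrade the tautological inequality $h_T+h_K\le h_{B(0,1)}$ to an equality: for each boundary direction $u$ it produces a translate $y_u+K$ that touches $\partial B(0,1)$ at a point with outer normal $u$, and this common tangency pins down $h_T(u)+h_K(u)=h_{B(0,1)}(u)$ pointwise on $S^{n-1}$, which by homogeneity is equivalent to $K+T=B(0,1)$.
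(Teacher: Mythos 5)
Your proof is correct and complete. The paper itself does not prove this theorem; it cites it as classical (Schneider, \emph{Convex Bodies}, Theorem 3.2.2), and your argument is essentially the standard one from that source: the candidate summand is the Minkowski difference $T=\{y: y+K\subseteq B(0,1)\}$, the sliding hypothesis is used to force equality in the tautological inequality $h_K+h_T\le h_{B(0,1)}$ at each direction $u\in S^{n-1}$ via a translate of $K$ touching $\partial B(0,1)$ with outer normal $u$, and \eqref{eq:intersections-over-your-summand} follows from the cancellation law for support functions. All the small points that need checking (nonemptiness and boundedness of $T$, finiteness of $h_T$ when subtracting, passing from equality on $S^{n-1}$ to equality of the bodies) are handled correctly.
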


Theorem \ref{thm:sliding-iff-summand} implies that if $K$ is a summand of $B(0,1)$ then $K\in \S_n\setminus \{\emptyset, \RR^n\}$.
Moreover, Proposition \ref{prop:boundary-points-have-friends} implies that if $K\in \S_n\setminus \{\emptyset, \RR^n\}$ then $K$ slides freely inside $B(0,1)$, which by Theorem \ref{thm:sliding-iff-summand} implies $K$ is a summand of $B(0,1)$. This observation is attributed to Maehara, who proved (see \cite[Theorem 3.2.5]{schneider2013convex}) the following.
\begin{thm} Any nonempty intersection of translates of Euclidean
unit balls in $\RR^n$
is a summand of $B(0,1)$.
\end{thm}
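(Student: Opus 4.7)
The plan is to invoke Theorem~\ref{thm:sliding-iff-summand}: a nonempty intersection $K=\bigcap_{p\in A}B(p,1)$ is automatically a compact, convex member of $\S_n\setminus\{\emptyset,\RR^n\}$ (it is contained, e.g., in any single $B(p,1)$ with $p\in A$), so it suffices to verify that $K$ slides freely inside $B(0,1)$. Fix $z\in\partial B(0,1)$ and write $u=z\in S^{n-1}$; the task is to exhibit $w\in\RR^n$ with $z\in w+K\subseteq B(0,1)$.

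The natural candidate is $w:=z-x^*$ for some $x^*\in\partial K$ satisfying $u\in N_K(x^*)$ (such $x^*$ exists as a maximizer of $\langle\cdot,u\rangle$ on the compact convex set $K$). With this choice $z=w+x^*\in w+K$ is automatic, and the required inclusion $w+K\subseteq B(0,1)$ rewrites as
\[
K\subseteq B(x^*-u,1),
\]
that is, as the assertion $x^*-u\in K^c$. At any \emph{smooth} boundary point $x$---one with a unique outer unit normal $u(x)$---Proposition~\ref{prop:boundary-points-have-friends} supplies $y\in K^c$ with $\|x-y\|_2=1$ and $x-y\in N_K(x)=\{u(x)\}$, forcing $y=x-u(x)$ and hence $K\subseteq B(x-u(x),1)$. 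This settles the desired containment whenever $u$ is the unique outer normal at some smooth boundary point of $K$.

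The main obstacle is extending this from smooth normal directions to an arbitrary $u\in S^{n-1}$, since Proposition~\ref{prop:boundary-points-have-friends} attaches only one companion ball to each boundary point, and that companion need not lie in the direction we want. Here I will invoke the standard convex-geometric fact that smooth points of $\partial K$ are dense in $\partial K$ and that their outer unit normals form a dense subset of $S^{n-1}$ (the singular set of outer normals having Hausdorff dimension at most $n-2$). Given $u$, pick smooth $x_k\in\partial K$ with $u(x_k)\to u$, pass to a subsequence so that $x_k\to x^{**}\in\partial K$, and take limits in the inequalities $\|k-x_k+u(x_k)\|_2\le 1$ valid for every $k\in K$: this yields $K\subseteq B(x^{**}-u,1)$, while upper semicontinuity of the normal cone gives $u\in N_K(x^{**})$. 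Thus $x^{**}$ plays the role of $x^*$ above, sliding freely is verified at $z$, and Theorem~\ref{thm:sliding-iff-summand} concludes that $K$ is a summand of $B(0,1)$.
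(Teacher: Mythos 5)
Your route is the same as the paper's (verify free sliding and invoke Theorem~\ref{thm:sliding-iff-summand}), and you have correctly isolated the crux: one must show $K\subseteq B(x^*-u,1)$, where $x^*$ is the support point of $K$ in direction $u$, for \emph{every} $u\in S^{n-1}$. Your treatment of the case where $u$ is the unique normal at a smooth boundary point is fine. The gap is the density claim used to handle all other $u$: it is \emph{not} true that the outer unit normals attained at smooth points of $\partial K$ are dense in $S^{n-1}$. Take $K=B(\tfrac12 e_1,1)\cap B(-\tfrac12 e_1,1)\subset \RR^2$ and $u=e_2$. The only boundary point with $e_2\in N_K(x)$ is the corner $p=(0,\sqrt{3}/2)$, whose normal cone is spanned by the two circle normals $(\pm\tfrac12,\tfrac{\sqrt{3}}{2})$; the normals at smooth points accumulate only at these two extreme directions, so no sequence of smooth points has normals converging to $e_2$, and your limiting argument produces nothing for this $u$. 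The fact you invoke --- that the singular normal directions form a set of Hausdorff dimension at most $n-2$ --- concerns directions $u$ for which the support set $F(K,u)$ is not a singleton; here that set is empty ($K$ is strictly convex), while the set of directions realized only at non-smooth points consists of two arcs of total length $2\pi/3$. The two notions are unrelated, and the parenthetical cannot supply the density you need.

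What is missing is exactly the statement that $x-u\in K^c$ for \emph{every} $u\in N_K(x)\cap S^{n-1}$, not only for the single companion normal produced by Proposition~\ref{prop:boundary-points-have-friends}; this is Lemma~\ref{lem:normals map boundary to boundary}, which the paper obtains from the identity $h_{K^c}(-v)=|v|-h_K(v)$ of Proposition~\ref{prop:KminusK}. Alternatively, for $u$ a unit vector of the form $\sum_i\lambda_i(x-p_i)$ with $\lambda_i\ge 0$, $p_i\in A$ and $\|x-p_i\|_2=1$, one can check $K\subseteq B(x-u,1)$ directly: for $k\in K$ each active ball gives $\langle k-x,\,x-p_i\rangle\le -\tfrac12\|k-x\|_2^2$, and $\sum_i\lambda_i\ge\|u\|_2=1$ by the triangle inequality, whence $\|k-(x-u)\|_2^2\le 1$ (one must still justify that every $u\in N_K(x)$ lies in the closed cone generated by the active normals). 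Either way, some input beyond the smooth case is required; as written, the proof does not cover the directions interior to the normal cones at non-smooth boundary points.
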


\begin{rem}
We remark here that, of course, one can discuss summands of some other fixed convex body $M\subset \RR^n$ in place of $B(0,1)$, namely pairs of convex bodies $K$ and $L$ such that $K+L = M$. In such a case, the summand $K$ is of the form $K = \bigcap_{x\in A} (M-x)$, however it need not be the case that every intersection of translates of $M$ is a summand of $M$, when $n\ge 3$  
(the cross-polytope $L = B_1^3$ is a counterexample). See \cite[Section 3.2]{schneider2013convex} for more details. 
\end{rem}

The following proposition follows from the above discussion, and we add yet another proof. We use $h_K:\RR^n \to \RR$ to denote, as usual, the support function of a set $K$, that is, $h_K(u)=\sup_{x\in K}\iprod{x}{u}$.

\begin{prop}\label{prop:KminusK}
Let $K\in \S_n\setminus \{\emptyset, \RR^n\}$. Then $K - K^c = B(0,1)$, or, equivalently, for all $u\in \S^{n-1}$
	\[ h_{K^c} (u) = 1- h_K(-u).\]
	In particular $K + K^c$ is a body of constant width $2$.\\
Moreover, bodies of constant width $1$ are precisely the fixed points of the $c$-duality. 
\end{prop}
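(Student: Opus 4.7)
The plan is to prove the identity $K - K^c = B(0,1)$ first, then derive the equivalent support function formula, and finally handle the constant-width corollary and the fixed-point characterization in turn. My approach combines Maehara's theorem (Theorem \ref{thm:sliding-iff-summand}) with a direct support function comparison. Maehara gives a convex body $T$ with $K + T = B(0,1)$, hence $h_T(u) = 1 - h_K(u)$ for $u\in S^{n-1}$. For any $t\in T$ the inclusion $K + t \subseteq B(0,1)$ rewrites as $K \subseteq B(-t,1)$, so by the identity $K^c = \{y : K\subseteq B(y,1)\}$ observed after the definition of $c$-duality, $-t\in K^c$. This yields $-T\subseteq K^c$, and at the level of support functions $h_{K^c}(u) \ge 1 - h_K(-u)$. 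For the reverse inequality I would argue directly from the definition: any $y\in K^c$ and $x\in K$ satisfy $\iprod{u}{y} - \iprod{u}{x} \le \|y-x\|_2 \le 1$, so $h_{K^c}(u) \le 1 + \inf_{x\in K}\iprod{u}{x} = 1 - h_K(-u)$. Combining, $h_{K^c}(u) = 1 - h_K(-u)$ on $S^{n-1}$, and $K - K^c = B(0,1)$ follows via the standard support function rules for Minkowski sum, reflection, and positive homogeneity.

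The constant-width-$2$ claim for $K + K^c$ is then immediate: the width in direction $u$ equals $[h_K(u) + h_K(-u)] + [h_{K^c}(u) + h_{K^c}(-u)]$, which collapses to $2$ upon substituting $h_{K^c}(\pm u) = 1 - h_K(\mp u)$.

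For the fixed-point characterization one direction is immediate: if $K^c = K$, then $K - K = B(0,1)$ by the identity just established, and since the support function of the difference body is the width function of $K$, this says $K$ has constant width $1$. For the converse I would show directly that any convex body $K$ of constant width $1$ satisfies $K^c = K$, without first establishing $K\in \S_n$. The inclusion $K\subseteq K^c$ is easy because constant width $1$ forces diameter at most $1$, so every $y\in K$ satisfies $K\subseteq B(y,1)$, i.e., $y\in K^c$. For the reverse $K^c \subseteq K$, I would argue by separation: if $y\in K^c\setminus K$, choose $u\in S^{n-1}$ with $\iprod{u}{y} > h_K(u)$ and pick $x^*\in K$ attaining $\iprod{u}{x^*} = -h_K(-u)$; then $\iprod{u}{y - x^*} > h_K(u) + h_K(-u) = 1$, forcing $\|y - x^*\|_2 > 1$, a contradiction. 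This separation step is the main technical point of the whole proof, since it sidesteps the a priori gap that a body of constant width $1$ is not yet known to lie in $\S_n$, and hence the main identity cannot be applied to it directly.
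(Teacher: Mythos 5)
Your proof is correct, but it takes a different route from the one displayed in the paper. For the identity $K-K^c=B(0,1)$ the paper argues pointwise on the boundary: for each $u\in S^{n-1}$ it takes the (unique) $x\in\partial K$ with $u\in N_K(x)$, invokes Proposition \ref{prop:boundary-points-have-friends} to produce $y=x-u\in\partial K^c$ with $-u\in N_{K^c}(y)$, and reads off $h_{K-K^c}(u)=\iprod{x-y}{u}=1$. You instead start from Maehara's theorem to get a summand $T$ with $K+T=B(0,1)$, deduce $-T\subseteq K^c$ from the description $K^c=\{y:K\subseteq B(y,1)\}$, and close the gap with the elementary Cauchy--Schwarz bound $h_{K^c}(u)\le 1-h_K(-u)$; this is essentially the ``follows from the above discussion'' route that the paper alludes to but does not write out, and as a bonus it identifies $T=-K^c$ and avoids any appeal to strict convexity or uniqueness of the touching point. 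For the fixed-point characterization the two arguments also diverge: the paper notes that constant width $1$ gives $K-K=B(0,1)$, hence $K$ is a ball summand and so lies in $\S_n$, and then compares $K-K=B(0,1)$ with $K-K^c=B(0,1)$; your converse is more self-contained, proving $K\subseteq K^c$ from $\diam(K)\le 1$ and $K^c\subseteq K$ by a direct separation argument, so you never need to certify $K\in\S_n$ first. Both approaches are sound; yours trades the geometric normal-cone picture for support-function bookkeeping and a separation lemma.
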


\begin{rem}
   The study of bodies of constant width was one of the motivations for studying $\S_n$, and we address them   in Section \ref{sec:constantwidth}.  
\end{rem}

\begin{proof}
Let $u\in S^{n-1}$ and consider the unique point $x\in \partial K$ such that $u\in N_K(x)$. By Proposition \ref{prop:boundary-points-have-friends}, $y = x-u\in \partial K^c$, $-u \in N_{K^c}(y)$ and $\|x-y\|_2 = 1$. Therefore
\begin{eqnarray*}
h_{K-K^c}(u)  = h_{K}(u) + h_{K^c}(-u) 
= \iprod{x}{u} + \iprod{y}{-u} = \iprod{x-y}{u} = \|u\|_2^2 = 1 
\end{eqnarray*}	
For the second assertion, note that \begin{eqnarray*} 
		h_{K + K^c} (u) + h_{K+ K^c}(-u) &=& h_K(u) + h_{K^c}(u) +
		h_K(-u) + h_{K^c}(-u)\\& =&   h_K(u) + h_{-K^c}(u) +
		h_{K}(-u) + h_{-K^c}(-u)\\& = &h_{K-K^c}(u) +  h_{K-K^c}(-u) = 2.
	\end{eqnarray*}
Finally, if $K$ is a body of constant width $1$ then $K-K = B(0,1)$ namely $K$ is a summand of the ball, so $K\in \S_n\setminus \{\emptyset, \RR^n\}$, thus $K-K^c = B(0,1)$, by the first assertion we have just shown. Combining the two, we get $K = K^c$.
\end{proof}

With this characterization of the $c$-duality and the class $\S_n$, we can easily prove some useful properties. The first follows from the classical Brunn-Minkowski inequality  $\vol_n(A+B)^{1/n} \ge \vol_n(A)^{1/n} + \vol_n(B)^{1/n}$ and Proposition \ref{prop:KminusK}. (We call it Santal\'o-type since it bounds from above the volume of the $c$-dual, similarly to the way Santal\'o-'s inequality bounds the volume of the polar body, \cite{Santalo1949}, see also \cite[Section 1.5.4]{AGMBook}.)

\begin{lem}[A Santal\'o-type inequality]\label{lem:santalo}
For any $K\in \S_n \setminus \{\emptyset, \RR^n\}$ it holds that
\[
\vol(K)^{1/n} + \vol(K^c)^{1/n}
\le
\vol(B(0,1))^{1/n},
\]
with equality if and only if $K$ is some ball $B(x,r)$. In particular for any $K$ of constant width $1$ we have  $\vol(K)  \le \vol(B(0,1/2))$. 
\end{lem}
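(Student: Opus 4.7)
The plan is to combine Proposition \ref{prop:KminusK} with the classical Brunn--Minkowski inequality. By Proposition \ref{prop:KminusK}, for any $K\in\S_n\setminus\{\emptyset,\R^n\}$ we have $K-K^c=B(0,1)$, which we rewrite as $K+(-K^c)=B(0,1)$. Applying Brunn--Minkowski to the convex bodies $K$ and $-K^c$, and using the translation-invariance of volume so that $\vol(-K^c)=\vol(K^c)$, gives
\[
\vol(B(0,1))^{1/n}=\vol(K+(-K^c))^{1/n}\ge \vol(K)^{1/n}+\vol(K^c)^{1/n},
\]
which is the stated inequality.

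For the equality case, I would first check that balls $K=B(x,r)$ with $r\in[0,1]$ saturate the inequality: then $K^c=B(x,1-r)$, and the two volume terms sum to $r\,\vol(B(0,1))^{1/n}+(1-r)\,\vol(B(0,1))^{1/n}=\vol(B(0,1))^{1/n}$. Conversely, equality in Brunn--Minkowski for non-degenerate convex bodies forces $K$ and $-K^c$ to be homothetic, say $-K^c=\lambda K+v$ for some $\lambda\ge 0$ and $v\in\R^n$; the degenerate case is $K$ a single point (equivalently $K=B(x,0)$), which is already a ball. In the non-degenerate case, substituting back gives $(1+\lambda)K+v=K-K^c=B(0,1)$, so $K$ is itself a Euclidean ball.

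Finally, for the constant-width corollary I would invoke the second part of Proposition \ref{prop:KminusK}: a body $K$ of constant width $1$ satisfies $K=K^c$, so the main inequality yields $2\vol(K)^{1/n}\le \vol(B(0,1))^{1/n}$, i.e.\ $\vol(K)\le 2^{-n}\vol(B(0,1))=\vol(B(0,1/2))$.

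There is no real obstacle here: the only thing requiring a moment of care is the equality analysis, specifically handling the degenerate case where $K$ (or $K^c$) is a single point so that the homothety condition from Brunn--Minkowski must be interpreted appropriately; but in that case $K=B(x,0)$ already fits the conclusion.
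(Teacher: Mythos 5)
Your proof is correct and follows essentially the same route as the paper's primary argument: apply Brunn--Minkowski to the decomposition $K+(-K^c)=B(0,1)$ from Proposition \ref{prop:KminusK}. Your treatment of the equality case (homothety from Brunn--Minkowski, plus the observation that the only degenerate bodies in $\S_n$ are singletons) is a careful spelling-out of what the paper leaves implicit, and it is sound.
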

This lemma was well known 
and admits many different proofs, some of which we will encounter in this text. In fact, let us present another simple one. 
\begin{proof}[Another proof for Lemma \ref{lem:santalo}]
The mean width of a convex body $K$ is defined by $M^*(K) = \int_{S^{n-1}}h_K(u) d\sigma(u)$ (where $\sigma$ is the normalized Haar measure on the sphere),
and Urysohn's inequality states that $\vol(A)^{1/n} \le  M^*(A)\cdot\vol(B(0, 1))^{1/n}$, with equality only for Euclidean balls (see \cite[Theorem 1.5.11]{AGMBook}).
The fact that  $h_K(u) + h_{K^c}(-u) = 1$ for $u\in S^{n-1}$ implies that $M^*(K) + M^*(K^c) = 1$, and applying Urysohn to both $K$ and $K^c$, we get $\vol(K)^{1/n} + \vol(K^c)^{1/n} \le \vol(B(0,1))^{1/n}$ with equality only for balls, as required.
\end{proof}

Since the Brunn-Minkowski inequality holds also for mixed volumes $V_k$ (see  
\cite[Section 1.1.5 and Appendix B]{AGMBook}) we can also show 
\begin{prop}[A Santal\'o-type inequality]\label{lem:santalo-mixed}
    For any $K\in \S_n \setminus \{\emptyset, \RR^n\}$ and $k\in \{1,\ldots, n\}$ it holds that
\[
V_k(K)^{1/k} + V_k(K^c)^{1/k}
\le
V_k(B(0,1))^{1/k},
\]
with equality if and only if $k=1$ or $K$ is some ball $B(x,r)$. In particular for any $K$ of constant width $1$ we have  $V_k(K)  \le V_k(B(0,1/2))$. 
\end{prop}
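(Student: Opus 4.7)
I would follow the first proof of Lemma \ref{lem:santalo} almost verbatim, substituting the classical Brunn--Minkowski inequality for volume by its version for mixed volumes $V_k$. The main analytic input is the generalized Brunn--Minkowski inequality
\[
V_k(A+B)^{1/k} \;\ge\; V_k(A)^{1/k} + V_k(B)^{1/k}
\]
for convex bodies $A,B \subset \RR^n$, which is exactly the fact cited in the paragraph preceding the statement (see e.g.\ \cite[Appendix B]{AGMBook}).

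Applying this inequality with $A=K$ and $B=-K^c$, using the rigid-motion invariance $V_k(-K^c) = V_k(K^c)$, and inserting the identity $K + (-K^c) = B(0,1)$ supplied by Proposition \ref{prop:KminusK}, one obtains the stated inequality in a single line:
\[
V_k(B(0,1))^{1/k} \;=\; V_k(K + (-K^c))^{1/k} \;\ge\; V_k(K)^{1/k} + V_k(K^c)^{1/k}.
\]

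For the equality case I would split on $k$. When $k=1$, the functional $V_1$ is a constant multiple of the mean width and is Minkowski-additive, so the inequality is automatically an equality for every $K \in \S_n \setminus\{\emptyset, \RR^n\}$, matching the ``$k=1$'' alternative in the statement. When $k \ge 2$, the equality clause of the Brunn--Minkowski inequality for $V_k$ forces $K$ and $-K^c$ to be homothetic, say $-K^c = \lambda K + x_0$ for some $\lambda \ge 0$ and $x_0 \in \RR^n$; substituting into $K + (-K^c) = B(0,1)$ and passing to support functions yields $(1+\lambda)\,h_K(u) + \iprod{x_0}{u} = \|u\|_2$ for every $u$, so $h_K$ is the support function of a Euclidean ball and $K = B(x,r)$ for some $r \in [0,1]$. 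The converse direction is immediate, since $K = B(x,r)$ gives $K^c = B(x, 1-r)$ and $k$-homogeneity of $V_k$ makes the inequality an equality. The main obstacle here is the equality characterization of the Brunn--Minkowski inequality for $V_k$ in the intermediate range $2 \le k \le n-1$: this is a special case of the Aleksandrov--Fenchel equality problem, whose full resolution is famously delicate, but in our setting, where the Minkowski sum is a Euclidean ball, the homotheticity conclusion is available in the literature. Finally, the ``in particular'' clause follows from Proposition \ref{prop:KminusK}: any $K$ of constant width $1$ is a fixed point of $c$-duality, so $K=K^c$, giving $2\,V_k(K)^{1/k} \le V_k(B(0,1))^{1/k}$, and $k$-homogeneity rewrites the right-hand side as $2\,V_k(B(0,1/2))^{1/k}$, yielding $V_k(K) \le V_k(B(0,1/2))$.
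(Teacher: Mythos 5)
Your proposal is correct and follows the same route as the paper, whose entire proof is the one-line remark that the statement is an immediate consequence of the Brunn--Minkowski inequality for mixed volumes applied to $K+(-K^c)=B(0,1)$, together with its equality cases. You simply spell out the details (the Minkowski-additivity of $V_1$ explaining the $k=1$ alternative, and the homothety conclusion forcing $K$ to be a ball for $k\ge 2$) that the paper leaves implicit.
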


\begin{proof}
    This is an immediate consequence of the Brunn-Minkowski inequality for mixed volumes, together with its equality cases. 
\end{proof} 

\begin{rem}
In fact, by Brunn-Minkowski we have
\[ \vol(K)^{1/n} + \vol(K^c)^{1/n} \le \vol(K+K^c)^{1/n}, \]
and similarly for $V_k$. This bound is generally better than the bound in Lemma \ref{lem:santalo} since $K+K^c$ is of constant width $2$ and thus has  volume (or $V_k$) at most that of the Euclidean ball. 
\end{rem} 

The next result we demonstrate is a linearity result for the $c$-duality with respect to Minkowski averaging, which we find quite surprising, even though it follows immediately from the summands point of view of $\S_n$.

\begin{thm}\label{thm:minkowski-sum-c-is-linear}
Let $K,T\in \S_n\setminus \{\emptyset, \RR^n\}$,  and $\lambda \in (0,1)$. Then $(1-\lambda)K + \lambda T\in \S_n$ and  \[ ((1-\lambda)K + \lambda T)^c = (1-\lambda)K^c + \lambda T^c.\]
\end{thm}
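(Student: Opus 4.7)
The plan is to exploit the summand characterization of $\S_n$ furnished by Proposition \ref{prop:KminusK} and Theorem \ref{thm:sliding-iff-summand}. Since $K, T \in \S_n \setminus \{\emptyset, \RR^n\}$, Proposition \ref{prop:KminusK} gives us the two identities
\[
K - K^c = B(0,1), \qquad T - T^c = B(0,1).
\]
The first step is simply to take the Minkowski combination of these with weights $1-\lambda$ and $\lambda$. Because Minkowski addition is commutative and associative and scalar dilation distributes over it, and since $(1-\lambda)B(0,1) + \lambda B(0,1) = B(0,1)$, we obtain
\[
\bigl[(1-\lambda)K + \lambda T\bigr] - \bigl[(1-\lambda)K^c + \lambda T^c\bigr] = B(0,1).
\]

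Setting $M := (1-\lambda)K + \lambda T$ and $L := (1-\lambda)K^c + \lambda T^c$, this display exhibits $M$ as a summand of $B(0,1)$. By Theorem \ref{thm:sliding-iff-summand} (in the direction: summands of $B(0,1)$ lie in $\S_n$), this immediately gives $M \in \S_n$, establishing the first part of the statement.

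For the second part, I would now apply Proposition \ref{prop:KminusK} to $M$ itself, which is legitimate because we have just shown $M \in \S_n$. This yields $M - M^c = B(0,1)$. Comparing with the display above, both $L$ and $M^c$ are summands complementing $M$ in $B(0,1)$. Passing to support functions on $S^{n-1}$ via the identity $h_{A-B}(u) = h_A(u) + h_B(-u)$, both relations read $h_M(u) + h_L(-u) = 1 = h_M(u) + h_{M^c}(-u)$, so $h_L = h_{M^c}$ on $S^{n-1}$, and hence $L = M^c$.

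There is no serious obstacle here; the whole argument is essentially a short algebraic manipulation once the summand characterization is in place. The only point that requires a moment of care is the final cancellation step, $L = M^c$, which I would justify via support functions as above rather than by a general cancellation lemma — both $L$ and $M^c$ are nonempty compact convex sets, so equality of support functions immediately forces equality of the sets. In this sense the theorem is a clean consequence of the fact, emphasized just before its statement, that the $c$-duality is essentially the operation of passing to the complementary summand of $B(0,1)$ (up to reflection), and that operation is manifestly affine in its argument.
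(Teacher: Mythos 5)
Your proof is correct and follows essentially the same route as the paper: both combine the identities $K-K^c = T-T^c = B(0,1)$ from Proposition \ref{prop:KminusK} to exhibit $(1-\lambda)K+\lambda T$ as a summand of the ball with complementary summand $-\bigl((1-\lambda)K^c+\lambda T^c\bigr)$. Your explicit justification of the final cancellation step via support functions fills in a detail the paper leaves implicit, but it is not a different argument.
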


\begin{proof}
Since $K,T\in \S_n\setminus \{\emptyset, \RR^n\}$, we know by Proposition \ref{prop:KminusK} that they are summands of the ball and that $K - K^c = T- T^c = B(0,1)$. 
	Therefore 
	\[ (1-\lambda) K - (1-\lambda) K^c + \lambda T- \lambda T^c = B(0,1),\]
	which implies that $(1-\lambda)K + \lambda T$ is a summand of the ball, hence in $\S_n$, and moreover, its dual is precisely $(1-\lambda)K^c + \lambda T^c$. 
\end{proof}

\begin{rem}
    The case $\lambda = 1/2$ and $K = -T$ was observed in \cite[Lemma 15]{Bezdek2021}, with a very different proof.
\end{rem}

Theorem \ref{thm:minkowski-sum-c-is-linear} has some immediate consequences.

\begin{cor}\label{cor:minkowski-sum-c-is-sub-linear}
Let $K,T\subset \RR^n$ be non-empty sets with $\outrad(K), \outrad(T)\le 1$ and let $\lambda \in (0,1)$. Then    \[ ((1-\lambda)K + \lambda T)^c \supseteq  (1-\lambda)K^c + \lambda T^c.\]
\end{cor}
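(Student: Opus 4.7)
The plan is to reduce the statement to the linearity equality of Theorem \ref{thm:minkowski-sum-c-is-linear} by replacing $K$ and $T$ with their $c$-hulls $K^{cc}$ and $T^{cc}$, and then to exploit the order-reversing property of the $c$-duality. The crucial point is that for any non-empty $A \subseteq \RR^n$, we have $A^c = (A^{cc})^c$: indeed, $A \subseteq A^{cc}$ gives $(A^{cc})^c \subseteq A^c$, while $A^c$ is in $\S_n$ (being the $c$-dual of something) so $(A^c)^{cc} = A^c$, which gives the reverse inclusion after one more application of $c$.

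First, I would note that the hypotheses $\outrad(K), \outrad(T) \le 1$ together with $K,T \neq \emptyset$ guarantee that $K^{cc}$ and $T^{cc}$ lie in $\S_n \setminus \{\emptyset, \RR^n\}$, as explained after the definition of the $c$-hull. So Theorem \ref{thm:minkowski-sum-c-is-linear} applies to them, yielding
\[
\bigl((1-\lambda)K^{cc} + \lambda T^{cc}\bigr)^c = (1-\lambda)(K^{cc})^c + \lambda (T^{cc})^c = (1-\lambda)K^c + \lambda T^c,
\]
where in the last equality I used $(K^{cc})^c = K^c$ and $(T^{cc})^c = T^c$.

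Next, since $K \subseteq K^{cc}$ and $T \subseteq T^{cc}$, Minkowski averaging preserves inclusion, so
\[
(1-\lambda)K + \lambda T \subseteq (1-\lambda)K^{cc} + \lambda T^{cc}.
\]
Applying the order-reversing map $A \mapsto A^c$ flips the inclusion, giving
\[
\bigl((1-\lambda)K + \lambda T\bigr)^c \supseteq \bigl((1-\lambda)K^{cc} + \lambda T^{cc}\bigr)^c = (1-\lambda)K^c + \lambda T^c,
\]
which is exactly the claim.

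There is no real obstacle here; the only thing to be careful about is the degenerate cases, i.e.\ verifying that the outradius condition keeps us inside $\S_n \setminus \{\emptyset, \RR^n\}$ so that Theorem \ref{thm:minkowski-sum-c-is-linear} is applicable. The corollary is essentially a formal consequence of the order-reversion of $c$-duality combined with the linearity established for genuine ball-bodies.
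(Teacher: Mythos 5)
Your proof is correct and follows essentially the same route as the paper's: replace $K,T$ by their $c$-hulls, apply the linearity of Theorem \ref{thm:minkowski-sum-c-is-linear} together with $(A^{cc})^c = A^c$, and then use order reversion of the $c$-duality on the inclusion $(1-\lambda)K + \lambda T \subseteq (1-\lambda)K^{cc} + \lambda T^{cc}$. The only difference is that you spell out the triple-dual identity explicitly, which the paper leaves implicit.
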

\begin{proof}
Indeed,
\[ (1-\lambda)K + \lambda T  \subseteq  (1-\lambda)K^{cc} + \lambda T^{cc}.\]
Note that $K^{cc}, T^{cc}\in \S_n\setminus \{\emptyset, \RR^n\}$ (since $K$ and $T$ are non-empty, and have out-radius at most $1$). Thus by Theorem \ref{thm:minkowski-sum-c-is-linear}, the $c$-dual of their Miknowski average is given by $\left((1-\lambda)K^{cc} + \lambda T^{cc}\right)^c =  (1-\lambda)K^{c} + \lambda T^{c}$.
Since $c$-duality reverses inclusion we get 
\[ \left((1-\lambda)K + \lambda T \right)^c \supseteq  \left((1-\lambda)K^{cc} + \lambda T^{cc}\right)^c =  (1-\lambda)K^{c} + \lambda T^{c},\]
as claimed. 
\end{proof}

Another consequence of Theorem \ref{thm:minkowski-sum-c-is-linear} deals with the Minkowski symmetral of a (convex) body $K$ with respect to a subspace $u^\perp$, defined as $M_u K = \frac{1}{2}\left(K + R_uK\right)$, where $u\in S^{n-1}$, and $R_u(x) = x- 2\iprod{x}{u}u$ is reflection with respect to $u^\perp$.
\begin{cor}\label{cor:closed-under-Mink}
Let $u\in S^{n-1}, K\in \S_n$. Then $M_u K\in\S_n$, and $M_u(K^c) = (M_u K)^c$. Moreover, $\vol(M_u K) \ge \vol(K)$, and $\vol(M_u K ^c) \ge \vol(K^c)$.
\end{cor}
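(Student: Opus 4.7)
The plan is to derive this corollary as a nearly immediate consequence of Theorem \ref{thm:minkowski-sum-c-is-linear} together with the fact, already noted in the excerpt, that the $c$-duality commutes with rigid motions. Specifically, since $R_u$ is an orthogonal transformation fixing the origin (hence a rigid motion), one has $(R_u K)^c = R_u(K^c)$, and in particular $R_u K \in \S_n$ whenever $K\in \S_n$.

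Assuming $K\in \S_n \setminus \{\emptyset, \RR^n\}$ (the degenerate cases being trivial since $M_u\emptyset = \emptyset$ and $M_u\RR^n = \RR^n$), I would apply Theorem \ref{thm:minkowski-sum-c-is-linear} with $T = R_u K$ and $\lambda = 1/2$. This immediately gives
\[
M_u K \;=\; \tfrac{1}{2}K + \tfrac{1}{2}R_u K \;\in\; \S_n,
\]
and
\[
(M_u K)^c \;=\; \tfrac{1}{2}K^c + \tfrac{1}{2}(R_u K)^c \;=\; \tfrac{1}{2}K^c + \tfrac{1}{2}R_u(K^c) \;=\; M_u(K^c),
\]
which settles the first two assertions.

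For the volume inequality, I would invoke the Brunn--Minkowski inequality. Since $R_u$ is an isometry, $\vol(R_u K) = \vol(K)$, and therefore
\[
\vol(M_u K)^{1/n} \;=\; \vol\!\left(\tfrac{1}{2}K + \tfrac{1}{2}R_u K\right)^{1/n} \;\ge\; \tfrac{1}{2}\vol(K)^{1/n} + \tfrac{1}{2}\vol(R_u K)^{1/n} \;=\; \vol(K)^{1/n}.
\]
Since $K^c \in \S_n$ as well, the same argument applied to $K^c$ in place of $K$ yields $\vol(M_u K^c) \ge \vol(K^c)$, completing the proof.

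There is no real obstacle here: the content of the statement is essentially packaged into Theorem \ref{thm:minkowski-sum-c-is-linear}, and the volume monotonicity is the standard Brunn--Minkowski argument that already underlies the fact that Minkowski symmetrization does not decrease volume for general convex bodies. The only point worth being explicit about is that the $c$-duality respects the reflection $R_u$, so that $R_u(K^c)$ coincides with $(R_u K)^c$, which is what allows the identity $(M_u K)^c = M_u(K^c)$ to emerge directly from the linearity of $(\cdot)^c$ on Minkowski averages.
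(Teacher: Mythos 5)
Your proof is correct and follows essentially the same route as the paper: the first two claims via Theorem \ref{thm:minkowski-sum-c-is-linear} applied with $T=R_uK$ and $\lambda=1/2$, using that $c$-duality commutes with the rigid motion $R_u$, and the volume inequalities via Brunn--Minkowski applied to $K$ and to $K^c$. Nothing is missing.
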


\begin{proof}
Clearly $R_u K\in \S_n$, and $R_u(K^c) = (R_u K)^c$, as $c$-duality commutes with rigid motions. 
Thus, the first two claims follow from Theorem \ref{thm:minkowski-sum-c-is-linear}, and the last two claims follow from the Brunn-Minkowski inequality.
\end{proof}

Our last consequence of Theorem \ref{thm:minkowski-sum-c-is-linear} deals with orthogonal projections onto lower dimensional subspaces.
\begin{cor}\label{cor:cosed-under-projection} Let $K\in {\cal S}_n$ and let $E\subset \RR^n$ be a $k$-dimensional subspace. Then $P_E K\in \S_{k}$, where $P_E:\RR^n\to E$ is the orthogonal projection onto  $E$. Moreover, $P_E(K^c )= (P_EK)^c$ where 
on the right hand side, the $c$-duality is understood as intersections of $1$-balls in $E$. 
\end{cor}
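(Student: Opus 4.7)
The plan is to exploit the ``summand of the ball'' characterization of $\S_n$ given in Proposition \ref{prop:KminusK}, together with the trivial facts that $P_E$ is linear (so commutes with Minkowski addition) and that $P_E(B(0,1)) = B(0,1)\cap E$, the unit Euclidean ball inside the subspace $E$. This reduces the statement to invoking the same characterization, but in the ambient space $E$ identified with $\RR^k$.

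After dispatching the degenerate cases $K\in\{\emptyset,\RR^n\}$ by direct inspection (where both sides of the claimed identity reduce to $\emptyset$ or to $E$), I focus on $K\in\S_n\setminus\{\emptyset,\RR^n\}$, for which $K$ and $K^c$ are both nonempty compact convex bodies. Proposition \ref{prop:KminusK} gives $K - K^c = B(0,1)$, and applying the linear map $P_E$ to both sides yields
\[ P_E K - P_E K^c = B(0,1)\cap E. \]
Therefore $P_E K$ is a summand of the unit ball in $E$; by Theorem \ref{thm:sliding-iff-summand} read inside the space $E$, we conclude $P_E K\in\S_k$. Proposition \ref{prop:KminusK} applied inside $E$ then also gives $P_E K - (P_E K)^c = B(0,1)\cap E$. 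Comparing the two identities,
\[ P_E K - P_E K^c = P_E K - (P_E K)^c, \]
so by the Minkowski cancellation law for compact convex sets, $P_E K^c = (P_E K)^c$, as desired.

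The main obstacle, and the only nontrivial input beyond what is already developed in the paper, is the cancellation step; it relies on the standard fact (see e.g.\ \cite{schneider2013convex}) that for compact convex sets $A,B,C$ with $A$ nonempty, $A+B = A+C$ implies $B=C$. Its applicability is immediate here since $P_E K$ is a nonempty compact convex set, and both $P_E K^c$ and $(P_E K)^c$ are compact (note that $K^c$ lies in $\S_n$ with outradius at most $1$ whenever $K$ is nonempty, so $P_E K^c$ is bounded). The rest of the argument is routine bookkeeping.
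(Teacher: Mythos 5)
Your proof is correct and follows essentially the same route as the paper's: apply the linear map $P_E$ to the identity $K-K^c=B(0,1)$ from Proposition \ref{prop:KminusK} to see that $P_EK$ is a ball-summand in $E$, hence in $\S_k$, with complementary summand $-P_E(K^c)$. Your explicit appeal to the Minkowski cancellation law merely spells out the uniqueness of the complementary summand (i.e.\ that $K+T=B(0,1)$ forces $T=-K^c$), which the paper uses implicitly here and elsewhere (e.g.\ in Theorem \ref{thm:minkowski-sum-c-is-linear}).
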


\begin{proof}	
If $K\in \S_n\setminus \{\emptyset, \RR^n\}$ then $K-K^c=B(0,1)$, by Proposition \ref{prop:KminusK}. Since $P_E$ is a linear map, it commutes with Minkowski sum, and we get
\[
P_E K  - P_E(K^c)  = P_E(B(0,1)) = B_E(0,1).
\] 
This shows that $P_EK$ is a ball-summand, and its $c$-dual in $E$ is $P_E(K^c)$. 
\end{proof}
In the case $k=n-1$ of Corollary \ref{cor:cosed-under-projection}, since $P_E K = M_u(K) \cap E$ for $u^\perp = E$, we could have used Corollary \ref{cor:closed-under-Mink} together with Lemma \ref{lem:cosed-under-section} to get that the projection $P_E K$ is in the class. The attentive reader may have also noticed that a slight sharpening of Proposition \ref{prop:boundary-points-have-friends} would allow for a direct proof for the fact that $\S_n$ is closed under sections and projection, without the use of ball summands. Indeed, the proposition stated that for $K\in {\cal S}_n$ and $x\in \partial K$, one can always find a normal $u\in N_K(x)$ such that  $y = x-u\in K^c$ (and in such a case $-u\in N_{K^{c}}(y)$. However, a stronger fact is true: for any $u\in N_K(x)$, the point $y = x-u \in \partial K^c$ and $-u\in N_{K^c}(y)$. Since this fact will be useful for us in what follows, we prove it here as well.

\begin{lem}\label{lem:normals map boundary to boundary}
Let $K\in \S_n$, $x\in \partial K$, and $u\in N_K(x)$. Then for $y = x-u$ we have $K\subseteq B(x-u, 1)$, i.e. $y \in \partial K^c$. Moreover, $-u \in N_{K^c}(y)$. 
\end{lem}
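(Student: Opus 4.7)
The plan is to leverage Proposition \ref{prop:KminusK}, which gives us the identity $h_K(u) + h_{K^c}(-u) = 1$ for every $u \in S^{n-1}$. This is the key tool: once we know the support functions of $K$ and $K^c$ add up to $1$ along antipodal directions, the point $y = x - u$ should come out essentially for free from a tightness argument.

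Concretely, given $x \in \partial K$ and $u \in N_K(x)$, the definition of outer normal tells us that $\langle x, u\rangle = h_K(u)$. Since $K^c$ is nonempty and compact (it is a closed subset of $B(x_0,1)$ for any $x_0 \in K$), we may pick $y \in K^c$ realizing the supremum in $h_{K^c}(-u)$, so that $\langle y, -u\rangle = h_{K^c}(-u) = 1 - h_K(u)$. Adding the two relations yields $\langle x - y, u\rangle = 1$. On the other hand, since $y \in K^c$ and $x \in K$, the defining inequality of $K^c$ forces $\|x - y\|_2 \le 1$. With $\|u\|_2 = 1$, Cauchy--Schwarz becomes an equality in
\[
1 = \langle x - y, u\rangle \le \|x - y\|_2\,\|u\|_2 \le 1,
\]
so $x - y = u$, i.e.\ $y = x - u \in K^c$. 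This immediately gives $K \subseteq B(x - u, 1)$ and, combined with $x \in K \cap \partial B(x-u,1)$, also $y \in \partial K^c$.

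For the moreover part, the identity $\langle y, -u\rangle = h_{K^c}(-u)$ already established says by definition that $-u$ is a unit outer normal to $K^c$ at $y$, so $-u \in N_{K^c}(y)$.

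I do not anticipate a serious obstacle here: the whole argument is essentially the observation that the Brunn--Minkowski-type identity $K - K^c = B(0,1)$ forces the pair of supporting points in antipodal directions to be related by the translation $u$, which is exactly the tightness in Cauchy--Schwarz. The only point to be slightly careful about is verifying that a maximizer $y$ of $\langle \cdot, -u\rangle$ on $K^c$ exists, which follows from compactness of $K^c$ (an intersection of closed Euclidean unit balls, contained in any one of them).
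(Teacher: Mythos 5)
Your argument is correct and is essentially the paper's own second proof of this lemma: both start from the identity $h_{K^c}(-u)=1-h_K(u)=\iprod{x-u}{-u}$ of Proposition \ref{prop:KminusK} and then observe that, since $K^c\subseteq B(x,1)$, the supporting point of $K^c$ in direction $-u$ is forced to be $x-u$ because the supporting hyperplane meets $B(x,1)$ only at that point. Your Cauchy--Schwarz equality case is just a different phrasing of that same tightness, and the remaining details (compactness of $K^c$, $y\in\partial K^c$ since $y\in S(x,1)$, and the normal identification) are all in order.
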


\begin{proof}
    
Since  $K\in \S_n$ is strictly convex, its support function $h_K$ is necessarily $C^1$, and by \cite[Corollary 1.7.3]{schneider2013convex}, $\nabla h_K(u) = x$ 
and $h_K(x)=\iprod{x}{u}$. 
For all $v\in \RR^n$ we have by Proposition \ref{prop:KminusK}
$h_K(v) + h_{K^c}(-v) = |v|$,  and differentiating we get 
\begin{equation}\label{eq:nablah_K}
\nabla h_K(v) - \nabla h_{K^c}(-v) = v/|v|.\end{equation}
Plugging in $v = u$ we get that 
$y = x-u = \nabla h_{K^c} (-u)$. In particular, $y\in \partial K^c$ is the unique point in $K^c$ for which $h_{K^c}(-u)=\iprod{y}{-u}$ and $-u \in N_{K^c} (y)$ as claimed. 
\end{proof}

 \begin{proof}[Another proof for Lemma \ref{lem:normals map boundary to boundary}]
 	Using Proposition \ref{prop:KminusK}, we know that 
 	\[ h_{K^c}(-u) = 1 - h_K(u) = 1 - \iprod{x}{u} = \iprod{x-u}{-u}.\] 
 	On the other hand, the point $y = x-u$ is at distance $1$ to $x$, and all other points on the hyperplane $\iprod{\cdot}{-u} = h_{K^c}(-u)$ 
 are at distance more than $1$ from $x$. Since $K^c\subseteq B(x,1)$, and we know there is some point in $K^c$ on this hyperplane, we conclude $y\in K^c$, as claimed. This is equivalent to $K\subseteq B(y,1)$. In particular we also get that $-u\in N_{K^c}(y)$ since $h_{K^c}(-u) = \iprod{y}{-u}$. 	
 \end{proof}

\begin{rem}
From the above lemma we see that the boundary of $K^c$ is precisely  the image of the set valued map taking a point  $ x\in \partial K$ to $ x - N_K(x)$. This mapping satisfies $\|x - Tx\|_2 = 1$ (in the set-valued sense, namely $\|x - y\|_2 = 1$ for all $y\in Tx$).
\end{rem}

One can give yet another description of the mapping $K\mapsto K^c$, captured in the following lemma.

\begin{lem}\label{lem:C(K)}
Let $K\in \S_n$.
Then  
	\begin{eqnarray}\label{eq-CKK}
	  K^c = \left\{
	x\in\R^n :
	\outrad\left((K-x)\cup (x-K) \le 1
	\right)
	\right\}.
	\end{eqnarray}
\end{lem}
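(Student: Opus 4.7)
The plan is to unpack the outer-radius condition into two simultaneous ball containments, translate each one into membership in $K^c$, and then invoke convexity of $K^c$ to close the loop.

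First, I would note that saying $\outrad\bigl((K-x)\cup(x-K)\bigr)\le 1$ is exactly saying that there exists some $z\in\R^n$ such that
\[
(K-x)\cup (x-K) \;\subseteq\; B(z,1),
\]
equivalently that both $K-x\subseteq B(z,1)$ and $x-K\subseteq B(z,1)$ hold simultaneously. The first containment rewrites as $K\subseteq B(x+z,1)$, which by the definition of $c$-duality is equivalent to $x+z\in K^c$. The second containment rewrites as $K\subseteq B(x-z,1)$, equivalently $x-z\in K^c$. Hence the right-hand side of \eqref{eq-CKK} is exactly
\[
\bigl\{x\in\R^n : \exists\, z\in\R^n \text{ with } x+z\in K^c \text{ and } x-z\in K^c\bigr\},
\]
that is, the set of midpoints of pairs of points in $K^c$.

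Next I would verify the two inclusions. For $K^c$ contained in that set, given $x\in K^c$ simply take $z=0$, so that $x\pm z=x\in K^c$. For the reverse inclusion, suppose $x=\tfrac{1}{2}((x+z)+(x-z))$ with both $x\pm z\in K^c$. Since $K\in\S_n$ implies $K^c\in\S_n$, and every member of $\S_n$ is convex (being an intersection of balls), the midpoint $x$ lies in $K^c$.

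The only points that need mild care are the degenerate cases $K=\emptyset$ and $K=\R^n$, which I would dispatch separately: if $K=\emptyset$ then both sides equal $\R^n$ (the union $(K-x)\cup(x-K)$ is empty, so its outer radius is $0$), while if $K=\R^n$ then $K^c=\emptyset$ and the union $(K-x)\cup(x-K)=\R^n$ has infinite outer radius, so again both sides match. There is no real obstacle here; the entire proof is a short chain of rewrites, with the only substantive input being the convexity of $K^c$, which is immediate from $K^c\in\S_n$.
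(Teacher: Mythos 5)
Your argument is correct and is essentially the paper's own: the paper proves the more general identity $\frac{K^c+T^c}{2}=\{x:\outrad((K-x)\cup(x-T))\le 1\}$ by exactly the same rewrite (a common containing unit ball $B(z,1)$ means $z\in (K^c-x)\cap(x-T^c)$, i.e. $2x\in K^c+T^c$), and then specializes to $T=K$ using convexity of $K^c$, which is what your midpoint step does. Your separate treatment of the degenerate cases is a harmless addition.
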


 In fact, considering this new description of $K^c$, one is motivated to study a similar definition when one of the copies of $K$ is replaced by a different body $T$. While originally not even clear if such an adjustment would produce a body in $\S_n$, it turns out that this gives yet another description of the Minkowski average of the duals, and so we will prove the following, which implies Lemma \ref{lem:C(K)}.

\begin{lem}\label{lem:CKT)}
	Let $K,T\in \S_n$. Then  
	\begin{eqnarray}\label{eq-CKT}
 \frac{K^c + T^c}{2}	=  \left\{
	x\in\R^n :
	\outrad\left((K-x)\cup (x-T)) \le 1
	\right)
	\right\} . 
	\end{eqnarray}
\end{lem}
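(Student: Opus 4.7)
The plan is to unfold the definition of outradius on the right-hand side and rewrite each membership condition via the $c$-dual. Specifically, for a point $x \in \R^n$, the condition $\outrad\bigl((K-x)\cup(x-T)\bigr)\le 1$ is equivalent to the existence of some $z\in\R^n$ with $(K-x)\cup(x-T)\subseteq B(z,1)$, which splits into the two separate conditions $K-x\subseteq B(z,1)$ and $x-T\subseteq B(z,1)$.

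Next I would translate each of these ball-containments into a $c$-dual membership. The first condition $K-x\subseteq B(z,1)$ rewrites as $K\subseteq B(x+z,1)$, which by the definition of $K^c$ (as the set of all $y$ with $K\subseteq B(y,1)$) is exactly $x+z\in K^c$. For the second, since the Euclidean ball is centrally symmetric, $x-T\subseteq B(z,1)$ is equivalent to $T\subseteq B(x-z,1)$, i.e.\ $x-z\in T^c$.

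The last step is the change of variables. Writing $a := x+z$ and $b := x-z$, we have $a\in K^c$, $b\in T^c$, and $x = \tfrac{a+b}{2} \in \tfrac{K^c+T^c}{2}$. Conversely, any $x\in\tfrac{K^c+T^c}{2}$ can be written as $x=\tfrac{a+b}{2}$ with $a\in K^c$ and $b\in T^c$, and then $z:=\tfrac{a-b}{2}$ gives an enclosing ball $B(z,1)$ witnessing $\outrad\bigl((K-x)\cup(x-T)\bigr)\le 1$. This bijection between pairs $(a,b)\in K^c\times T^c$ and admissible centers $z$ for the enclosing unit ball closes the equivalence.

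There is essentially no obstacle: the proof is a chain of equivalences, and the only tiny subtlety is noticing that the symmetry $B(z,1) = -B(-z,1)$ lets us convert the $x-T$ containment into a $T\subseteq B(x-z,1)$ statement, which is what matches the definition of $T^c$. The specialization to $T=K$ giving Lemma~\ref{lem:C(K)} is then immediate, since then $\tfrac{K^c+K^c}{2}=K^c$.
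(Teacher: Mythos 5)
Your proof is correct and follows essentially the same route as the paper's: both reduce the outradius condition to the existence of a common enclosing unit ball $B(z,1)$, translate the two containments into $x+z\in K^c$ and $x-z\in T^c$ (the paper phrases this as $z\in (K-x)^c\cap(x-T)^c = (K^c-x)\cap(x-T^c)$), and conclude $2x\in K^c+T^c$. The only difference is notational.
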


\begin{proof}
Denote the right hand side by $C(K,T)$. Let $x\in\R^n$. Then $x\in C(K,T)$ if and only if there exists $z\in\R^n$ such that $z\in(K-x)^c=K^c-x$ and also $z\in(x-T)^c=x-T^c$. Therefore $x\in C(K,T)$ if and only if $K^c-x$ intersects $x-T^c$, or equivalently $2x \in K^c + T^c$. This proves \eqref{eq-CKT} and in particular \eqref{eq-CKK}.
\end{proof}

\begin{rem}
We shall see that this simple representations can be applied to study some non-trivial intersections of $1$-lenses  in Section \ref{sec:application-carambula}, Proposition \ref{prop-1-MONTH-Car-INTERSECT}.
\end{rem}

We conclude this section with another representation of the class $\S_n$, which is sometimes used as the definition of the class, and which can serve to further convince the reader that this is a central class worthy of deep study. The bodies in $\S_n$ are convex bodies which are sufficiently curved in every direction. 

\begin{thm}\label{thm:char-curv}
The class 	$\S_n$ consists of all convex bodies in $\R^n$ for which all sectional curvatures are in $[1,\infty]$. 
\end{thm}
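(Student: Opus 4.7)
The plan is to prove each direction separately, using the results already established in the paper for the easy implication, and invoking the classical Blaschke rolling theorem for the harder one.

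For the forward direction, suppose $K\in\S_n$, $K\neq\emptyset,\R^n$. Fix any $x\in\partial K$ and any outer unit normal $u\in N_K(x)$. By Lemma \ref{lem:normals map boundary to boundary}, $y:=x-u$ belongs to $K^c$, so $K\subseteq B(y,1)$, and $x$ is a common boundary point of $\partial K$ and $\partial B(y,1)$ sharing the outward normal $u$. Then the standard curvature-comparison principle for convex hypersurfaces (at a common tangent point, the second fundamental form of the inner body dominates that of the outer body) yields that every sectional/principal curvature of $\partial K$ at $x$ in any $2$-plane containing $u$ is at least the curvature of $\partial B(y,1)$, which equals $1$. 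At boundary points where $N_K(x)$ is not a single ray the body is non-smooth and the corresponding sectional curvatures are interpreted as $+\infty$, still in $[1,\infty]$.

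For the reverse direction, let $K$ be a convex body whose sectional curvatures are all in $[1,\infty]$. The classical Blaschke rolling theorem (see, e.g., \cite[Theorem 3.2.12]{schneider2013convex}) converts this pointwise second-order bound into the global statement that $K$ slides freely inside $B(0,1)$. By Theorem \ref{thm:sliding-iff-summand}, $K$ is then a summand of $B(0,1)$, and the representation \eqref{eq:intersections-over-your-summand} exhibits $K$ as an intersection of unit Euclidean balls, so $K\in\S_n$.

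The main obstacle is the reverse direction: the passage from pointwise curvature information to a global containment/rolling statement is precisely what Blaschke's rolling theorem supplies, and it is non-trivial for bodies that are not of class $C^2_+$. An alternative, more self-contained route is to take the curvature hypothesis in its equivalent geometric form, namely that for every $x\in\partial K$ and every $u\in N_K(x)$ one has $K\subseteq B(x-u,1)$. With this reformulation in hand the conclusion becomes immediate: setting $A=\{x-u:x\in\partial K,\,u\in N_K(x)\}$, one has $K\subseteq\bigcap_{y\in A}B(y,1)$ by hypothesis, while for any $z\notin K$, with closest boundary point $x\in\partial K$ and outer normal $u=(z-x)/\|z-x\|_2$, the point $z=x+\|z-x\|_2\,u$ satisfies $\|z-(x-u)\|_2=1+\|z-x\|_2>1$, hence $z\notin B(x-u,1)$, proving the reverse inclusion and therefore $K\in\S_n$.
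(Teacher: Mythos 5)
Your proposal is correct and is essentially the paper's approach: the paper offers no argument beyond citing Blaschke's rolling theorem and its generalizations, and you fill in the easy direction using Lemma \ref{lem:normals map boundary to boundary} (every $x\in\partial K$ with normal $u$ satisfies $K\subseteq B(x-u,1)$, forcing curvature $\ge 1$ by comparison) while correctly isolating the rolling theorem as the single non-trivial classical input for the converse. One caveat: your ``more self-contained alternative'' is not actually self-contained, since the asserted equivalence between the pointwise curvature bound and the global containment $K\subseteq B(x-u,1)$ for all boundary points and normals is exactly the content of the rolling theorem you set out to avoid, and is the genuinely hard step for bodies that are not $C^2_+$.
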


This theorem is classical, it follows from Blaschke's Rolling Theorem and its various generalizations, see \cite{Weil1982,Firey1979} and the discussion in \cite{drach2023reverse}.  

\section{Continuity, Isometry and Uniqueness}
 
In this section we discuss some basic useful facts regarding the $c$-duality.
First, we show that under mild assumptions, the mapping $K\mapsto K^c$ is, up to obvious adjustments, the only order reversing isomorphism on $\S_n$. This theorem  is in the spirit of \cite{boroczky2008duality,artstein2010chain,alesker2009fourier,alesker2010product,MilmanSchneider,florentin2016mixed,artstein2009duality,artstein2011hidden}, where many classical operations in convex geometry and beyond are shown to be ``god given'' in the sense that, having fixed a class of objects and very few properties, they are uniquely defined. 

Secondly, we study the continuity properties of the mapping $K\mapsto K^c$. On $\S_n$ the $c$-duality mapping is an isometry, so it is obviously continuous, but since it is defined on the larger class of all subsets of $\RR^n$, we can ask what continuity properties it satisfies on this larger domain. The fact that up to rigid motions there are only two isometries on $\S_n$, the $c$-duality and the identity, is shown in \cite{FUTUREWORK}. 

In the third part of the section we show that sets in $\S_n$ can be approximated by relatively simple sets, akin to the well known properties of convex bodies which can be well approximated by polytopes. We leave questions of rates of approximation of a ball-body by simple sets to future work.

\subsection{Characterization of the $c$-duality  on $\S^n$}

In many cases in convex geometry, only a single order reversing isomorphism exists, up to trivial linear adjustments, for example standard duality  on convex bodies \cite{boroczky2008duality, slomka2011} (and some of its sub classes \cite{florentin2011,segal2013}), or the Legendre transform in $\cvx(\R^n)$ \cite{artstein2009duality}.   This same phenomenon exists for the class $\S_n$,  namely  $c$-duality is (essentially) the only order reversing involution on $\S_n$. 

Since $K\mapsto K^c$ is an order reversing bijection of $\S_n$, for every order reversing isomorphims $T:\S_n\to\S_n$, the composition of $T$ with the $c$-duality is an order preserving isomorphism, so that it suffices to characterize order preserving isomorphisms on the class $\S_n$, which is the objective of the following theorem.

\begin{thm}\label{uniq:bijection-thm}
Let $F:\S_n\to\S_n$ be an order preserving bijection. Then $F$ is induced by a rigid motion $f:\R^n\to\R^n$, that is, there exist $x_0 \in \RR^n$ and $U\in O(n)$ such that for every $K\in\S_n$ 
\[
F(K)=\left\{f(x): x\in K\right\} 
\]
where $f(x) = x_0 + Ux$. Conversely, every rigid motion$f:\R^n\to \R^n$ induces a bijection on $\S_n$.
\end{thm}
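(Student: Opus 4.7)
The plan is to exploit the order structure of $(\S_n,\subseteq)$ to first identify the action of $F$ on singletons and on unit balls, and then show these two actions agree and come from a common rigid motion. I will carry this out for $n\ge 2$; the case $n=1$ would require a separate ad hoc argument since the main metric step below (Beckman--Quarles) is specific to $n\ge 2$.

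First I would identify the atoms and coatoms of the poset $\S_n$. The minimum is $\emptyset$ and the maximum is $\RR^n$, both fixed by $F$. The atoms (minimal non-empty members) are precisely the singletons $\{x\}$. The coatoms (maximal members below $\RR^n$) are precisely the unit balls $B(x,1)$: indeed, any $K\in\S_n\setminus\{\RR^n\}$ is of the form $\bigcap_{x\in A}B(x,1)$ with $A$ non-empty and hence is contained in some $B(x,1)$; and if a unit ball is contained in another proper element $K'$, the latter is an intersection of unit balls each of which must contain $B(x,1)$, forcing each of them to equal $B(x,1)$ and hence $K'=B(x,1)$. Therefore $F$ induces bijections $f,g\colon\RR^n\to\RR^n$ via
\[ F(\{x\}) = \{f(x)\}, \qquad F(B(x,1)) = B(g(x),1). \]
Moreover, a point $y$ lies in $K\in\S_n$ iff $\{y\}\subseteq K$, so order preservation yields $F(K)=\{f(x):x\in K\}=f(K)$ for every $K\in\S_n$; in particular $f(B(x,1))=B(g(x),1)$ as subsets of $\RR^n$, and because $\S_n$ is closed under intersection, $F$ preserves meets: $F(K\cap L)=F(K)\cap F(L)$.

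The geometric heart of the argument is to identify $g$ as a rigid motion. For any $x\ne y$ the intersection $B(x,1)\cap B(y,1)$ is a single point if and only if $\|x-y\|_2=2$ (the tangency case), in which case the point is the midpoint $(x+y)/2$. Applying $F$ to this meet gives
\[ \|x-y\|_2=2 \iff \|g(x)-g(y)\|_2=2, \]
so $g$ is a bijection of $\RR^n$ preserving a single Euclidean distance. For $n\ge 2$ the Beckman--Quarles theorem (applied to the rescaled map $x\mapsto \tfrac12 g(2x)$, which preserves unit distance) then forces $g$ to be an isometry of $\RR^n$, hence a rigid motion.

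It remains to show $f=g$. Given any $y$ with $\|y-x\|_2=1$, the antipode $z=2x-y$ satisfies $B(y,1)\cap B(z,1)=\{x\}$, so applying $F$ yields $\{f(x)\}=B(g(y),1)\cap B(g(z),1)$ and places $f(x)\in S(g(y),1)$. As $y$ varies over the unit sphere $S(x,1)$, the isometry $g$ sends $y$ to a point $g(y)$ tracing out the full sphere $S(g(x),1)$, so $f(x)$ must lie at Euclidean distance exactly $1$ from every point of $S(g(x),1)$; the only point with this property is the center $g(x)$, so $f(x)=g(x)$. Thus $F(K)=f(K)$ is induced by the rigid motion $f=g$, completing the main direction. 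The converse is immediate from the observation, made just after the definition of $c$-duality, that rigid motions commute with $c$-duality and therefore act on $\S_n$. The main obstacle is the clean invocation of Beckman--Quarles, which restricts the argument to $n\ge 2$; the case $n=1$ would need a separate direct treatment exploiting additionally the join-preservation $F(\{x,y\}^{cc})=\{f(x),f(y)\}^{cc}$.
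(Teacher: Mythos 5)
Your proof is correct, and its skeleton is the same as the paper's: reduce $F$ to a point map on the atoms (singletons), show that map preserves the single distance $2$ using only order-theoretic data, invoke Beckman--Quarles, and then propagate to all of $\S_n$ by order preservation in both directions. The one step you do differently is the metric characterization: the paper detects $\|x-y\|_2=2$ directly on the singletons, as the property that the pair $\{x\},\{y\}$ has \emph{exactly one} upper bound in $\S_n\setminus\{\RR^n\}$, whereas you detect it on the coatoms $B(x,1)$ via tangency (the meet being an atom), which obliges you to introduce a second map $g$ and then prove $f=g$ by the antipodal-ball argument. Both routes are sound; the paper's is slightly shorter since it never leaves the atoms, while yours has the mild advantage of making the meet-preservation of $F$ and the coatom structure of $\S_n$ explicit. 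Your caveat about $n\ge 2$ applies equally to the paper's proof, which also rests on Beckman--Quarles.
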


For the proof we will use two simple lemmas.

\begin{lem}\label{uniq:signletons-to-singletons}
Let $F:\S_n\to\S_n$ be an order preserving bijection. Then $F(\emptyset)=\emptyset$, $F(\R^n)=\R^n$, and there exists a bijection $f:\RR^n \to \RR^n$ such that for any $x\in \RR^n$ we have $F(\{x\})=\{f(x)\}$.
\end{lem}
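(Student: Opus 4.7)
The plan is to exploit the fact that $F$, being an order preserving bijection of the poset $(\S_n,\subseteq)$, must preserve all distinguished elements definable purely in order-theoretic terms. Concretely, I will identify $\emptyset$, $\R^n$, and the singletons by purely order-theoretic properties, so that $F$ is forced to map them to elements of the same type.

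First I would handle the two trivial extremal elements. The empty set is contained in every member of $\S_n$, so it is the unique minimum of $(\S_n,\subseteq)$; since $F$ is an order preserving bijection its image $F(\emptyset)$ satisfies $F(\emptyset)\subseteq F(K)$ for all $K\in\S_n$, so $F(\emptyset)$ is a minimum of $\S_n$, hence $F(\emptyset)=\emptyset$. The same argument applied to the unique maximum $\R^n$ yields $F(\R^n)=\R^n$.

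Next I would characterize singletons as the atoms of $\S_n$, i.e., the elements $K\in\S_n\setminus\{\emptyset\}$ such that no $L\in\S_n$ satisfies $\emptyset\subsetneq L\subsetneq K$. One direction is immediate: if $\{x\}\in\S_n$ (which we observed near Definition \ref{def:Sn}, as $\{x\}=B(x,1)^c$ for instance), then any non-empty $L\in\S_n$ contained in $\{x\}$ must equal $\{x\}$. For the converse direction, if $K\in\S_n$ contains two distinct points $x_1\neq x_2$, then $\{x_1\}\in\S_n$ is a non-empty proper subset of $K$, so $K$ is not an atom. Hence atoms coincide with singletons. The main (mild) obstacle in this step is merely verifying that singletons do lie in $\S_n$, which was already recorded in the paper.

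Finally, since atomicity is an order-theoretic property preserved by any order preserving bijection, for each $x\in\R^n$ the image $F(\{x\})$ is an atom of $\S_n$, hence of the form $\{f(x)\}$ for a unique point $f(x)\in\R^n$. This defines a map $f:\R^n\to\R^n$. Its bijectivity follows at once from the bijectivity of $F$: the restriction of $F$ to the set of atoms $\{\{x\}:x\in\R^n\}$ is a bijection onto the same set of atoms (using the order-theoretic argument for both $F$ and $F^{-1}$, which is also an order preserving bijection), and this is exactly the statement that $f$ is a bijection of $\R^n$.
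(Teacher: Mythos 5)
Your proof is correct and follows essentially the same route as the paper: identify $\emptyset$ and $\R^n$ as the unique minimal and maximal elements, and characterize singletons order-theoretically as the atoms of $(\S_n,\subseteq)$ (the paper phrases this as ``the only elements greater than exactly one element, namely $\emptyset$''), so that $F$ and $F^{-1}$ both preserve them. Your extra verification that singletons belong to $\S_n$ and that atoms are exactly the singletons is a slightly more explicit rendering of the same argument.
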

\begin{proof}
For every $K\in\S_n$ one has $\emptyset\subseteq K \subseteq \R^n$, i.e. $\emptyset$ and $\R^n$ are the (unique) minimal and maximal elements of the partially ordered set $\S_n$. Thus we necessarily have $F(\emptyset)=\emptyset$ and $F(\R^n)=\R^n$. Next, note that   singletons (elements of the form $\{x\}$) are the only elements of $\S_n$ which are greater than only one element - the empty set $\emptyset$. Since this property is preserved by $F$, singletons must be mapped to singletons, as required. Denoting by $f:\RR^n\to \RR^n$ the point map for which $F(\{x\}) =\{ f(x)\}$, clearly it must be a bijection on $\RR^n$ since $F^{-1}$ is an order isomorphism as well. 
\end{proof}

\begin{lem}\label{uniq:point-map-is-orthogonal}
Let $F:\S_n\to\S_n$ be an order preserving bijection, and let $f:\RR^n \to \RR^n$ be the bijection for which $F\left(\{x\}\right) = \left\{f(x)\right\}$. Then $f$ is a rigid motion (that is, $f(x)  = f(0)+ Ux$ for some $U\in O(n)$). 
\end{lem}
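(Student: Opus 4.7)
The plan is to extract enough metric information about $f$ from the order structure of $\S_n$ to conclude that $f$ preserves a single positive Euclidean distance, and then to invoke the Beckman--Quarles theorem.

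First, I would observe that $F$, being an order-preserving bijection of $(\S_n,\subseteq)$, preserves joins. Since the join in $\S_n$ is given by $A\vee B = (A\cup B)^{cc}$, and since $F$ restricts to singletons via $f$ by Lemma \ref{uniq:signletons-to-singletons}, this yields
\[ F\bigl(\{x,y\}^{cc}\bigr) \;=\; \{f(x),f(y)\}^{cc} \qquad \text{for all } x,y\in\RR^n. \]

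Second, I would characterize the closed unit balls $B(z,1)$ order-theoretically as the maximal elements of $\S_n\setminus\{\RR^n\}$. Indeed, if $B(x,1)\subseteq K\in\S_n\setminus\{\RR^n\}$, then writing $K$ as an intersection of unit balls gives some $B(y,1)\supseteq K\supseteq B(x,1)$; but a chain of two unit balls forces $y=x$, and so $K=B(x,1)$. Conversely, every proper member of $\S_n$ is contained in some unit ball (any one of those defining it). Being an order isomorphism, $F$ then permutes the set of unit balls: there is a bijection $g:\RR^n\to\RR^n$ with $F(B(x,1))=B(g(x),1)$.

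The key geometric step is the identification of when the $1$-lens $\{x,y\}^{cc}$ is itself a unit ball. A direct calculation (or an appeal to Proposition \ref{prop:KminusK}) shows that $\{x,y\}^{cc}$ equals the unit ball $B\bigl(\tfrac{x+y}{2},1\bigr)$ precisely when $\|x-y\|_2=2$; it is a strictly smaller proper $1$-lens when $0<\|x-y\|_2<2$, and it equals $\RR^n$ when $\|x-y\|_2>2$. Combining the three observations above produces the chain of equivalences
\[ \|x-y\|_2 = 2
\iff \{x,y\}^{cc}\text{ is a unit ball}
\iff \{f(x),f(y)\}^{cc}\text{ is a unit ball}
\iff \|f(x)-f(y)\|_2 = 2, \]
so that $f$ preserves Euclidean distance $2$ in both directions.

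Finally, the Beckman--Quarles theorem asserts that in dimension $n\ge 2$ any self-map of $\RR^n$ preserving a single positive distance is necessarily an isometry; applied to $f$ with distance $2$, this yields $f(x)=f(0)+Ux$ for some $U\in O(n)$, as required. The main obstacle is the reliance on Beckman--Quarles as a black box: a self-contained substitute would demand extracting further metric data purely from the order structure (for instance, distinguishing $1$-lenses of different radii from the sub-lattice of ball-bodies they contain, or exploiting the map $g$ of step two to pin down centers), which is the most delicate part of a fully elementary argument. One should also handle the case $n=1$ separately, since in dimension one the preservation of a single distance is not enough to force rigidity and additional structural input may be needed.
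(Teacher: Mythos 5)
Your proposal is correct and follows essentially the same route as the paper: both arguments extract from the order structure the fact that $f$ preserves the distance $2$ in both directions and then conclude via the Beckman--Quarles theorem. The only (minor) difference is the order-theoretic invariant used to detect $\|x-y\|_2=2$ --- you characterize unit balls as the maximal elements of $\S_n\setminus\{\RR^n\}$ and test whether the join $\{x,y\}^{cc}$ is such a maximal element, whereas the paper tests whether the pair $\{x\},\{y\}$ admits a \emph{unique} common upper bound in $\S_n\setminus\{\RR^n\}$; both criteria are valid, and your worry about the case $n=1$ (where Beckman--Quarles fails) applies equally to the paper's own argument.
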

\begin{proof}
For simplicity of the following argument, let $\S_n^*=\S_n\setminus\{\R^n\}$.
First we show that $\|x-y\|_2=2$ if and only if $\|f(x)-f(y)\|_2=2$. Indeed, let $x,y\in\R^n$. If $\|x-y\|_2<2$ there are infinitely many elements in $\S_n^*$ which include both $\{x\}$ and $\{y\}$. If $\|x-y\|_2>2$ there are no elements in $\S_n^*$ which include both $\{x\}$ and $\{y\}$. If $\|x-y\|_2=2$ there is exactly one element in $\S_n^*$ which includes both $\{x\}$ and $\{y\}$, namely $B\left(\frac{x+y}2,1\right)$. The property of a pair of sets $A,B\in \S_n^*$, of having a unique element in $\S_n^*$ which includes both, is preserved by $F$. Thus for every $x,y\in\R^n$, we have
\[\|x-y\|_2=2 \,\,\Longleftrightarrow\,\, \|f(x)-f(y)\|_2=2.\]
By a theorem of Beckman and Quarles \cite{BeckmanQuarles1953}  (see also \cite{Benz1987}), this implies that $f$ is an affine orthogonal map, as required.
\end{proof}

\begin{proof}[Proof of Theorem \ref{uniq:bijection-thm}]
By  Lemma \ref{uniq:point-map-is-orthogonal}, given $F$ we find its associated   rigid motion $f:\R^n\to \R^n$, such that $F(\{x\}) = \{ f(x)\}$ for all $x\in \RR^n$.  Given $K\in\S_n$, denote  $\tilde{K}=\left\{\{x\}: x\in K\right\}$ the set of singletons which are included in $K$. Since $F$ is an order preserving bijection mapping singletons to singletons, each element in  $\tilde{K}$ is mapped by $F$ to a singleton which is included in $F(K)$, namely   $\{f(x):x\in K\} \subseteq F(K)$. However, as the same reasoning can be applied to $F^{-1}$ and $f^{-1}$, we see that 
\[
F(K)=\left\{f(x): x\in K\right\},
\]
as required. 

The fact that every rigid motion $f$ induces an order isomorphism is trivial, since $K\in\S_n$ implies $f(K)\in \S_n$,  the map $K\mapsto F(K)= \{ f(x): x\in K\}$ is order preserving, and $f^{-1}$ is a rigid motion as well, so that $F$ is an order preserving isomorphism.
\end{proof}

\subsection{Continuity properties of the $c$-duality}

It will be convenient in this section to denote $B(0,1) = B$. 
We shall use the Hausdorff distance between convex bodies, defined for two compact convex sets  $K,L$ in  $\RR^n$ by
\[ d_H(K,L)  = \inf \{ \lambda\ge 0: K \subseteq L+ \lambda B \, {\rm ~and~}
\, L\subseteq K + \lambda B 
\}.\]
Equivalently, we embed the class of convex bodies into $C(S^{n-1})$ using the support map, $K \mapsto h_K$, and pull back the uniform distance, namely $d_H(K, T) = \sup_{u\in S^{n-1}} |h_K(u) - h_T(u)|:=\|h_K-h_T\|_{\infty}$, see \cite{schneider2013convex} for details.  
We mention that if the reader feels uneasy using the set $K + \lambda B$ which might not be in $\S_n$, he or she can instead write the above inclusions as 
\[ \frac{1}{1+\lambda}K \subseteq \frac{1}{1+\lambda}L + \frac{\lambda}{1+\lambda} B\,\,\, {\rm and}
\,\,\, \frac{1}{1+\lambda}L\subseteq \frac{1}{1+\lambda}K + \frac{\lambda}{1+\lambda} B, 
\]
where now if both bodies $K,L\in \S_n$ then so do the sets for which inclusion is considered. 

It turns out that $c$-duality is an isometry on the class $\S_n$.   In particular, it is continuous and $1$-Lipschitz. 

\begin{prop}
	On the class $\S_n\setminus \{\RR^n, \emptyset\}$, the mapping $K\mapsto K^c$   is an isometry with respect to the Hausdorff distance. 
\end{prop}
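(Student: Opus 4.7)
The plan is to use Proposition \ref{prop:KminusK} directly: for any $K \in \S_n \setminus \{\emptyset, \R^n\}$ and any $u \in S^{n-1}$, one has $h_{K^c}(u) = 1 - h_K(-u)$. This identity instantly converts the support function of $K^c$ into an affine function of the support function of $K$, which is exactly what isometry with respect to the Hausdorff metric requires.

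Concretely, I would first recall that the Hausdorff distance between two convex bodies $K, L$ coincides with $\|h_K - h_L\|_\infty$ where the sup-norm is taken on $S^{n-1}$. Then for $K, L \in \S_n \setminus \{\emptyset, \R^n\}$ I would compute, for each $u \in S^{n-1}$,
\[
h_{K^c}(u) - h_{L^c}(u) = \bigl(1 - h_K(-u)\bigr) - \bigl(1 - h_L(-u)\bigr) = h_L(-u) - h_K(-u),
\]
by two applications of Proposition \ref{prop:KminusK}. Taking absolute values and then the supremum over $u \in S^{n-1}$, and using the fact that the map $u \mapsto -u$ is a bijection of the sphere,
\[
d_H(K^c, L^c) = \|h_{K^c} - h_{L^c}\|_\infty = \sup_{u \in S^{n-1}} |h_K(-u) - h_L(-u)| = \|h_K - h_L\|_\infty = d_H(K, L).
\]
This gives the isometry claim and in particular shows $c$-duality is $1$-Lipschitz and continuous on $\S_n \setminus \{\emptyset, \R^n\}$.

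There is essentially no obstacle here; the only subtlety worth a brief comment is that both $K$ and $L$ must lie in $\S_n \setminus \{\emptyset, \R^n\}$ in order to invoke Proposition \ref{prop:KminusK} on both sides, which is exactly the hypothesis of the proposition being proved. The deeper reason behind this one-line proof is the characterization of $\S_n$ as summands of $B$: once $K$ and $-K^c$ must sum to $B$, the support function of $K^c$ is rigidly determined by that of $K$, so any perturbation of $K$ (measured by $\|h_K - h_L\|_\infty$) is transferred verbatim to $K^c$.
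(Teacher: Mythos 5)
Your proof is correct and follows exactly the same route as the paper: both invoke Proposition \ref{prop:KminusK} to write $h_{K^c}(u) = 1 - h_K(-u)$ and then observe that the sup-norm of the difference of support functions is preserved under this affine substitution combined with the reflection $u \mapsto -u$. No gaps; the argument matches the paper's proof essentially line for line.
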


\begin{proof}
	Let $K,L\in \S_n\setminus \{\RR^n, \emptyset\}$. Using Proposition \ref{prop:KminusK} we see that, denoting $h_K^{-}(u) = h_K(-u)$
	\[ d_H(K,L) = \|h_K - h_L\|_\infty =  \|(1-h_K^-) - (1-h_L^-)\|_\infty  
	= \|h_{K^c} - h_{L^c}\|_\infty = d_H(K^c, L^c). \]
This completes the proof. 	
\end{proof}

The fact that $K\mapsto K^c$ is an isometry is quite exciting, especially in view of the non-existence of isometries which are not rigid motion induced in the class of all convex bodies. Indeed, it was shown by Schneider \cite{Schneider1975} that on the class of all convex bodies in $\RR^n$,  an isometry with respect to the Hausdorff metric which is surjective must be induced by an isometry of $\RR^n$, in the sense that there is a rigid motion $g:\R^n \to \RR^n$ so that $F(K)=gK $ for all $K$. Without assuming surjectivity, Gruber and Lettl \cite{GruberLettl1980}  have shown that $F(K)=gK+L$ for some rigid motion $g$ and convex $L$.
However, we see that when we reduce to the class $\S_n$, there appears a new isometry. It corresponds, of course, to the possibility of ``subtraction'' since the class now is that of summands. 

\begin{rem}
   As mentioned above,  on the class $\S_n$ there are still not ``too many'' isometries with respect to the Hausdorff distance, and we show in \cite{FUTUREWORK, FUTUREWORK2} that  the only isometries are, up to affine rigid motions, the identity and the $c$-duality. 
\end{rem}

When considering  continuity properties of the mapping $A\mapsto A^c$ on the whole space of subsets of $\RR^n$, some caution is needed. First, sets of out-radius strictly greater than $1$ are mapped to the empty set,  which is of infinite distance to any other set, and thus the continuity is only possible on the class of subsets of $\RR^n$ with out-radius at most $1$ (and discarding the empty-set as well). To show continuity, we need a few preparations.

\begin{definition}
	For a set $A\subset \RR^n$ define the function $R_A:\RR^n \to \RR^+$ by 
	\[ R_A(x) = \inf\{ R>0: A\subseteq B(x,R)\}, \]
    in particular $A^c = \{ x: R_A(x) \le 1\}$. 
\end{definition}

Note that   the function $R_A(y)$, considered for a fixed $y$ as a function of the set $A$, satisfies monotonicity with respect to $A$ of course, but also the following property
\begin{eqnarray*} R_{A+ \eps B}(y) &=& \inf\{ R>0: A+ \eps B\subseteq B(x,R)\} =  \inf\{ R>0: A \subseteq B(x,R-\eps )\}  \\
	& =&
	\inf\{ R+\eps >0: A\subseteq B(x,R)\} = R_A(y) + \eps.\end{eqnarray*}

\begin{lem}
The function $R_A:\RR^n\to \RR^+$ is $1$-Lipschitz and convex. It attains a unique minimum which equals to $\outrad(A)$. 
\end{lem}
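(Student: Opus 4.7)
The plan is to first reduce everything to the identity
\[ R_A(x) = \sup_{a\in A} \|x-a\|_2, \]
which is immediate from the definition, since $A\subseteq B(x,R)$ means precisely that $\|x-a\|_2\le R$ for every $a\in A$. Everything else will drop out of this representation. Throughout I assume $A$ is non-empty and bounded, otherwise $R_A\equiv +\infty$ and the statement is vacuous or trivial.

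With this representation in hand, the $1$-Lipschitz property and the convexity of $R_A$ are free: each function $x\mapsto \|x-a\|_2$ is $1$-Lipschitz (reverse triangle inequality) and convex (triangle inequality), and both properties are preserved by taking a pointwise supremum. I would dispatch the first two assertions in one line.

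For existence of a minimum I would invoke coercivity. Fixing any $a_0\in A$, we have $R_A(x)\ge \|x-a_0\|_2$, so $R_A(x)\to\infty$ as $\|x\|_2\to\infty$, and a continuous coercive function on $\RR^n$ attains its infimum. By construction, the minimum value equals the infimum of those $R$ for which $A$ fits into some ball $B(x,R)$, which is exactly $\outrad(A)$.

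The only step that requires any real argument is the uniqueness of the minimizer; this I expect to be the main (though modest) obstacle, as it is the one place the Hilbert structure of $\RR^n$ enters. Here I would use the parallelogram identity: if $x_1\ne x_2$ both attain the minimum value $r^*$, set $x=(x_1+x_2)/2$; then for every $a\in A$,
\[ \|a-x\|_2^2 \;=\; \tfrac12\|a-x_1\|_2^2 + \tfrac12\|a-x_2\|_2^2 - \tfrac14\|x_1-x_2\|_2^2 \;\le\; {r^*}^2 - \tfrac14\|x_1-x_2\|_2^2. \]
Taking the supremum over $a\in A$ yields $R_A(x)\le \sqrt{{r^*}^2-\tfrac14\|x_1-x_2\|_2^2}<r^*$, contradicting the minimality of $r^*$. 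This completes the proof.
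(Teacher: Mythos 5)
Your proof is correct and follows essentially the same route as the paper: the representation $R_A(x)=\sup_{a\in A}\|x-a\|_2$ is just a repackaging of the paper's ball-inclusion arguments for convexity and the Lipschitz bound, and your parallelogram-identity computation is exactly the classical fact the paper cites for uniqueness (namely that two distinct balls of equal radius containing $A$ yield a strictly smaller enclosing ball centered at the midpoint, which appears explicitly in the paper's proof of Lemma \ref{lem:inplusout}). Your proposal is slightly more self-contained, since it also records coercivity for the existence of the minimum and proves rather than cites the uniqueness step.
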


\begin{proof}
The fact that the minimum (which by definition equals the out-radius) is attained at a unique point is a classical fact from convex geometry, following from the fact that the intersection of two balls of the same radius $R$ and a different center, has outer-radius strictly smaller than $R$.
For convexity we note that if $A\subseteq B(x,r)$ and $A\subseteq B(y,s)$ then $A\subseteq B((1-\lambda)x + \lambda y, (1-\lambda)r + \lambda s)$. For Lipschitz we note that if $R_A(x) = r$ then $A\subseteq B(x,r)\subset B(y, r+|x-y|)$ and so $R_A(y)\le r+|x-y|$ and vice versa.  
\end{proof}

The next lemma captures the following fact: When a ball of radius $1-\delta$ is intersected with a ball of radius $1$, the intersection might still have out-radius $1-\delta$, if the $1$-ball included two antipodal points on the ball of radius $\delta$. However, if the centers of the two intersected balls are far enough in terms of $\delta$, this cannot happen, and the intersection will have out-radius strictly smaller than $1-\delta$. (The proof of Lipschitz continuity above, handles in fact the easy case $\delta = 0$, in which the distance between $x$ and $y$ merely has to be positive.)

\begin{lem}\label{lem:above}
Let $\delta\in(0,1)$ and $x,y\in \RR^n$. If $\|x-y\|_2>\eta(\delta) = \sqrt{2\delta-\delta^2}$ then 
	\[ \outrad(B(x,1) \cap B(y, 1-\delta)) < 1-\delta.\]
Moreover, there exists some $p\in(x,y)$ with $B(x,1) \cap B(y, 1-\delta) \subseteq B(p,1-\delta)$.
\end{lem}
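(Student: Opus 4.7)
The plan is to work by direct computation, using the rotational symmetry of the intersection to reduce to a one-parameter search for a good center $p$ on the segment $[x,y]$.

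First I would set up coordinates placing $x = 0$ and $y = d e_1$, where $d = \|x-y\|_2$. The intersection $I = B(x,1) \cap B(y,1-\delta)$ is rotationally symmetric about the $e_1$-axis, and it is standard that $\partial B(x,1) \cap \partial B(y,1-\delta)$ lies in the affine hyperplane $\{z_1 = (d^2+\eta^2)/(2d)\}$ (subtracting the two equations $\|z\|^2=1$ and $\|z-de_1\|^2 = (1-\delta)^2$, noting that $(1-\delta)^2 = 1-\eta^2$). Call this sphere the \emph{ring}; it divides $\partial I$ into two spherical caps, $C_x \subseteq \partial B(x,1)$ with $z_1 \ge (d^2+\eta^2)/(2d)$ and $C_y \subseteq \partial B(y,1-\delta)$ with $z_1 \le (d^2+\eta^2)/(2d)$. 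By symmetry the candidate center has the form $p_t = t e_1$ with $t \in (0,d)$.

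Next I would check containment $I \subseteq B(p_t, 1-\delta)$ by looking only at the boundary: since $z \mapsto \|z - p_t\|$ is a convex function, it attains its maximum on $I$ at an extreme point of $I$, which lies on $C_x \cup C_y$. On $C_x$ one has $\|z\|=1$, so $\|z - p_t\|^2 = 1 - 2tz_1 + t^2$, which is decreasing in $z_1$; thus its maximum over $C_x$ occurs at the ring. On $C_y$ one writes $\|z - p_t\|^2 = \|(z-de_1) + (d-t)e_1\|^2 = (1-\delta)^2 + (d-t)^2 + 2(d-t)(z_1 - d)$, which (since $d - t > 0$) is increasing in $z_1$; again the maximum occurs at the ring. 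So both caps are controlled by the single value
\[
M(t) := 1 - t(d^2+\eta^2)/d + t^2
\]
coming from $z_1 = (d^2+\eta^2)/(2d)$.

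The containment $I \subseteq B(p_t, 1-\delta)$ then reduces to $M(t) \le (1-\delta)^2 = 1 - \eta^2$, i.e.\ to the quadratic inequality
\[
Q(t) := t^2 - t(d^2+\eta^2)/d + \eta^2 \le 0.
\]
By Vieta's formulas the two roots of $Q$ are $t = \eta^2/d$ and $t = d$, so $Q(t) \le 0$ iff $t \in [\eta^2/d, d]$. The hypothesis $d > \eta(\delta)$ is exactly what makes this interval non-degenerate inside $(0,d)$, because $d > \eta$ gives $\eta^2/d < d$. Choosing any $t \in (\eta^2/d, d)$ — for instance $t = (d + \eta^2/d)/2$ — yields a point $p = p_t \in (x,y)$ with $Q(t) < 0$, hence $M(t) < (1-\delta)^2$, which gives both the containment $I \subseteq B(p, 1-\delta)$ and the strict inequality $\outrad(I) < 1-\delta$. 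The main point to be careful about is the convexity/compactness argument in the reduction to the ring; once that is in place, everything else is the quadratic computation.
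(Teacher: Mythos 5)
Your proof is correct and follows essentially the same route as the paper's: both reduce to the ring $S(x,1)\cap S(y,1-\delta)$ where the two boundary spheres meet, and show that a suitable point of $(x,y)$ sees every point of the lens within distance strictly less than $1-\delta$. The paper takes the center of the ring as the center and computes the out-radius exactly as $r(d,\delta)=\sqrt{1-\left(\tfrac{\eta^2+d^2}{2d}\right)^2}$, whereas you exhibit the whole interval $(\eta^2/d,\,d)$ of admissible centers via the quadratic $Q(t)$; your monotonicity argument on each cap makes explicit the cap-containment step that the paper only asserts.
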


\begin{proof}
Denote $d=\|x-y\|_2$. If $d> 2-\delta$ then $B(x,1) \cap B(y, 1-\delta)=\emptyset$, so there is nothing to prove.
The case $d=\eta(\delta)=\sqrt{1- (1-\delta)^2}$ corresponds to the boundaries of the balls intersecting on a big circle of $S(y, 1-\delta)$, i.e. $S(x,1) \cap S(y, 1-\delta) = \left(y+(x-y)^\perp\right)\cap S(y, 1-\delta)$. If $d\in (\eta(\delta),2-\delta]$, it can be checked by simple Euclidean geometry that the intersection $S(x,1) \cap S(y, 1-\delta)$ is a sphere centered at 
\[ p = \left(\frac12 - \frac{\eta(\delta)^2}{2d^2}\right) x + \left(\frac12 + \frac{\eta(\delta)^2}{2d^2}\right)y,
\]
of radius 
\[ r (d,\delta)  = \sqrt{1 - \left(\frac{\eta(\delta)^2 + d^2}{2d}\right)^2} < \sqrt{1 - \eta(\delta)^2} = 1-\delta.
\]
The intersection $B(x,1) \cap B(y, 1-\delta)$ consists of two spherical caps of radii $1-\delta$ and $1$, meeting in an $(n-1)$-dimensional ball of radius $r(d,\delta)$, centered at $p$. Since $r(d,\delta)<1-\delta<1$, the ball of radius $r(d, \delta)$ centered at $p$ contains both spherical caps, thus $\outrad(B(x,1) \cap B(y, 1-\delta)) = r(d,\delta) < 1-\delta$.
\end{proof}

\begin{prop}\label{prop:etasheldetla}
Let 
$\delta\in (0,1)$ and 
    $A\subset \RR^n$ with $\outrad (A+ \delta B)\le 1$. Then 	
	\[ (A +\delta  B)^c\subseteq A^c \subseteq (A +\delta  B)^c + \eta (\delta ) B. \]
	where $\eta(\delta ) = \sqrt{2\delta -\delta ^2}$ as in Lemma \ref{lem:above}
\end{prop}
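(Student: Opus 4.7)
The first inclusion $(A+\delta B)^c \subseteq A^c$ is immediate from $A\subseteq A+\delta B$ and order reversal of $c$-duality. For the reverse inclusion, the plan is to reformulate both sides using the circumradius function $R_A$ introduced just above: by the definition of $R_A$ one has $A^c=\{y:R_A(y)\le 1\}$, and directly from the definitions $(A+\delta B)^c=\{z:R_A(z)\le 1-\delta\}$ (since $z\in(A+\delta B)^c$ iff $\|z-a\|_2+\delta\le 1$ for every $a\in A$). The hypothesis $\outrad(A+\delta B)\le 1$ translates to $\outrad(A)\le 1-\delta$, so by the previous lemma there is a (unique) minimizer $x^*$ of $R_A$ with $R_A(x^*)\le 1-\delta$; in particular $x^*\in (A+\delta B)^c$.

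Given $y\in A^c$, I would produce $z\in (A+\delta B)^c$ with $\|y-z\|_2\le \eta(\delta)$ by moving from $y$ a short distance toward $x^*$. If $\|y-x^*\|_2\le \eta(\delta)$, simply take $z=x^*$ and we are done. Otherwise set
\[
z:=y+\eta(\delta)\cdot\frac{x^*-y}{\|x^*-y\|_2},
\]
so that $\|z-y\|_2=\eta(\delta)$ and $z$ lies strictly on the open segment from $y$ to $x^*$. The remaining task is to verify $\|z-a\|_2\le 1-\delta$ for every $a\in A$, which would give $z\in (A+\delta B)^c$ and close the proof.

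This last verification is the main (and really the only) obstacle, and it is a short but slightly delicate quadratic calculation. I plan to consider $\phi(\lambda):=\|y+\lambda(x^*-y)-a\|_2^2$, a quadratic in $\lambda$ satisfying $\phi(0)=\|y-a\|_2^2\le 1$ and $\phi(1)=\|x^*-a\|_2^2\le (1-\delta)^2$, at the value $\lambda_0:=\eta(\delta)/\|x^*-y\|_2\in(0,1)$. Expanding $\phi(\lambda_0)=\phi(0)+2\lambda_0\iprod{y-a}{x^*-y}+\lambda_0^2\|x^*-y\|_2^2$ and using the identity $2\iprod{y-a}{x^*-y}=\phi(1)-\phi(0)-\|x^*-y\|_2^2$ to eliminate the cross-term yields
\[
\phi(\lambda_0)=(1-\lambda_0)\phi(0)+\lambda_0\phi(1)-\lambda_0(1-\lambda_0)\|x^*-y\|_2^2.
\]
Plugging in the bounds on $\phi(0)$ and $\phi(1)$ and dividing by $(1-\lambda_0)>0$, the desired inequality $\phi(\lambda_0)\le (1-\delta)^2$ collapses to $\eta(\delta)^2 \le \eta(\delta)\,\|x^*-y\|_2$, which holds by the defining identity $\eta(\delta)^2=1-(1-\delta)^2$ together with the case assumption $\|x^*-y\|_2>\eta(\delta)$. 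This gives $z\in (A+\delta B)^c$ and hence $y\in z+\eta(\delta)B \subseteq (A+\delta B)^c+\eta(\delta)B$, as required. The constant $\eta(\delta)$ is sharp, as is seen by taking $A=\{\pm(1-\delta)e_1\}$: there $A^c$ is a lens reaching height $\eta(\delta)$ off the $e_1$-axis while $(A+\delta B)^c$ degenerates to the single point $\{0\}$.
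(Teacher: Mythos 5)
Your proof is correct, and it takes a genuinely different route from the paper's. Both proofs dispose of the first inclusion the same way and both use the reformulation $(A+\delta B)^c=\{z:R_A(z)\le 1-\delta\}$, but for the second inclusion the paper fixes $x\in\partial A^c$ (reducing to boundary points via convexity of $A^c$), takes the nearest point $y$ to $x$ in the convex level set $K=(A+\delta B)^c$, and derives a contradiction from Lemma \ref{lem:above}: if $\|x-y\|_2>\eta(\delta)$ then $B(x,1)\cap B(y,1-\delta)$ sits inside some $B(p,1-\delta)$ with $p$ on the open segment $(x,y)$, forcing $p\in K$ even though $[x,y)$ misses $K$. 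You instead move an arbitrary $y\in A^c$ a distance $\eta(\delta)$ toward the circumcenter $x^*$ of $A$ and verify directly, via the identity
\[
\|(1-\lambda)u+\lambda v\|_2^2=(1-\lambda)\|u\|_2^2+\lambda\|v\|_2^2-\lambda(1-\lambda)\|u-v\|_2^2,
\]
that the resulting point lies in $(A+\delta B)^c$; the inequality indeed collapses to $\eta(\delta)\le\|x^*-y\|_2$, which is your case hypothesis. The quadratic computation encodes the same Euclidean geometry that underlies Lemma \ref{lem:above}, but your version is constructive, bypasses both that lemma and the nearest-point projection, treats all of $A^c$ at once rather than only its boundary, and supplies a correct sharpness example ($A=\{\pm(1-\delta)e_1\}$). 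The paper's argument is shorter only because Lemma \ref{lem:above} has already been proved for other purposes; as a self-contained proof yours is arguably cleaner.
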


\begin{proof}
We clearly have $(A +\delta  B)^c \subseteq A^c$, since $A\subseteq A+\delta B$. For the second inclusion, since $A^c$ is convex (as the intersection of balls), it suffices to show that its boundary $\partial A^c$ is contained in $(A +\delta  B)^c + \eta (\delta ) B$. Let $x\in \partial A^c$. This means $R_A(x) = 1$. The (convex) level set $K = \{ y: R_A(y)\le 1-\delta \} =  \{ y: R_A(y)+\delta  \le 1\} = (A+\delta  B)^c$ is not empty, by the assumption $\outrad (A+ \delta B)\le 1$. Let $y\in \partial K$ be the closest point to $x$ in $K$. Then $R_A(y) = 1-\delta$ and $x-y$ is in an outer normal direction to $\partial K$ at $y$, so that $[x,y)\cap K = \emptyset$.

We claim that $\|x - y\|_2 \le \eta(\delta)$. Indeed, if we suppose towards a contradiction that $\|x - y\|_2 > \eta(\delta)$, then by Lemma \ref{lem:above} there exists some $p\in(x,y)$ such that $A\subset B(x,1) \cap B(y, 1-\delta) \subset B(p,1-\delta)$, thus $R_A(p)< 1-\delta$, i.e. $p\in K$. This is  a contradiction, since the entire interval $[x,y)$ lies outside of $K$.
Summing up, for every $x\in \partial A^c$ we found some $y\in K=(A +\delta  B)^c$ such that $\|x - y\|_2 \le \eta(\delta)$, i.e. $\partial A^c\subseteq (A +\delta  B)^c + \eta(\delta ) B$, as required.  	
\end{proof}

\begin{cor}\label{cor:cdual-is-cont}
Let $n\in \N$, and consider the class of non-empty subsets of $\RR^n$ with out-radius at most $1$. On this class, the mapping $K\mapsto K^c$ is continuous in the Hausdorff metric.  	
\end{cor}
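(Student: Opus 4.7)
The plan is to fix $A \subset \RR^n$ with $\outrad(A) \le 1$ and $\varepsilon > 0$, and to produce a $\delta > 0$ such that any $C$ with $\outrad(C) \le 1$ and $d_H(A,C) < \delta$ satisfies $d_H(A^c, C^c) < \varepsilon$. Unpacking $d_H(A,C) < \delta$ as $A \subseteq C + \delta B$ and $C \subseteq A + \delta B$ and invoking order reversion of the $c$-duality gives the bracketing $(C + \delta B)^c \subseteq A^c$ and $(A + \delta B)^c \subseteq C^c$. I would split the argument according to whether $\outrad(A) < 1$, where Proposition \ref{prop:etasheldetla} is directly applicable, or $\outrad(A) = 1$, where it degenerates and a softer compactness argument is required.

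In the first case, I would pick $\delta$ with $\delta < (1 - \outrad(A))/2$ and $\eta(\delta) < \varepsilon$. Then $d_H(A,C) < \delta$ forces $\outrad(C) \le \outrad(A) + \delta$, ensuring that both $\outrad(A + \delta B) \le 1$ and $\outrad(C + \delta B) \le 1$, so Proposition \ref{prop:etasheldetla} applies to both $A$ and $C$. Combining its conclusions with the bracketing above yields
\[
A^c \subseteq (A + \delta B)^c + \eta(\delta) B \subseteq C^c + \eta(\delta) B,
\]
and the symmetric inclusion $C^c \subseteq (C + \delta B)^c + \eta(\delta) B \subseteq A^c + \eta(\delta) B$, so that $d_H(A^c, C^c) \le \eta(\delta) < \varepsilon$.

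The hard part is the boundary case $\outrad(A) = 1$, where $\outrad(A + \delta B) = 1 + \delta > 1$ rules out a direct use of Proposition \ref{prop:etasheldetla}. My workaround relies on the uniqueness of the minimum-radius enclosing ball, which forces $A^c$ to be a singleton $\{z_0\}$. I would introduce, for $r \ge 1$, the compact convex set $A^c_r := \{y : A \subseteq B(y,r)\} = \{R_A \le r\}$; the family $(A^c_r)_{r > 1}$ is decreasing as $r \downarrow 1$ with intersection $A^c_1 = \{z_0\}$. By the standard fact that a decreasing family of nonempty compact sets in $\RR^n$ converges in Hausdorff distance to its intersection, one can choose $\delta > 0$ with $A^c_{1 + \delta} \subseteq B(z_0, \varepsilon)$. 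For any $C$ with $\outrad(C) \le 1$ and $d_H(A,C) < \delta$, the set $C^c$ is nonempty, and every $y \in C^c$ satisfies $A \subseteq C + \delta B \subseteq B(y, 1 + \delta)$, i.e.\ $y \in A^c_{1 + \delta} \subseteq B(z_0, \varepsilon)$. Hence $C^c \subseteq B(z_0, \varepsilon)$, and since $C^c$ is nonempty this gives $d_H(A^c, C^c) = d_H(\{z_0\}, C^c) \le \varepsilon$, completing the plan.
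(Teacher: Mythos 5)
Your proof is correct. The overall strategy (split according to whether $\outrad(A)<1$ or $\outrad(A)=1$, and in the non-degenerate case combine the bracketing $(C+\delta B)^c\subseteq A^c\subseteq (A+\delta B)^c+\eta(\delta)B\subseteq C^c+\eta(\delta)B$ with Proposition \ref{prop:etasheldetla}) is exactly the paper's, with only cosmetic differences in the choice of $\delta_0$. Where you genuinely diverge is the boundary case $\outrad(A)=1$: the paper argues quantitatively through the auxiliary quantity $r_T(x)=\sup\{r:B(x,r)\subset T\}$ and the identity $R_{L+\eps B}(x)=R_L(x)+\eps$, deducing $L^c\subseteq B(x,\eps)$ from $R_L(x)\ge 1-\eps$; you instead observe that $A^c=\{z_0\}$ and that the sublevel sets $A^c_r=\{R_A\le r\}$ are nested compact convex sets shrinking to $\{z_0\}$ as $r\downarrow 1$, so that $A^c_{1+\delta}\subseteq B(z_0,\varepsilon)$ for small $\delta$, and any $y\in C^c$ lands in $A^c_{1+\delta}$. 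Your version trades the paper's explicit modulus of continuity at such $A$ for a soft compactness argument, but it is cleaner and avoids the paper's somewhat delicate passage from a bound on the in-radius of $L^c$ about $x$ to the containment $L^c\subseteq B(x,\eps)$; the only price is that you do not get an explicit $\delta(\varepsilon)$ in the degenerate case, which is irrelevant for the continuity statement being proved.
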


\begin{proof}
	Fix $K\subset \RR^n$, $K\neq \emptyset$, $\outrad(K)\le 1$. We first address the case that $\outrad(K) = 1$. Then for the (unique) $x\in K$  with $R_K(x)=1$ we have and $K^c = \{ x\}$. Denoting 
    \[ r_T(x) = \sup \{ r: B(x,r)\subset T\}\]
we see that $R_T(x) = R$ implies $T\subseteq B(x,R)$ implies $T^c \supseteq B(x,1-R)$ which implies $r_{T^c}(x)\ge 1-R$, namely $B(x,1-R)\subset T^c$, which in turn implies $T\subseteq T^{cc}\subseteq B(x,R)$.     
    
     If $d_H(L, K) \le  \eps$ for some non-empty $L\subset \RR^n$ with $\outrad(L)\le 1$ then as $R_{L+\eps B}(x) = R_L (x)+\eps$
    and $K\subset L+\eps B$, we see $1=R_K(x)\le R_L(x)+\eps$ so that $R_L(x)\ge (1-\eps)$ which means $r_{L^c}(x)\le \eps$ so that $L^c\subseteq B(x,\eps) = K^c+\eps B$, and clearly in such a case $\{x\}\subset L^c+\eps B$  
     and we get the claim (in fact, with constant $1$) as needed. 
	
	The second case to consider is continuity at a set $K$ where $\outrad(K) <1$. 	Let $\eps>0$, and let 
    $\delta<\delta_0  = \min \left(\frac{1-\outrad(K)}{4}, \frac{\eps^2}{2} \right)$. 
    
    Consider some non-empty $L$ with $d_H(K,L)<\delta$. Then 
$\outrad(K), \outrad(L) \le 1-\delta$ and 
$L\subseteq K+ \delta B$ and $K \subseteq L+\delta B$. Therefore
 $L^c\supseteq (K+ \delta B)^c$ and $K^c\supseteq (L+\delta B)^c$ and so 	\[ L^c + \eta B \supseteq (K+ \delta B)^c+ \eta B, \,\,\,{\rm and}\,\,\, K^c+\eta B \supseteq (L+\delta B)^c+\eta B.\] 
	Picking $\eta = \eta(\delta)$ from   Proposition \ref{prop:etasheldetla} we get  
	\[ L^c + \eta(\delta) B \supseteq K^c, \,\,\,{\rm and}\,\,\, K^c+\eta(\delta) B \supseteq L^c,\] 
	namely $d_H(K^c, L^c) \le \eta (\delta)$. We check that 
    \[ \eta(\delta) <\eta (\eps^2/2) = \sqrt{\eps^2- \eps^4/4}<\eps,    
    \]
    and the proof is complete. 
\end{proof}

\begin{cor}\label{cor:c-hull-is-continuous}
	Let $n\in \N$, and consider the class of non-empty subsets of $\RR^n$ with out-radius at most $1$. On this class, the mapping $K\mapsto K^{cc} = \cconv(K)$ is continuous in the Hausdorff metric.      
\end{cor}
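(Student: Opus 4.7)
The plan is to observe that $K\mapsto K^{cc}$ is simply the composition of the $c$-duality with itself, so that Corollary \ref{cor:cdual-is-cont} applied twice should immediately yield the result. The only thing to check in advance is that the first application of the $c$-duality lands back in the domain where the second application of that corollary is available.

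Concretely, the first step I would carry out is the self-mapping check: if $K$ is non-empty with $\outrad(K)\le 1$, then $K^c$ is also non-empty with $\outrad(K^c)\le 1$. Non-emptiness follows because the circumradius is attained on bounded sets (the closed convex hull of $K$ is compact, and the function $y\mapsto \max_{x\in\overline{\conv(K)}}\|x-y\|_2$ is continuous and proper, hence attains its minimum), so some $y\in\RR^n$ satisfies $K\subseteq B(y,1)$, whence $y\in K^c$. The out-radius bound is immediate: for any fixed $x\in K$ we have by definition $K^c=\bigcap_{x'\in K}B(x',1)\subseteq B(x,1)$, so $\outrad(K^c)\le 1$.

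Once this self-mapping property is established, the proof finishes in one line. Given a sequence $K_m$ in the class with $d_H(K_m,K)\to 0$, a first application of Corollary \ref{cor:cdual-is-cont} yields $d_H(K_m^c,K^c)\to 0$; since the $K_m^c$ and $K^c$ all still belong to the class, a second application of the same corollary gives $d_H(K_m^{cc},K^{cc})\to 0$, which is the claim. There is no genuine obstacle: the argument is a pure composition, and the only careful point is the attainment of the circumradius used above, which is a standard compactness fact.
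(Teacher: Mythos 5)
Your proposal is correct and takes essentially the same route as the paper: the paper's proof is exactly the observation that $K\mapsto K^c$ is continuous (Corollary \ref{cor:cdual-is-cont}) and maps the class of non-empty sets of out-radius at most $1$ into itself, so the composition is continuous. Your additional verification of the self-mapping property (non-emptiness of $K^c$ via attainment of the circumradius, and $K^c\subseteq B(x,1)$ for any $x\in K$) is the detail the paper leaves implicit, and it is correct.
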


\begin{proof}
    The statement follows immediately from the fact that $K\mapsto K^c$ is continuous and maps non-empty sets with out-radius at most $1$ to sets in $\S_n$ which are non-empty and with out-radius at most $1$.
\end{proof}

\begin{rem}
We did not aim for best constants in the inequalities. It is interesting to check whether one can in fact get that $K\mapsto K^c$ is $1$-Lipschitz with respect to the Hausdorff distance, on the class of subsets of $\RR^n$ which are of out-radius at most $1$. 
\end{rem}

\subsection{Approximation}

We end this section with several  useful of theorems regarding  the denseness of some natural  subsets of $\S_n$.

\begin{prop}[Denseness of $c$-polytopes]
    Let $n\in \mathbb N$ and $K\in \S_n$. There exists  a sequence of finite sets $A_m\subset \RR^n$ such that  $d_H(\cconv(A_m), K)\to_{m\to \infty}  0$. 
\end{prop}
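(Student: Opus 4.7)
The plan is to exploit the continuity of the $c$-hull established in Corollary \ref{cor:c-hull-is-continuous}, combined with a straightforward density argument. First I would dispense with the degenerate cases. If $K=\RR^n$, any finite set $A_m$ of outer radius strictly greater than $1$ (e.g.\ $A_m = \{2e_1,-2e_1\}$) gives $\cconv(A_m)=\RR^n=K$; if $K=\emptyset$, then $A_m=\emptyset$ satisfies $\cconv(\emptyset)=\emptyset$.

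For the main case, $K\in \S_n\setminus\{\emptyset,\RR^n\}$, the point is that $K$ is the nonempty intersection of unit balls, hence $K\subseteq B(x,1)$ for some $x\in \RR^n$, so $K$ is compact with $\outrad(K)\le 1$. I would then choose, for each $m$, a finite $(1/m)$-net $A_m\subseteq K$, which exists by compactness. By construction $d_H(A_m,K)\to 0$ and $A_m\subseteq K$ forces $\outrad(A_m)\le 1$, so both $A_m$ and $K$ live in the class on which Corollary \ref{cor:c-hull-is-continuous} asserts continuity of the $c$-hull. Applying that corollary yields $d_H(\cconv(A_m),\cconv(K))\to 0$, and since $K\in \S_n$ we have $\cconv(K)=K^{cc}=K$, finishing the proof.

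There is essentially no real obstacle here; the heavy lifting was already carried out in the proof of the continuity of $A\mapsto A^{cc}$ (via Lemma \ref{lem:above} and Proposition \ref{prop:etasheldetla}). To make the exposition less reliant on that machinery as a black box, I would point out in passing a self-contained alternative: pick a countable dense sequence $(x_i)_{i\ge 1}\subseteq K$ and set $A_m=\{x_1,\dots,x_m\}$. Then $\cconv(A_m)$ is an increasing chain of compact convex sets contained in $K$ (order reversion plus $\{x\}^{cc}=\{x\}$ gives monotonicity and $x_i\in \cconv(A_m)$ for $m\ge i$), so $\bigcup_m \cconv(A_m)$ is dense in $K$. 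Monotone Hausdorff convergence of nested compact convex sets to their closed union then delivers $d_H(\cconv(A_m),K)\to 0$ directly.
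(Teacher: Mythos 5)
Your proof is correct, but the key step is different from the paper's. Both arguments approximate $K$ from inside by finite sets; the difference lies in how one passes from $d_H(A_m,K)\to 0$ to $d_H(\cconv(A_m),K)\to 0$. You invoke Corollary \ref{cor:c-hull-is-continuous}, i.e.\ the continuity of $A\mapsto A^{cc}$ on sets of out-radius at most $1$, which in turn rests on the quantitative estimates of Lemma \ref{lem:above} and Proposition \ref{prop:etasheldetla}. The paper avoids all of that with a one-line sandwich: since $A_m\subseteq K$ and $K\in\S_n$, minimality of the $c$-hull gives $A_m\subseteq\cconv(A_m)\subseteq K$, and an inner approximant that is enlarged while staying inside $K$ can only get closer, so $d_H(\cconv(A_m),K)\le d_H(A_m,K)$. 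This yields the explicit rate $d_H(\cconv(A_m),K)\le 1/m$ for your nets, whereas the continuity corollary only gives convergence (and, as stated there, with a $\sqrt{\delta}$-type loss). Your ``self-contained alternative'' at the end is essentially the paper's argument in disguise, except that you conclude via monotone Hausdorff convergence of nested compacta rather than via the sandwich inequality; either finishes the job, but the sandwich bound is both shorter and quantitative. One small remark: the degenerate cases $K=\emptyset,\RR^n$ that you treat separately are harmless but not really at issue, since the Hausdorff distance is only meaningful for compact sets and the paper implicitly restricts to proper bodies.
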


\begin{proof}
We use the fact that a convex body can be approximated from within by polytopes, $P_m \to K$, see \cite[Proposition A.3.5]{AGMBook}. If $P\subset K$ and $K\in \S_n$ then also $\cconv(P)\subset K$ and $d_H(\cconv(P),K)\le d_H(P,K)$
proving the proposition (where $A_m$ is the set of vertices of the polytope $P_m$).     
\end{proof}

Similarly by dualizing we get a corresponding fact for approximating a ball-bodiy from the outside by intersections of Euclidean unit balls. 

\begin{prop}[Denseness of $c$-polyhedrals]
    Let $n\in \mathbb N$ and $K\in \S_n$. There exists a sequence $(K_m)_{m\in \mathbb N}$ of finite intersections of $1$-balls such that   $d_H(K_m, K)\to_{m\to \infty} 0$. 
\end{prop}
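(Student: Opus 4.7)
The plan is to dualize the preceding proposition. Given $K \in \S_n$, I would first apply that proposition to the body $K^c \in \S_n$ in place of $K$, obtaining a sequence of finite sets $A_m \subset \RR^n$ with $d_H(\cconv(A_m), K^c) \to 0$ as $m \to \infty$. Since $\cconv(A_m) = A_m^{cc} \in \S_n$, the isometry property of $c$-duality on $\S_n \setminus \{\emptyset, \RR^n\}$ transports this convergence across the $c$-operation, giving $d_H(\cconv(A_m)^c, K^{cc}) \to 0$.

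Setting $K_m := \cconv(A_m)^c$ and using $K^{cc} = K$ (as $K \in \S_n$), we obtain $K_m \to K$ in Hausdorff distance. It remains to identify $K_m$ as a finite intersection of unit balls: since $A^c \in \S_n$ for any $A\subset \RR^n$, one has $A^{ccc} = (A^c)^{cc} = A^c$, so
\[ K_m \;=\; \cconv(A_m)^c \;=\; A_m^{ccc} \;=\; A_m^c \;=\; \bigcap_{x \in A_m} B(x,1), \]
which is a finite intersection of $1$-balls because $A_m$ is finite.

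The only delicate points are the degenerate cases, which should be treated separately and constitute the ``main obstacle'' only in a bookkeeping sense. For $K = \emptyset$ one can take $K_m = B(0,1) \cap B(3e_1,1) = \emptyset$ for all $m$. The case $K = \RR^n$ cannot be approximated in Hausdorff distance by bounded sets, so it should be excluded implicitly, just as it is in the preceding proposition (whose proof relies on polytopal approximation of a convex body from within). Beyond this, the argument is a three-line consequence of the isometry of $c$-duality together with the identity $A^{ccc} = A^c$, and I anticipate no real technical difficulty.
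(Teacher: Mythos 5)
Your argument is correct, and it reaches the conclusion by a genuinely different route than the paper, even though both proofs share the overall strategy of dualizing an approximation of $K^c$. The paper approximates $K^c$ from the \emph{outside} by polytopes $P_m\supseteq K^c$ and then invokes Corollary~\ref{cor:cdual-is-cont}, the continuity of $A\mapsto A^c$ on arbitrary sets of out-radius at most $1$; this is the quantitatively heavier tool (it rests on the $\eta(\delta)$ estimate of Proposition~\ref{prop:etasheldetla}) and it forces the paper to treat separately the case $\outrad(K^c)=1$, i.e.\ $K$ a singleton, since otherwise the outer polytopes would have out-radius exceeding $1$. You instead approximate $K^c$ from the \emph{inside} via the preceding denseness-of-$c$-polytopes proposition and transport the convergence through the exact isometry of $c$-duality on $\S_n\setminus\{\emptyset,\RR^n\}$; since both $\cconv(A_m)$ and $K^c$ lie in that class (as $A_m\neq\emptyset$ and $A_m\subseteq K^c$ forces $\outrad(A_m)\le 1$), the isometry applies with no case distinction, and the identity $A_m^{ccc}=A_m^c$ correctly exhibits $K_m$ as a finite intersection of unit balls. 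Your handling of the degenerate cases $\emptyset$ and $\RR^n$ is also fine (and no worse than the paper's, which does not address them either). The trade-off: your route is cleaner and avoids the quantitative continuity corollary entirely, at the price of routing through the $c$-polytope proposition; the paper's is more direct but needs the stronger continuity statement and the extra case split.
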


\begin{proof}
We use the fact that any convex body can be approximated by polytopes in which it is included,  see \cite[Proposition A.3.5]{AGMBook}. 
Let $K\in \S_n$. If $K^c$ is a translate of the Euclidean unit ball, there is nothing to prove (a singleton is the intersection of two Euclidean unit balls). Otherwise, $\outrad(K^c)<1$. Take a sequence 
 $P_m \supseteq K^c$ and $d_H(P_m, K^c)\to 0$. By Corollary \ref{cor:cdual-is-cont} we have that $d_H(P_m^c, K)\to 0$ and $P_m^c$ is the intersection of a finite number of balls centered at the vertices of $P_m$, completing the proof. 
\end{proof}

\begin{rem}
It is not hard to check that $c$-polytopes cannot be self dual in dimension $n\ge 3$. Nevertheless, if one looks for a ``simple'' dense subset of self-dual (i.e., constant width) bodies
a natural set to consider is the subset of $\{( P + P^c) /2: P = A^c,\,A~ {\rm finite}\}$. This is easily shown to be a dense subset of self-dual (that is, constant width 1) bodies.  
\end{rem}

Finally, smooth bodies in $\S_n$ constitute a dense subset. To this end we employ standard approximation techniques which are explained\footnote{We would like to thank Daniel Hug for discussing approximations and for pointing us to the most relevant theorem in \cite{schneider2013convex}} clearly in \cite[Section 3.4]{schneider2013convex}. In this proof we denote the Euclidean unit ball by $B_2^n$. 

\begin{prop}[Denseness of smooth bodies]\label{thm:dense-are-the-smooth}
    Let $n\in \mathbb N$ and $K\in \S_n$. There exists a sequence $(K_m)_{m\in \mathbb N}$ with $K_m\in \S_n$ which are $C^\infty$ smooth convex bodies with $h_K\in C^\infty$, such that  $d_H(K_m,K)\to_{m\to \infty} 0$. 
\end{prop}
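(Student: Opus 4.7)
The plan is to approximate $K$ by Minkowski averages of its rotated copies, weighted by smooth probability measures on $SO(n)$ that concentrate at the identity. The nontrivial point is that such averages remain in $\S_n$, which follows from the summand characterization in Proposition \ref{prop:KminusK} combined with Theorem \ref{thm:minkowski-sum-c-is-linear}.

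First I would dispose of the degenerate cases $K\in\{\emptyset,\R^n\}$ (where $K_m=K$ suffices), and assume $K\in\S_n\setminus\{\emptyset,\R^n\}$. I would then fix a sequence $(f_m)_{m\in\N}$ of $C^\infty$ probability densities on $SO(n)$ (with respect to Haar measure), each supported in a shrinking neighborhood of $\Id$, and define $K_m$ as the Minkowski integral $K_m=\int_{SO(n)}UK\,f_m(U)\,dU$, equivalently by
\[ h_{K_m}(u)=\int_{SO(n)}h_K(Uu)\,f_m(U)\,dU, \qquad u\in\R^n.\]
Sublinearity of $h_K$ passes to $h_{K_m}$, so $K_m$ is a well-defined convex body. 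Weak convergence $f_m\to\delta_{\Id}$ together with uniform continuity of $h_K$ on $S^{n-1}$ gives $h_{K_m}\to h_K$ uniformly there, hence $d_H(K_m,K)\to 0$.

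Next I would verify $K_m\in\S_n$. For each $U\in SO(n)$, the body $UK$ lies in $\S_n$ since $c$-duality commutes with rigid motions, and Proposition \ref{prop:KminusK} yields $UK-UK^c=UB(0,1)=B(0,1)$. Integrating this identity against $f_m$ (using linearity of the support function in Minkowski operations and Fubini) one obtains $K_m-L_m=B(0,1)$ with $L_m:=\int UK^c\,f_m(U)\,dU$. Hence $K_m$ is a summand of $B(0,1)$, and Theorem \ref{thm:sliding-iff-summand} delivers $K_m\in\S_n$. I expect this step to be the main conceptual obstacle: one cannot simply mollify $h_K$ as a function on $\R^n$, because naive mollification generically breaks summand-ness and pushes the approximant outside $\S_n$. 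Rotation means are precisely the operation that preserves the summand-of-$B(0,1)$ property while smoothing in every direction.

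Finally I would extract $C^\infty$-smoothness of $h_{K_m}$ on $\R^n\setminus\{0\}$ using the standard rotation-mean argument from \cite[Section 3.4]{schneider2013convex}: locally parametrize $u\in\R^n\setminus\{0\}$ via a smooth choice of rotations $R_u$ sending a fixed unit vector to $u/|u|$, substitute $V=UR_u$ in the $SO(n)$-integral to move the $u$-dependence out of $h_K$ and into $f_m$, and then differentiate under the integral to transfer derivatives onto the smooth kernel $f_m$. Since every body in $\S_n$ is strictly convex (its principal radii of curvature are at most $1$ by Theorem \ref{thm:char-curv}) and the rotational averaging with a positive smooth density keeps them bounded below away from $0$, this yields that $\partial K_m$ is a $C^\infty$ hypersurface, completing the argument.
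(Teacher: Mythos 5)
Your construction via rotation means is a legitimate variant of the paper's argument (the paper uses Schneider's radial smoothing operator $T_m$ from \cite[Theorem 3.4.1]{schneider2013convex}; your version has the pleasant feature that $\int_{SO(n)} U B(0,1)\, f_m(U)\,dU = B(0,1)$ exactly, so the rescaling step $K_m''=\frac{1}{\alpha_m}K_m'$ in the paper becomes unnecessary). The summand argument, the Hausdorff convergence, and the $C^\infty$-smoothness of $h_{K_m}$ on $\RR^n\setminus\{0\}$ via the substitution $V=UR_u$ are all fine.

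The gap is in the last step: smoothness of the support function does not imply smoothness of the body, and with densities $f_m$ \emph{supported in shrinking neighborhoods of} $\Id$ the averaging does not remove corners. Concretely, take $K=\cconv(x_0,x_1)$ a $1$-lens and let $u$ lie in the interior of the normal cone $N_K(x_0)$ at the tip $x_0$. For every $U$ in a sufficiently small neighborhood of $\Id$ and every $v$ in a fixed neighborhood of $u$ one has $h_{UK}(v)=\iprod{Ux_0}{v}$, so $h_{K_m}(v)=\iprod{\int Ux_0 f_m(U)\,dU}{v}$ is \emph{linear} near $u$: the body $K_m$ retains a genuine corner at $\int Ux_0 f_m(U)\,dU$, its normal cone there is full-dimensional, and $\partial K_m$ is not a $C^1$ (let alone $C^\infty$) hypersurface, even though $h_{K_m}\in C^\infty$. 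Your stated mechanism does not repair this: strict convexity of bodies in $\S_n$ (radii of curvature $\le 1$) yields $h_{K_m}\in C^1$, which is an upper bound on curvature radii, whereas what is needed is a \emph{lower} bound on the principal radii of curvature (nondegeneracy of $\nabla^2 h_{K_m}|_{u^\perp}$), and averaging over rotations near the identity produces no such bound at a corner. This is precisely why the paper's proof ends with the extra step $K_m\mapsto(1-\eps_m)K_m+\eps_m B_2^n$: this stays in $\S_n$ by Theorem \ref{thm:minkowski-sum-c-is-linear}, moves the body by at most $\eps_m$ in Hausdorff distance, and raises all principal radii of curvature to at least $\eps_m$, making the body $C^\infty_+$. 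Adding that step (or, alternatively, taking $f_m$ strictly positive on all of $SO(n)$ and then actually proving that the averaged area measure acquires a positive density) closes the argument.
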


\begin{proof}
    Given $K\in \S_n$ we may assume without loss of generality that $K\subseteq B_2^n$. 
    We employ the approximation procedure described in \cite[Theorem 3.4.1]{schneider2013convex}, where  $\eps_m>0$ is some sequence with $\eps_m \to 0$. To this end we fix for every $m$ some $\varphi_m:[0,\infty)\to [0,\infty)$ which is $C^\infty$ smooth, has $\int \varphi_m(|z|)dz = 1$,  and is supported on $[\eps_m/2, \eps_m)$, and define the mapping
    \[ T_mf (x) = \int_{\RR^n} f(x + |x|z)\varphi_m(|z|)dz.\]
Theorem 3.4.1 in \cite{schneider2013convex} implies that $T_m(h_{K})$ is the 
    support function of a convex body, which we call $K_m'$, and moreover $h_{K_m'}$
     is $C^\infty$ on $\RR^n\setminus \{0\}$. 
     Moreover (upon identifying the map $T_m$ on support functions and on convex bodies), $d_H(K, T_mK) \le \eps_m$ for all $K\in \S_n$ since $\outrad(K)\le 1$ and using property (c) of \cite[Theorem 3.4.1]{schneider2013convex}. We see also that $T_m(K+L) = T_mK + T_mL$ by definition (this is (a) in \cite[Theorem 3.4.1]{schneider2013convex}), and  that $T_m(B_2^n)$ is a Euclidean ball since it is invariant under rigid motions by property (b) of the same theorem. Denoting $T_m(B_2^n) = \alpha_m B_2^n$, we see that $\alpha_m \in [1-\eps_m, 1+\eps_m]$ since $d_H(\alpha_mB_2^n , B_2^n)\le \eps_m$.   
In particular, if $K+L= B_2^n$ we have $K_m'+T_m(L)= \alpha_m B_2^n$ and thus $\frac{1}{\alpha_m} K_m' + \frac{1}{\alpha_m}T_m(L) = B_2^n$. We let $K_m'' = \frac{1}{\alpha_m} K_m'$, so that 
$d_H(K_m'', K_m') = |1-\frac{1}{\alpha_m}|\sup_{u\in S^{n-1}}|h_{K_m'}(u)|\le |1-\frac{1}{\alpha_m}|\le 2\eps_m$ if we assume $\eps_m<1/2$, which we may. 
We see that $K_m''$ is a summand of $B_2^n$, and its support function is $C^\infty$, since this was the case for $K_m'$. 
The last step in our construction is to ensure that the body has no singular points. This would already imply that the body is $C^\infty$; For the discussion connecting the smoothness of the support function with the smoothness of the body, see \cite[Section 2.5]{schneider2013convex}  where the $C^2_+$ case is considered, but the proof works for any degree of smoothness. See also the discussion after Theorem 3.4.1 in the same book. To this end
we let 
\[
K_m  = (1-\eps_m)K_m'' + \eps_m B_2^n, 
\]
so that 
\[ d_H(K_m'',K_m)= \sup_{u\in S^{n-1}} |h_{K_m}(u) - h_{K_m''}(u)|
= \eps_m d_H(K_m'', B_2^n) \le \eps_m.
\] 
Clearly $K_m$ is a summand of $B_2^n$,  it is $C^\infty_+$, and its support function is also $C^\infty$. We get that  
\[ 
d_H(K_m, K) \le d_H(K_m, K_m'') + d_H(K_m'', K_m')+d_H(K_m',  K)\le 4\eps_m  
\]
and the proof is complete. 
\end{proof}

\section{Iso-parametric inequalities}\label{sec:isop-ineq}

Within a fixed class of bodies, it is of geometric interest to understand the extremal behavior of certain size or shape parameters with respect to others. A classical example is the isoperimetric inequality, stating that fixing volume, surface area is minimized (among {\em all} sets for which it can be reasonably defined) for balls. A reverse  isoperimetric inequality (maximizing surface area for fixed volume) does not hold without additional assumptions since one may construct bodies, even convex ones, with arbitrarily large surface area and fixed volume, for example by taking a very thin sheet. To solve this problem, it is customary to introduce a ``position'', in which case a celebrated theorem by Ball \cite{ball1991reverseisop} gives the extremizers. However, if one considers a smaller class, for instance $\S_n$, it is already reasonable to investigate sets of maximal surface area for a fixed volume without any position assumption. 


Within the class $\S_n$, an isoperimetric-type conjecture was suggested by Borisenko. It appeared first in the Ph.D.~dissertation of Drach \cite{Drach2016}, and is first formally stated in English in \cite[Section 4.2]{ChernovDrachTatarko2019}. 
\begin{conj}\label{conj:reverse-isop-twod}
Let $n\in {\mathbb N}$ and 	$ V\in (0, \kappa_n)$. Of all sets $K\in \S_n$ with fixed volume $\vol(K) = V$, the ones maximizing surface area  
are precisely lenses of volume $V$. 	
\end{conj}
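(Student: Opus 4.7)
The plan is to first handle $n=2$ by a first-variation/bang-bang analysis, and then to attempt a reduction of higher dimensions to the planar case.

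For $n=2$, parametrize $K\in \S_2\setminus\{\emptyset,\RR^2\}$ by its support function $h=h_K$ on $S^1$. By Theorem \ref{thm:char-curv}, $K\in \S_2$ is equivalent to its (distributional) radius of curvature $\rho=h+h''$ satisfying $\rho\in[0,1]$. One has $P(K)=\int_0^{2\pi} h\,d\theta$ and $\vol(K)=\tfrac12\int_0^{2\pi} h\rho\,d\theta$, and integration by parts yields $\delta P=\int \delta h\,d\theta$ and $\delta\vol=\int \rho\,\delta h\,d\theta$. The Lagrange condition for maximizing $P$ at fixed $\vol$ is thus $(1-\lambda\rho)\delta h=0$ for all admissible $\delta h$; on the interior of the feasibility cone $\{\rho\in(0,1)\}$ this forces $\rho\equiv 1/\lambda$. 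Such constant-$\rho$ ball-bodies are disks, and a direct check ($P_{\text{disk}}=2\sqrt{\pi \vol}$ versus $P_{\text{lens}}(\theta)=4\theta$, $\vol_{\text{lens}}(\theta)=2\theta-\sin 2\theta$) shows that for every admissible area $V\in(0,\pi)$ the lens of area $V$ strictly beats the disk of area $V$ in perimeter. Hence the extremizer must saturate the constraints, $\rho\in\{0,1\}$ a.e., so that $\partial K$ consists of finitely many unit arcs meeting at vertices — $K$ is a ``ball-polygon''.

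Among ball-polygons the lens is the unique two-arc case. I would conclude in 2D by a local deformation argument: given any $k$-arc ball-polygon with $k\ge 3$, slide the disk-centers so as to coalesce three consecutive arcs into two, removing one vertex while keeping area fixed via the deformation parameter; the resulting body should have strictly greater perimeter. Iterating forces $k=2$, and the extremizer is a lens.

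For $n\ge 3$ the analogous bang-bang characterization suggests the extremizer has boundary made of $(n-1)$-dimensional spherical unit-caps meeting along lower-dimensional singular strata — a ball-polyhedron. The principal obstacle is then to show such an extremizer is rotationally symmetric about some axis: once symmetric, one may pass to the 2D generating-curve problem and invoke the planar result. Standard tools do not readily apply: Schwarz symmetrization preserves volume but \emph{decreases} surface area (the wrong direction), and Minkowski symmetrization preserves $\S_n$ but strictly increases volume by Corollary \ref{cor:closed-under-Mink}. A promising but unfinished route is to exploit the identity $K-K^c=B(0,1)$ of Proposition \ref{prop:KminusK} together with rearrangement inequalities on the surface-area measure of $K$, but making this rigorous is the real difficulty of the conjecture.
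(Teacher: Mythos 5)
The statement you are trying to prove is stated in the paper as a \emph{conjecture}, not a theorem: the paper offers no proof, records that the case $n=2$ is due to Borisenko--Drach, the case $n=3$ to Drach--Tatarko, and explicitly notes that $n\ge 4$ is open. So there is no ``paper's proof'' to match, and any complete argument you gave would be a new result. Your proposal, however, is not complete even in the planar case. The first-variation step is only heuristic as written: at an extremizer the radius of curvature $\rho=h+h''$ is in general only a measure, the admissible variations at a point of the constraint set $\{\rho\in[0,1]\}$ form a cone rather than a linear space (so the multiplier rule yields inequalities, not the equation $(1-\lambda\rho)\,\delta h=0$), and you must separately establish existence of a maximizer (Blaschke selection plus closedness of $\S_2$ and continuity of perimeter do give this, but it needs to be said). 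More seriously, the reduction from a $k$-arc ball-polygon to a lens is precisely the content of the Borisenko--Drach theorem and cannot be dispatched by the unproved assertion that coalescing three consecutive arcs into two ``should have strictly greater perimeter'': one must exhibit an area-preserving deformation within $\S_2$ and prove the strict perimeter inequality, which is the heart of their argument.

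For $n\ge 3$ you candidly acknowledge that the key step --- showing an extremizer is a body of revolution so that the problem reduces to a planar generating curve --- is missing, and indeed your own observations show why the standard symmetrization tools fail (Schwarz symmetrization moves surface area the wrong way; Minkowski symmetrization does not preserve volume). Since that step is exactly where the difficulty lies (Drach--Tatarko's $n=3$ proof does not proceed by establishing rotational symmetry first, and no argument is known for $n\ge4$), the proposal should be regarded as a plausible programme for $n=2$ with identifiable gaps, and as no proof at all for $n\ge 3$.
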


Conjecture \ref{conj:reverse-isop-twod}  was proved by Borisenko and Drach \cite{BorisenkoDrach2014} in dimension $n=2$,  and recently by Drach and Tatarko \cite{drach2023reverse} in dimension $n=3$. The case $n\ge 4$ is currently open.

One may of course compare various other parameters for bodies in $\S_n$, such as $V(K[n-j], B[j])$, $V(K[n-j], -K[j])$ and similar parameters involving the $c$-dual of $K$. Usually, when considering the comparison of two parameters, a one-sided inequality follows simply since $\S_n$ is a subset of the class of all convex bodies, where extremizers are known, while the other side in the class of all convex bodies requires a ``position'', and in $\S_n$ can be considered directly since the bodies cannot be too degenerate. 

In this section we discuss relatively simple parameters, which already have interesting properties in this class, which are, additionally to volume, the diameter of a set,  its out-radius ($\min_x R_K(x)$) and its in-radius ($\max_x r_K(x)$, in the notations of the proof of Corollary \ref{cor:cdual-is-cont}).

\subsection{In-radius, Out-radius and Diameter}
Recall that for $K$ a convex body,  $\inrad(K) = \max\{ r: \exists x, B(x,r) \subseteq K\}$ is its in-radius, 
$\outrad(K) = \min\{ R: \exists x, K \subseteq B(x, R) \}$ is its out-radius, and $\diam(K) = \max \{ \|x-y\|_2:  x,y \in K\}$ is its diameter. 
We start with a simple fact, which is that for $K\in \S_n$ both the out-radius and the in-radius have a unique point in which they are attained. This is very much not the case for in-radius in the bigger class of all convex bodies. Moreover, the points at which they are attained are connected by duality, as are their values.

\begin{lem}\label{lem:inplusout} 
	For any $K \in \S_n$ we have 
	\[ \outrad(K) + {\rm Inrad}(K^c) = 1.\]
	Moreover, there is a unique point $x$ for which $K \subseteq B(x, \outrad(K))$, which is also the unique point for which $B(x, {\rm Inrad}(K^c))\subseteq K^c$. In other words, the 
	 smallest ball containing $K$ and the largest ball contained in $K^c$ are unique, concentric, and   $c$-dual to one another.
\end{lem}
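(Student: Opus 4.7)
The plan is to show that the out-radius ball around some center $x$ and the in-radius ball of $K^c$ around the same center are related by $c$-duality, and to deduce both equality and uniqueness at once from this relationship. The crucial observation is a two-way implication:
\begin{equation*}
K \subseteq B(x, R) \iff B(x, 1-R) \subseteq K^c,
\end{equation*}
valid for any convex body $K$ and any $R \in [0,1]$.

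For the forward direction I would use the triangle inequality: if $K \subseteq B(x, R)$ and $\|y - x\|_2 \le 1-R$, then for any $z \in K$, $\|z - y\|_2 \le \|z-x\|_2 + \|x-y\|_2 \le R + (1-R) = 1$, so $y \in K^c$. For the reverse direction, suppose $B(x, r) \subseteq K^c$ but that some $z \in K$ satisfies $\|z - x\|_2 > 1-r$. Consider $y = x + r \cdot (x-z)/\|x-z\|_2$ (assuming $z \ne x$, the degenerate case being easy). Then $y \in B(x,r) \subseteq K^c$, yet $\|z - y\|_2 = \|z - x\|_2 + r > 1$, contradicting $y \in K^c$.

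Applying the forward implication to the center $x$ realizing $\outrad(K)$ yields $\inrad(K^c) \ge 1 - \outrad(K)$; applying the reverse implication to a center realizing $\inrad(K^c)$ yields $\outrad(K) \le 1 - \inrad(K^c)$. Combining, $\outrad(K) + \inrad(K^c) = 1$, and moreover the two extremal balls are concentric: the center realizing $\outrad(K)$ also realizes $\inrad(K^c)$, and vice versa.

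Uniqueness of the out-radius center for any convex body is classical (if $K \subseteq B(x_1, R) \cap B(x_2, R)$ with $x_1 \neq x_2$ and $R = \outrad(K)$, then $K$ is contained in a ball of strictly smaller radius centered at $(x_1+x_2)/2$, a contradiction). The non-trivial part is uniqueness of the in-radius center of $K^c$, which in general can fail for convex bodies: but here it follows \emph{for free} from the two-way equivalence above, since any in-radius center of $K^c$ must coincide with an out-radius center of $K^{cc} = K$, and the latter is unique. I do not expect any serious obstacle; the only care needed is handling the trivial cases ($\outrad(K) = 0$ or $1$) separately, which cause no difficulty.
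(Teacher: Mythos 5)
Your proposal is correct and follows essentially the same route as the paper: the key step in both is the equivalence $K \subseteq B(x,R) \iff B(x,1-R) \subseteq K^c$ for $R\in[0,1]$, combined with the classical uniqueness of the circumscribed ball of a convex body, from which the radius identity, concentricity, and uniqueness of the inscribed ball of $K^c$ all follow. The only difference is that you spell out the proof of the equivalence (triangle inequality in one direction, an antipodal-point argument in the other), which the paper states without proof.
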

\begin{proof}
Clearly for any convex body $K$ the out-radius is attained at a unique point. Indeed, if $K\subseteq B(x, R)\cap B(y, R)$ with $x\neq y$, then $K\subseteq B(\frac{x+y}{2}, \sqrt{R^2 - \frac{|x-y|^2}{4}})$ so that the out-radius of a convex body is attained at a unique point. If $K\in \S_n$, then also the in-radius is attained at a unique point. This can be shown directly since the $c$-hull of two $r$-balls contains a ball with larger radius. However, it also follows from the following argument.

	For any $x\in\R^n$ and $R\in[0,1]$ we have
	that $K \subseteq B(x,R)$ if and only if $B(x,1-R) \subseteq K^c$, 
    from which follows that $r_0$ is minimal for $K$ if and only if $(1 - r_0)$ is maximal for $K^c$, in which case they are attained for balls centered at the same $x_0$. By uniqueness of the ball attaining the out-radius we get that for bodies in $\S_n$, the maximal inscribed ball is also unique, and moreover  $ \outrad(K) + {\rm Inrad}(K^c) = 1$, as claimed. 
\end{proof}

It is easy to check that the $c$-hull cannot increase the out-radius of a set, since the out-ball containing the set  is a body in $\S_n$  and will thus also include its $c$-hull. Therefore
	\begin{equation}\label{eq:out-and-outc}
	\outrad(\cconv(A)) = \outrad(A).
	\end{equation}
Nevertheless, when discussing diameter this is no longer true, and in contrast with the classical convex hull operation, one can find examples for which 		\[ 	{\rm diam}(\cconv(A)) >	{\rm diam}(A).  \]

\begin{exm}
Let $\varepsilon\in (0,\pi/3)$, $L\in (1,2\cos(\varepsilon)]$, and consider a thin isosceles triangle $T = {\rm conv}(x,y,z)\subset \RR^2$ with  $\|x-z\|_2 = \|y-z\|_2 = L, \|x-y\|_2 = 2\sin(2\varepsilon)$. Then $\outrad (T^{cc}) = \outrad (T) < 1$, however it is easy to see that $2\ge \diam(T^{cc}) = \sqrt{L^2-\sin^2(2\varepsilon)} + 2\sin^2(\varepsilon) > L = \diam(T)$.
In fact, the worst ratio $\diam(K^{cc})/\diam(K)$ is $\sqrt{2n/(n+1)}$, as we show  in Theorem \ref{thm:diam-diamhull}.
\end{exm}

The choice of $L>1$ in the previous example is not incidental, and we next show that when a set is of diameter at most $1$, the operation of $c$-hull does not change its  diameter. 
	We do this in two steps. 	
	First, we show that if a set has diameter less than $1$, the $c$-hull operation does not change this fact. 
	\begin{lem}\label{lem:diam-of-cconv-stay-below-1}
    Let $n\in {\mathbb N}$ and let $K\subset \RR^n$ satisfy $\diam(K) \le 1$. Then $\diam(K^{cc}) \le 1$. 
		\end{lem}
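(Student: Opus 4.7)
The plan is to exploit a direct consequence of $\diam(K) \le 1$: every pair of points in $K$ lies at distance at most $1$, so every point of $K$ serves as a center of a unit ball containing $K$. In other words, $K \subseteq K^c$.

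To see this, take any $x \in K$. Then for all $y \in K$ we have $\|x - y\|_2 \le \diam(K) \le 1$, which means $K \subseteq B(x,1)$, which is exactly the condition $x \in K^c$ (using the description $K^c = \{y : K \subseteq B(y,1)\}$ noted earlier).

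From $K \subseteq K^c$, I would apply order reversion of $c$-duality to get $K^{cc} \subseteq K^c$. Now the conclusion is immediate from the definition of $K^c$: for any $p, q \in K^{cc}$, since $q \in K^{cc} \subseteq K^c$ we have $p \in B(q,1)$ (as $p \in K^{cc}$ means $p$ lies in every unit ball centered at a point of $K^c$). Hence $\|p - q\|_2 \le 1$, and taking the supremum over $p, q \in K^{cc}$ yields $\diam(K^{cc}) \le 1$.

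There is no real obstacle here; the whole argument hinges on the one simple observation $K \subseteq K^c$, which is specific to the diameter-$1$ regime. Note in particular that this argument does \emph{not} extend when $\diam(K) > 1$, since then a point of $K$ need no longer be a valid ``center'' for $K$, which is consistent with the example preceding the lemma where $\diam(K^{cc}) > \diam(K)$ can occur.
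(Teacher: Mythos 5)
Your argument is correct and is essentially the paper's own proof: both rest on the equivalence $\diam(K)\le 1 \iff K\subseteq K^c$, followed by order reversion to get $K^{cc}\subseteq K^c$ and then reading off the diameter bound. Your final step merely unfolds explicitly what the paper compresses into the identity $K^c = K^{ccc}$.
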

	\begin{proof}
	Indeed, $\diam (K) \le 1$ is equivalent, by definition, 
	to the condition $K\subseteq K^c$. This in turn implies, as $c$-duality reverses order, that $K^{cc}\subseteq K^c = K^{ccc}$ which means that the diameter of $K^{cc}$ is at most $1$. 
\end{proof}
For the second step  
 we need the following lemma  
	\begin{lem}\label{lem:small-dilations-have-less-1-conv}
		Let $n\in {\mathbb N}$, let   $K\subset \RR^n$ and let $t\in (0,1)$. Then 
		\[
		(t K)^{cc} \subseteq t K^{cc}.
		\]
	\end{lem}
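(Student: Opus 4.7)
The plan is to unpack both sides using the characterization $A^{cc}=\bigcap\{B(y,1):A\subseteq B(y,1)\}$ and to check inclusion pointwise. Fix $z\in (tK)^{cc}$; the goal is $z\in tK^{cc}$, equivalently $z/t\in K^{cc}$. By the characterization above, this reduces to showing that $|z-ty|\le t$ for every $y\in\RR^n$ such that $K\subseteq B(y,1)$.

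Fix such a $y$. Scaling by $t$ gives $tK\subseteq B(ty,t)$, a ball of radius strictly less than $1$, which on its own tells us nothing about $z$ since $B(ty,t)$ is not a unit ball. The key trick will be to \emph{thicken} $B(ty,t)$ to a family of unit balls that all still contain $tK$. Concretely, for every $u\in S^{n-1}$, the triangle inequality gives
\[
B(ty,t)\subseteq B(ty+(1-t)u,\,1),
\]
because $|w-ty-(1-t)u|\le |w-ty|+(1-t)\le t+(1-t)=1$ for any $w\in B(ty,t)$. In particular $tK\subseteq B(ty+(1-t)u,1)$ for every unit $u$, and since $z\in(tK)^{cc}$ lies in every unit ball containing $tK$, we conclude
\[
|z-ty-(1-t)u|\le 1\qquad\text{for every }u\in S^{n-1}.
\]

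Finally I intersect this family of constraints. Writing $w=z-ty$ and taking $u=-w/|w|$ (assuming $w\neq 0$; otherwise we are trivially done), the inequality $|w-(1-t)u|\le 1$ becomes $|w|+(1-t)\le 1$, i.e.\ $|w|\le t$. This is exactly $|z-ty|\le t$, so $z/t\in B(y,1)$, completing the argument. The only genuinely non-routine step is the thickening observation; once one sees that $B(ty,t)$ equals $\bigcap_{u\in S^{n-1}}B(ty+(1-t)u,1)$, the statement falls out of the definition of $(tK)^{cc}$.
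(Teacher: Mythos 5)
Your proof is correct and follows essentially the same route as the paper's: both reduce to showing $\|z-ty\|_2\le t$ for every $y$ with $K\subseteq B(y,1)$, and both use the same thickening observation that $tK\subseteq B(ty,t)\subseteq B(ty+(1-t)u,1)$ for all unit $u$ (the paper phrases this as $(tK)^c\supseteq B(ty,1-t)$), concluding via the extremal choice of $u$. No gaps.
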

	
\begin{proof}
	Let $x\in (tK)^{cc}$ , we should show $x/t \in K^{cc}$ namely $B(x/t, 1) \supseteq K^c$ 
	when we know $B(x, 1) \supseteq (tK)^c$. 
	We need to show that if $K\subseteq B(y,1)$ then $\|y-x/t\|_2\le 1$, under the assumption $B(x, 1) \supseteq (tK)^c$. Assume  	
	$K\subseteq B(y,1)$.   Then   $tK \subseteq B(ty, t) \subseteq B(z, 1)$ for every $z\in B(ty, 1-t)$, and by the assumption this implies that $\|z-x\|_2\le 1$ for every $z\in B(ty, 1-t)$. Clearly $B(ty,1-t)\subseteq B(x,1)$  implies $\|ty-x\|_2\le t$ meaning $\|y-x/t\|_2\le 1$ as needed. 
\end{proof}

 With this in hand, we  
 prove that the  $c$-hull operation does not increase the diameter of a convex body, if it is smaller than $1$.
	\begin{prop}
		Let $K\subset \RR^n$ with $\diam(K)\le 1$. Then 
        \[ \diam(K^{cc}) = \diam(K).
		\]
	\end{prop}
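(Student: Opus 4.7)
The lower bound $\diam(K)\le \diam(K^{cc})$ is immediate from the inclusion $K\subseteq K^{cc}$, so the only content is the reverse inequality $\diam(K^{cc})\le \diam(K)$. I would handle the degenerate cases $\diam(K)\in\{0,1\}$ first: when $\diam(K)=0$ the set $K$ is a point (or empty) and the claim is trivial, and when $\diam(K)=1$ the previous lemma (Lemma \ref{lem:diam-of-cconv-stay-below-1}) gives exactly $\diam(K^{cc})\le 1=\diam(K)$.

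The heart of the argument is the intermediate case $d:=\diam(K)\in(0,1)$, which I propose to reduce to the $d=1$ case by a scaling trick. Set $K' := K/d$, so that $\diam(K')=1$ and $K=dK'$ with $d\in(0,1)$. Applying Lemma \ref{lem:small-dilations-have-less-1-conv} with $t=d$ yields
\[
K^{cc}=(dK')^{cc}\subseteq d\cdot (K')^{cc}.
\]
Now apply Lemma \ref{lem:diam-of-cconv-stay-below-1} to $K'$, which has diameter $1$, to conclude $\diam((K')^{cc})\le 1$. Since diameter is $1$-homogeneous under scalar dilations, this gives
\[
\diam(K^{cc})\le \diam\bigl(d\cdot (K')^{cc}\bigr) = d\cdot \diam((K')^{cc})\le d = \diam(K),
\]
and combining with the trivial lower bound finishes the proof.

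The only step that required any thought is the scaling reduction: Lemma \ref{lem:diam-of-cconv-stay-below-1} only tells us that the $c$-hull preserves the property ``diameter at most $1$'', so to extract information about a smaller diameter $d<1$ we must change scale so that $d$ becomes the critical value $1$. The non-obvious ingredient making this work is the sub-homogeneity of the $c$-hull under contractions, Lemma \ref{lem:small-dilations-have-less-1-conv}, which allows us to pull the estimate back to the original scale. No further obstacle arises.
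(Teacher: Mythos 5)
Your argument is correct and is essentially identical to the paper's proof: both rescale $K$ by $1/d$ to reduce to the diameter-$1$ case handled by Lemma \ref{lem:diam-of-cconv-stay-below-1}, and then use the sub-homogeneity of the $c$-hull (Lemma \ref{lem:small-dilations-have-less-1-conv}) to transfer the bound back to $K$. Your explicit treatment of the edge cases $d\in\{0,1\}$ is a minor (and welcome) addition, since the scaling lemma is stated only for $t\in(0,1)$.
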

	\begin{proof}
		Let $d:=\diam(K)$ and $t = d^{-1}$.  Since $\diam(tK) = 1$, we have by Lemma \ref{lem:diam-of-cconv-stay-below-1} that 
		$\diam((tK)^{cc})= 1$, or, equivalently,
	$\diam(d(tK)^{cc})= d$. 
		By Lemma \ref{lem:small-dilations-have-less-1-conv}, we have $K^{cc}\subseteq d(tK)^{cc}$, i.e. $\diam(K^{cc})\le d$, which completes the proof.
	\end{proof}

	To demonstrate the ``worst'' behavior of diameter with respect to $c$-hull, consider the regular simplex $\Delta_n(d)$ of edge length $d = \sqrt{\frac{2(n+1)}{n}}>1$. It  has diameter $d$, but has out-radius $1$ so that there is a unique ball including it, namely $(\Delta_n(d))^{cc} = B(0,1)$. In other words,  $c$-convexifying increases the diameter by a factor close to $\sqrt{2}$. In fact, this  example is sharp, and we have 
	
	\begin{thm}\label{thm:diam-diamhull}
		Let $n\in {\mathbb N}$ and let $K\subset \RR^n$ with $\outrad(K) \le 1$.  Then    $\diam (K^{cc}) \le 
		\sqrt{\frac{2n}{n+1}}
		\diam(K)$. 	
	\end{thm}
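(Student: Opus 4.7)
The plan is to deduce the bound from Jung's classical theorem, combined with a simple observation: for any $R\in[0,1]$ and $y_0\in\RR^n$, the intersection of all unit balls containing the ball $B(y_0,R)$ is $B(y_0,R)$ itself. Indeed, $B(y_0,R)\subseteq B(y,1)$ is equivalent to $\|y-y_0\|_2\le 1-R$, and then a point $x$ lies in $\bigcap_{\|y-y_0\|_2\le 1-R}B(y,1)$ precisely when $\sup_{y}\|x-y\|_2=\|x-y_0\|_2+(1-R)\le 1$, i.e.\ when $\|x-y_0\|_2\le R$.

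Next I would let $y_0$ be the center of the minimum enclosing ball of $K$, so that $K\subseteq B(y_0,\outrad(K))$. Since every unit ball that contains the larger set $B(y_0,\outrad(K))$ automatically contains $K$, it participates in the intersection $\bigcap_{\{y:\,K\subseteq B(y,1)\}}B(y,1)=K^{cc}$. Combined with the intersection lemma above, this yields $K^{cc}\subseteq B(y_0,\outrad(K))$, and in particular $\diam(K^{cc})\le 2\outrad(K)$.

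To close the estimate, I would invoke Jung's theorem: any bounded set in $\RR^n$ of diameter $D$ has out-radius at most $D\sqrt{n/(2(n+1))}$. Combined with the hypothesis $\outrad(K)\le 1$ and writing $D=\diam(K)$, this gives $\outrad(K)\le \min\bigl(1,\,D\sqrt{n/(2(n+1))}\bigr)$. A brief case split then finishes the proof: when $D\sqrt{n/(2(n+1))}\le 1$, the desired bound is $2\outrad(K)\le 2D\sqrt{n/(2(n+1))}=D\sqrt{2n/(n+1)}$; when $D\sqrt{n/(2(n+1))}>1$, one has $D>\sqrt{2(n+1)/n}$ and the identity $\sqrt{2(n+1)/n}\cdot\sqrt{2n/(n+1)}=2$ yields $2\outrad(K)\le 2<D\sqrt{2n/(n+1)}$.

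I do not foresee any substantive obstacle. The only nontrivial imported input is Jung's sharp bound, and the simplex extremal example exhibited just before the theorem already shows that both Jung's inequality and our intersection lemma are simultaneously tight there, confirming the constant $\sqrt{2n/(n+1)}$ is optimal.
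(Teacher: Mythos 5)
Your proof is correct and follows essentially the same route as the paper's: bound $\diam(K^{cc})$ by $2\outrad(K^{cc})$, observe that the $c$-hull preserves the out-radius (your ball-intersection lemma is precisely the justification of the paper's identity \eqref{eq:out-and-outc}, which the paper leaves as ``easy to check''), and finish with Jung's theorem. The concluding case split is redundant, since $2\outrad(K)\le 2\diam(K)\sqrt{n/(2(n+1))}=\diam(K)\sqrt{2n/(n+1)}$ follows from Jung's inequality alone in both cases.
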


	\begin{proof}
		We use Jung's theorem \cite{Jung1901} which states that 
		\begin{equation}\label{eq:Jung}\outrad(K) \le \sqrt{\frac{n}{2(n+1)}}\diam(K).\end{equation}  
	Combining \eqref{eq:Jung} with \eqref{eq:out-and-outc} we see that \[ \diam(K^{cc}) \le 2\outrad(K^{cc}) = 2\outrad(K) \le \sqrt{\frac{2n}{n+1}}\diam(K).\]
 The simplex with edge-length $d = \sqrt{\frac{2(n+1)}{n}}$ attains an equality. 
	\end{proof}

Using these simple observations, we can prove a Santal\'{o}-type inequality for the diameter as follows. 

\begin{thm}
		Let $n\in {\mathbb N}$. For any $K\in \S_n$ we have 
	\[2\le  \diam(K) + \diam(K^c) \le 2\sqrt{2}.\]
	Moreover, fixing $\diam(K) = d$, we have that $2-d\le \diam(K^c) \le  \sqrt{4-d^2}$. 
\end{thm}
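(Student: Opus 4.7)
My plan is as follows. The starting point is a width identity for $K \in \S_n \setminus \{\emptyset, \R^n\}$: Proposition \ref{prop:KminusK} gives $h_{K^c}(u) + h_K(-u) = 1$ for $u \in S^{n-1}$, so summing the values at $\pm u$ yields
\[
w_{K^c}(u) \;=\; h_{K^c}(u)+h_{K^c}(-u) \;=\; 2 - w_K(u),
\]
where $w_K(u) := h_K(u)+h_K(-u)$ is the width of $K$ in direction $u$. Using that the diameter of any convex body equals the maximum of its width function, taking $\max_u$ of the identity gives $\diam(K^c) = 2 - \min_u w_K(u)$. The lower bound $\diam(K) + \diam(K^c) \ge 2$, in its sharp form $\diam(K^c) \ge 2-d$, is then immediate from the trivial inequality $\min_u w_K(u) \le \max_u w_K(u) = d$.

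For the upper bound I will exploit order-reversal of $c$-duality rather than chasing widths. Let $x,y \in K$ be a pair realising the diameter, $\|x-y\|_2 = d$. Then $\{x, y\} \subseteq K$ forces
\[
K^c \;\subseteq\; \{x,y\}^c \;=\; B(x,1) \cap B(y,1),
\]
so $\diam(K^c) \le \diam(B(x,1) \cap B(y,1))$, and the problem reduces to computing the diameter of this $(n-1)$-lens.

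This last step is a short calculation: for any $p \in B(x,1) \cap B(y,1)$, writing $m = \tfrac12(x+y)$ and averaging the inequalities $\|p-x\|_2^2 \le 1$ and $\|p-y\|_2^2 \le 1$ gives, via the parallelogram identity, $\|p - m\|_2^2 \le 1 - d^2/4$. Hence the lens sits inside $B(m, \tfrac12\sqrt{4-d^2})$, so its diameter is at most $\sqrt{4-d^2}$, with equality realised by any antipodal pair on the equatorial sphere $S(x,1) \cap S(y,1)$. This delivers the refined upper bound $\diam(K^c) \le \sqrt{4-d^2}$. The global inequality $\diam(K) + \diam(K^c) \le 2\sqrt{2}$ then follows by elementary one-variable maximisation of $d + \sqrt{4-d^2}$ on $[0,2]$, whose maximum $2\sqrt{2}$ is attained at $d = \sqrt{2}$. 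I do not foresee any serious obstacle; the key observation is the one-line reduction from ``$x,y$ realise $\diam(K)$'' to ``$K^c$ is trapped inside the two-ball lens $\{x,y\}^c$'' via order-reversal, which bypasses any width bookkeeping beyond the simple identity above.
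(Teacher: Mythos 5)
Your proof is correct and follows essentially the same route as the paper: the key step for the upper bound is the identical reduction $K^c\subseteq\{x,y\}^c$ for a diameter-realising pair, the only difference being that you compute the diameter of the lens $B(x,1)\cap B(y,1)$ directly via the parallelogram identity instead of citing the appendix lemma on $k$-lenses, which is a perfectly valid (and self-contained) substitute. For the lower bound the paper uses subadditivity of the diameter applied to $K-K^c=B(0,1)$, whereas you use $\diam=\max_u w_u$ together with the width identity $w_{K^c}(u)=2-w_K(u)$; both are one-line consequences of Proposition \ref{prop:KminusK}, and your version has the small bonus of giving the exact formula $\diam(K^c)=2-\min_u w_K(u)$.
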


\begin{proof}
First, since $K - K^c = B(0,1)$ and diameter is sub-additive, we see that 
	\[ 2 = \diam(K - K^c) \le \diam(K) + \diam(K^c).\]
	For the inequality in the opposite direction, let
	 $\diam(K) = d$, and let $x,y\in K$ with $\|x-y\|_2 = d$. Then $\cconv (x,y)\subseteq K$ and so $K^c \subseteq 
	(\cconv (x,y))^{c} = \{x,y\}^c$ which means $\diam(K^c) \le \diam(\{x,y\}^c)$. So all that is left for proving the inequality is to 
    consult the appendix, specifically 
    Lemma \ref{lem:k-lens-properties}, for the appropriate values of the diameters of a pair of dual $1$-lens and $(n-1)$-lens, showing  
    $\diam((\{x,y\})^{c}) = 2\sqrt{1-(\|x-y\|_2/2)^2}$, implying that 
    that $\diam(K^c) \le  
    \sqrt{4-d^2}$. Maximizing over $d$ gives the value $2\sqrt{2}$ for $d = \sqrt{2}$. 
\end{proof}
 
\subsection{Contact points of a body and its in/out-ball}

We will make use, in the sequel, of the special structure of the set of contact points of a body in $\S_n$ and its in-ball.

\begin{lem}\label{lem:contact points give definig ball}
	Let $n\ge 2$  and $r\in(0,1)$. Let $B(0,r)\subseteq K\in \S_n$ and assume $\inrad(K) = r$. 
    Then  $rv\mapsto -(1-r)v$ is a one-to-one correspondence between the contact points 
 $B(0,r)\cap \partial K$ and the contact points $S(0,1-r)\cap K^c$.
\end{lem}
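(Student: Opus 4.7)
The plan is to combine Lemma \ref{lem:inplusout} with Lemma \ref{lem:normals map boundary to boundary}. Since $K\in\S_n$ and $B(0,r)$ is the (unique) largest ball inside $K$, applying Lemma \ref{lem:inplusout} to $K^c$ and using $K^{cc}=K$ gives $\outrad(K^c)=1-\inrad(K)=1-r$, with the unique smallest enclosing ball of $K^c$ also centered at the origin. Hence $K^c\subseteq B(0,1-r)$, and any point of $S(0,1-r)\cap K^c$ automatically lies on $\partial K^c$ (a neighborhood of it would otherwise leave $B(0,1-r)$, and thus $K^c$).

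For the forward direction, fix $rv\in B(0,r)\cap\partial K$ with $v\in S^{n-1}$. Since $B(0,r)\subseteq K$ and both meet at the boundary point $rv$, the outer unit normal $v$ to $B(0,r)$ at $rv$ is also an outer unit normal to $K$ at $rv$; that is, $v\in N_K(rv)$. Lemma \ref{lem:normals map boundary to boundary}, applied with $x=rv$ and $u=v$, yields $y=rv-v=-(1-r)v\in\partial K^c$; since $\|y\|_2=1-r$ we obtain $y\in S(0,1-r)\cap K^c$.

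For the converse, fix $-(1-r)v\in S(0,1-r)\cap K^c$ with $v\in S^{n-1}$. By the first paragraph this point lies on $\partial K^c$, and since $K^c\subseteq B(0,1-r)$ share this point, the outer unit normal $-v$ to $B(0,1-r)$ at $-(1-r)v$ is an outer unit normal to $K^c$ there, i.e.\ $-v\in N_{K^c}(-(1-r)v)$. Applying Lemma \ref{lem:normals map boundary to boundary} with $K$ replaced by $K^c\in\S_n$, $x=-(1-r)v$ and $u=-v$, we get $-(1-r)v-(-v)=rv\in\partial K^{cc}=\partial K$, and $\|rv\|_2=r$ gives $rv\in B(0,r)\cap\partial K$.

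The underlying point map $v\mapsto -v$ is a bijection on $S^{n-1}$, and the two directions above show that $rv\mapsto -(1-r)v$ and $-(1-r)v\mapsto rv$ are well-defined and mutually inverse on the respective contact sets, yielding the claimed one-to-one correspondence. The only nontrivial step is identifying the concentric dual balls via Lemma \ref{lem:inplusout} applied to $K^c$; once this is in hand, Lemma \ref{lem:normals map boundary to boundary} does the rest essentially as bookkeeping, thanks to the fact that the normal directions at contact points of a body with its in- or out-ball are forced.
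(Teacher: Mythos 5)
Your proof is correct and follows essentially the same route as the paper's: identify the concentric dual in-/out-balls via Lemma \ref{lem:inplusout}, then push contact points back and forth with Lemma \ref{lem:normals map boundary to boundary} using the forced normal direction at a contact point. (The paper's text cites Lemma \ref{uniq:signletons-to-singletons} here, which is evidently a typo for Lemma \ref{lem:normals map boundary to boundary} — the lemma you correctly invoke.)
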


\begin{proof}
Clearly (see Lemma \ref{lem:inplusout})  $K^c\subseteq B(0,1-r)$ and $\outrad(K^c) = 1-r$. 
 For the correspondence, let $rv\in B(0,r)\cap \partial K$. The   outer normal to $K$ at $rv$ is $v$, as it is also a normal to $B(0,r)$ at $rv$. 
 By Lemma \ref{uniq:signletons-to-singletons} the point $rv-v = -(1-r)v\in \partial K^c$ and is clearly in $S(0, 1-r)$ meaning it is a contact point of $K^c$ and its out-ball. For the other direction, given $(1-r)u$ a contact point of $K^c$ and $B(0,1-r)$, we have that $u = n_{B(0,1-r)}((1-r)u) \subseteq N_{K^c}((1-r)u)$ and again by Lemma \ref{uniq:signletons-to-singletons} we see that $-ru\in \partial K\cap B(0,r)$ as needed. 
\end{proof}

Contact points of the in-ball of a convex body and the body itself must be relatively ``spread'', and similarly the contact points of a convex body and its out-ball. The facts mentioned in the following two lemmas are well known, and we sketch the proofs after the statements for the convenience of the reader.

\begin{lem}\label{lem:contactopints} 
	Let $n\ge2$, let $K\subset \RR^n$ be a convex body,  and let $r>0$.  Assume $B(0,r)\subseteq K$ and denote   $C = B(0,r) \cap \partial K$. The following are equivalent:\\
 (i) The in-radius of $K$ is $r$.\\
 (ii) The set $C$ intersects every closed hemisphere of $S(0,r)$, i.e.
	\begin{equation}\label{eq:contact-condition}
		\forall u\in S^{n-1} \, \exists x\in C\,\,{\rm s.t.}\, \iprod{x}{u} \ge 0.
	\end{equation}
 (iii) There exists $1\le k\le n$ and a subset $C'\subseteq C$ of $k+1$ points such that the positive span of $C'$ is a subspace of dimension $k$ (equivalently, $\conv(C')$ is a $k$-simplex with $0$ in its relative interior). 
\end{lem}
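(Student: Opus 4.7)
The plan is to prove the cycle $(i)\Rightarrow(ii)\Rightarrow(iii)\Rightarrow(i)$. The backbone of the argument will be a single observation I would use twice: since $B(0,r)\subseteq K$ and every $x\in C$ lies on both $\partial K$ and $\partial B(0,r)$, the unique supporting hyperplane of $B(0,r)$ at $x$ also supports $K$, so the outer normal cone of $K$ at $x$ is spanned by $x/r$ and in particular $K\subseteq\{y:\langle y,x\rangle\le r^2\}$.

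For $(i)\Rightarrow(ii)$ I would argue the contrapositive. If some $u\in S^{n-1}$ satisfies $\langle x,u\rangle<0$ for every $x\in C$, then by compactness of $C$ there is $c>0$ with $\langle x,u\rangle\le -c$ on $C$. Splitting $\partial K$ into the part near $C$ (where $\langle\cdot,u\rangle\le -c/2$) and its complement, which by compactness sits at distance at least $r+\eta$ from the origin for some $\eta>0$, a standard continuity argument would show that for all sufficiently small $\delta>0$ the translated ball $B(\delta u,r)$ lies in $K$ with positive slack, hence some inscribed ball of radius strictly greater than $r$ fits in $K$, contradicting $\inrad(K)=r$.

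For $(ii)\Rightarrow(iii)$, I would first note that (ii) is equivalent to $0\in\conv(C)$: if $0\notin\conv(C)$ then the (compact) convex hull is strictly separated from $0$ by a hyperplane whose outer normal witnesses the failure of (ii), while conversely $\langle x,u\rangle<0$ on $C$ would force $\langle y,u\rangle<0$ on $\conv(C)$ and hence $0\notin\conv(C)$. Given $0\in\conv(C)$, I would pick a representation $0=\sum_{i=0}^k\lambda_i x_i$ with $x_i\in C$, $\lambda_i>0$, $\sum\lambda_i=1$ of minimal length. Minimality forces the $x_i$ to be affinely independent: any nontrivial relation $\sum\mu_i x_i=0$, $\sum\mu_i=0$ would let us replace $\lambda_i$ by $\lambda_i+t\mu_i$ for an appropriate $t$ to kill one weight while preserving positivity and the unit sum. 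Hence $\conv(x_0,\ldots,x_k)$ is a $k$-simplex with $0$ in its relative interior, and rearranging $\sum\lambda_i x_i=0$ exhibits each $x_j$ as a positive combination of the others, so $\{x_0,\ldots,x_k\}$ positively spans its ($k$-dimensional) linear span.

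For $(iii)\Rightarrow(i)$, I would take any inscribed ball $B(z,r')\subseteq K$ and apply the key observation at $x_0,\ldots,x_k$ to get $\langle z,x_i\rangle\le r^2-r'\|x_i\|_2=r(r-r')$ for every $i$ (using $\|x_i\|_2=r$); taking the convex combination with weights $\lambda_i$ satisfying $\sum\lambda_i x_i=0$ and $\sum\lambda_i=1$ would collapse the left side to $0$ and yield $r'\le r$, so $\inrad(K)=r$. I expect the one genuinely technical step to be the contrapositive of $(i)\Rightarrow(ii)$, where one has to turn the absence of contact points in a closed hemisphere into concrete room to enlarge the inscribed ball; the other two implications are essentially Carath\'eodory's theorem together with the supporting-hyperplane observation.
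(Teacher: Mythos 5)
Your proposal is correct, and although the endpoints coincide it travels a genuinely different route from the paper's sketch. The paper proves (i)$\Leftrightarrow$(ii) and (ii)$\Leftrightarrow$(iii) as two separate equivalences, whereas you close a single cycle, and each leg differs in substance. For $\neg$(ii)$\Rightarrow\neg$(i), the paper first produces a second inscribed ball of the \emph{same} radius $r$ and then enlarges via a midpoint argument; you instead show directly that $B(\delta u,r)$ misses $\partial K$ for small $\delta>0$ (any $y\in\partial K$ has $\|y\|_2\ge r$, and either $\iprod{y}{u}\le -c/2$, whence $\|y-\delta u\|_2^2\ge \|y\|_2^2+\delta c>r^2$, or $\|y\|_2\ge r+\eta$), so a concentric strictly larger ball fits; this avoids the paper's two-ball geometry and is easier to make airtight. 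For (ii)$\Rightarrow$(iii), the paper invokes Zorn's lemma and the decomposition of a closed convex cone into a proper cone plus its lineality space; your route through the equivalence of (ii) with $0\in\conv(C)$ (separation) followed by the Carath\'eodory minimal-representation trick is more elementary and, unlike the paper's minimal-set argument, delivers the cardinality $k+1$ for a $k$-dimensional positive span explicitly. Finally, for the return to (i), the paper argues that the convex hull of two balls of different radii contains a closed half-ball of the smaller one in its interior, while your support-function computation from $K\subseteq\{y:\iprod{y}{x_i}\le r^2\}$ (valid because the unique supporting hyperplane of $B(0,r)$ at a contact point must also support $K$) gives $r'\le r$ in one line. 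Both are written at the level of a sketch, matching the paper's own ``sketch of proofs,'' and all of your steps fill in correctly.
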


The out-radius is similarly characterized
\begin{lem}\label{lem:contactopintsout} 
	Let $n\ge2$, let $K\subset \RR^n$ be a convex body,  and let $R>0$. Assume $ K \subset B(0,R)$ and denote   $C = S(0,R) \cap K$. The following are equivalent:\\
 (i) The out-radius of $K$ is $R$.\\
 (ii) The set $C$ intersects every closed hemisphere of $S(0,R)$, i.e.
	\begin{equation}\label{eq:contact-condition2}
		\forall u\in S^{n-1}\, \exists x\in C \,\,{\rm s.t.}\, \iprod{x}{u} \ge 0.
	\end{equation}
 (iii) There exists $1\le k\le n$ and a subset $C'\subset C$ of $k+1$ points such that the positive span of $C'$ is a subspace of dimension $k$ (equivalently, $\conv(C')$ is a $k$-simplex with $0$ in its relative interior). 
\end{lem}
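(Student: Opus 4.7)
The plan is to establish the cycle (i)$\Rightarrow$(ii)$\Rightarrow$(iii)$\Rightarrow$(i), exactly paralleling the structure of the in-radius version in Lemma~\ref{lem:contactopints}. The unifying principle is the classical fact that the center of a minimum enclosing ball must lie in the convex hull of its contact points; the three conditions will emerge as equivalent reformulations of this fact. Note that this is a statement about general convex bodies; the ball-body structure of $\S_n$ plays no role.

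For (i)$\Rightarrow$(ii) I will argue by contrapositive and shift the center of the enclosing ball. If there is some $u\in S^{n-1}$ with $\iprod{x}{u}<0$ for every $x\in C$, compactness of $C$ gives a uniform bound $\iprod{x}{u}\le -\delta$ on $C$ for some $\delta>0$. I then consider the shifted ball $B(-tu,R)$ for small $t>0$: expanding
\[
\|x+tu\|_2^2=\|x\|_2^2+2t\iprod{x}{u}+t^2
\]
and splitting $K$ into a closed neighborhood of $C$ (on which the inner product with $u$ is at most $-\delta/2$ by continuity) and its complement in $K$ (on which $\|x\|_2\le R-\eta$ uniformly, by compactness of $K$ and closedness of $S(0,R)\cap K=C$), I will conclude that for $t$ sufficiently small the enclosing radius at $-tu$ is some $R'<R$, contradicting (i).

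For (ii)$\Rightarrow$(iii), condition (ii) is the standard reformulation of $0\in\conv(C)$ via separating-hyperplane applied to the compact set $C$. Carath\'{e}odory's theorem then writes $0$ as a strictly positive convex combination of some finite $C'\subseteq C$ of cardinality at most $n+1$; choosing such an expression of minimum cardinality $k+1$ forces the points of $C'$ to be affinely independent, since any affine dependence would let one coefficient be driven to zero. Thus $\conv(C')$ is a $k$-simplex containing $0$ in its relative interior, as required, and $\{x_1,\dots,x_{k+1}\}$ positively span the $k$-dimensional affine hull.

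For (iii)$\Rightarrow$(i), the inclusion $K\subseteq B(0,R)$ gives $\outrad(K)\le R$ for free, so I need only rule out strictly smaller enclosing balls. Assuming $K\subseteq B(y,R')$ with $R'<R$, for each $x\in C'$, combining $\|x-y\|_2^2\le R'^2$ with $\|x\|_2=R$ yields
\[
\iprod{x}{y}\ge \tfrac12(R^2-R'^2+\|y\|_2^2) =: a > 0;
\]
pairing this with a positive representation $0=\sum\lambda_i x_i$ in $\conv(C')$ and taking inner product with $y$ gives $0=\sum\lambda_i\iprod{x_i}{y}\ge a>0$, the desired contradiction. The main obstacle is the quantitative shift argument in (i)$\Rightarrow$(ii), which must be made rigorous uniformly on all of $K$ rather than only on $C$; the compactness split described above handles this, and the remaining implications reduce to convex combinatorics.
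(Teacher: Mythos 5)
Your proposal is correct; all three implications go through, and the cycle (i)$\Rightarrow$(ii)$\Rightarrow$(iii)$\Rightarrow$(i) covers the full equivalence. The first implication is essentially the paper's argument: the paper also shifts the center to $-\eps u$ using the compactness gap between $C$ and the half-sphere $\{\iprod{x}{u}\ge 0\}$, concluding via the fact that two distinct enclosing balls of equal radius force a strictly smaller out-radius, where you instead verify directly that $\sup_{x\in K}\|x+tu\|_2<R$; these are the same idea. Where you genuinely diverge is in the treatment of (iii). The paper derives (iii) from (ii) by taking a Zorn-minimal subset $C'\subseteq C$ still meeting every closed hemisphere and then analyzing the positive span of $C'$ as a cone of the form $K\oplus F$ to show it is a subspace; your route — observing that (ii) is exactly the separation-theoretic reformulation of $0\in\conv(C)$ and then extracting a minimal-cardinality Carath\'eodory representation, whose points are forced to be affinely independent — is shorter and avoids the cone-decomposition machinery entirely (you should just note in passing that $k\ge 1$ because $C\subseteq S(0,R)$ with $R>0$ rules out the single-point representation $0=\lambda_1 x_1$). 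Likewise, the paper closes the loop by proving (ii)$\Rightarrow$(i) via the geometric observation that $B(z,R')\cap S(0,R)$ sits inside an open hemisphere, whereas your (iii)$\Rightarrow$(i) is a two-line inner-product computation against the positive combination $0=\sum\lambda_i x_i$; both work, and yours has the advantage of making the contradiction purely algebraic. One small point of hygiene in (i)$\Rightarrow$(ii): rather than splitting $K$ into a closed neighborhood of $C$ and its (non-closed) complement, split it into the two closed sets $\{x\in K:\iprod{x}{u}\le-\delta/2\}$ and $\{x\in K:\iprod{x}{u}\ge-\delta/2\}$; the latter is compact and disjoint from $S(0,R)$, which gives the uniform bound $\|x\|_2\le R-\eta$ you need.
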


\begin{proof}[Sketch of proofs of Lemma \ref{lem:contactopints} and Lemma \ref{lem:contactopintsout}] We start with Lemma \ref{lem:contactopints}, and show that (i) is equivalent to (ii). 
	Assume that $r = \inrad(K)$. Suppose towards a contradiction that there exists $u\in S^{n-1}$ with $\iprod{x}{u} < 0$ for all $x\in C$. Let $\eps_0$ denote the distance between $\partial K$ and $B(0,r) \cap \left\{ x: \iprod{x}{u} \ge 0 \right\}$, which is positive since these two closed sets do not intersect, by our assumption. Therefore the ball $B(\frac{\eps_0}{2} u, r)$ is contained in $K$. This implies that the in-radius of $K$ is attained at two different points. Consider the midpoint $z$ of these two. By convexity, the ball of $B(z,r)$ is contained in $K$, and moreover, it can intersect the boundary of $K$ only where it intersects the convex hull of the two balls. However, since by assumption all the points on the lower dimensional sphere $\{x: \iprod{x}{u}  = 0\}$ do not belong to $\partial K$, these are in its interior. By convexity this implies all the points on the translated hemisphere $\{x: \iprod{x}{u}  = \iprod{z}{u}\}$ are in the interior of $K$ as well. Therefore all of $S(z,r)$ belongs to the interior of $K$, and by compactness one may find a larger ball with center $z$ contained in $K$, contradicting that $r = \inrad(K)$.

For the other direction, assume that $C$ satisfies \eqref{eq:contact-condition}. Had there been some ball in $K$ of radius $r'>r$ then in particular $K$ would include another ball of radius $r''>r$ with a different center, $u\neq 0$. The half-space $\{x:\iprod{x}{u}\ge 0\}$, on the one hand, contains some point of $C$, and on the other hand, its intersection with $S(0,r)$ is contained in the (usual) convex hull of the two balls $B(0,r), B(u, r'')$, which is a contradiction, as the convex hull of two balls with different radii centered at different points includes in its interior a closed half-ball of the smaller ball  $B(0, r)$.  

The equivalence of (i) and (ii) in Lemma \ref{lem:contactopintsout} is proven using a similar argument; If $K\subseteq B(0,R)$ and $u\in S^{n-1}$ satisfies $C\subseteq 
\{x:\iprod{x}{u}< 0\}$  then we can find a ball $B(z, R)$ containing $K$, for $z = -\eps u$ where $\eps$ is chosen using compactness, namely letting 
$d=  d(C, \{ x: \iprod{x}{u} = 0 \})$ we take 
$0< \eps <  d(K, \{ x: \iprod{x}{u} \ge  - d/2\}\cap S(0,R) )<d$. 
This contradicts uniqueness of the out-ball of convex body. 
For the other direction, if condition \eqref{eq:contact-condition2} 
is satisfied but $R> \outrad(K)$ then we can find a smaller ball $K \subset B(z, R')$, $R'<R$, meaning $K \subset B(z, R') \cap B(0, R)$. However this means all the contact points of $K$ and $B(0,R)$ belong to $B(z,R') \cap S(0,R)$ which is contained in an open hemisphere of $S(0,R)$ contradicting condition (ii).

To show the equivalence of (ii) and (iii) in both Lemmas, 
first note that (iii) immediately implies (ii). For the opposite direction, given $C$ which satisfies (ii),  
take a minimal subset $C'$ of $C$ which still satisfies \eqref{eq:contact-condition} (it exists by Zorn's lemma on closed subsets of $C$ satisfying \eqref{eq:contact-condition} with the order of inclusion). Consider its positive span 	
	\[
	E =
	\left\{
	\sum_{i=1}^{m} \lambda_i x_i \,\big|\,
	m\in\N,\, \lambda_i > 0,\, x_i\in C'
	\right\}. 
	\]
	The set $E$ is a cone in $\RR^n$. A cone in $\RR^n$ is always of the form $K\oplus F$ for some subspace $F$ and some proper cone $K$ (see \cite[Lemma 1.4.2]{schneider2013convex}). 
    First, we claim that $F$ cannot be trivial. Indeed, if $F = \{0\}$ this means that $K = E$ is a proper cone. A proper cone in $\RR^n$, intersection with $S^{n-1}$, is contained in an open half sphere (the cone is a convex subset of $\R^n$ with the origin a boundary point, take the normal cone at $0$, it must have non-empty interior otherwise the normal cone is contained in a proper subspace and its orthogonal complement will be part of the original cone). 
	We thus see that $F\neq \{0\}$. 
	Next we claim that the subset $C'\cap F$ must positively span $F$.  
	Indeed, $F$ is positively spanned by points in $C'$. Assume $f\in F$ satisfies that $f = \sum \lambda_i x_i$ where $x_i \in C'$. If $\lambda_i \neq 0$ for some $x_i \in C'\setminus F$ then by manipulating this expressions we can get an equation of the form 
	\[ f'  = \sum \lambda_j x_j\] where $f'\in F$ and $x_j \in C'\setminus F$, $\lambda_j >0$ for all $j$. The expression  $ \sum \lambda_j x_j$ belongs to $K$, and since the sum $K\oplus F$ is a direct sum, this is a contradiction. 
	
	Finally, since $C'\cap F$ positively span $F$, in particular they satisfy condition \eqref{eq:contact-condition} for $S(0,r)\cap F$ and this is inherited by $S(0,r)$. We conclude, by minimality, that $C' = C'\cap F$, and that $K = \{0\}$, so that $E$ is indeed a subspace. 
\end{proof}

\subsection{Fixing $r(K)$ and extremizing $R(K)$ and $\diam(K)$} 

The following theorem is a comparison between the inner and outer radius of a body in $\S_n$. It is worthwhile to consider the analogous question in the classical duality theory for convex bodies. Clearly, we may find convex bodies with inner radius $r$ and outer-radius any number $R\ge r$. Since, after an appropriate translation, the inner radius is the reciprocal of the outer-radius of the polar body $K^\circ= \{ y: \sup_{x\in K}\iprod{x}{y}\le 1\}$ and vice versa, this means that $I(K) := r(K)/R(K)=\inrad(K)\cdot \min_z \inrad ((K-z)^\circ)$ 

is bounded above by $1$, and can be arbitrarily close to $0$. 
In the setting of $\S_n$ and $c$-duality, the same trivial upper bound $r(K)\le R(K)$ of course holds. However here a  lower bound also exists,  and follows from  the characterization of maximal inscribed balls in terms of their contact points with the containing body.

\begin{thm}\label{thm:santalo-thereal}
Let $K\in \S_n$ with $\inrad(K)=r\in [0,1]$. Then
\begin{equation}\label{eq:Santalo-for-omi}  
		r \le \outrad(K) \le \sqrt{2r-r^2}.
\end{equation}
Moreover, if $B(x,r)\subseteq K$ then $K\subseteq B(x, \sqrt{2r-r^2})$. 
Equality on the left hand side is attained if and only if $K=rB_2^n$.
Equality on the right hand side is attained for many bodies, for example for any 
$K$ that lies between a $1$-lens of in-radius $r$, 
and an $(n-1)$-lens  of in-radius $r$.
\end{thm}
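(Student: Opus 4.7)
The left inequality is immediate: any inscribed ball lies in any circumscribed ball, so $\inrad(K)\le \outrad(K)$. Equality forces the two concentric balls of the same radius to coincide with $K$, giving $K=rB_2^n$.

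For the right inequality, I would translate so that the unique inscribed ball is $B(0,r)$. By Lemma \ref{lem:inplusout}, $K^c\subseteq B(0,1-r)$ and $\outrad(K^c)=1-r$, attained concentrically. Let $C=S(0,1-r)\cap K^c$ be the set of contact points of $K^c$ with its circumscribed sphere. By Lemma \ref{lem:contact points give definig ball}, $C$ is in bijection (via $rv\mapsto -(1-r)v$) with the contact points $B(0,r)\cap\partial K$, and since $r=\inrad(K)$, Lemma \ref{lem:contactopints} (or equivalently Lemma \ref{lem:contactopintsout} applied to $K^c$ inside $B(0,1-r)$) tells us $C$ meets every closed hemisphere of $S(0,1-r)$.

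The main step is to bound an arbitrary point $y\in K$. Since $y\in K=K^{cc}$, we have $\|y-z\|_2\le 1$ for every $z\in K^c$, in particular for every $z\in C$. Expanding,
\[
1 \ge \|y-z\|_2^2 = \|y\|_2^2 -2\iprod{y}{z} + (1-r)^2,
\]
so $2\iprod{y}{z} \ge \|y\|_2^2 - (2r-r^2)$ for every $z\in C$. If $y=0$ the desired bound is trivial; otherwise apply the hemisphere property to the direction $u=-y/\|y\|_2$ to obtain some $z\in C$ with $\iprod{y}{z}\le 0$. Combining, $0\ge \|y\|_2^2-(2r-r^2)$, giving $\|y\|_2\le \sqrt{2r-r^2}$. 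This proves the ``moreover'' statement that $K\subseteq B(0,\sqrt{2r-r^2})$, and in particular the outer radius bound.

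The only nonroutine ingredient is recognizing that the inscribed-ball condition translates, through $c$-duality and Lemma \ref{lem:contact points give definig ball}, into the hemisphere property for the contact points of the circumscribed ball of $K^c$; once this is in hand, the bound drops out of a one-line computation. For the stated equality example, one checks directly from Definition \ref{def:klens} that a $1$-lens $L_1(0,\R u, \sqrt{2r-r^2})$ and an $(n-1)$-lens $L_{n-1}(0,u^\perp,r)$ are $c$-dual to each other (using the facts from Appendix \ref{appendix-examples}) and have the correct in- and out-radii; any $K\in\S_n$ sandwiched between them inherits $\inrad(K)=r$ and $\outrad(K)=\sqrt{2r-r^2}$.
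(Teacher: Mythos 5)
Your proposal is correct and follows essentially the same route as the paper: translate so that $B(0,r)\subseteq K$, use the hemisphere property of the contact points (Lemmas \ref{lem:contactopints}/\ref{lem:contactopintsout} via the bijection of Lemma \ref{lem:contact points give definig ball}) to produce, for any $y\in K$, a point $z\in K^c$ with $\|z\|_2=1-r$ and $\iprod{y}{z}\le 0$, and conclude $\|y\|_2^2\le 1-(1-r)^2$ from $\|y-z\|_2\le 1$. The paper phrases this as a contradiction using the containing ball $B(-(1-r)v,1)$ attached to a contact point $rv$ of the in-ball, which is the same computation with $z=-(1-r)v$.
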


\begin{rems}
        Inequality \eqref{eq:Santalo-for-omi}  appeared in 
        \cite{BorisenkoDrach2013}, and more explicitly in \cite{Drach2015}, see also 
        \cite[Lemma 12]{Bezdek2021}. \\
    We also mention the parameter $\outrad(K)- \inrad(K)$, measuring a ``distance'' from being a ball, which in the class $\S_n$ is thus always smaller than $\sup_{r\in (0,1)}(\sqrt{2r-r^2}-r) =\sqrt{2}-1$, which is attained at $r=1-1/\sqrt{2}$.     
\end{rems}

\begin{proof}[Proof of Theorem \ref{thm:santalo-thereal}]
The left hand side inequality is trivial for any convex body. Fix some $r\in (0,1)$ and a body $K$ with $\inrad(K) = r$. Translate $K$ so that $B(0,r) \subseteq K$. A lens $L$ of in-radius $r$ is one example of such a body, and its outer-radius is $g(r) = \sqrt{2r-r^2}$.
Assume towards a contradiction that there exists $x\in K$ with $\|x\|_2>g(r)$. The half-space $\{y: \iprod{x}{y}\ge 0\}$ contains at least one contact point of $K$ and $B(0,r)$ by Lemma \ref{lem:contactopints}. However, this contact point corresponds by Lemma \ref{lem:contact points give definig ball} to a ball including $K$ which does not include $x$ (it does not include, in this half-space, any point of Euclidean norm more than $g(r)$), a contradiction. 
\end{proof}

\begin{cor}\label{cor:inrad-lower-bound-for-CW-bodies}
If $K\subset \RR^n$ is a body of constant width $1$, then its in-radius satisfies $\inrad(K)\ge 1 -\sqrt{\frac12}\approx 0.293$. 
\end{cor}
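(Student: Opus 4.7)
The plan is to combine the three results already at our disposal: the identification of constant-width bodies as fixed points of the $c$-duality (Proposition \ref{prop:KminusK}), the identity $\outrad(K) + \inrad(K^c) = 1$ from Lemma \ref{lem:inplusout}, and the upper bound $\outrad(K) \le \sqrt{2r - r^2}$ from Theorem \ref{thm:santalo-thereal}.

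First I would record that since $K$ has constant width $1$, Proposition \ref{prop:KminusK} gives $K = K^c$, in particular $K \in \S_n$. Applying Lemma \ref{lem:inplusout} with $K^c = K$ yields the clean identity
\[
\outrad(K) + \inrad(K) = 1.
\]
Writing $r = \inrad(K)$, this says $\outrad(K) = 1 - r$. Theorem \ref{thm:santalo-thereal} then furnishes $1 - r = \outrad(K) \le \sqrt{2r - r^2}$. Since $r \le 1/2$ (a constant-width-$1$ body has diameter $1$), both sides are nonnegative, so squaring is legitimate and gives
\[
1 - 2r + r^2 \le 2r - r^2,
\]
equivalently $2r^2 - 4r + 1 \le 0$. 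Solving the quadratic yields $r \ge 1 - \tfrac{1}{\sqrt{2}}$, which is the claimed bound.

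There is no real obstacle: the work is done by Theorem \ref{thm:santalo-thereal}. The only sanity check needed is that $1 - r$ is indeed nonnegative before squaring, which is immediate from $r \le \diam(K)/2 = 1/2 < 1$.
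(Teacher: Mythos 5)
Your argument is correct and is essentially identical to the paper's proof: both use $K = K^c$ together with $\outrad(K) = 1 - \inrad(K)$ and the bound $\outrad(K) \le \sqrt{2r - r^2}$ from Theorem \ref{thm:santalo-thereal}, then solve the resulting quadratic. The extra remark justifying the squaring via $r \le 1/2$ is a harmless bit of additional care not present in the paper.
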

\begin{proof}
Indeed, if $K=K^c$ has $\inrad(K)=r\in(0,1)$ then $1-r=\outrad(K)$ and also, by Theorem \ref{thm:santalo-thereal}, we have $\outrad(K)\le \sqrt{2r-r^2}$, so we get $(1-r)^2\le 2r-r^2$ which implies $1-\sqrt{\frac12}\le r$ as required.
\end{proof}

\begin{rem}
In fact, Jung's inequality \eqref{eq:Jung}  gives the tight lower bound $1-\sqrt{\frac{n}{2(n+1)}}$ for the in-radius of a body of constant width $1$. Indeed, $\diam(K) = 1$ so $\outrad(K) \le \sqrt{\frac{n}{2(n+1)}}$ and so $\inrad(K) = \inrad(K^c) = 1- \outrad(K) \ge 1-  \sqrt{\frac{n}{2(n+1)}}$. These two estimates, are, however, asymptotically the same. Moreover,  the previous argument  also  gives a simple proof for a Jung-type result since every body of diameter $1$ is a subset of a body of constant width $1$. 
\end{rem}

This elementary bound already gives a simple lower bound for the volume of a body of constant width $1$. The best lower bound is a well known open problem called the Blaschke-Lebesgue problem. We discuss this and other bounds in Section \ref{sec:constantwidth}. 

Theorem \ref{thm:santalo-thereal} has a similar but not identical, analogous fact regarding the diameter of a body in $\S_n$ with a fixed in-radius. Its proof is much simpler.  

\begin{prop}\label{prop:extremality of diam fix inradius}
	Let $K\in \S_n$ and assume $\inrad(K) = r\in [0,1]$. Then 
    $\diam(K)\le 2\sqrt{2r-r^2}$, with equality  attained for example for a $1$-lens of in-radius $r$ and for an $(n-1)$-lens of in-radius $r$. 
\end{prop}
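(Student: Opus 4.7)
The proof is essentially immediate from Theorem \ref{thm:santalo-thereal} combined with the trivial observation that diameter is at most twice the circumradius. My plan is as follows.

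First I would recall that for any convex body $T\subset\RR^n$ one has $\diam(T)\le 2\outrad(T)$, since if $T\subseteq B(x,R)$ then for any $p,q\in T$ the triangle inequality gives $\|p-q\|_2\le \|p-x\|_2+\|x-q\|_2\le 2R$. Applying this to $K\in\S_n$ with $\inrad(K)=r$, Theorem \ref{thm:santalo-thereal} yields $\outrad(K)\le \sqrt{2r-r^2}$, and therefore
\[
\diam(K)\le 2\outrad(K)\le 2\sqrt{2r-r^2},
\]
which is the asserted inequality.

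It remains to exhibit the equality cases. For the $(n-1)$-lens $L_{n-1}(0,u^\perp,\sqrt{1-d^2})=B(du,1)\cap B(-du,1)$, the largest inscribed ball is centered at the origin with radius $1-d$, so setting $1-d=r$ gives in-radius $r$; its diameter is realized on the equatorial $(n-2)$-sphere in $u^\perp$, namely $2\sqrt{1-d^2}=2\sqrt{2r-r^2}$, as required. For the $1$-lens $L_1(0,\RR u,d)=\cconv(du,-du)$, described in Appendix \ref{appendix-examples} (or directly via Lemma \ref{lem:CKT)}) as the intersection of all unit balls whose centers lie in the dual $(n-1)$-lens, the largest inscribed ball is centered at the origin with radius $1-\sqrt{1-d^2}$; setting this equal to $r$ gives $d=\sqrt{2r-r^2}$, and the two extreme points $\pm du$ are at distance $2d=2\sqrt{2r-r^2}$, saturating the bound.

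There is no real obstacle here; the only point to double-check is the computation of the in-radius and diameter of the $1$-lens, which one can either look up in Lemma \ref{lem:k-lens-properties} in the appendix, or derive from first principles using the description as the intersection of unit balls with centers in the dual $(n-1)$-lens and observing that the farthest such center from the origin is at distance $\sqrt{1-d^2}$.
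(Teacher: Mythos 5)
Your proof is correct, but it follows a different route from the paper's. You derive the bound as an immediate corollary of Theorem \ref{thm:santalo-thereal} via the trivial estimate $\diam(K)\le 2\outrad(K)$; this is perfectly valid, and your verification of the two equality cases (using Lemma \ref{lem:k-lens-properties}) is also sound. The paper instead argues directly: if $x,y\in K$ realize the diameter $d$, then $\cconv\{x,y\}\subseteq K$ is a $1$-lens, whose in-radius $1-\sqrt{1-(d/2)^2}$ cannot exceed $r$ by monotonicity of the in-radius, and rearranging gives $d\le 2\sqrt{2r-r^2}$. The trade-off is that the paper's argument is self-contained modulo the in-radius formula for a $1$-lens and avoids invoking Theorem \ref{thm:santalo-thereal} (whose proof runs through the contact-point lemmas) — indeed the paper advertises the diameter statement as having a "much simpler" proof — whereas your argument is shorter on the page but imports that heavier machinery. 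Your route does have the mild advantage of making explicit that the extremal configuration is forced through the out-ball: equality in your chain requires both $\outrad(K)=\sqrt{2r-r^2}$ and a diametral pair of antipodal contact points on the out-sphere, which is consistent with the two families of extremizers named in the statement.
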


\begin{proof}
	Let $\diam(K)=d$. Then there are two points $x,y\in K$ with $\|x-y\|_2=d$  and therefore $\cconv\{x,y\}	\subseteq K$. This convex hull is a $1$-lens, and if $d>2\sqrt{2r-r^2}$ then this $1$-lens has in-radius greater than $r$,  a contradiction. 
\end{proof}

\subsection{Extremizing volume for given $r(K)$ or $R(K)$} 

In his paper \cite{Bezdek2021}, Bezdek proves inequalities connecting volume and inner and outer radii for ball polytopes (from which these follow for all ball bodies). We quote his results in our notations. 
The following theorem is a re-writing of  \cite[Theorem 1]{Bezdek2021}. 
\begin{thm}[Bezdek] 
Let $K\in \S_n$ and let ${\rm Inrad}(K) = r$. Assume $L_{n-1}$ is an $(n-1)$-lens with ${\rm Inrad}(L_{n-1}) = r$ Then 
\[ \vol(K) \le \vol(L_{n-1}).\]
Equivalently, of all bodies $K\in \S_n$ with fixed volume, the one with minimal in-radius is the $(n-1)$-lens. 	
\end{thm}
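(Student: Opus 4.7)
The plan is to combine approximation with a contact-point analysis on $K^c$ and reduce, when possible, to a direct containment in an $(n-1)$-lens. I would first invoke the denseness of $c$-polyhedra (finite intersections of unit balls) in $\S_n$ together with continuity of volume in the Hausdorff metric, to assume that $K = \bigcap_{i=1}^N B(x_i,1)$ is a ball polytope. After translation, place the inscribed ball $B(0,r)$ at the origin; then by Lemma \ref{lem:inplusout}, $K^c \subseteq B(0,1-r)$, with outer radius exactly $1-r$ attained uniquely at the origin. The key easy case arises when there exist two antipodal contact points $\pm(1-r)v \in K^c \cap S(0,1-r)$: then $\cconv\{(1-r)v,-(1-r)v\} \subseteq K^c$, and order-reversal of $c$-duality gives
\[
K \;\subseteq\; \{(1-r)v,-(1-r)v\}^c \;=\; B((1-r)v,1) \cap B(-(1-r)v,1),
\]
which is a translate of $L_{n-1}$ with in-radius $r$, so $\vol(K) \le \vol(L_{n-1})$ follows by volume monotonicity.

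The hard part is when $K^c$ has no antipodal pair of contact points on $S(0,1-r)$. This really occurs: a Reuleaux triangle in $\S_2$ with $r = 1-1/\sqrt{3}$ satisfies $K = K^c$ and its three contact points form an equilateral triangle with no antipodal pair; moreover $K$ has width $1$ in every direction, while the corresponding lens of in-radius $r$ has width only $2r < 1$ along its axis, so no containment of $K$ in a translate of $L_{n-1}$ is possible. For this case, the plan is to exploit the Minkowski expansion
\[
\kappa_n \;=\; \vol(K) + \sum_{j=1}^n \binom{n}{j}\, V\bigl(K[n-j],(-K^c)[j]\bigr),
\]
coming from the identity $K + (-K^c) = B(0,1)$ (Proposition \ref{prop:KminusK}). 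In the extremal case $K=L_{n-1}$ the body $-K^c$ is a segment of length $2(1-r)$, only the $j=1$ term survives, and it contributes exactly $\kappa_n - \vol(L_{n-1}) = 2(1-r)\kappa_{n-1}(2r-r^2)^{(n-1)/2}$.

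The main obstacle will be to show that for a general $K$ the entire sum on the right is at least this value — equivalently, that the ``excess thickness'' of $-K^c$ beyond a segment of length $2(1-r)$, as forced by the $k$-simplex structure of contact points of $-K^c$ with $S(0,1-r)$ (Lemma \ref{lem:contactopintsout}), contributes to the higher-order mixed volumes exactly enough to compensate for the corresponding decrease in the leading $j=1$ term. This compensation is delicate, since naive containment bounds on $K$ yield only weaker estimates on $V(K[n-1],-K^c)$, and the tight inequality must come from a genuine interplay of all mixed-volume terms together with the constraint that $-K^c$ is itself a summand of $B(0,1)$.
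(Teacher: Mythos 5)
The paper quotes this result from Bezdek without reproving it, so the only question is whether your argument stands on its own. It does not: it is an honest plan with the decisive step missing. Your easy case is fine — if $K^c$ meets $S(0,1-r)$ in two antipodal points $\pm(1-r)v$, then order reversal gives $K\subseteq\{\pm(1-r)v\}^c=B((1-r)v,1)\cap B(-(1-r)v,1)$, which by Lemma \ref{lem:k-lens-properties} is an $(n-1)$-lens of in-radius exactly $r$, and monotonicity of volume finishes. But, as you yourself note with the Reuleaux triangle, the generic situation is that no antipodal pair exists and no translate of $L_{n-1}$ contains $K$; that is the entire content of Bezdek's theorem, and you explicitly leave it as an "obstacle" rather than proving it. As written, the proposal establishes the theorem only for the special subclass of bodies admitting an antipodal contact pair.

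Moreover, the strategy you sketch for the hard case rests on a false computation in the extremal case. The $c$-dual of the $(n-1)$-lens is not a segment: by Lemma \ref{lem:k-lens-dual-to-n-k-lens} it is the $1$-lens $\cconv(x+du,x-du)$ with $d=1-r$, a full-dimensional spindle of revolution. Hence in the expansion $\kappa_n=\sum_{j=0}^{n}\binom{n}{j}V\bigl(L_{n-1}[n-j],(-L_{n-1}^c)[j]\bigr)$ every term survives — already in the plane one has $\pi=(\theta-\sin\theta)+2\sin\theta+(\pi-\theta-\sin\theta)$, with the $j=2$ term $\vol(L^c)=\pi-\theta-\sin\theta>0$ — so the identity $\kappa_n-\vol(L_{n-1})=2(1-r)\kappa_{n-1}(2r-r^2)^{(n-1)/2}$ is wrong, and the "compensation" you would need to prove is not even correctly normalized. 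If you want to complete this along geometric lines, the contact-point structure of Lemma \ref{lem:contactopints} (the contact directions positively span a subspace and hence meet every closed hemisphere) is the right starting input, but it must be combined with a genuine volume-comparison mechanism — e.g.\ a cone-by-cone or symmetrization argument as in Bezdek's original paper — rather than containment in a single lens or the mixed-volume identity as stated.
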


The opposite direction is the following theorem (which is \cite[Theorem 7]{Bezdek2021}). 
\begin{thm}[Bezdek] 
Let $K\in \S_n$ with $\outrad(K) = R$, and let $L_1$ be a $1$-lens with 
$\outrad(L_1) = R$ (namely the $c$-hull of two points at distance $2R$). Then 
\[ \vol(L_1) \le \vol(K).\]
Equivalently, of all $K\in \S_n$ with fixed volume, the one with maximal out-radius is the $1$-lens. 	
\end{thm}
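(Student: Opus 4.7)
The plan is to find a $1$-lens of out-radius exactly $R$ contained in $K$. This suffices because $r\mapsto \vol(L_1(r))$ is monotone increasing: for $r\le r'$, we have $\{\pm r e_1\}\subseteq \{\pm r' e_1\}^{cc}=L_1(r')$, and since $L_1(r')\in \S_n$ it contains the smallest ball-body around $\{\pm r e_1\}$, which is $L_1(r)$.

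Translate so that the unique out-ball of $K$ (Lemma \ref{lem:inplusout}) is $B(0,R)$, and set $C=K\cap S(0,R)$. By Lemma \ref{lem:contactopintsout}, there exist $k+1$ points $p_1,\ldots,p_{k+1}\in C$ (for some $1\le k\le n$) whose convex hull is a $k$-simplex containing $0$ in its relative interior. In the easy case $k=1$, we have $p_2=-p_1$ with $\|p_1-p_2\|_2=2R$, and since $K\in\S_n$ contains $\{p_1,p_2\}$ it contains the $c$-hull $\{p_1,p_2\}^{cc}$ (the smallest ball-body through these two points), which is a $1$-lens of out-radius $R$; this finishes the proof.

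The harder case $k\ge 2$ means no pair of contact points is antipodal, hence $\diam(K)<2R$, and a single application of spindle-convexity cannot produce an $L_1(R)$ inside $K$. Here I would first pass to the smaller body $K':=\{p_1,\ldots,p_{k+1}\}^{cc}$, which lies in $\S_n$, is contained in $K$, and still has out-ball $B(0,R)$ (its out-radius is at least $R$ since it contains the $p_i$'s, and at most $R$ since $K'\subseteq K\subseteq B(0,R)$). Then I would continuously deform $K'$ by sliding one contact point $p_i$ along a great circle of $S(0,R)$ toward the antipode of another contact point $p_j$, redefining $K'$ as the $c$-hull of the deformed configuration at each step; throughout, $K'$ stays in $\S_n$ and has out-ball $B(0,R)$. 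When $p_i$ reaches $-p_j$, the configuration contains an antipodal pair and the easy case applies.

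The main obstacle is proving the volume is monotonically non-increasing along this deformation; once established, iteration reduces the problem to the easy case. A direct variational analysis is delicate because moving one contact point reshapes several boundary pieces of the $c$-hull simultaneously. A cleaner route I would attempt is to first prove the inequality for ball polytopes (intersections of finitely many unit balls) via a discrete reduction argument in the spirit of \cite{Bezdek2021}, eliminating defining balls one at a time, and then pass to general $K\in\S_n$ by a density argument: Hausdorff continuity of $\cconv$ (Corollary \ref{cor:c-hull-is-continuous}), together with compactness of $\{K\in\S_n:\outrad(K)\le R\}$ under the Hausdorff metric and continuity of volume.
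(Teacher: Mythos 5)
First, a point of comparison: the paper does not prove this theorem at all — it is quoted, in the paper's notation, from Bezdek's work (\cite[Theorem 7]{Bezdek2021}). So your proposal cannot be measured against an in-paper argument; it has to stand on its own as a proof, and as written it does not. Your ``easy case'' is correct: if the contact set $K\cap S(0,R)$ contains an antipodal pair $p,-p$, then $\{p,-p\}^{cc}\subseteq K$ by Proposition \ref{prop:ball and spindle convexity}, this $c$-hull is a $1$-lens of out-radius $R$ by \eqref{eq:out-and-outc}, and your monotonicity argument for $r\mapsto\vol(L_1(r))$ is fine.

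The gap is that this case is exceptional, and in the remaining case your headline strategy — exhibiting an $L_1(R)$ inside $K$ — is not merely harder but impossible. A $1$-lens of out-radius $R$ has diameter exactly $2R$ (it contains its two vertices at distance $2R$ and sits inside a ball of radius $R$), so $L_1(R)\subseteq K\subseteq B(0,R)$ would force two antipodal contact points; the Reuleaux triangle, with out-radius $1/\sqrt3$ and diameter $1<2/\sqrt3$, already shows a generic $K$ contains no such lens. Hence the entire content of the theorem sits in your deformation step, where you must show that $\vol$ is non-increasing as a contact point slides along $S(0,R)$ toward an antipode — and you explicitly leave this unproved. It is not a routine verification: moving one vertex reshapes the $c$-hull globally; the intermediate configurations can lose the property that the contact set meets every closed hemisphere, so the out-ball need not remain $B(0,R)$ throughout; and nothing in the paper supplies the needed monotonicity (the linear parameter systems of Section \ref{sec:lps} give concavity of $\vol(L_t^c)^{1/n}$, not monotonicity of $\vol(L_t)$, and Section \ref{sec:stein-not-in-class} shows $\vol(L_t)$ even fails to be convex in $\RR^3$). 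The fallback of ``a discrete reduction argument in the spirit of Bezdek'' is a pointer to the proof you are supposed to supply, not a proof. (Also, a minor slip: $\{K\in\S_n:\outrad(K)\le R\}$ is not Hausdorff-compact — it is translation invariant — so the closing density argument would need all bodies confined to a fixed ball.)
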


For both theorems, Bezdek conjectured \cite[Conjectures 5,10]{Bezdek2021} that the same is true when volume is replaced by $V_k$, the quermassintegrals of various orders. He also provides some non-sharp bounds for these quantities. 
In \cite{drach2023reverse} Drach and Tatarko prove the ``The Reverse Inradius Inequality'' which is an instance of one of Bezdek's conjectures.

\begin{thm}[Drach-Tatarko] 
Let $n\ge 2$ and let $K\in \S_n$ with $\inrad(K) = r$, and let $L$ be an $(n-1)$-lens with 
$\inrad(L_{n-1}) = r$. Then 
\[ \vol_{n-1}(\partial K) \le \vol_{n-1}(\partial L_{n-1}).\]
Equivalently, of all $K\in \S_n$ with surface area volume, the one with minimal in-radius is the $(n-1)$-lens. 	
\end{thm}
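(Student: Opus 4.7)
The plan is to exploit the structural results from this excerpt --- that $K\in\S_n$ is a summand of $B(0,1)$ (Proposition \ref{prop:KminusK}), has all principal curvatures bounded below by $1$ (Theorem \ref{thm:char-curv}), and has a positively spanning contact set with its in-ball (Lemma \ref{lem:contactopints}) --- together with a compactness plus first-variation argument to identify the extremizer.

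First, by Proposition \ref{thm:dense-are-the-smooth} and the Hausdorff continuity of surface area on bodies of uniformly bounded out-radius, I may assume $K$ is $C^\infty$ and strictly convex. Translate so that the in-ball of $K$ is $B(0,r)$; by Lemma \ref{lem:contactopints} the contact set $C=\partial K\cap S(0,r)$ positively spans some subspace through the origin, and by Lemma \ref{lem:contact points give definig ball} it corresponds under $rv\mapsto -(1-r)v$ to the contact set between $\partial K^c$ and $S(0,1-r)$. For the candidate extremizer $L_{n-1}=B((1-r)e_1,1)\cap B(-(1-r)e_1,1)$, the contact set is $\{\pm re_1\}$, and a direct computation gives
\[
\vol_{n-1}(\partial L_{n-1}) \;=\; 2\,\vol_{n-2}(S^{n-2})\int_0^{\arccos(1-r)}\sin^{n-2}\theta\,d\theta,
\]
namely twice the area of a spherical cap of $S^{n-1}$ of angular radius $\arccos(1-r)$.

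Next, by Theorem \ref{thm:santalo-thereal} the admissible family $\{K\in\S_n:\inrad(K)=r\}$ lies in a fixed ball, so Blaschke's selection theorem combined with the Hausdorff continuity of surface area yields a maximizer $K^*$. I would then perform a first-variation analysis, parameterizing nearby bodies by support-function perturbations $h_{K^*}+\epsilon\varphi$ subject to the in-radius constraint $h_{K^*}+\epsilon\varphi\ge r$ (with equality on the normals to $C$) and to the $\S_n$-membership constraint on the spherical Hessian of the support function (principal curvatures $\ge 1$). The aim is to show that the curvature constraint must saturate at $1$ along the free part of $\partial K^*$ --- in other words, $\partial K^*$ must be a union of pieces of unit spheres --- since any region where the curvature exceeds $1$ admits an admissible outward perturbation strictly increasing the surface area.

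The problem then reduces to classifying bodies in $\S_n$ whose boundary is a union of unit-sphere pieces and whose contact set with $B(0,r)$ is minimal subject to the positive-spanning condition of Lemma \ref{lem:contactopints}. Any such body is a finite intersection of unit balls whose centers lie on $S(0,1-r)$; the minimality and positive-spanning conditions force these centers to be a single antipodal pair, giving exactly $L_{n-1}$. The main obstacle is making the Euler--Lagrange step rigorous: the $\S_n$-membership constraint is a nonlinear one-sided inequality on a second-order differential operator acting on the support function, and distinguishing the contact regions from the free part of $\partial K^*$ is a free-boundary problem in the spirit of rigidity results for convex surfaces with curvature bounds, so care is needed to rule out degenerate scenarios in which the extremizer has tangential contact beyond what Lemma \ref{lem:contactopints} requires.
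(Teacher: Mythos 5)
The first thing to say is that the paper does not prove this statement at all: it is quoted, with attribution, from Drach--Tatarko \cite{drach2023reverse}, whose proof of the reverse inradius inequality is the main content of a separate paper. So there is no internal argument to compare against, and your proposal has to stand on its own as a reproof.

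As it stands it is a programme rather than a proof, and the two steps you leave open are precisely where the substance lies. First, the variational step: membership in $\S_n$ requires \emph{all} principal radii of curvature to be at most $1$, so the obstruction to an admissible outward perturbation at a point of the free boundary only forces \emph{some} principal radius to equal $1$ there, not all of them. A point where exactly one principal radius equals $1$ lies on a unit arc, not on a piece of a unit sphere; the $1$-lens $\cconv(x,y)$, whose boundary for $n\ge 3$ is a surface of revolution of a unit arc containing no spherical cap, and which is the extremizer in Bezdek's out-radius theorem quoted in the same subsection, shows that such "ruled by unit arcs" boundaries are genuinely competitive in problems of this type. Hence the conclusion that $\partial K^*$ decomposes into unit-sphere pieces does not follow from the argument as described; you would also have to reconcile the perturbation with the constraint $\inrad(K)=r$ near the normals of the contact set of Lemma \ref{lem:contactopints}. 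Second, the classification step is false as stated: the Reuleaux triangle in the plane is a finite intersection of unit balls whose centers lie exactly on $S(0,1-r)$ for $r=1-1/\sqrt{3}$, whose boundary is a union of unit arcs, and whose in-ball contact set (the three arc midpoints) positively spans $\RR^2$ --- yet it is not a lens. The positive-spanning condition is a lower bound on how spread the contact set must be, so "minimality" does not eliminate configurations with three or more contact points; extremality of the surface area must be invoked again at this stage, via an explicit comparison among the surviving configurations, and that comparison is the actual content of the theorem.
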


\subsection{The intersection of $K$ and $K^c$} 

By Lemma \ref{lem:inplusout}, for a body $K\in \S_n $ the sets $K$ and $K^c$ always intersect, since the center of the out-ball of a convex $K$ is always in $K$, and of course the center of the in-ball of $K^c$ belongs to $K^c$. 
	In particular $K$ and $K^c$ are always contained, together, in some ball (any ball with center in $K\cap K^c$) which means their union has non trivial $c$-hull (that is, $c$-hull different that $\RR^n$). This raises several natural problems,  such as 
comparing the volumes of 
$K, K^c$, $K\cap K^c$ and $\cconv (K, K^c)$.  

Bounding the volume of the intersection from above is immediate

\begin{prop}\label{prop:bding-vol-intersec-above}
Let $K\in \S_n$, then 
\[  \vol(K\cap   K^c) \le  \sqrt{\vol(K)\vol(K^c)} \le  2^{-n} \vol(B_2^n),\]
with equality in the first inequality if and only if $K=K^c$ and in the second inequality if and only if $K$ is a translate of $\frac12 B_2^n$. Similarly 
\[  \vol(K\cap   -K^c) \le  \sqrt{\vol(K)\vol(K^c)} \le  2^{-n} \vol(B_2^n),\] 
with equality in the first inequality if and only if $K=\frac12 B_2^n$.\end{prop}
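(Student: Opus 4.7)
The plan is to obtain each inequality from a separate, short argument, and then to read off the equality cases carefully.

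For the first inequality $\vol(K\cap K^c)\le \sqrt{\vol(K)\vol(K^c)}$, I would use the trivial containment $K\cap K^c\subseteq K$ and $K\cap K^c\subseteq K^c$, giving $\vol(K\cap K^c)\le \min(\vol(K),\vol(K^c))$. Since $\min(a,b)\le \sqrt{ab}$ for positive reals (AM--GM), the inequality follows. For the equality case, equality in AM--GM forces $\vol(K)=\vol(K^c)$, and equality in $\vol(K\cap K^c)\le \min(\vol(K),\vol(K^c))$ forces $K\cap K^c=K=K^c$ as convex bodies with the same volume. The inclusion $K\subseteq K^c$ just means $\diam(K)\le 1$, and combined with $K^c\subseteq K$ we conclude $K=K^c$.

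For the second inequality $\sqrt{\vol(K)\vol(K^c)}\le 2^{-n}\vol(B_2^n)$, I would directly invoke the Santal\'o-type inequality (Lemma \ref{lem:santalo}), which gives $\vol(K)^{1/n}+\vol(K^c)^{1/n}\le \vol(B_2^n)^{1/n}$, combined with AM--GM applied to $\vol(K)^{1/n}$ and $\vol(K^c)^{1/n}$:
\[
2\sqrt{\vol(K)^{1/n}\vol(K^c)^{1/n}}\le \vol(K)^{1/n}+\vol(K^c)^{1/n}\le \vol(B_2^n)^{1/n}.
\]
Raising to the $n$-th power yields the claim. Equality in the Santal\'o-type inequality forces $K$ to be some ball $B(x,r)$ (so $K^c=B(x,1-r)$), and equality in AM--GM forces $\vol(K)=\vol(K^c)$, hence $r=1-r$, i.e.\ $K$ is a translate of $\tfrac12 B_2^n$.

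For the "similarly" clause with $-K^c$ in place of $K^c$, exactly the same argument applies since $\vol(-K^c)=\vol(K^c)$: $\vol(K\cap -K^c)\le \min(\vol(K),\vol(K^c))\le \sqrt{\vol(K)\vol(K^c)}\le 2^{-n}\vol(B_2^n)$. The only delicate step is the equality case for the first inequality here, which I expect to be the main (though still minor) obstacle. Equality requires $K=-K^c$, and combining this with Proposition \ref{prop:KminusK} gives $B(0,1)=K-K^c=K+K=2K$, so $K=\tfrac12 B_2^n$; conversely this $K$ clearly achieves equality. The second inequality's equality case is identical to the one already analyzed. Overall the proof is essentially one line per inequality, with the only subtlety being that one must chain together an easy containment, AM--GM, and the previously established Santal\'o-type inequality for $c$-duality.
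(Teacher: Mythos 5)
Your proof is correct and follows the same route as the paper's (one-line) proof: the containment $K\cap K^c\subseteq K, K^c$ plus AM--GM for the first inequality, and Lemma \ref{lem:santalo} plus AM--GM for the second, with the equality cases read off exactly as you describe. The paper simply states this without spelling out the details you supply.
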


\begin{proof}
  We use the Santal\'o-type inequality  Lemma \ref{lem:santalo} together with the arithmetic geometric means inequality.
The equality cases are trivial.  
    \end{proof}

If we wish to bound the volume of these intersections from below, a first simple observation is captured in the following lemma (note that for the intersection with $-K^c$ one must allow for a translation, since when translating $K$ to $x_0 + K$ the body $-K^c$ is translated to $-x_0 + K^c$ and these might not intersect at all.  

\begin{lem}\label{lem:ball in intersection}
Let $n\in \mathbb N$. For any body $K\in \S_n$, 
we have $K\cap K^c \neq \emptyset$. Moreover, letting $r= \inrad (K)$  and $R = \outrad(K)$ it holds that
$ \inrad (K\cap K^c) \ge \rho(r,R)$  where 
\[ \rho(r,R) =   
 \max (
\min (1-R, 1-\sqrt{1-R^2})
,
\min (r, 1-\sqrt{1-(1-r)^2}) 
 ) .
 \]
Moreover, there exists some $x_0\in \RR^n$ such that letting  $\tilde{K} = K+x_0$ 
we also have 
$ \inrad (K\cap -K^c) \ge \rho(r,R)$. 
\end{lem}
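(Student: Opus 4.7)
The plan is to produce two separate centered-ball inclusions inside $K\cap K^c$, one by positioning the circumscribed ball of $K$ at the origin and one by positioning the inscribed ball there; the maximum of the two exactly matches $\rho(r,R)$. Non-emptiness of $K\cap K^c$ is immediate: for any convex body $K$ the center $x_0$ of the (unique) circumscribed ball lies in $K$ (as it is contained in the convex hull of the contact points $K\cap S(x_0,R)$), and by Lemma \ref{lem:inplusout} the same $x_0$ is the center of the inscribed ball of $K^c$, so $x_0\in K\cap K^c$.

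The crucial ingredient is the following: \emph{if $K\subseteq B(0,R)$ is the circumscribed ball of $K$, then $K^c\subseteq B(0,\sqrt{1-R^2})$.} Indeed, any $q\in K^c$ satisfies $K\subseteq B(q,1)$, hence $K\subseteq B(0,R)\cap B(q,1)$; if $|q|>\sqrt{1-R^2}$ then Lemma \ref{lem:above} applied with $\delta=1-R$ (so that $\eta(\delta)=\sqrt{1-R^2}$) would give $\outrad(B(0,R)\cap B(q,1))<R$, contradicting $\outrad(K)=R$. Dualizing yields $K=K^{cc}\supseteq B(0,1-\sqrt{1-R^2})$, and Lemma \ref{lem:inplusout} supplies $B(0,1-R)\subseteq K^c$ concentric with the circumscribed ball of $K$. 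In this position,
\[
K\cap K^c\supseteq B\bigl(0,\min(1-R,\,1-\sqrt{1-R^2})\bigr),
\]
and since $-B(0,1-R)=B(0,1-R)$ the same inclusion holds for $K\cap(-K^c)$.

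For the second term in $\rho$, apply the same key containment to $K^c$: once $K$ is positioned so that its inscribed ball $B(0,r)$ sits at the origin, the circumscribed ball of $K^c$ is $B(0,1-r)$, and the containment gives $K\subseteq B\bigl(0,\sqrt{1-(1-r)^2}\bigr)$ (precisely the Santal\'o-type bound of Theorem \ref{thm:santalo-thereal}), so by duality $K^c\supseteq B\bigl(0,1-\sqrt{1-(1-r)^2}\bigr)$. Together with $B(0,r)\subseteq K$ this produces $K\cap K^c\supseteq B\bigl(0,\min(r,\,1-\sqrt{1-(1-r)^2})\bigr)$, and likewise for $K\cap(-K^c)$. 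Since $(K+x_0)^c=K^c+x_0$, the set $K\cap K^c$ is merely translated when $K$ is translated, so its inradius is translation-invariant and both bounds apply to the same quantity, giving $\inrad(K\cap K^c)\ge\rho(r,R)$. The inradius of $K\cap(-K^c)$ does depend on the position of $K$, so for that last assertion we choose $x_0$ to be either the translate placing the circumscribed ball or the one placing the inscribed ball of $K$ at the origin, whichever yields the larger bound. The main obstacle is spotting the right reduction to Lemma \ref{lem:above}: once the containment $K^c\subseteq B(0,\sqrt{1-R^2})$ is in hand, the remainder is a packaging of Lemma \ref{lem:inplusout} with $c$-duality.
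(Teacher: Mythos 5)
Your proof is correct and follows essentially the same route as the paper's: work in the two positions (circumscribed ball at the origin, then inscribed ball at the origin), use Lemma \ref{lem:inplusout} to pair each centered ball in $K$ with a concentric one in $K^c$, take the maximum of the two resulting $\min$'s, and exploit the central symmetry of the balls to handle $K\cap(-K^c)$. The only real difference is how the quantitative ingredient is obtained: you re-derive the containment $K^c\subseteq B(0,\sqrt{1-R^2})$ directly from Lemma \ref{lem:above}, whereas the paper gets the equivalent inclusion $B(0,1-\sqrt{1-R^2})\subseteq K$ from the contact-point Lemma \ref{lem:contactopintsout} together with Theorem \ref{thm:santalo-thereal} (whose ``moreover'' clause is exactly your key containment in dual form); your explicit observation that $\inrad(K\cap K^c)$ is translation-invariant, so that both positional bounds apply simultaneously, cleanly justifies a step the paper leaves implicit.
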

 
\begin{proof}
Without loss of generality (or, after a proper translation) we may assume, say, 
$K \subseteq B(0, R)$ and we have  $B(0, 1-R)\subseteq K^c$. By Lemma \ref{lem:contactopintsout}  the contact points $C = S(0,R)\cap K$ form a set which intersects every closed hemisphere of $S(0,R)$, so in particular $0$ belongs to $\conv(C)\subset K$. 

The set $\cconv(C)\in \S_n$, which is also a subset of $K$, has out-radius $R$ (by Lemma \ref{lem:contactopintsout} applied to $C^{cc}$) and so by Theorem \ref{thm:santalo-thereal} it holds that 
$\inrad (C^{cc}) \ge 1-\sqrt{1-R^2}$ and in fact 
$B(0, 1-\sqrt{1-R^2})\subseteq K$. We see that a ball of radius $\min (1-R, 1-\sqrt{1-R^2})$ centered at $0$ is a subset of both $K$ and $K^c$. Since the ball is centrally symmetric, it is also a subset of $-K$ and $-K^c$. 

Applying the same reasoning to the inclusions $B(0,r)\subseteq K$ and 
$K^c  \subseteq B(0,1-r)$ (which hold after a proper translation) we see that (after a possibly different translation) a ball of radius 
$\min (r, 1-\sqrt{1-(1-r)^2})$ is a subset of both $K$ and $K^c$ (and thus also of $-K$ and $-K^c$). This completes the proof of the lemma. 
\end{proof}

To bound the volume of the intersection $K\cap K^c$ in terms of the volumes of $K$ and $K^c$ we can use the well known inequality by Milman and Pajor \cite{milman1989isotropic}
\begin{prop}\label{prop:MP} Let $K,L\subset\RR^n$ be two convex bodies with the same baricenter. Then
	\[ \vol(K)\vol(L) \le \vol(K-L) \vol(K\cap L).\]
\end{prop}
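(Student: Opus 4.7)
The plan is to prove the inequality by analyzing the overlap function
\[
\phi(t) := \vol\bigl(K\cap(L+t)\bigr), \qquad t\in\RR^n,
\]
which is supported on the convex body $D := K-L$. After translating both bodies by their common barycenter (WLOG to the origin), the proof will rest on three basic properties of $\phi$. Property (i): the inclusion $(1-\lambda)(K\cap(L+s)) + \lambda(K\cap(L+t)) \subseteq K\cap(L+(1-\lambda)s + \lambda t)$ (which follows from the convexity of $K$ and $L$), combined with the Brunn-Minkowski inequality, shows that $\phi^{1/n}$ is concave on $D$. Property (ii): by Fubini and the change of variables $u = x-t$,
\[
\int_D \phi(t)\,dt \;=\; \int\!\!\int \mathbf{1}_K(x)\,\mathbf{1}_L(x-t)\,dx\,dt \;=\; \vol(K)\vol(L).
\]
Property (iii): the same change of variables yields $\int_{D} t\,\phi(t)\,dt = \vol(L)\int_K x\,dx - \vol(K)\int_L u\,du = 0$, so $0$ is the barycenter of the measure $\phi(t)\,dt$.

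Given these three properties, the Milman-Pajor inequality reduces to the following analytic statement, which I would isolate as a lemma: if $\phi$ is a nonnegative $1/n$-concave function on a convex body $D\subset\RR^n$ whose associated measure $\phi(t)\,dt$ has barycenter $c\in D$, then $\phi(c)\vol(D)\ge \int_D \phi$. Applied with $c=0$, this immediately yields $\vol(K\cap L)\vol(K-L)\ge\vol(K)\vol(L)$.

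To prove the lemma I would set $f := \phi^{1/n}$, which is concave and nonnegative on $D$, and choose any supergradient $m\in\RR^n$ of $f$ at $c$, so that $f(t)\le f(c)+\iprod{m}{t-c}$ for every $t\in D$. Multiplying by $\phi(t)\ge 0$, integrating over $D$, and using the barycenter condition $\int_D(t-c)\phi(t)\,dt = 0$ produces
\[
\int_D f^{\,n+1}\,dt \;=\; \int_D f\cdot\phi\,dt \;\le\; f(c)\int_D\phi\,dt.
\]
H\"older's inequality with conjugate exponents $\tfrac{n+1}{n}$ and $n+1$ gives $\int_D f^n\le \bigl(\int_D f^{n+1}\bigr)^{n/(n+1)}\vol(D)^{1/(n+1)}$, which rearranges to $\int_D f^{n+1}\ge (\int_D f^n)^{(n+1)/n}\vol(D)^{-1/n}$. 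Combining the two displays and cancelling a factor of $\int_D f^n=\int_D\phi$ leaves $f(c)^n\vol(D)\ge\int_D f^n$, that is, $\phi(c)\vol(D)\ge\int_D\phi$, as desired.

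The main obstacle is isolating the correct analytic lemma; once this is in hand, the remaining Brunn-Minkowski and Fubini computations are routine. The lemma itself is a special case of a Berwald-type inequality for $s$-concave densities, but the short self-contained argument above avoids invoking the general theory.
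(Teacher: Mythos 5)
Your proof is correct. Note that the paper does not actually prove this proposition: it is quoted as the known Milman--Pajor inequality with a citation to their 1989 paper, so there is no in-text argument to compare against. What you have written is a self-contained reconstruction of the standard proof of that inequality: the $1/n$-concavity of the overlap function $\phi(t)=\vol\bigl(K\cap(L+t)\bigr)$ on $D=K-L$ via Brunn--Minkowski, the Fubini identities $\int_D\phi=\vol(K)\vol(L)$ and $\int_D t\,\phi(t)\,dt=0$, and the Berwald/H\"older step converting the vanishing first moment into the pointwise bound $\phi(0)\vol(D)\ge\int_D\phi$. All steps check out. Two small points are worth making explicit, though neither is a gap: the supergradient of $f=\phi^{1/n}$ at $c=0$ exists because $0$ is an \emph{interior} point of $D$ (the common barycenter is an interior point of both bodies, so $0\in{\rm int}(K)\subseteq{\rm int}(K-L)$; at a boundary point a concave function need not have a finite supergradient), and the final cancellation divides by $\int_D f^n=\int_D\phi=\vol(K)\vol(L)>0$, which is legitimate since $K$ and $L$ are bodies with nonempty interior.
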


Using it directly, together with the fact that $K-K^c = B_2^n$, we see that we get a useful bound for $\vol(K\cap K^c)$ only under the assumption that $K$ and $K^c$ share a barycenter. (If we assume central symmetry $K=-K$, for example, then indeed the barycenters  of $K$ and $K^c$ both lie at the origin.) Nevertheless, if we use this bound to estimate $\vol(K\cap -K^c)$, after an appropriate translation, then it is applicable for all $K$. It is perhaps not so surprising, since 
  $K = K^c$ means that $K$ is of constant width $1$, but $K = -K^c$ implies that $K = \frac12 B_2^n$ (since $K- K^c = B_2^n$ for any $K\in \S_n$). So,  finding a bound for the ``measure of non-ball-ness'' is at times easier than a bound for ``measure of non-constant-width-ness''. (In particular this includes the centrally symmetric case, since the only centrally symmetric convex bodies of constant width are balls.) We let $\kappa_n = \vol_n(B_2^n)$.

\begin{thm}
Let $n\ge 2$ and let $K\in \S_n$ such that $K$ and $K^c$ have the same barycenter. Then
\[\left(\frac{\vol(K)}{\kappa_n}\right)\left(\frac{\vol(K^c)}{\kappa_n}\right) \le   \frac{\vol(K\cap K^c)}{\kappa_n}. \]
For {\em any} $K\in \S_n$ there  exists some $x_0$ such that for $\tilde{K} = K + x_0$  it holds that 
\[\left(\frac{\vol(K)}{\kappa_n}\right)\left(\frac{\vol(K^c)}{\kappa_n}\right) \le   \frac{\vol(\tilde{K}\cap -\tilde{K}^c)}{\kappa_n}. \]
\end{thm}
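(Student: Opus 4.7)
The plan is to deduce both inequalities directly from the Milman--Pajor inequality (Proposition \ref{prop:MP}) together with the identity $K - K^c = B_2^n$ from Proposition \ref{prop:KminusK}, with a small translation trick for the second statement.

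For the first inequality, I would simply apply Proposition \ref{prop:MP} to the pair $K, K^c$. Since by assumption they share a barycenter, the inequality
\[
\vol(K)\vol(K^c) \le \vol(K - K^c)\,\vol(K\cap K^c)
\]
is applicable. Proposition \ref{prop:KminusK} yields $\vol(K - K^c) = \vol(B_2^n) = \kappa_n$, and dividing by $\kappa_n^2$ gives the claim. This part is essentially a one-line computation.

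For the second inequality the idea is to apply Milman--Pajor to $\tilde K$ and $L := -\tilde K^c$. First I would compute, using that $(K+x_0)^c = K^c + x_0$, that the barycenter of $\tilde K = K+x_0$ is $b(K)+x_0$ while the barycenter of $-\tilde K^c = -K^c - x_0$ is $-b(K^c)-x_0$. Choosing $x_0 = -\tfrac12(b(K)+b(K^c))$ makes these agree, so Proposition \ref{prop:MP} gives
\[
\vol(K)\vol(K^c) = \vol(\tilde K)\vol(-\tilde K^c) \le \vol(\tilde K + \tilde K^c)\,\vol\bigl(\tilde K \cap (-\tilde K^c)\bigr).
\]
Since $\tilde K + \tilde K^c$ is a translate of $K + K^c$, the remaining step is to show $\vol(K + K^c) \le \kappa_n$.

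The one nontrivial ingredient is this last volume bound, and I expect it to be the only real content. By Proposition \ref{prop:KminusK}, $K + K^c$ is a body of constant width $2$, hence has diameter $2$. By the classical isodiametric (Bieberbach) inequality, a convex body of diameter $d$ has volume at most that of the Euclidean ball of radius $d/2$, giving $\vol(K+K^c) \le \kappa_n$. Plugging this into the Milman--Pajor bound and dividing by $\kappa_n^2$ yields the desired inequality. Note that this argument actually establishes the slightly stronger statement
\[
\vol(K)\vol(K^c) \le \vol(K+K^c)\,\vol(\tilde K \cap (-\tilde K^c)),
\]
which may be of independent interest and parallels the remark following Lemma \ref{lem:santalo}.
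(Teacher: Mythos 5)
Your proposal is correct and follows essentially the same route as the paper: Milman--Pajor applied to $K,K^c$ (resp.\ to $\tilde K$ and $-\tilde K^c$ after centering the barycenters), combined with $K-K^c=B_2^n$ and the bound $\vol(K+K^c)\le\kappa_n$ for the constant-width-$2$ body $K+K^c$. The only cosmetic difference is that you justify this last bound via the isodiametric inequality, whereas the paper invokes Urysohn's inequality; both are standard and equally valid here.
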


\begin{proof}
The first inequality 
follows from Proposition \ref{prop:MP} and the fact that $K-K^c = B_2^n$. For the second  inequality, we translate the body $K$ such that the origin lies halfway on the interval connecting the  barycenter of $K$ and the barycenter of $K^c$, so that the barycenters of $K+x_0$ and $-K^c-x_0$ coincide. We then apply Proposition \ref{prop:MP} as above, together with the well known fact (which we have already mentioned, and which follows for example from Urysohn's inequality) that for a body of constant width $2$, volume is maximized when the body is a Euclidean ball (recall that by 
Proposition \ref{prop:KminusK} the body $K+K^c$ has constant width $2$). 
We get (having replaced $\tilde{K}$ by $K$ where the volume is unaffected) 
\[\frac{\vol(K)\vol(K^c)}{\kappa_n} \le \frac{\vol(K)\vol(K^c)}{\vol(K+K^c) } \le  \vol(\tilde{K}\cap -\tilde{K}^c). \] This completes the proof. 
\end{proof}

While the set $K\cup -K^c$ is non convex in general, it is sometimes instructive to consider it as well. In terms of volume, this is not different than bounding the intersections, since 
\begin{eqnarray*}
\vol(K\cup \pm K^c) &=& \vol(K) + \vol(K^c)-\vol(K\cap \pm K^c)
. 
\end{eqnarray*}
It was shown by Schramm  \cite{schramm1988constantwidth} that for constant width bodies, i.e. $K=K^c$, for which the center of the in-ball is assumed at the origin, there is a lower bound for $\vol(K\cup - K^c)$ and in fact  $K\cup - K^c$ contains a certain ball.

\begin{thm}[Schramm]
Let $n\ge 2$. For $K\in \S_n$  satisfying $K = K^c  \subseteq B(0,R)$ we have 
\[ 
B(0, \sqrt{1-R^2 + 1/4}- 1/2) \subseteq K\cup -K. 
\]
In particular, as $\diam(K) = 1$, we have $R\le \sqrt{\frac{n}{2(n+1)}}$ and therefore \[B(0, \sqrt{\frac34+\frac{1}{2(n+1)}}- \frac12 )\subseteq K\cup -K.\]
\end{thm}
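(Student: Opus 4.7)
My plan is to argue by contradiction using the characterization $K=K^{cc}$, and reduce everything to one elementary quadratic estimate. Fix $\rho = \sqrt{1-R^{2}+1/4}-1/2$ and suppose, toward contradiction, that there exists $p\in B(0,\rho)$ with $p\notin K$ and $-p\notin K$. Since $K=K^{cc}$, the assumption $p\notin K$ means there is a witness $y_{1}\in K^{c}=K$ with $\|y_{1}-p\|_{2}>1$, and likewise $-p\notin K$ gives $y_{2}\in K$ with $\|y_{2}-(-p)\|_{2}=\|y_{2}+p\|_{2}>1$. Both $y_{1},y_{2}$ lie in $K\subseteq B(0,R)$, so $\|y_{i}\|_{2}\le R$.

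Next I would invoke Proposition \ref{prop:KminusK}: the hypothesis $K=K^{c}$ identifies $K$ as a body of constant width $1$, so in particular $\diam(K)\le 1$ and hence $\|y_{1}-y_{2}\|_{2}\le 1$. This is the only use of the constant-width assumption and is what links the two witnesses.

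Now I would simply expand and add the two strict inequalities. From
\[
\|y_{1}\|_{2}^{2}-2\iprod{y_{1}}{p}+\|p\|_{2}^{2}>1,\qquad \|y_{2}\|_{2}^{2}+2\iprod{y_{2}}{p}+\|p\|_{2}^{2}>1,
\]
summing yields $\|y_{1}\|_{2}^{2}+\|y_{2}\|_{2}^{2}+2\|p\|_{2}^{2}+2\iprod{y_{2}-y_{1}}{p}>2$. Applying Cauchy--Schwarz to $\iprod{y_{2}-y_{1}}{p}\le\|y_{2}-y_{1}\|_{2}\|p\|_{2}\le\|p\|_{2}$ and $\|y_{i}\|_{2}^{2}\le R^{2}$, I obtain $2R^{2}+2\|p\|_{2}^{2}+2\|p\|_{2}>2$, i.e. $\|p\|_{2}^{2}+\|p\|_{2}+(R^{2}-1)>0$. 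Solving this quadratic in $\|p\|_{2}$ (taking the positive root) gives $\|p\|_{2}>\tfrac{-1+\sqrt{5-4R^{2}}}{2}=\sqrt{1-R^{2}+1/4}-1/2=\rho$, contradicting $p\in B(0,\rho)$.

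The ``in particular'' claim is then immediate from Jung's inequality \eqref{eq:Jung}: since $\diam(K)=1$ one has $\outrad(K)\le\sqrt{n/(2(n+1))}$, and after translating so that the minimal enclosing ball is centered at the origin one may take $R=\sqrt{n/(2(n+1))}$, which rewrites $\rho$ in the stated form. I do not anticipate any serious obstacle here --- the argument is essentially a one-line estimate; the only point requiring care is organizing the two failures of containment symmetrically so that the cross term becomes $\iprod{y_{2}-y_{1}}{p}$, allowing $\diam(K)\le 1$ to feed directly into the Cauchy--Schwarz bound.
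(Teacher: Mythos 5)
Your proof is correct, and it takes a genuinely different route from the paper's. The paper deduces the statement from the more general Theorem \ref{thm:Schramm-general}, which is proved directly and direction-by-direction: for each $u\in S^{n-1}$ it exhibits explicit points $a(u)u\in K^c$ and $-b(u)u\in K$ with $a(u)=g(h_K(-u))$ and $b(u)=g(h_{K^c}(u))$ for $g(t)=\sqrt{1-R^2+t^2}-t$, and then uses convexity of $g$ together with the identity $h_K(-u)+h_{K^c}(u)=1$ from Proposition \ref{prop:KminusK} to conclude $\max(a(u),b(u))\ge g(1/2)=\rho$. You instead argue by contradiction: a point $p$ outside both $K$ and $-K$ produces witnesses $y_1,y_2\in K^c=K$ with $\|y_1-p\|_2>1$ and $\|y_2+p\|_2>1$; summing the squared inequalities, bounding the cross term $\iprod{y_2-y_1}{p}$ by Cauchy--Schwarz via $\diam(K)\le 1$, and using $\|y_i\|_2\le R$ yields $\|p\|_2>\rho$. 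Both arguments bottom out in the same quadratic, but yours replaces the support-function identity and the convexity of $g$ by the diameter bound plus Cauchy--Schwarz, which is shorter and more elementary. What the paper's version buys is a directional refinement (the radial reach of $K^c$ along $u$ is at least $g(h_K(-u))$, with the worst case at $h_K(-u)=1/2$) and a statement valid for general, not necessarily self-dual, $K$. Note, though, that your argument also extends verbatim to Theorem \ref{thm:Schramm-general}: take $y_1\in K$ as the witness for $p\notin K^c$ and $y_2\in K^c$ as the witness for $-p\notin K=K^{cc}$, and replace the appeal to $\diam(K)\le1$ by the observation that $y_1\in K\subseteq B(y_2,1)$ gives $\|y_1-y_2\|_2\le 1$.
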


It turns out that Schramm's proof carries over to the general case of $K\neq K^c$, and we present this result with the proof (which is completely analogous to Schramm's argument) 

 \begin{thm}\label{thm:Schramm-general}
Let $n\ge 2$, $K\in \S_n$, let $0<R<1$ and assume $K, K^c \subset B(0,R)$. Then  
\[ 
B(0, \sqrt{\frac{5}{4}-R^2}- \frac12) \subseteq K^c\cup -K. 
\]
\end{thm}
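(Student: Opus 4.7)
The plan is to mimic Schramm's contradiction argument, but applied asymmetrically to $K^c$ and $-K$. I will assume toward a contradiction that some point $p\in\RR^n$ with $\|p\|_2\le r_0:=\sqrt{5/4-R^2}-1/2$ satisfies $p\notin K^c\cup(-K)$. Unpacking the two non-memberships via the definitions, $p\notin K^c$ produces an $x\in K$ with $\|x-p\|_2>1$, while $-p\notin K=K^{cc}$ produces a $y\in K^c$ with $\|y-(-p)\|_2=\|y+p\|_2>1$. Since $x\in K$ and $y\in K^c$, the definition of the $c$-dual also gives the crucial cross bound $\|x-y\|_2\le 1$.

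The key step is to add the squared inequalities $\|x-p\|_2^2+\|y+p\|_2^2>2$ and expand, which yields
\[
\|x\|_2^2+\|y\|_2^2+2\|p\|_2^2+2\langle y-x,p\rangle>2.
\]
Now I plug in the three available bounds: $\|x\|_2^2\le R^2$ and $\|y\|_2^2\le R^2$ (from the hypothesis $K,K^c\subseteq B(0,R)$), together with Cauchy--Schwarz and $\|y-x\|_2\le 1$ to get $\langle y-x,p\rangle\le \|p\|_2$. This converts the strict inequality into
\[
2R^2+2\|p\|_2^2+2\|p\|_2>2,\qquad\text{i.e.}\qquad \bigl(\|p\|_2+\tfrac12\bigr)^2>\tfrac54-R^2,
\]
so $\|p\|_2>\sqrt{5/4-R^2}-1/2=r_0$, contradicting the choice of $p$.

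This shows $B(0,r_0)\subseteq K^c\cup(-K)$ as claimed. I do not anticipate any real obstacle: the calculation is elementary, and the only conceptual point requiring care is invoking $K=K^{cc}$ (valid since $K\in\S_n$) to convert $-p\notin K$ into the existence of a witness $y\in K^c$, thereby symmetrizing the roles of $K$ and $K^c$ in the two-sided bound. In particular, when $K=K^c$ (bodies of constant width $1$) one may take $x=y$ and recover Schramm's original inequality, confirming that $r_0=\sqrt{1-R^2+1/4}-1/2$ is the right quantity.
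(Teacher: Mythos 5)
Your proof is correct, and it takes a genuinely different route from the paper's. The paper follows Schramm's original argument: for each direction $u\in S^{n-1}$ it exhibits explicit points $a(u)u\in K^c$ and $-b(u)u\in K$ with $a(u)=\sqrt{1-R^2+h_K^2(-u)}-h_K(-u)$ and $b(u)=\sqrt{1-R^2+h_{K^c}^2(u)}-h_{K^c}(u)$, and then uses the convexity of $t\mapsto\sqrt{1-R^2+t^2}-t$ together with the identity $h_K(-u)+h_{K^c}(u)=1$ (Proposition \ref{prop:KminusK}) to conclude $\max(a(u),b(u))\ge\sqrt{5/4-R^2}-1/2$. You instead argue by contradiction with two witnesses: $p\notin K^c$ yields $x\in K$ with $\|x-p\|_2>1$, $-p\notin K=K^{cc}$ yields $y\in K^c$ with $\|y+p\|_2>1$, and the cross-bound $\|x-y\|_2\le 1$ plus $\|x\|_2,\|y\|_2\le R$ and Cauchy--Schwarz close the computation. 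Both proofs are sound and yield the same constant; yours is shorter and purely metric, using only the definition of the $c$-dual (and $K=K^{cc}$) rather than support functions and the convexity lemma, while the paper's version carries more information --- it tells you, direction by direction, exactly how far along the ray $\RR^+u$ the sets $K^c$ and $-K$ individually reach, which is the kind of refinement one would want if extra knowledge of $h_K$ were available. One small point worth making explicit in a write-up: the hypotheses $K,K^c\subseteq B(0,R)$ with $R<1$ rule out the degenerate cases $K\in\{\emptyset,\RR^n\}$, so the witnesses $x$ and $y$ indeed exist.
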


 \begin{proof}
Denote $g( t) = \sqrt{1- R^2+t^2}-t$. Then $g:\RR \to \RR^+$ is positive, decreasing,  and convex.  Fix $u\in S^{n-1}$.  
We claim that $au\in K^c$ for 
\begin{equation}\label{eq:upboundfrac}   a(u) = \sqrt{1- R^2+h_K^2(-u)}-h_K(-u). \end{equation}
Indeed, take $x\in K$ so that $\|x\|_2\le R$ and $-\iprod{x}{u}\le h_K(-u)$
both hold. Therefore 
\begin{eqnarray*} \|au-x\|_2^2 &=& a^2 - 2a\iprod{x}{u}+\|x\|_2^2 \le a^2 + 2ah_K(-u)+R^2 \\&=& 
(h_K(-u) + a)^2 +R^2 - h_K^2(-u)=  
1\end{eqnarray*}
for our choice of $a = a(u)$. This being true for any $x\in K$ implies $a(u)u\in K^c$. 
Similarly, $-b(u)u\in K^{cc} = K$ (equivalently, $b(u)u\in -K$) for 
\begin{equation}\label{eq:upboundfrac} b(u) = \sqrt{1- R^2+h_{K^c}^2(u)}-h_{K^c}(u). \end{equation}
Finally, we claim that for all $u\in S^{n-1}$ we have $\rho = \sqrt{\frac54 -R^2} - \frac12 \le \max(a(u),b(u))$. Indeed, 
\begin{eqnarray*}
&&\max (a(u), b(u))   \\
&& =  \max (\sqrt{1- R^2+h_K(-u)^2}-h_K(-u), \sqrt{1- R^2+h_{K^c}(u)^2}-h_{K^c}(u))
)\\
&& \ge \frac{1}{2}\left( 
\sqrt{1- R^2+h_K(-u)^2}+ \sqrt{1- R^2+h_{K^c}(u)^2}-h_K(-u)-h_{K^c}(u))
\right) 
\\
&& \ge \sqrt{1 - R^2+ \left(\frac{h_{K^c}(u) + h_K(-u)}{2}\right)^2} - \frac{h_{K^c} (u) + h_K(-u)}{2} = \sqrt{\frac{5}{4} - R^2  } - \frac{1}{2}
\end{eqnarray*}
 where we have used the convexity of $g$. Therefore $\rho B_2^n \subset K^c\cup -K$, as claimed. 
\end{proof}

\subsection{$c$-Mahler in the plane}


To conclude this section, we return to our starting point, namely 
the Borisenko Conjecture \ref{conj:reverse-isop-twod}. In  
 \cite{BorisenkoDrach2014} it is proved in $\RR^2$. In the notation of mixed volumes their theorem is the following. 
 
\begin{thm}\label{thm:tatarko-dim2}
Let $K\in \S_2$ and let $V=\vol_2(K) \le \pi$. Then $V(K,B)\le V(L,B)$ where $L$  is a lens of area $V$. 	
\end{thm}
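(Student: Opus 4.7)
My plan is a variational argument: reduce to ball-polygons, take a perimeter-maximizer, and show by a local surgery that it must have only two arcs, i.e.\ be a lens. Recall that in dimension two $V(K,B)=\tfrac12\operatorname{Per}(K)$, so the claim is equivalent to a reverse isoperimetric inequality within $\S_2$.

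\textbf{Reduction and existence.} By the denseness of $c$-polyhedra in $\S_2$ established earlier, together with the Hausdorff continuity of area and of mixed volume on compact convex sets, it suffices to prove the inequality for ball-polygons $K=\bigcap_{i=1}^N B(x_i,1)$. Their boundaries consist of finitely many unit circular arcs meeting at corner points. The family of $K\in\S_2$ with prescribed area $V\in(0,\pi)$ is compact in Hausdorff distance (they are uniformly contained in a single disc, e.g.\ via Lemma \ref{lem:santalo}), so a perimeter-maximizer $K^\ast$ exists.

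\textbf{Surgery on three consecutive arcs.} Suppose $\partial K^\ast$ has $m\ge 3$ arcs, coming from balls with centers $x_1,\dots,x_m$ in the cyclic order of the arcs. Fix three consecutive centers $x_{i-1},x_i,x_{i+1}$ and consider the one-parameter family obtained by translating $x_i$ by $tv$, for $v\in\RR^2$ small, with all other centers fixed. Write $K_t$ for the resulting ball-polygon. For $t$ small and $v$ in a suitable open cone, $x_i+tv$ remains a vertex of the dual $c$-polytope and the middle arc persists, so $K_t\in\S_2$ with the same arc structure as $K^\ast$.

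\textbf{First-variation calculation.} The two corners of $\partial K_t$ bounding the arc $\alpha_i$ are the intersection points of the unit circle centered at $x_i+tv$ with those centered at $x_{i\pm 1}$, and depend smoothly on $t$. Differentiating, $\tfrac{d}{dt}\vol_2(K_t)|_{t=0}$ and $\tfrac{d}{dt}\operatorname{Per}(K_t)|_{t=0}$ are each explicit linear functionals of $v$, expressible through the central angles of $\alpha_{i-1},\alpha_i,\alpha_{i+1}$ and the chord directions $x_i-x_{i\pm 1}$. Pick $v$ nonzero in the one-dimensional kernel of the area derivative; the heart of the argument is to show that along this area-preserving direction the perimeter derivative does not vanish. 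A sign flip of $v$ then strictly increases $\operatorname{Per}(K_t)$ while keeping $\vol_2(K_t)=V$, contradicting maximality of $K^\ast$. Hence $m\le 2$, and a ball-polygon with two arcs is precisely a lens.

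\textbf{Main obstacle.} The delicate step is the last one: the trigonometric identity certifying that the area and perimeter derivatives cannot vanish simultaneously for any nondegenerate triple $(\alpha_{i-1},\alpha_i,\alpha_{i+1})$. After simplification this should reduce to the statement that, for a convex ball-polygon, three consecutive chord directions $x_i-x_{i\pm 1}$ and $x_{i+1}-x_{i-1}$ span $\RR^2$. Equally delicate are the degenerate boundary cases of the deformation, when the middle arc shrinks to a point and two neighboring arcs merge; these correspond exactly to the transition $m\to m-1$ towards the lens configuration, and must be handled by direct inspection when the area-preserving deformation is forced to the boundary of the admissible parameter region. Finally, the constraint $V\le \pi=\vol_2(B_2^2)$ is needed to guarantee that the lens of area $V$ exists, and should also be used to verify that the extremizer cannot degenerate to the full disc (where $m=0$).
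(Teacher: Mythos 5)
First, note that the paper does not prove this statement at all: Theorem \ref{thm:tatarko-dim2} is quoted as the planar case of Conjecture \ref{conj:reverse-isop-twod}, proved by Borisenko and Drach in \cite{BorisenkoDrach2014}, so there is no in-paper argument to compare yours against. Your reduction to ball-polygons via density and Hausdorff continuity, and the existence of a perimeter-maximizer by compactness, are fine. The problem is the step you yourself flag as the heart of the argument, and it is not merely unverified --- it is false as stated.

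You claim that for a ball-polygon with $m\ge 3$ arcs, the first variations of area and perimeter under displacement of a single center $x_i$ are never proportional, so that some area-preserving direction strictly changes the perimeter. The Reuleaux triangle $R=\{x_1,x_2,x_3\}^{cc}$ is a counterexample. Reflecting across the symmetry axis through $x_1$ shows that both $\vol_2(K_t)$ and $\operatorname{Per}(K_t)$ are even in $t$ when $v$ is perpendicular to that axis; hence both gradients at $x_1$ lie along the axis and are proportional, and the perimeter derivative vanishes identically on the one-dimensional area-preserving direction. The same symmetry argument shows $R$ is a critical point of the full constrained problem (perturbing all three centers), so no first-order surgery of the kind you describe can rule out $m=3$. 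This is consistent with general principle: a constrained maximizer must satisfy the Lagrange condition, i.e.\ exact proportionality of the two differentials, so your strategy amounts to proving that no configuration with $m\ge3$ is even a critical point --- which is untrue. To salvage the approach you would need a second-variation analysis or a genuinely global comparison (this is in effect what \cite{BorisenkoDrach2014} supply, working with curves of curvature bounded below rather than with vertex perturbations), and in addition a careful treatment of the degeneration $m\to m-1$ that you currently only gesture at. As written, the proposal does not constitute a proof.
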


As a consequence, we get a Mahler-type result in the plane, that is, fixing the area of a body, a lower bound for the area of its $c$-dual.  

\begin{cor}\label{cor:whoisMahlerR2}
	Let $K\in \S_2$ with $\vol_2(K) = V\le \pi$, and let $L$ be the lens of area $V$. Then 
	\[ \vol_2(L^c) \le \vol_2(K^c).\]
\end{cor}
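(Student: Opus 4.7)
The plan is to derive a clean identity expressing $\vol_2(K^c)$ in terms of $\vol_2(K)$ and the mixed area $V(K,B)$, from which the corollary follows by direct substitution into Theorem \ref{thm:tatarko-dim2}.

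First I would establish the identity
\[ \vol_2(K^c) = \pi + \vol_2(K) - 2V(K,B) \]
valid for all $K \in \S_2 \setminus \{\emptyset, \RR^2\}$. The starting point is Proposition \ref{prop:KminusK}, which gives $K + (-K^c) = B$. Applying the Minkowski expansion $\vol_2(A + C) = \vol_2(A) + 2V(A, C) + \vol_2(C)$ to $A = K$, $C = -K^c$ yields
\[ \pi = \vol_2(B) = \vol_2(K) + 2V(K, -K^c) + \vol_2(K^c). \]
Separately, by bilinearity of the mixed area together with $B = K + (-K^c)$, one has
\[ V(K, B) = V(K, K) + V(K, -K^c) = \vol_2(K) + V(K, -K^c), \]
so $V(K, -K^c) = V(K, B) - \vol_2(K)$. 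Substituting into the previous display and simplifying gives the claimed identity. As a sanity check, for $K = B(0, r)$ with $K^c = B(0, 1-r)$ the identity reads $\pi(1-r)^2 = \pi + \pi r^2 - 2\pi r$, which holds.

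With the identity in hand, the corollary is immediate. Applying it to $K$ and to the lens $L$, and using $\vol_2(K) = \vol_2(L) = V$,
\[ \vol_2(K^c) - \vol_2(L^c) = 2\bigl(V(L, B) - V(K, B)\bigr), \]
which is non-negative by Theorem \ref{thm:tatarko-dim2}. Hence $\vol_2(L^c) \le \vol_2(K^c)$, as claimed.

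There is no real obstacle: the entire argument reduces to the very clean identity above, which is a transparent consequence of $K - K^c = B$ combined with bilinearity of mixed areas. The only points worth noting are that $-K^c$ is a convex body (so the Minkowski formula applies) and that one should recall that in $\RR^2$ the mixed area $V(K, B)$ equals half the perimeter of $K$, so that Theorem \ref{thm:tatarko-dim2} is exactly the reverse isoperimetric statement of Borisenko--Drach phrased in mixed-area form.
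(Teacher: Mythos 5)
Your proof is correct and follows essentially the same route as the paper: both derive the identity $\vol_2(K^c) = \pi + \vol_2(K) - 2V(K,B)$ from $K - K^c = B$ together with bilinearity of mixed areas, and then invoke Theorem \ref{thm:tatarko-dim2} to conclude. The only difference is cosmetic (you expand $\vol_2(K + (-K^c))$ directly, while the paper expands $V(K^c - K, K^c - K)$), so there is nothing further to add.
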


\begin{proof}
	Using that $K - K^c = B$ and the linearity of mixed volumes we have
	\begin{eqnarray*}
		\vol_2(K^c) &=& V(K^c, K^c) = V(K^c-K, K^c-K) - 2V(K^c, -K) -V(K,K) \\&=& V(B,B)-2V(K^c-K, -K) + 2V(-K,-K) - V(K, K) \\
		& = & \pi - 2V(B,K) + V, 
	\end{eqnarray*}
so that minimizing $\vol_2(K^c)$ under the restriction $\vol_2(K) = V$ amounts to maximizing $V(K,B)$ under the same restriction, which by Theorem \ref{thm:tatarko-dim2} is maximized by a lens. 
\end{proof}

Without the area restriction, we can write the following consequence, which can also be considered as a Mahler-type result in the plane.  

\begin{cor}\label{cor:mahler-plane}
For any $K\in \S_2\setminus \{\emptyset,\RR^2\}$ one has 
\[ \sqrt{2\pi-4}  \le
\vol_2(K)^{1/2}+ \vol_2(K^c)^{1/2}
\le \pi^{1/2} \]
with equality on the left hand side for the self-dual lens (with diameter $\sqrt{2}$ and angle $\pi/2$) and on the right hand side for the ball of radius $\frac12$.  
\end{cor}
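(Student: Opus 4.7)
The upper bound is immediate from the $n=2$ case of Lemma \ref{lem:santalo}: with $\vol_2(B_2^2) = \pi$, any Euclidean ball realizes equality, and in particular the self-dual $B(0,1/2)$ gives $\sqrt{\pi/4}+\sqrt{\pi/4}=\sqrt{\pi}$.

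For the lower bound, the plan is to first reduce to lenses and then minimize explicitly over this one-parameter family. Fixing $V=\vol_2(K)\in(0,\pi]$ and letting $L(V)$ denote a lens of area $V$, Corollary \ref{cor:whoisMahlerR2} yields $\vol_2(K^c)\ge\vol_2(L(V)^c)$, hence
\[
\vol_2(K)^{1/2}+\vol_2(K^c)^{1/2}\ \ge\ \vol_2(L(V))^{1/2}+\vol_2(L(V)^c)^{1/2},
\]
so it suffices to minimize $\sqrt{\vol_2(L)}+\sqrt{\vol_2(L^c)}$ over lenses $L\subset\RR^2$. Parametrizing $L_\delta=B(0,1)\cap B(\delta e_1,1)$ by $t:=\delta/2\in[0,1]$, the appendix shows that $L_\delta^c$ is itself a lens with parameter $t'=\sqrt{1-t^2}$, so $c$-duality acts on lenses by the involution $t\leftrightarrow\sqrt{1-t^2}$ whose unique fixed point $t^*=1/\sqrt{2}$ is precisely the self-dual lens (diameter $\sqrt{2}$, subtending angle $\pi/2$ at each center). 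A circular-segment computation gives $A(t):=\vol_2(L_\delta)=2\arccos(t)-2t\sqrt{1-t^2}$; with the substitution $\arccos(t)=\pi/4+\alpha$ for $\alpha\in[-\pi/4,\pi/4]$ (so $2t\sqrt{1-t^2}=\sin(\pi/2+2\alpha)=\cos(2\alpha)$), one finds $A(t)=\pi/2+2\alpha-\cos(2\alpha)$ and $A(\sqrt{1-t^2})=\pi/2-2\alpha-\cos(2\alpha)$, so setting $\phi(\alpha):=\sqrt{A(t)}+\sqrt{A(\sqrt{1-t^2})}$ we obtain
\[
\phi(\alpha)^2\ =\ \pi-2\cos(2\alpha)+2\sqrt{\bigl(\tfrac{\pi}{2}-\cos(2\alpha)\bigr)^2-4\alpha^2},
\]
which at $\alpha=0$ evaluates to $2\pi-4$.

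The final step is to verify $\phi(\alpha)^2\ge 2\pi-4$ for all $\alpha\in[-\pi/4,\pi/4]$. The inequality is equivalent to $2\sqrt{(\pi/2-\cos(2\alpha))^2-4\alpha^2}\ \ge\ 2\cos(2\alpha)-(4-\pi)$; when the right-hand side is negative (which happens near $\alpha=\pm\pi/4$) it is trivial, and on the complementary range squaring and applying $x^2-y^2=(x-y)(x+y)$ to cancel the cross terms collapses everything to the clean inequality
\[
(\pi-2)\sin^2\alpha\ \ge\ \alpha^2.
\]
I would verify this by monotonicity of $|\sin\alpha|/|\alpha|$ together with a boundary evaluation at $|\alpha|=\tfrac{1}{2}\arccos((4-\pi)/2)$, where one computes $\sin^2\alpha=(\pi-2)/4$; the remaining claim then reduces to $\pi-2\ge\arccos((4-\pi)/2)$, equivalently $\pi<4+2\cos 2$, which is true (e.g.\ via a rational Taylor bound such as $\cos 2>-19/45$ combined with $\pi<22/7$). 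This final analytic inequality, tight by only a small amount, is the main obstacle. Equality propagates back through the chain only when $\alpha=0$ and $K$ is a lens, i.e.\ when $K$ is the self-dual lens.
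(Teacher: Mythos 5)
Your proposal is correct, and its overall architecture is the same as the paper's: the upper bound is Lemma \ref{lem:santalo} with $n=2$, and the lower bound is obtained by invoking Corollary \ref{cor:whoisMahlerR2} to reduce to the one-parameter family of lenses and then minimizing over that family. The only genuine difference is in how the one-variable minimization is carried out. The paper parametrizes by the angle $\theta$ and, in Remark \ref{rem:minimization-ex}, locates the minimum of $g(\theta)=\sqrt{\theta-\sin\theta}+\sqrt{\pi-\theta-\sin\theta}$ by a critical-point argument ($g'$ vanishes only at $\theta=\pi/2$, with a sign analysis on either side). You instead center the parameter at the self-dual lens via $\theta=\pi/2+2\alpha$, square the objective, and observe that the cross terms collapse so that $\phi(\alpha)^2\ge 2\pi-4$ is equivalent (on the nontrivial range) to $(\pi-2)\sin^2\alpha\ge\alpha^2$, which you settle by monotonicity of $\sin\alpha/\alpha$ and the single endpoint inequality $\pi<4+2\cos 2$. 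I checked the algebra: $A+A'=\pi-2\cos 2\alpha$, $AA'=(\pi/2-\cos2\alpha)^2-4\alpha^2$, and the difference of squares indeed reduces the squared inequality to $8(\pi-2)(1-\cos2\alpha)\ge16\alpha^2$; the endpoint value $\sin^2\alpha=(\pi-2)/4$ and the range check $\tfrac12\arccos((4-\pi)/2)<\tfrac{\pi-2}{2}$ are also correct. Your version buys a fully explicit, derivative-free verification (at the cost of a slightly delicate numerical margin, as you note), while the paper's derivative argument is shorter but leans on the sign analysis in the remark; both establish uniqueness of the minimizer at the self-dual lens.
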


\begin{proof}
The right hand side inequality is simply Lemma \ref{lem:santalo}.
 For the left hand side, note that by Corollary \ref{cor:whoisMahlerR2} the only bodies to consider as minimizers are lenses. For a lens of angle $\theta$ (i.e. perimeter $2\theta$), the dual lens has angle $\pi-\theta$ and their areas are $\theta - \sin(\theta)$ and $\pi-\theta - \sin(\theta)$. The function $\sqrt{\theta - \sin(\theta)} + \sqrt{\pi -\theta - \sin(\theta)}$ 
 attains its (unique) minimal value (of $\sqrt{2\pi-4}$) at $\theta =  \pi/2$ (see Remark \ref{rem:minimization-ex} in Section \ref{sec:on-k-lenses}). 
\end{proof}

\section{Boundary Structure}\label{sec:extremal-structure}

We are interested in the boundary structure of sets $K\in \S_n$. Since they are convex bodies, we can apply results from the theory of convex bodies to these special bodies. However, on top of the fact that they possess specialized features, we also use here a different notion of ``convex hull'' (namely $c$-hull) so that some parts of the theory are developed anew.

\subsection{Extremal Points}

\begin{definition}
	Let $n\ge 2$ and $K\in \S_n$. A point $x\in K$ is called $c$-extremal for $K$ if $x\in {\conv}_c (y,z)$ for $y,z\in K$ implies $y = x$ or $z = x$. We denote the set of $c$-extremal points of $K$ by ${\rm ext}_c(K)$. 
\end{definition}

It will sometimes be easier to use the following equivalent definition 

\begin{lem}\label{lem:extremalist-in-arc-language}
   Let $n\ge 2$ and $K\in \S_n$. A point $x\in \partial K$ is  $c$-extremal for $K$ if and only if $x$   does not belong to an open $1$-arc $A\subset \partial K$. \end{lem}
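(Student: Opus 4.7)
The plan is to prove the two implications separately; the reverse direction is the substantive one. For the forward direction, if $x$ is in the relative interior of a $1$-arc $A \subseteq \partial K$, I would pick $y, z \in A$ flanking $x$: the short arc of $A$ from $y$ to $z$ lies on a unit circle and is one of the two meridian $1$-arcs in the boundary of the planar lens $\cconv(y,z)$, so $x \in \cconv(y,z) \setminus \{y,z\}$, contradicting $c$-extremality of $x$.

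For the converse, assume $x \in \partial K$ is not $c$-extremal, so $x \in \cconv(y,z) \subseteq K$ with $y,z \in K$ and $x \neq y, z$. I would first locate $x$: since $\cconv(y,z) \subseteq K$ and $x \in \partial K$, the point $x$ lies on $\partial \cconv(y,z)$. Setting aside the degenerate case $\|y-z\|=2$ (which forces $K$ to be a unit ball, trivially satisfying the conclusion), I may assume $\|y-z\| < 2$. A short argument using $\cconv(y,z) \cap \mathrm{line}(y,z) = [y,z]$ rules out $x, y, z$ being collinear, so they span a $2$-plane $P$; inside $P$, the boundary of $\cconv(y,z) \cap P$ consists of two $1$-arcs joining $y$ to $z$, and $x$ lies in the relative interior of one of them, call it $A_0$. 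Let $c$ be its center, so $\|c-y\| = \|c-z\| = \|c-x\| = 1$.

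The central step is to show that $r := x - c$ is the outer unit normal to $K$ at $x$, via a tangent-cone comparison. Since $\cconv(y,z) \subseteq B(c,1)$ and both touch $\partial B(c,1)$ at $x$, the tangent cone $T_x \cconv(y,z)$ is the closed half-space with outer normal $r$: the rotational symmetry of $\cconv(y,z)$ about $\mathrm{line}(y,z)$ guarantees this cone is $n$-dimensional rather than confined to $P$. From $\cconv(y,z) \subseteq K$ we obtain $T_x K \supseteq T_x \cconv(y,z)$, and since $T_x K$ is a convex cone properly contained in $\RR^n$ (as $x \in \partial K$), it must coincide with this half-space, so $r$ is the unique outer unit normal of $K$ at $x$.

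Then Lemma \ref{lem:normals map boundary to boundary} applied with $u = r$ yields $c = x - r \in K^c$, i.e.\ $K \subseteq B(c,1)$. Combined with $A_0 \subseteq \cconv(y,z) \subseteq K$ and $A_0 \subseteq \partial B(c,1)$, every point of $A_0$ lies in $K \cap \partial B(c,1)$ and hence in $\partial K$, so $A_0 \subseteq \partial K$ and its relative interior is the desired open $1$-arc through $x$. The main obstacle will be the tangent-cone identification in the previous paragraph: in dimensions $n \geq 3$ one must use the rotational symmetry of the $1$-lens to ensure the tangent cone is a full $n$-dimensional half-space and not something lower-dimensional; once this is established, Lemma \ref{lem:normals map boundary to boundary} closes the argument.
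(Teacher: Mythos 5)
Your proof is correct, and the forward direction is the same as the paper's. For the converse, however, you take a genuinely different route. The paper localizes $x$ on the unique $1$-arc of $\partial\cconv(y,z)$ through $x$ exactly as you do, but then finishes with an elementary ``interior propagation'' argument: if some point $b$ of that arc lay in $\mathrm{int}(K)$, the fact that $\cconv(a,b)\setminus\{a\}\subset\mathrm{int}(K)$ whenever $a\in K$ and $b\in\mathrm{int}(K)$ would push $x$ itself into the interior of $K$, a contradiction, so the whole arc sits in $\partial K$. You instead identify the outer normal at $x$ by a tangent-cone comparison with the lens and then invoke Lemma \ref{lem:normals map boundary to boundary} to produce the supporting ball $B(c,1)\supseteq K$, whence $A_0\subseteq K\cap S(c,1)\subseteq\partial K$. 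Your route yields a slightly stronger intermediate fact (the center $c$ of the arc lies in $K^c$, so the arc is part of an exposed $c$-face of $K$), but at the cost of importing the duality machinery and the smoothness of the $1$-lens away from its tips, whereas the paper's argument is self-contained and works directly from the definition of the $c$-hull. Two small points worth tightening in a final write-up: when flanking $x$ by $y,z\in A$ you must choose them close enough that the subtended arc is a minor arc (a sub-arc of $A$ subtending more than $\pi$ is not a boundary arc of $\cconv(y,z)$), and the identification $T_x\cconv(y,z)=H^-$ deserves the explicit one-line justification that the boundary of the $1$-lens is a smooth hypersurface at every point other than its two vertices.
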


\begin{proof}
One direction is obvious since if there exists an open $1$-arc $A\subset \partial K$ with $x\in A$ then one may find two points $x\neq y,z\in K$ on this arc with $x\in \cconv(y,z)$ so that $x$ is not $c$-extremal for $K$. For the other direction, assume $x$ is not $c$-extremal for $K$, and consider $x\neq y,z\in K$ with $x\in \cconv(y,z)$. Since $K\in \S_n$ we have that $\cconv(y,z)\subset K$, and $x\in   \cconv(y,z)\cap \partial K$ so that $x\in \partial \cconv(y,z)$. There is a unique $1$-arc in $\partial \cconv (y,z)$ connecting $y$ and $z$ and passing through $x$. Moreover, had this $1$-arc contained a point which is not in $\partial K$ then this would mean that $x$ is in the interior of $K$ (indeed, if $a\in K$ and $b\in {\rm int}(K)$ then  
$\cconv(a,b)\setminus \{a\}\subset {\rm int}(K)$). This means that the $1$-arc on the boundary of $\cconv (y,z)$ is also on the boundary of $K$, which completes the proof.
\end{proof}

\begin{lem}\label{lem:extremals=and=containment}
    Let $n\ge 2$ and  $K_1,K_2\in \S_n$ with $K_1 \subseteq K_2$. Then $K_1\cap {\rm ext}_c(K_2)\subseteq {\rm ext}_c(K_1)$.
\end{lem}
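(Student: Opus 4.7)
The plan is to observe that this is a direct, essentially tautological consequence of the definition of $c$-extremality together with the containment $K_1 \subseteq K_2$. The intuition is the same as in the classical convex setting: ``being hard to express as a nontrivial $c$-hull'' only becomes easier when we shrink the ambient set, since any witness to non-extremality in the smaller set automatically lies in the larger set.

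Concretely, I would argue by contradiction. Fix $x \in K_1 \cap \ext_c(K_2)$ and suppose for contradiction that $x \notin \ext_c(K_1)$. By the definition of $c$-extremality, there then exist points $y, z \in K_1$ with $y \neq x$, $z \neq x$, and $x \in \cconv(y, z)$. Since $K_1 \subseteq K_2$, both $y$ and $z$ belong to $K_2$ as well, with the same nontrivial $c$-hull relation $x \in \cconv(y,z)$. This contradicts $x \in \ext_c(K_2)$, and hence $x \in \ext_c(K_1)$.

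There is essentially no obstacle here; the only thing to be careful about is that the definition of $c$-extremality uses membership of $y,z$ in the ambient ball-body, so one must explicitly invoke the inclusion $K_1 \subseteq K_2$ to transfer the pair $(y,z)$ into $K_2$. Alternatively, one could phrase the argument in the equivalent language of Lemma \ref{lem:extremalist-in-arc-language}: if an open $1$-arc $A \subset \partial K_1$ passes through $x$, then since $K_1 \subseteq K_2$ and $x \in K_1$, the arc $A$ lies in $K_2$; and because $x \in \partial K_2$ (as $x$ is extremal for $K_2$, hence on its boundary), the arc $A$ must in fact lie on $\partial K_2$, contradicting extremality of $x$ in $K_2$. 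Either formulation yields the result in a few lines.
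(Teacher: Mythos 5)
Your contrapositive argument is exactly the paper's proof: any witnesses $y,z\in K_1$ to non-extremality of $x$ in $K_1$ lie in $K_2$ by the inclusion, contradicting extremality in $K_2$. Correct, and essentially identical to the paper's approach.
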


  \begin{proof}
     Let $x\in K_1$ be $c$-extremal for $K_2$. If $x\in \cconv(y,z)$ with $y,z\in K_1$ then in particular $y,z\in K_2$ so by assumption either $x=y$ or $x=z$. This means $x\in {\rm ext}_c(K_1)$ as well. 
  \end{proof}

 \begin{lem}\label{lem:ext-cont-pts}
     Let $n\ge 2$, $K\in \S_n$ and $R<1$. Assume $K\subseteq B(x_0,R)$. Then the contact points $S(x_0 ,R)\cap K$ are extremal. 
 \end{lem}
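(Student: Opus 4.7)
The plan is to reduce the statement to a much simpler claim about the ball $B(x_0,R)$ itself, and then to use the hypothesis $R<1$ in a one-line planar-geometry argument. The starting observation is that $B(x_0,R)\in \S_n$ (it is the $c$-dual of $B(x_0,1-R)$, since $1-R\in[0,1]$), so $K$ and $B(x_0,R)$ are two bodies in $\S_n$ with $K\subseteq B(x_0,R)$.

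First I would apply Lemma \ref{lem:extremals=and=containment} to this inclusion, which gives $K\cap {\rm ext}_c(B(x_0,R))\subseteq {\rm ext}_c(K)$. Since every contact point in $S(x_0,R)\cap K$ is, in particular, a boundary point of $B(x_0,R)$, this reduces the lemma to the single claim: \emph{every point of $S(x_0,R)=\partial B(x_0,R)$ is $c$-extremal for $B(x_0,R)$, provided $R<1$}.

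For the reduced claim I would use Lemma \ref{lem:extremalist-in-arc-language}: a boundary point fails to be $c$-extremal exactly when it lies on some open $1$-arc contained in the boundary. Any such $1$-arc lies on a unit circle $C$ inside an affine $2$-plane $P\subset\RR^n$. If an arc of $C$ were contained in $S(x_0,R)$, then $C$ and the planar circle $S(x_0,R)\cap P$ would be two circles in $P$ sharing infinitely many points, forcing $C=S(x_0,R)\cap P$. But $S(x_0,R)\cap P$ has radius $\sqrt{R^2-d^2}\le R<1$, where $d$ is the distance from $x_0$ to $P$, whereas $C$ has radius $1$, a contradiction. Hence no boundary point of $B(x_0,R)$ lies on a $1$-arc in $\partial B(x_0,R)$, and the reduced claim is proved.

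The main (and only) substantive step is this last geometric observation; it rests on the standard fact that two distinct circles in a plane share at most two points. Given that, no serious obstacle remains, and I do not anticipate any delicate calculations.
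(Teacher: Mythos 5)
Your proposal is correct and follows essentially the same route as the paper: reduce via Lemma \ref{lem:extremals=and=containment} to the $c$-extremality of boundary points of $B(x_0,R)$, then invoke Lemma \ref{lem:extremalist-in-arc-language} together with the fact that no open $1$-arc fits in a sphere of radius $R<1$. The only difference is that you spell out the two-circles-in-a-plane argument where the paper calls this fact obvious.
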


\begin{proof}
Let $x\in S(x_0, R)\cap K$. Since $R<1$ we have $B(x_0, R) \in \S_n$, and by Lemma \ref{lem:extremals=and=containment}
it suffices to show that $x$ is  $c$-extremal for $B(x_0,R)$. We use Lemma \ref{lem:extremalist-in-arc-language}, and the obvious fact that no open $1$-arc passing through a   point in $S(x_0,R)$ can be contained in $B(x_0,R)$ when $R<1$. 
\end{proof}

\begin{thm}\label{thm:only ball has no extremal points}
Let $n\ge 2$. The unit balls $B(x,1)$, $x\in \RR^n$  are the only  sets in $\S_n$ with no extremal points.  	
\end{thm}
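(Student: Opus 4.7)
The plan is to prove both directions, with the bulk of the work being the converse.

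For the easy direction, I would verify that $B(x_0,1)$ has no $c$-extremal points. Any interior point lies in the $c$-hull of two nearby points on a short diameter, so it is not $c$-extremal directly from the definition. Any boundary point $y\in S(x_0,1)$ lies on a great-circle arc of $S(x_0,1)$ in any $2$-plane through $x_0$ and $y$; such a great-circle arc is an open $1$-arc contained in $\partial K$, so Lemma \ref{lem:extremalist-in-arc-language} shows that $y$ is not $c$-extremal.

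For the converse I would assume $K\in\S_n\setminus\{\emptyset,\R^n\}$ is not a unit ball and produce a $c$-extremal point. The strategy is to reduce to Lemma \ref{lem:ext-cont-pts} by showing that $\outrad(K)<1$. The key input is $c$-duality. Since $K=K^{cc}$, the assumption that $K$ is neither $\R^n$ nor a unit ball forces $K^c$ to be neither empty nor a singleton: otherwise $K^c=\emptyset$ would give $K=\R^n$ and $K^c=\{x_0\}$ would give $K=\{x_0\}^c=B(x_0,1)$. Hence there exist two distinct points $y_1,y_2\in K^c$, and then $K\subseteq B(y_1,1)\cap B(y_2,1)$. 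The parallelogram identity
\[
|z-y_1|^2+|z-y_2|^2=2\left|z-\tfrac{y_1+y_2}{2}\right|^2+\tfrac{1}{2}|y_1-y_2|^2
\]
then immediately yields
\[
B(y_1,1)\cap B(y_2,1)\subseteq B\!\left(\tfrac{y_1+y_2}{2},\sqrt{1-|y_1-y_2|^2/4}\right),
\]
so $\outrad(K)\le\sqrt{1-|y_1-y_2|^2/4}<1$.

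With $R:=\outrad(K)<1$ established, Lemma \ref{lem:inplusout} supplies the unique circumcenter $x_0$ with $K\subseteq B(x_0,R)$, and the contact set $K\cap S(x_0,R)$ is non-empty by the definition of $\outrad$. Finally, Lemma \ref{lem:ext-cont-pts}, whose hypothesis $R<1$ is exactly what we have arranged, guarantees that every contact point is $c$-extremal in $K$, contradicting the assumption. I do not foresee a serious obstacle; the crux is the dual observation that $\outrad(K)=1$ forces $K^c$ to collapse to at most a single point, reducing everything to the already-proved Lemma \ref{lem:ext-cont-pts}.
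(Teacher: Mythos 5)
Your proof is correct and follows essentially the same route as the paper's: reduce to the case $\outrad(K)<1$ and then invoke Lemma \ref{lem:ext-cont-pts} on the contact points with the out-ball. The only difference is cosmetic — you explicitly justify $\outrad(K)<1$ via two points of $K^c$ and the parallelogram identity (a fact the paper asserts without proof), and you handle the easy direction through the $1$-arc characterization rather than the observation that $B(x,1)$ is the $c$-hull of any pair of its antipodal points.
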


\begin{proof}
	The  ball $B(x, 1)$ has no extremal points since it is the $c$-hull of any two antipodal points. 
	Any other set in $\S_n$ has out-radius $R<1$, and by Lemma \ref{lem:contactopintsout} has contact points with its out-ball, which by Lemma \ref{lem:ext-cont-pts} are $c$-extremal for $K$. 
\end{proof}

\begin{prop}
 Let $n\ge 2$. The only sets $K\in \S_n$ with just one $c$-extremal point are the singletons $\{x\}$.  
\end{prop}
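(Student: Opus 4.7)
The plan is to prove the contrapositive in a strengthened form: any $K\in\S_n$ which is not a singleton and not $\emptyset$ or $\RR^n$ has either zero or at least two $c$-extremal points. First I would dispatch the degenerate cases. If $K=\emptyset$ there are no points at all; if $K=\RR^n$ then no point is $c$-extremal since every point lies in the $c$-hull of some pair at distance greater than $2$; and if $K=B(x_0,1)$ is a unit ball then by Theorem \ref{thm:only ball has no extremal points} there are no $c$-extremal points. So in none of these cases can $K$ have exactly one $c$-extremal point.

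Next, assume $K\in\S_n$ is not one of the above. The key claim is then that $0<\outrad(K)<1$. The lower bound is immediate from $K$ being more than a point. For the upper bound, observe that by Lemma \ref{lem:inplusout} one has $\outrad(K)+\inrad(K^c)=1$; so $\outrad(K)=1$ would force $\inrad(K^c)=0$, and then Theorem \ref{thm:santalo-thereal} applied to $K^c$ gives $\outrad(K^c)\le \sqrt{2\cdot 0 - 0^2}=0$, forcing $K^c$ to be a singleton, which makes $K=B(x_0,1)$ a unit ball, contradicting our assumption.

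With $R=\outrad(K)\in (0,1)$ established, let $x_0^\ast$ be the unique center of the circumscribed ball so $K\subseteq B(x_0^\ast,R)$, and consider the contact set $C=K\cap S(x_0^\ast,R)$. By Lemma \ref{lem:contactopintsout}, $C$ must intersect every closed hemisphere of $S(x_0^\ast,R)$. In particular $C$ cannot be a single point: a lone point $p$ at distance $R>0$ from $x_0^\ast$ would lie in the closed hemisphere determined by $u$ only when $\langle p-x_0^\ast,u\rangle\ge 0$, which cannot hold simultaneously for $u$ and $-u$ unless $p=x_0^\ast$. Hence $|C|\ge 2$. Finally, since $R<1$, Lemma \ref{lem:ext-cont-pts} tells us that every point of $C$ is $c$-extremal for $K$, producing at least two distinct $c$-extremal points. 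This contradicts the hypothesis of a unique $c$-extremal point, so $K$ must be a singleton.

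The argument is largely a bookkeeping exercise once the machinery is in place; the only mildly non-obvious step is the justification that $\outrad(K)<1$ whenever $K$ is not a unit ball, which is where the duality between in-radius of $K^c$ and out-radius of $K$ is essential. Everything else reduces to the hemisphere-covering property of contact sets and the fact that such contact points are $c$-extremal.
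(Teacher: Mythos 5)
Your proof is correct and follows essentially the same route as the paper's: rule out the unit ball (no extremal points), deduce $\outrad(K)<1$, and then use the fact that for $0<R<1$ the contact set with the circumscribed ball has at least two points (Lemma \ref{lem:contactopintsout}), all of which are $c$-extremal by Lemma \ref{lem:ext-cont-pts}. The only difference is cosmetic: you derive $\outrad(K)<1$ via Lemma \ref{lem:inplusout} and Theorem \ref{thm:santalo-thereal} and you extract $|C|\ge 2$ from the hemisphere condition (ii) rather than from condition (iii), but both are minor elaborations of the same argument.
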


\begin{proof}
Such a $K$ cannot be a ball $B(x, 1)$ since it has no $c$-extremal points. Thus $R = \outrad(K) <1$. If $R>0$, the set of contact points of $K$ with its out-ball has at least two elements by Lemma \ref{lem:contactopintsout} (3). For $R= 0$ the set is clearly a singleton $\{x\}$, and by definition the points $x$ is trivially $c$-extremal.  
\end{proof}

\begin{rem}
The fact that if $K\in \S_n$ has exactly two $c$-extremal points then it must be a $1$-lens will follow for example from our Caratheodory-type 
    Theorem \ref{thm:KM-Caralight} in Section \ref{sec:Carath}.  
\end{rem}

\begin{prop}\label{prop:not-extremal-means-smooth}
  Let $n\ge 2$,  $K\in\S_n$ and let $x\in \partial K$ have a non-trivial normal cone (i.e., $x$ is not a smooth point). Then $x$ is $c$-extremal for $K$. In other words, if $x\in \partial K\setminus {\rm ext}_c(K)$ then $x$ is a smooth boundary point.  	
 \end{prop}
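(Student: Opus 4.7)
My plan is to prove the contrapositive: if $x \in \partial K$ is not $c$-extremal, then $x$ is a smooth boundary point. By Lemma \ref{lem:extremalist-in-arc-language}, non-$c$-extremality of $x$ means that $x$ lies in the relative interior of some open $1$-arc $A \subset \partial K$; such an arc is, by definition, contained in some $2$-plane $P$ and lies on a unit circle $S(p,1)\cap P$ with $|x-p|=1$. I will show that the unit outer normal to $K$ at $x$ is necessarily $x-p$, and hence unique. The main tool will be Lemma \ref{lem:normals map boundary to boundary}: for any $u \in N_K(x)\cap S^{n-1}$, the point $y:=x-u$ satisfies $K \subseteq B(y,1)$ and $|x-y|=1$. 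In particular, $A \subset B(y,1)$ while $x\in A\cap S(y,1)$, so the arc $A$ is internally tangent to the sphere $S(y,1)$ at $x$.

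With this setup, parametrize $A$ by $z(\theta)=p+\cos\theta\,e_1+\sin\theta\,e_2$, where $e_1, e_2$ are orthonormal vectors spanning the linear subspace parallel to $P$ and $z(\theta_0)=x$. The scalar function $f(\theta):=|z(\theta)-y|^2-1$ then attains a local maximum value $0$ at $\theta_0$, so $f'(\theta_0)=0$ and $f''(\theta_0)\le 0$. A direct computation yields $f(\theta)=|p-y|^2+2\langle p-y,\,\cos\theta\,e_1+\sin\theta\,e_2\rangle$. Decomposing $v:=p-y=v_P+v_{P^\perp}$ along $P$, the first-order condition forces $v_P=c(x-p)$ for some $c\in\mathbb{R}$, the second-order condition forces $c\ge 0$, and the equation $f(\theta_0)=0$ becomes the quadratic identity $c^2+2c+|v_{P^\perp}|^2=0$. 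Since $c\ge 0$, this forces $c=0$ and $v_{P^\perp}=0$, i.e., $y=p$. Consequently $u=x-p$ is the unique unit outer normal to $K$ at $x$, proving smoothness.

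I expect the only delicate step to be this second-order analysis of $f$. A purely first-order argument only pins down the direction of the in-plane component of $v$, and many $y\ne p$ satisfy this; it is the second-order inequality, together with the distance constraint $|x-y|=1$, that collapses the family of possible normals to the single direction $x-p$. Once the geometry inside $P$ is handled, the passage to $\RR^n$ is automatic, since the transverse component $v_{P^\perp}$ only contributes non-negatively to the final quadratic, making the constraint harder rather than easier to satisfy.
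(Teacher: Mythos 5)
Your proof is correct, and it takes a genuinely different route from the paper's. The paper argues the direct implication: given two distinct unit normals $u_1\neq u_2\in N_K(x)$, it encloses $K$ in the $(n-1)$-lens $L=B(x-u_1,1)\cap B(x-u_2,1)$, observes that $x$ lies in the contact set $S(x-u_1,1)\cap S(x-u_2,1)$ of $L$ with its out-ball, and invokes Lemma \ref{lem:ext-cont-pts} together with Lemma \ref{lem:extremals=and=containment} to conclude that $x$ is $c$-extremal. You instead prove the contrapositive by a local second-order analysis: the arc through $x$ is internally tangent at $x$ to every supporting unit sphere $S(y,1)$ supplied by Lemma \ref{lem:normals map boundary to boundary}, and the combination of $f'(\theta_0)=0$, $f''(\theta_0)\le 0$ and $f(\theta_0)=0$ collapses the quadratic $c^2+2c+\|v_{P^\perp}\|_2^2=0$ to $c=0$, $v_{P^\perp}=0$, hence $y=p$. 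Your computations check out (in particular $f''(\theta_0)=-2c$, so the sign condition really is what excludes the spurious root $c=-1-\sqrt{1-\|v_{P^\perp}\|_2^2}$, e.g.\ the externally tangent ball centered at $2x-p$). What your approach buys is an explicit identification of the unique normal as $x-p$, the center of the arc — a fact the paper only extracts later, in the proof of Proposition \ref{prop:extremal-ray-and-extremality} and Lemma \ref{lem:pairings}; what it costs is a coordinate computation where the paper gets away with two already-established structural lemmas about lenses and contact points.
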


 \begin{proof}
Assume that $x$ has more than one unit normal with respect to $K$, namely $u_1 \neq u_2 \in N_K(x)$. By Lemma \ref{uniq:signletons-to-singletons} we have $K\subseteq B(x-u_1, 1)\cap B(x-u_2, 1)=:L$. By Lemma 
\ref{lem:extremals=and=containment} it suffices to show that $x$ is $c$-extremal for the $(n-1)$-lens $L$, however for the $(n-1)$ lens $L$ the set $S(x-u_1, 1)\cap S(x-u_2, 1)$ (which is a sphere of lower dimension, containing $x$) is exactly the set of contact points of $L$ with its out-ball, and hence by Lemma \ref{lem:ext-cont-pts} consists of $c$-extremal points for $L$.  
\end{proof}

 \subsection{Duality and $c$-extremality}

It turns out that $c$-extremal points in a body $K\in \S_n$ correspond to extremal rays of the normal cones  for points in the boundary the $c$-dual $K^c$.

 \begin{prop}\label{prop:extremal-ray-and-extremality}
 Let $n\ge 2 $ and let $K\in \S_n$ with $\outrad(K)<1$. 
 Then \begin{equation}\label{eq:extremal-points-and-extremal-rays} {\rm ext}_c(K) = \{ y - u: y\in \partial K^c, ~u\in   (N_{K^c}(y))  {\rm~is~extremal}   \}.\end{equation}
Here $u$ is called extremal  for $N_{K^c}(y)$ if in the normal cone $u$ spans an extremal ray. 
 \end{prop}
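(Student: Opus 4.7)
My strategy is to translate both sides of the claimed equality into statements about $1$-arcs in $\partial K$. The two underlying tools are Lemma~\ref{lem:normals map boundary to boundary}, which via the symmetry between $K$ and $K^c$ (both in $\S_n$) gives the parameterization $\partial K = \{y-u : y \in \partial K^c,\ u \in N_{K^c}(y)\}$ with $u$ a unit outer normal, and Lemma~\ref{lem:extremalist-in-arc-language}, which says $x \in {\rm ext}_c(K)$ iff no open $1$-arc in $\partial K$ passes through $x$. The bridge between the two descriptions is the geometric observation that a great-circle arc in $N_{K^c}(y) \cap S^{n-1}$ (equivalently, a planar slice of a $2$-dimensional face of the normal cone at $y$) is carried by $v \mapsto y-v$ to a circular $1$-arc on $\partial K \cap S(y,1)$, and conversely.

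For the inclusion $\supseteq$, I would fix $y \in \partial K^c$ and an extremal unit vector $u \in N_{K^c}(y)$, set $x = y - u \in \partial K$, and suppose toward a contradiction that $x \notin {\rm ext}_c(K)$. Proposition~\ref{prop:not-extremal-means-smooth} then forces $x$ to be smooth, so $N_K(x) = \{-u\}$. By Lemma~\ref{lem:extremalist-in-arc-language}, there is an open $1$-arc $A \subset \partial K$ through $x$, sitting on some sphere $S(p,1)$ in a $2$-plane $E$. Applying Lemma~\ref{lem:normals map boundary to boundary} to points of $A$ places $p \in \partial K^c$, and smoothness of $x$ identifies $p$ with $y$. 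The map $z \mapsto y-z$ then sends $A$ to a great-circle arc in $N_{K^c}(y) \cap S^{n-1}$ which contains $u$ in its relative interior, contradicting the extremality of the ray $\mathbb R_+ u$.

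For $\subseteq$, I would start with $x \in {\rm ext}_c(K)$, pick any unit normal $u' \in N_K(x)$, and set $y = x - u'$, $u = -u'$. Lemma~\ref{lem:normals map boundary to boundary} yields $y \in \partial K^c$, $u \in N_{K^c}(y)$, and $x = y - u$. If the ray $\mathbb R_+ u$ were not extremal in the cone generated by $N_{K^c}(y)$, one could write $u = \alpha_1 v_1 + \alpha_2 v_2$ with $\alpha_i > 0$ and unit vectors $v_1, v_2 \in N_{K^c}(y)$ not proportional to $u$. By convexity of the normal cone, the whole great-circle arc from $v_1$ to $v_2$ lies in $N_{K^c}(y) \cap S^{n-1}$; applying Lemma~\ref{lem:normals map boundary to boundary} to $K^c \in \S_n$, its image under $v \mapsto y-v$ is an open $1$-arc in $\partial K$ through $x$, contradicting $c$-extremality by Lemma~\ref{lem:extremalist-in-arc-language}.

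The main obstacle is the identification $p = y$ in the $\supseteq$ direction: a priori the supporting ball center $p$ coming from an arbitrary $1$-arc through a non-extremal point $x$ need not agree with the given $y$, and without this the dictionary between arcs in $\partial K$ and arcs in $N_{K^c}(y)$ collapses. Proposition~\ref{prop:not-extremal-means-smooth} disposes of this cleanly, since any would-be counterexample $x$ is automatically smooth, and smoothness forces the unique supporting unit normal at $x$ to pin down $p = y$. Beyond that, the proof is a routine translation between the geometry of great-circle arcs in $N_{K^c}(y)$ and $1$-arcs on $\partial K$.
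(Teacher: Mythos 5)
Your proposal is correct and follows essentially the same route as the paper: both directions rest on the dictionary between $1$-arcs on $\partial K$ centered at $y$ and great-circle arcs in $N_{K^c}(y)\cap S^{n-1}$, with Proposition~\ref{prop:not-extremal-means-smooth} disposing of the non-smooth case and Lemma~\ref{lem:normals map boundary to boundary} supplying the correspondence. Your key step — that smoothness of $x$ forces the arc's center $p$ to coincide with $y$ — is exactly the paper's justification that the arc must lie on the supporting unit sphere $S(y,1)$ at $x$, so there is no substantive difference.
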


\begin{proof}
 Assume $u\in S^{n-1}$ and $\RR^+u$ is an extremal ray of the cone $\RR^+N_{K^{c}}(y)$ where $y\in \partial K^c$. 
 Then by Lemma \ref{lem:normals map boundary to boundary} it holds that $x = y-u \in \partial K$. We consider two cases. If there is some $y'\neq y$ with $y'\in \partial K^c$ and $\|x-y'\|_2=1$, then $x$ is not a smooth point of $\partial K$ (since both $x-y$ and $x-y'$ belong to $N_K(x)$, again from Lemma \ref{lem:normals map boundary to boundary}) and by Proposition \ref{prop:not-extremal-means-smooth} we see that $x$ is $c$-extremal for $K$. If, however, $x$ is a smooth point and $y$ is the only point in $K^c$ with $\|x-y\|_2=1$ then had $x$ belonged to an open $1$-arc contained in $\partial K$,  
 this arc must be centered at $y$ (as it is part of the unit ball supporting $K$ at  $x$), which would mean the normal cone   $N_{K^c}(y)$ for $K^c$ at $y$  does not have $u$ spanning an extremal direction.  Therefore no such open arc exists, and by Lemma \ref{lem:extremalist-in-arc-language} the point $x$ is  $c$-extremal for $K$. 
 We have shown that the right hand side in \eqref{eq:extremal-points-and-extremal-rays} is included in the left hand side.

 The other inclusion works similarly and is simpler. 
 Let $x\in {\rm ext}_c(K)$ and take some $w\in N_K(x)$. By Lemma \ref{lem:normals map boundary to boundary} the point $x-w  = y$ belongs to $\partial K^c$ and $u=-w\in N_{K^c}(y)$. If $u$ is not an extremal ray for $N_{K^c}(y)$ then there are two other unit vectors $u_1, u_2 \in N_{K^c}(y)$ such that $u$ is proportional to $\frac12 (u_1 + u_2)$, in which case $y-u_1$ and $y-u_2$ both belong to $\partial K$ (using Lemma \ref{lem:normals map boundary to boundary} again) and span an open arc on $\partial K$ to which $x$ belongs, which cannot occur as $x$ was assumed to be $c$-extremal for $K$. 
\end{proof}

\begin{rem}
We can use the above arguments to see once again that if a body has no extremal points it must be a ball $B(x,1)$. Indeed, the normal cones for the boundary points of its dual have no extremal rays which can happen only if the dual is a point.     
\end{rem}

\begin{rem}
While non-extremal points are always smooth, extremal points can be smooth (such as the boundary points of the ball $B(0,1/2)$),  or not. There are however restrictions regarding a pair of ``dual'' points. 
\end{rem}

\begin{lem}\label{lem:pairings}
    Let $n\ge 2$ and $K\in \S_n$. Assume  $x\in \partial K$ and $y\in \partial K^c$ satisfy $\|x-y\|_2=1$. Then either both $x$ and $y$ are $c$-extremal (for $K$ and $K^c$ respectively) or one of them is $c$-extremal and the other one not. Moreover, if both are smooth points then both are $c$-extremal smooth points. 
\end{lem}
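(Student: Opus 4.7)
The plan is to first observe that the main claim reduces to the ``moreover'' part. Indeed, Proposition \ref{prop:not-extremal-means-smooth} tells us that a non-$c$-extremal boundary point of a body in $\S_n$ must be smooth. So if neither $x$ nor $y$ were $c$-extremal, both would be smooth boundary points, and then the ``moreover'' statement would force them to be $c$-extremal, contradicting that assumption. Hence it suffices to prove the ``moreover'' part: if $x\in\partial K$ and $y\in\partial K^c$ are both smooth and $\|x-y\|_2=1$, then both are $c$-extremal.

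To prove this, assume $x$ and $y$ are smooth with $\|x-y\|_2=1$. Since $x$ is smooth, $N_K(x)=\{u\}$ for a unique unit vector $u$, and Lemma \ref{lem:normals map boundary to boundary} gives that the only point of $K^c$ at distance $1$ from $x$ is $x-u$; thus $y = x-u$. Suppose for contradiction that $x$ is not $c$-extremal. By Lemma \ref{lem:extremalist-in-arc-language}, there is an open $1$-arc $A\subseteq\partial K$ passing through $x$; this arc lies on some circle of radius $1$ centered at a point $z$. Since $A\subset\partial K$ and $K$ is locally contained in the unit ball $B(z,1)$ (the arc being part of $S(z,1)\cap \partial K$), the direction $x-z$, which has unit length, is an outer normal to $K$ at $x$. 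By smoothness of $x$ this forces $x-z = u$, i.e. $z = x-u = y$.

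Now apply the same reasoning to every other point $x'\in A$: the vector $x'-z$ is a unit normal to $K$ at $x'$, so by Lemma \ref{lem:normals map boundary to boundary} the point $z = x' - (x'-z)$ lies in $\partial K^c$ (which we knew) and $-(x'-z) = z - x'\in N_{K^c}(z)$. As $x'$ ranges over the arc $A$, the vectors $z-x' = y - x'$ sweep out an infinite continuous family of distinct unit directions in $N_{K^c}(y)$. This contradicts the smoothness of $y$. Hence $x$ must be $c$-extremal. By the symmetric roles of $(K,x)$ and $(K^c,y)$ under $c$-duality (using $K = K^{cc}$), the same argument applied to $y$ shows that $y$ is $c$-extremal as well, completing the ``moreover'' part.

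The only real subtlety is the step identifying the center $z$ of the arc with the dual point $y$: this uses both the smoothness of $x$ and the uniqueness in Lemma \ref{lem:normals map boundary to boundary}. Once this identification is made, the contradiction with smoothness of $y$ comes essentially for free, because a whole $1$-arc of normal directions accumulates at $y$ from the arc through $x$.
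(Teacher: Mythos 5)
Your proof is correct and follows essentially the same route as the paper's: identify the center of a $1$-arc through a non-$c$-extremal smooth point $x$ with the dual point $y=x-u$, and observe that the arc then forces $N_{K^c}(y)$ to contain a continuum of unit directions, contradicting smoothness of $y$. The only differences are organizational — you prove the ``moreover'' clause first and deduce the main dichotomy from it via Proposition \ref{prop:not-extremal-means-smooth}, whereas the paper argues in the opposite order — and your key identification of the arc's center with $y$ is justified at the same (terse) level as the paper's corresponding assertion that the arc lies on $S(y,1)$.
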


\begin{proof}
A pair of points $x\in \partial K$ and $y\in \partial K^c$ with $\|x-y\|_2=1$ satisfy that $y-x\in N_{K^c}(y)$ and $x-y \in N_K(x)$. If $x$ (say) is not extremal then it is smooth. Denoting the normal to $K$ at $x$ by $u$, we get that $y=x-u$ and that the $1$-arc testifying to $x$'s non-extremality  is part of the sphere $S(y,1)$. For any $z=y+w$ in this arc, $-w\in N_{K^c}(y)$, which means $y$ is not a smooth point of $K^c$ and in particular $y$ is $c$-extremal for $K^c$. This completes the proof of the first assertion. 

For the ``moreover'' part, assume $\|x-y\|_2 = 1$ and that both points are smooth. By the above proof, any $1$-arc testifying to the non-extremality of one of these points (say $x$) would imply the non-smoothness of the other (in this case, $y$) point. Since both are assumed smooth, both are $c$-extremal.     
\end{proof}

\begin{rem}
One may construct a body $K\in \S_2$ and a pair of points $x\in \partial K$, $y\in \partial K^c$ such that both are $c$-extremal (for $K$ and $K^c$ respectively), one of them is smooth (namely admits just one normal) and the other is not. We thus have the following possibilities for a pair of points $x\in \partial K$ and $y\in \partial K^c$ with $\|x-y\|_2 =1$:\\
1) Both are $c$-extremal and smooth (e.g. in a pair of $1/2$-balls)\\
2) Both are $c$-extremal and not-smooth (e.g. in a $1$-lens and and $(n-1)$-lens)\\
3) One of them is not $c$-extremal and the other is $c$-extremal and not smooth (again in a $1$-lens and and $(n-1)$-lens, a different pair)\\
4) One of them is $c$-extremal and smooth and the other is $c$-extremal and not smooth (see Example \ref{ex:Naztel-body}). 
\end{rem}

\begin{exm}\label{ex:Naztel-body}
    Following the construction in the paper \cite{arman2024small} we consider the $c$-hull of two sets, $(\RR^+)^2\cap B(0,R)$ and $(\RR^-)^2\cap B(0,r)$, with $R = 1-1/\sqrt{2}$ and $r = 1/\sqrt{2}$ so that $R+r = 1$. This body is of constant width $1$, and is thus self-$c$-dual. The dual pair of of points $x = (R,0)$ and $y = (-r, 0)$
satisfy that $x$ is not smooth whereas $y$ is smooth. We see here that $x-y$ is indeed an extremal ray for the normal cone to $K$ at $x$. \end{exm}

 \begin{figure}[t]
     \centering
     \begin{tikzpicture}[scale=2]
         \draw[->] (-1, 0) -- (1, 0) node[right] {$x$};
         \draw[->] (0, -1) -- (0, 1) node[above] {$y$};

         \def\rone{1 - 1/1.4142}  
         \def\rtwo{1/1.4142}      

         \def\cx{\rone}  
         \def\cy{\rone}  

         \draw[thick] (\rtwo,0) arc (0:90:\rtwo);

 \draw[thick] (180:\rone) arc (180:270:\rone);
 \draw[thick] 
        (0,\rtwo) 
        arc[start angle=135, end angle=180, radius=1];

  \draw[thick] 
        (0,-0.2928) 
        arc[start angle=-90, end angle=-45, radius=1];
     \end{tikzpicture}
     \caption{The set in Example \ref{ex:Naztel-body}}
    \label{fig:NAZETAL}
 \end{figure}
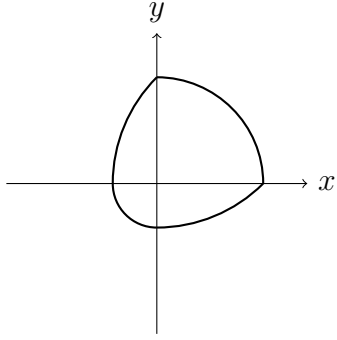

 \begin{rem}
 While Lemma \ref{lem:pairings} gives a certain intuition that smoothness of $K$ has to do with smoothness of its polar, it is instructive to note an example of a body which is (globally) smooth, but its dual is not. Indeed, to this end we can intersect two $0.99$ balls with some properly chosen centers. This is clearly not a smooth body (the points where the boundaries intersect will have non-trivial normal cones). The dual of this intersection is the $c$-hull of two $0.01$-balls with the same centers as the large balls, and this $c$-hull is easily checked to be smooth. 
 \end{rem}

\subsection{Carath\'eodory-type theorems}\label{sec:Carath}

Carath\'eodory's theorem states that a point in the convex hull of a set $A\subset \RR^n$ can be expressed as a convex combination of $(n+1)$ points in $A$. The counterparts to Carathe\'odory's theorem for $c$-hulls work out quite well. The reason is that we have the correspondence between normal cones (in the dual) and boundary points. In particular, boundary parts which are parts of spheres, must be parts of convex cones.

\begin{thm}\label{thm:KM-Caralight}
	Let $n\ge 2 $ and $K\in \S_n$ with $\outrad(K)<1$.  For any $x\in \partial K$ there exist $\{ x_i\}_{i=1}^m \subseteq {\rm ext}_c(K)$, $m\le n$ such that 
	\[ x\in \cconv (\{ x_i\}_{i=1}^m ).\]
\end{thm}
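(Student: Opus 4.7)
The plan is to exploit the duality between boundary points of $K$ and normal cones of $K^c$, and then apply classical Carath\'eodory for convex cones inside a normal cone.

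First I would dispose of the trivial case $K=\{x_0\}$ (where one may take $m=1$) and assume $K$ has non-empty interior. Given $x\in\partial K$, the plan is to pick any outer unit normal $u\in N_K(x)\cap S^{n-1}$ and set $y:=x-u$. By Lemma \ref{lem:normals map boundary to boundary}, $y\in\partial K^c$ and $-u\in N_{K^c}(y)$. Since $\outrad(K)<1$, Lemma \ref{lem:inplusout} yields $\inrad(K^c)=1-\outrad(K)>0$, so $K^c$ has non-empty interior and $N_{K^c}(y)$ is a pointed closed convex cone in $\R^n$.

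Next I would invoke the Minkowski--Carath\'eodory theorem for pointed convex cones to decompose
\[ -u = \sum_{i=1}^m \lambda_i u_i, \qquad \lambda_i > 0,\ m\le n, \]
where each $u_i\in N_{K^c}(y)\cap S^{n-1}$ spans an extremal ray of $N_{K^c}(y)$. Setting $x_i:=y-u_i$, Proposition \ref{prop:extremal-ray-and-extremality} gives $x_i\in\extr(K)$ for every $i$. Crucially, taking norms and using that $u$ and all $u_i$ lie on $S^{n-1}$ yields
\[ 1 = \|u\|_2 = \Bigl\|\sum_{i=1}^m \lambda_i u_i\Bigr\|_2 \le \sum_{i=1}^m \lambda_i. \]

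To finish, I would verify $x\in\cconv(\{x_1,\ldots,x_m\})$ by direct computation: for any $z\in\{x_1,\ldots,x_m\}^c$, expanding $\|z-x_i\|_2^2 = \|z-y\|_2^2+2\iprod{z-y}{u_i}+1 \le 1$ gives $\iprod{z-y}{u_i}\le -\tfrac12\|z-y\|_2^2$ for each $i$. Combining these with weights $\lambda_i$ and using $\sum_i\lambda_i\ge 1$,
\[ \iprod{z-y}{-u} = \sum_{i=1}^m \lambda_i\iprod{z-y}{u_i} \le -\tfrac12\|z-y\|_2^2, \]
so $\iprod{z-y}{u}\ge\tfrac12\|z-y\|_2^2$ and therefore
\[ \|z-x\|_2^2 = \|z-y\|_2^2-2\iprod{z-y}{u}+1 \le 1. \]
Since this holds for every $z\in\{x_1,\ldots,x_m\}^c$, we conclude $x\in\{x_1,\ldots,x_m\}^{cc} = \cconv(\{x_i\}_{i=1}^m)$.

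The main obstacle, or at least the conceptual crux, is the elementary but easily-overlooked fact that the Carath\'eodory coefficients of a unit vector expressed in terms of unit extremal rays automatically sum to at least $1$ by the triangle inequality; without this the final estimate on $\|z-x\|_2^2$ would not close. Once that is spotted, everything else is a straightforward translation through the duality dictionary provided by Lemma \ref{lem:normals map boundary to boundary} and Proposition \ref{prop:extremal-ray-and-extremality}, combined with the classical Carath\'eodory theorem for convex cones, which is precisely what produces the bound $m\le n$.
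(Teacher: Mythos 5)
Your proof is correct, and its skeleton is the same as the paper's: pass to $y=x-u\in\partial K^c$, apply the classical Carath\'eodory theorem to the pointed normal cone $\RR^+N_{K^c}(y)$ to write $-u$ as a positive combination of at most $n$ unit vectors spanning extremal rays, and pull these back to $c$-extremal points of $K$ via Proposition \ref{prop:extremal-ray-and-extremality}. The one place you genuinely diverge is the final step: the paper deduces $x\in\cconv(\{x_i\})$ by appealing to the geometric dictionary of Remark \ref{rem:unfounded} (cone-convex combinations of rays in $N_{K^c}(y)$ correspond to $c$-hulls on the sphere $S(y,1)$), whereas you verify the inclusion by a direct computation, expanding $\|z-x_i\|_2^2\le 1$ for $z\in\{x_i\}^c$ and recombining with the weights $\lambda_i$. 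That computation is sound, and your observation that $\sum_i\lambda_i\ge\|u\|_2=1$ is exactly the point that makes it close — a detail the paper's argument never has to surface. A minor bonus of your version is that it handles the non-smooth (hence automatically extremal) case of $x$ uniformly rather than as a separate branch, since if $-u$ is itself extremal the decomposition degenerates to $m=1$ with $x_1=x$.
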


\begin{proof}
	Let $x\in \partial K$. If $x$ is not a regular point, namely $N_K(x)$ is not a singleton, then by Lemma \ref{prop:not-extremal-means-smooth} the point $x$ itself is $c$-extremal and we can take $m=1$. 	Otherwise, let $u =  n_K(x)$ and consider $y = x-u$.   Proposition \ref{prop:extremal-ray-and-extremality} implies that $y\in \partial K^c$ and $-u\in N_{K^c}(y)$ does not span an extremal ray for the normal cone $\RR^+N_{K^c}(y)$.
 Therefore $u$ is in the convex hull of the cone, and by the classical Carath\'eodory theorem we can find $m\le n$ extremal rays $(u_j)_{j=1}^m\subset  N_{K^c}(y)$ of the cone such that $u$ is in the convex hull of these rays. This also means that  on the sphere $S^{n-1}$, $u$ belongs to the $c$-hull of $(u_j)_{j=1}^m\subset  S^{n-1}$. 
Using 
Proposition \ref{prop:extremal-ray-and-extremality} again we see that $y-u_j\in {\rm ext}_c(K)$, and since $u$  is in the $c$-hull of $(u_j)_{j=1}^m$ we get that $y-u$ is in the $c$-hull of 
$(y-u_j)_{j=1}^m$, as claimed. 		
\end{proof}

\begin{rem}\label{rem:unfounded}
    Within the proof we use the following fact regarding cones and $c$-hulls, which we would like to spell out explicitly.   Given $K\in \S_n$ with, say $0\in \partial K^c$, letting  $A =  K \cap  S^{n-1}$,   consider the  convex cone $C(A)$ spanned by $\RR^+A$, intersected with the sphere. Then 
    \[ C(A)\cap S^{n-1} = K\cap S^{n-1}.\]
    Indeed, $K\cap S^{n-1} = -N_{K^c}(0)$ and this is simply Lemma \ref{lem:normals map boundary to boundary}. 
 Moreover, the image of the convex hull of two rays in $N_{K^{c}}(0)$ is a $1$-arc on $\partial K\cap S^{n-1}$, and correspondingly, the image of the convex hull of any number of rays in $N_{K^{c}}(0)$ is the intersection of $S^{n-1}$ with the $c$-hull of these point. 
\end{rem}

\begin{thm}\label{thm:cara}
	Let $n\ge 2 $ and $K\in \S_n$ with $\outrad(K)<1$.  For any $x\in  K$ there exist $\{ x_i\}_{i=1}^m \subseteq {\rm ext}_c(K)$, $m\le n+1$ such that 
	\[ x\in \cconv (\{ x_i\}_{i=1}^m ).\]
\end{thm}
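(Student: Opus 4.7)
The plan is to reduce the statement to the boundary case already handled by Theorem \ref{thm:KM-Caralight}, using a straight-line argument anchored at a $c$-extremal point. Since $\outrad(K)<1$, Theorem \ref{thm:only ball has no extremal points} guarantees $\mathrm{ext}_c(K)\neq\emptyset$, which is what makes the anchoring step possible.

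First I would dispose of the easy cases. If $x\in \mathrm{ext}_c(K)$, take $m=1$ with $x_1=x$. If $x\in\partial K\setminus \mathrm{ext}_c(K)$, apply Theorem \ref{thm:KM-Caralight} directly to obtain $m\le n\le n+1$ extremal points whose $c$-hull contains $x$. The real work is the case $x\in\mathrm{int}(K)$.

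For $x\in \mathrm{int}(K)$, pick any $z\in \mathrm{ext}_c(K)$. A key observation is that a $c$-extremal point is also extremal in the ordinary sense, because $[y_1,y_2]\subseteq \cconv(\{y_1,y_2\})$, so if $z=(1-t)y_1+ty_2$ with $t\in(0,1)$ and $y_1,y_2\in K$, then $c$-extremality of $z$ forces $y_1=z$ or $y_2=z$, hence both equal $z$. Consider the line $\ell$ through $x$ and $z$ (note $z\neq x$ since $x$ is interior while $z$ lies on $\partial K$). The intersection $\ell\cap K$ is a segment, and since $z$ is extremal it must be an endpoint of this segment; let $y\in\partial K$ denote the other endpoint. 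Then $x\in[z,y]\subseteq \cconv(\{z,y\})$ because $\cconv(\{z,y\})$ is convex and contains both endpoints.

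Now I apply Theorem \ref{thm:KM-Caralight} to the boundary point $y$ to get $y\in \cconv(\{y_j\}_{j=1}^{m'})$ with $m'\le n$ and $y_j\in\mathrm{ext}_c(K)$. By monotonicity of $\cconv$, since $\{z,y\}\subseteq \cconv(\{z\}\cup\{y_j\}_{j=1}^{m'})$, we conclude
\[
x\in \cconv(\{z,y\})\subseteq \cconv(\{z\}\cup\{y_j\}_{j=1}^{m'}),
\]
and $\{z\}\cup\{y_j\}_{j=1}^{m'}\subseteq \mathrm{ext}_c(K)$ has cardinality at most $n+1$, which completes the argument. There is no serious obstacle here; the one point that requires care is choosing the anchor $z$ to be $c$-extremal rather than a generic boundary point, which is exactly what brings the count down from $2n$ (if one used two arbitrary boundary points and Theorem \ref{thm:KM-Caralight} twice) to the sharp $n+1$.
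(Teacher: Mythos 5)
Your proposal is correct and follows the same strategy as the paper's proof: anchor at a $c$-extremal point $z$ (which exists since $\outrad(K)<1$), produce a second boundary point $y$ so that $x\in\cconv(\{z,y\})$, and apply Theorem \ref{thm:KM-Caralight} to $y$ to reach the count $n+1$. The only difference is that you travel from $z$ through $x$ along a straight chord, whereas the paper continues a unit-circle arc of $\partial\cconv(x,x_0)$ to its first exit point from $K$; your variant is valid (and slightly simpler) because $\cconv(\{z,y\})$ is in particular convex and hence contains the segment $[z,y]$, and your justification that $\cconv(\{z,y\})\subseteq\cconv(\{z\}\cup\{y_j\}_{j=1}^{m'})$ via idempotence and monotonicity of the $c$-hull is sound.
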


\begin{proof}
	Let $x_0\in {\rm ext}_c(K)$, and let $L\in \partial \cconv(x,x_0)$ a $1$-arc (which is incidentally an extremal set of the dual lens)  connecting $x$ and $x_0$. Continue this arc as a big-circle $C$ passing through $x$ and $x_0$. It cannot be the case that the whole circle is contained in $K$ since then $K$ would be a ball.  In fact, the antipodal point to $x_0$ on this circle must be outside $K$.  Since the part connecting $x_0$ and $x$ is in the interior of $K$, there will be a first point $x'$ ``after'' $x$ which is on the boundary of $K$. Clearly $x\in \cconv (x_0, x')$. By Theorem \ref{thm:KM-Caralight} we can find $x_1, \ldots, x_m \in {\rm ext}_c(K)$ with $x'\in \cconv (\{ x_i\}_{i=1}^m )$ and therefore 
	\[ x\in \cconv (x_0, \{ x_i\}_{i=1}^m )\]
	as claimed. 	
\end{proof}

In particular, we get  a Krein-Milman type theorem for $c$-hulls. 
\begin{cor}
Let $n\ge 2 $ and  $K\in \S_n$ with $\outrad(K)<1$. Then 
\[ K = \cconv ({\rm ext}_c(K)).
\]
\end{cor}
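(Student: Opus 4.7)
The plan is to derive this Krein--Milman type statement as an immediate consequence of the Carath\'eodory-type Theorem \ref{thm:cara}, which has already done the real work. First I would dispatch the easy inclusion $\cconv({\rm ext}_c(K)) \subseteq K$: since ${\rm ext}_c(K) \subseteq K$ and $K \in \S_n$ satisfies $K = K^{cc}$, we have that $K$ is itself a ball-body containing ${\rm ext}_c(K)$, so by the minimality property of the $c$-hull (recall that $A^{cc}$ is the smallest member of $\S_n$ containing $A$), we get $\cconv({\rm ext}_c(K)) \subseteq K$.

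For the reverse inclusion, I would apply Theorem \ref{thm:cara} pointwise: given any $x \in K$, that theorem produces a finite subset $\{x_1, \ldots, x_m\} \subseteq {\rm ext}_c(K)$ with $m \le n+1$ such that $x \in \cconv(\{x_i\}_{i=1}^m)$. Since $\{x_i\}_{i=1}^m \subseteq {\rm ext}_c(K)$, monotonicity of the $c$-hull with respect to inclusion gives $\cconv(\{x_i\}_{i=1}^m) \subseteq \cconv({\rm ext}_c(K))$, and therefore $x \in \cconv({\rm ext}_c(K))$. Combining the two inclusions yields the claim.

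There is no real obstacle here; the proof is essentially a two-line bookkeeping argument. The one point worth noting is the necessity of the hypothesis $\outrad(K) < 1$: by Theorem \ref{thm:only ball has no extremal points}, the unit balls $B(x,1)$ are precisely the ball-bodies with no $c$-extremal points at all, so for them ${\rm ext}_c(K) = \emptyset$ and the proposed identity would degenerate into $B(x,1) = \emptyset$. The hypothesis $\outrad(K) < 1$ is inherited directly from Theorem \ref{thm:cara}, so the corollary slots in cleanly without any additional analysis.
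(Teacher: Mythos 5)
Your argument is correct and is exactly the route the paper intends: the corollary is stated as an immediate consequence of Theorem \ref{thm:cara}, with the easy inclusion following from minimality of the $c$-hull. Nothing to add.
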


Since taking the $c$-dual of a set is the same as taking the $c$-dual of its $c$-hull, we get the following. 
\begin{cor}
Let $n\ge 2 $ and  $K\in \S_n$ with $\outrad(K)<1$. Then 
\[ K^c = {\rm ext}_c(K)^c.
\]
\end{cor}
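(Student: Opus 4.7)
The plan is to derive this corollary directly from the preceding Krein--Milman-type corollary together with the standard quasi-involution identity $A^{ccc}=A^c$, which holds for every subset $A\subset\RR^n$. Since the authors explicitly flag the argument (``taking the $c$-dual of a set is the same as taking the $c$-dual of its $c$-hull''), essentially no new geometric input is needed.

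Concretely, I would argue as follows. By the immediately preceding corollary,
\[
K \;=\; \cconv(\extr(K)) \;=\; \extr(K)^{cc}.
\]
Applying the $c$-duality to both sides gives
\[
K^c \;=\; \bigl(\extr(K)^{cc}\bigr)^c \;=\; \extr(K)^{ccc},
\]
so the claim reduces to showing $\extr(K)^{ccc}=\extr(K)^c$.

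For this, I would record once and for all the general identity $A^{ccc}=A^c$ for any $A\subset\RR^n$, which is a formal consequence of $A\mapsto A^c$ being an order-reversing quasi-involution (as noted right after Definition~\ref{def:Sn}). Indeed, applying the inclusion $B\subseteq B^{cc}$ with $B=A^c$ gives $A^c\subseteq A^{ccc}$, while starting instead from $A\subseteq A^{cc}$ and applying $c$-duality (which reverses order) yields $A^{ccc}\subseteq A^c$. Combining these two inclusions gives $A^{ccc}=A^c$, and taking $A=\extr(K)$ finishes the proof.

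There is no real obstacle here; the only point worth writing out explicitly is the verification of the identity $A^{ccc}=A^c$, so that the corollary is genuinely self-contained and not just an invocation of a slogan. Everything else is a direct substitution using the previous corollary.
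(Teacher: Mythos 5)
Your proof is correct and follows exactly the route the paper intends: it applies the preceding Krein--Milman-type corollary to write $K=\mathrm{ext}_c(K)^{cc}$ and then uses the standard quasi-involution identity $A^{ccc}=A^c$, which the paper invokes implicitly via the remark that dualizing a set is the same as dualizing its $c$-hull. Your explicit verification of $A^{ccc}=A^c$ is a correct and welcome filling-in of that slogan.
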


Finally, to have a full analogue of Carath\'eodory's theorem, we prove that when taking a $c$-hull of a set, no $c$-extremal points are added. This will allow us to prove

 \begin{thm}\label{thm:cara-hard}
	Let $n\ge 2 $ and let $A\subset \R^n$ be closed with $\outrad(A)<1$.  For any $x\in \cconv(A)$ there exist $\{ x_i\}_{i=1}^m \subset A$, $m\le n+1$ such that 
	\[ x\in \cconv (\{ x_i\}_{i=1}^m ).\]
\end{thm}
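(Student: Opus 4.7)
The strategy is to reduce Theorem \ref{thm:cara-hard} to Theorem \ref{thm:cara} by establishing the \emph{key inclusion}
\[
\extr(\cconv(A))\subseteq A
\]
whenever $A\subset\R^n$ is closed with $\outrad(A)<1$. Given this, Theorem \ref{thm:cara-hard} follows immediately: for any $x\in\cconv(A)$, set $K=\cconv(A)=A^{cc}$, observe $\outrad(K)=\outrad(A)<1$ by \eqref{eq:out-and-outc}, and apply Theorem \ref{thm:cara} to obtain $x_1,\dots,x_m\in\extr(K)$ with $m\le n+1$ such that $x\in\cconv(\{x_i\}_{i=1}^m)$; by the key inclusion, each $x_i\in A$.

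To prove the key inclusion, fix $x\in\extr(K)$. By Proposition \ref{prop:extremal-ray-and-extremality} there exist $y\in\partial K^c=\partial A^c$ and a unit vector $u$ spanning an extremal ray of $N_{A^c}(y)$ with $x=y-u$. Since $A$ is closed with $\outrad(A)<1$, $A$ is compact, and so is the contact set $A_y:=\{a\in A:\|y-a\|_2=1\}$. Since $A^c=\bigcap_{a\in A}B(a,1)$ and each ball $B(a,1)$ with $a\in A_y$ supports $A^c$ from outside at $y$ with outer normal $y-a$, a standard supporting-hyperplane argument identifies $N_{A^c}(y)$ as the closed convex conic hull of the compact set
\[
V_y:=\{y-a:a\in A_y\}\subset S^{n-1}.
\]

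The existence of the extremal ray $\R^+u$ forces $N_{A^c}(y)$ to be pointed, so there is a linear functional $\phi$ with $\phi(v)>0$ for every $v\in V_y$ (otherwise $0\in\conv(V_y)$ by separation and compactness, and the cone would contain a line). The base $B:=N_{A^c}(y)\cap\{\phi=1\}$ is then a compact convex set equal to the closed convex hull of $\{v/\phi(v):v\in V_y\}$, and extremal rays of $N_{A^c}(y)$ correspond bijectively to extreme points of $B$. By Milman's converse to Krein-Milman, every extreme point of $B$ lies in the generating set $\{v/\phi(v):v\in V_y\}$. Hence $u$ is a positive multiple of $y-a$ for some $a\in A_y$, and since $\|u\|_2=\|y-a\|_2=1$ we conclude $u=y-a$, so $x=y-u=a\in A$.

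The main obstacle is the clean identification of $N_{A^c}(y)$ as the closed convex conic hull of the active outer normals $\{y-a:a\in A_y\}$ when $A$ is infinite. This is standard for intersections of convex bodies and follows from compactness of $A$ together with a supporting-hyperplane argument applied to $A^c$ at $y$ (one can also approximate $A$ by an increasing sequence of finite subsets and pass to the limit). The remaining ingredients--pointedness of cones with extremal rays and Milman's converse to Krein-Milman--are classical.
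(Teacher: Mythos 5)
Your proposal follows essentially the same route as the paper: the paper also reduces Theorem \ref{thm:cara-hard} to the inclusion $\extr(\cconv(A))\subseteq A$ (stated there as Theorem \ref{thm:cara-hard-neeD}) and combines it with Theorem \ref{thm:cara}, and it likewise deduces the inclusion by identifying the normal cone $N_{A^c}(y)$ with the conic hull of the active directions $y-a$, $a\in A\cap S(y,1)$, and noting that an extremal ray of such a cone must be one of the generators. The one step you assert as ``standard'' --- that $N_{A^c}(y)$ equals the closed conic hull of $V_y$ --- is precisely where the paper invests its effort (Lemma \ref{lem:helpful-for-carahard}); it is true, but the naive separation-and-perturbation argument needs care: one must choose the separating functional to be \emph{strictly} negative on the compact set $V_y$ (possible because the cone is pointed, as $A^c$ has nonempty interior), and then a compactness argument is needed to control the inactive constraints $a\in A$ with $\|y-a\|_2<1$ when perturbing $y$. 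Your alternative suggestion of approximating $A$ by finite subsets and passing to the limit is the weaker of the two routes, since boundary points and normal cones do not pass to such limits in any obvious way; the supporting-hyperplane route, carried out as above, does work and is essentially equivalent to the paper's lemma.
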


Equivalently, we can prove the following
\begin{thm}\label{thm:cara-hard-neeD}
Let $n\ge 2 $ and let $A\subset \R^n$ be closed with $\outrad(A)<1$.
Then
\[
{\rm ext}_c(\cconv(A)) \subseteq A.
\]
\end{thm}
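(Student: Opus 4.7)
The plan is to invoke Proposition \ref{prop:extremal-ray-and-extremality} on $K := \cconv(A) = A^{cc}$ and then pin down the extremal rays of the normal cone of $A^c$ via a Carath\'eodory-plus-extremality argument. Since $K^c = A^{ccc} = A^c$ and $\outrad(K) = \outrad(A) < 1$ (by equation \eqref{eq:out-and-outc}), any $x \in {\rm ext}_c(K)$ is of the form $x = y - u$ for some $y \in \partial A^c$ and some unit vector $u$ spanning an extremal ray of the normal cone $\RR^+ N_{A^c}(y)$. It therefore suffices to show that every such $x$ lies in $A$.

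The heart of the proof is identifying this normal cone. Writing $A^c = \bigcap_{a \in A} B(a,1)$ and noting that $A$ is compact (closed by hypothesis, bounded since $\outrad(A) < 1$), the ``active'' constraints at a boundary point $y \in \partial A^c$ are exactly $A_y := A \cap S(y,1)$. The standard active-constraint description (the normal cone of an intersection of convex sets at a common boundary point is generated by the normal cones of the active members) then gives
\[
\RR^+ N_{A^c}(y) = \mathrm{cone}\bigl(\{y - a : a \in A_y\}\bigr),
\]
and this cone is closed because $A_y$ is compact and all generators have unit norm.

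Next I would apply Carath\'eodory's theorem for cones to write $u = \sum_{i=1}^m \lambda_i(y-a_i)$ with $\lambda_i > 0$, $a_i \in A_y$, and $m \le n$. Since $\RR^+u$ is an extremal ray and each $y - a_i$ lies in the cone, extremality forces every summand $\lambda_i(y-a_i)$ to lie on $\RR^+u$; combined with $\|y-a_i\|_2 = \|u\|_2 = 1$, this forces $y - a_i = u$, hence $a_i = y - u = x$. Therefore $x \in A_y \subseteq A$, as desired.

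The main obstacle, and the one step that is not purely routine, is the cone description of $N_{A^c}(y)$ in the second paragraph. For finite $A$ (intersection of finitely many balls) it is immediate from Farkas/KKT, but for general compact $A$ one must verify both the active-constraint formula and the closedness of the generated cone; the hypothesis $\outrad(A) < 1$ (hence $A$ compact) is precisely what makes both of these go through. Once this is in place, the extremality argument collapses to the single observation that two unit vectors $y-a$ and $y-a'$ with $a,a' \in A_y$ are positive multiples of one another only if $a = a'$, which immediately forces $x$ to be an active constraint, i.e.\ an element of $A$.
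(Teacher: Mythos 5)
Your overall strategy is the same as the paper's: reduce via Proposition \ref{prop:extremal-ray-and-extremality} to showing that every extremal ray of $\RR^+N_{A^c}(y)$ is spanned by $y-a$ for some $a\in A\cap S(y,1)$, then use Carath\'eodory for cones plus the definition of an extremal ray. The final extremality step (each summand of $u=\sum\lambda_i(y-a_i)$ must lie on $\RR^+u$, and unit norms then force $a_i=x$) is correct and matches the paper.

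The gap is exactly where you flagged it, and flagging it is not the same as closing it. The identity $\RR^+N_{A^c}(y)=\mathrm{cone}\{y-a:\ a\in A\cap S(y,1)\}$ is not a citable ``standard active-constraint description'' here: $A^c=\bigcap_{a\in A}B(a,1)$ is in general an \emph{infinite} intersection, and for infinite intersections the inclusion $N_{\bigcap_\alpha C_\alpha}(y)\subseteq \mathrm{cone}\bigl(\bigcup_{\alpha\ \mathrm{active}}N_{C_\alpha}(y)\bigr)$ — the direction you actually need — can fail without a qualification condition, and even when true it requires an argument (compactness of $A$ alone does not hand it to you; one must also rule out normals arising ``in the limit'' from nearly-active constraints). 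This is precisely the content of the paper's Lemma \ref{lem:helpful-for-carahard}, which states $\cconv(A\cap S(y,1))\cap S(y,1)=\cconv(A)\cap S(y,1)$ and is proved by a genuine geometric perturbation: if $x\in\cconv(A)\cap S(y,1)$ but $x\notin\cconv(A\cap S(y,1))$, there is a ball $B(z,1)\supseteq A\cap S(y,1)$ missing $x$, and one tilts the center $y$ to $y+\eps\theta$ with $\theta\perp(x-y)$ chosen in a cone of directions around $z-y$, checking separately (using closedness of $A$ and $\outrad(A)<1$) that the tilted ball still contains both $A\cap S(y,1)$ and $A\setminus S(y,1)$ while excluding $x$; this contradicts $x\in\cconv(A)$. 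Via the correspondence between exposed $c$-faces and normal cones (Lemma \ref{lem:normals map boundary to boundary} and Remark \ref{rem:unfounded}), that lemma is equivalent to the cone formula you assert. So your proof becomes complete once you either prove this lemma or import an appropriate semi-infinite constraint-qualification theorem and verify its hypotheses (a Slater point exists since $\outrad(A)<1$); as written, the crucial step is asserted rather than established.
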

To see the equivalence: if $\cconv(A)$ would have some $c$-extremal point which is not in $A$, this would contradict Theorem \ref{thm:cara-hard} as it cannot be given as a $c$-hull of points different from it. On the other hand, once Theorem \ref{thm:cara-hard-neeD} is proved, we can use it together with Theorem \ref{thm:cara} to obtain a proof of Theorem \ref{thm:cara-hard}.

We will make use of the following lemma.

\begin{lem}\label{lem:helpful-for-carahard}
    Let $n\ge 2 $ and $A\subset \R^n$ closed with $\outrad(A)<1$. Let $y\in A^c$. Then 
\[ \cconv(A\cap S(y,1)) \cap S(y,1) = \cconv(A)\cap S(y,1). \] 
\end{lem}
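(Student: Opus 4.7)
My approach is to translate the claimed equality into an equality of normal cones of the $c$-duals $A^c$ and $P^c$ at $y$, using the boundary correspondence of Lemma~\ref{lem:normals map boundary to boundary}. Translate so that $y=0$, so $A\subseteq B(0,1)$, and set $P := A\cap S^{n-1}$. The inclusion $\cconv(P)\cap S^{n-1} \subseteq \cconv(A)\cap S^{n-1}$ follows immediately from $P\subseteq A$ and monotonicity of $\cconv$. The degenerate case $P=\emptyset$ is vacuous: compactness of $A$ then forces $A\subseteq B(0,r)$ for some $r<1$, so $\cconv(A)\subseteq B(0,r)$ misses $S^{n-1}$ and both sides are empty. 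So assume $P\neq\emptyset$.

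Applying Lemma~\ref{lem:normals map boundary to boundary} to any $K\in\S_n$ with $0\in\partial K^c$, the map $x\mapsto -x$ gives a bijection $K\cap S^{n-1}\leftrightarrow N_{K^c}(0)\cap S^{n-1}$. Taking $K=\cconv(A)$ and $K=\cconv(P)$ (both satisfy $0\in\partial K^c$ since $P\neq\emptyset$), the desired equality reduces to $N_{A^c}(0)=N_{P^c}(0)$. The inclusion $N_{P^c}(0)\subseteq N_{A^c}(0)$ is immediate since $A^c\subseteq P^c$ with $0$ in their common boundary.

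For the reverse inclusion I argue by contradiction: assume $u\in N_{A^c}(0)$ but $u\notin N_{P^c}(0)$, so there is $z^*\in P^c$ with $\langle u,z^*\rangle>0$. The claim is that $\epsilon z^*\in A^c$ for all sufficiently small $\epsilon>0$, which gives $\langle u,\epsilon z^*\rangle>0$ and contradicts $u\in N_{A^c}(0)$. For $a\in P$, the condition $\|\epsilon z^*-a\|\le 1$ is automatic because $\epsilon z^*\in[0,z^*]\subseteq P^c$. For $a\in A\setminus P$, so $\|a\|<1$, the inequality $\|\epsilon z^*-a\|^2\le 1$ is a quadratic in $\epsilon$ with constant term $\|a\|^2-1<0$, so its two real roots have opposite signs and the inequality holds on $[0,\epsilon^+(a)]$ for some positive root $\epsilon^+(a)>0$.

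The main obstacle is the uniform lower bound on $\epsilon^+(a)$ over $a\in A\setminus P$, and this is where closedness of $A$ enters. I would split by a small threshold $\eta>0$. For $a$ with $d(a,P)<\eta$, picking closest $p\in P$ and expanding $\|z^*-p\|^2\le 1$ with $\|p\|=1$ gives $\langle z^*,p\rangle\ge \|z^*\|^2/2$, hence $\langle z^*,a\rangle\ge \|z^*\|^2/2-\|z^*\|\eta$, which is positive and bounded below once $\eta$ is small, making $\epsilon^+(a)\ge \langle z^*,a\rangle/\|z^*\|^2$ uniformly bounded below. For $a$ with $d(a,P)\ge\eta$, the set $\{a\in A:d(a,P)\ge\eta\}$ is closed in the compact $A$ and disjoint from $P=A\cap S^{n-1}$, so $\|a\|$ attains a maximum strictly less than $1$ on it, giving a uniform lower bound on $1-\|a\|^2$ and hence on $\epsilon^+(a)$ via the explicit expression for the positive root. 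Combining yields $\epsilon_0>0$ with $\epsilon z^*\in A^c$ for all $\epsilon\in(0,\epsilon_0]$, closing the contradiction. Finally, applying Lemma~\ref{lem:normals map boundary to boundary} to $P^c$ at $0$ with normal $u=-x$ recovers $x=-u\in P^{cc}=\cconv(P)$, as desired.
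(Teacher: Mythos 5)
Your proof is correct, and it takes a genuinely different (and in places more fully worked-out) route than the paper's. The paper argues directly in the primal picture: assuming $x\in \cconv(A)\cap S(y,1)\setminus \cconv(A_g)$, it produces a witness $z$ with $A_g\subset B(z,1)$ and $d(x,z)>1$, and then perturbs the center $y$ by $\eps\theta$ for a direction $\theta$ chosen \emph{orthogonal} to $x-y$ inside a cone of directions around $z-y$; orthogonality makes $\|x-(y+\eps\theta)\|=\sqrt{1+\eps^2}>1$ automatic, and a compactness argument (using closedness of $A$, exactly as in your split by the threshold $\eta$) shows $B(y+\eps\theta,1)\supseteq A$ for small $\eps$. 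You instead pass to the dual side via the correspondence $K\cap S(y,1)=y-N_{K^c}(y)$ of Lemma \ref{lem:normals map boundary to boundary} (the paper only invokes this translation in the \emph{next} proof, that of Theorem \ref{thm:cara-hard-neeD}), reduce the statement to $N_{A^c}(0)=N_{P^c}(0)$, and perturb along the segment $[0,z^*]$ toward a point $z^*\in P^c$ with $\iprod{u}{z^*}>0$; the exclusion of $x=-u$ then comes from the sign of $\iprod{x}{z^*}$ rather than from orthogonality. The two perturbations are cousins, but yours buys a cleaner quantitative control: the uniform lower bound on the admissible $\eps$ is reduced to an explicit quadratic in $\eps$ with negative constant term, handled by compactness of $\{a\in A: d(a,P)\ge\eta\}$ and the estimate $\iprod{z^*}{p}\ge\|z^*\|_2^2/2$ for $p\in P$, whereas the paper's ``cone of directions $\psi$ surrounding $z-y$'' step is left somewhat informal. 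The price is that your argument leans on the normal-cone dictionary (and on $A^c$, $P^c$ having nonempty interior, which follows from $\outrad(A)<1$), while the paper's is self-contained at the level of balls and inclusions. Both proofs use closedness of $A$ in the same essential way, to separate $A\setminus S(y,1)$ uniformly from the sphere. Your treatment of the degenerate case $P=\emptyset$ and the final translation back via $x=-u\in\cconv(P)$ are both correct.
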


 \begin{proof}[Proof of Lemma \ref{lem:helpful-for-carahard}] 
 We split $A  =  A_g \cup A_b$ where $A_g = A\cap S(y,1)$ and $A_b  = A\setminus S(y,1)$. Since $A$ is closed, if $y\not \in \partial A^c$ then $A_g = \emptyset$ and $S(y,1)\cap \cconv (A) = \emptyset$ as well (since there is some smaller ball $A\subset B(y,R)$ with $R<1$), so in this case the conclusion of the lemma holds.  
	
We may this assume $A_g\neq \emptyset$. 
Assume towards a contradiction that there exists a point $x\in \cconv(A)\cap S(y,1)$ which is not in $\cconv (A_g)$. Since $\cconv (A_g)$ is the intersection of all $1$-balls including $A_g$, this means there exists $z\in \RR^n$ such that $B(z,1)\supset A_g$ and $d(x,z)>1$. 
	We claim that for a small enough $\eps>0$ and some $\theta\perp (x-y)$, the ball $B(y+\eps \theta, 1)$ includes $A$ and does not include $x$. This would contradict $x\in \cconv (A)$. 
	
	To find this $\theta$ we note that $A_g \subset B(y,1)\cap B(z,1)$ which implies that for any $\psi$ in a cone of directions surrounding $z-y$, there exists an $\eps>0$ with 	
	\[ A_g \subset B(y,1)\cap B(z,1)\subset B(y+ \eps \psi,1).\]
	Since $x\in S(y,1)$ and $x\not\in B(z,1)$ we know that $(x-y)^\perp$ must intersect  this cone of directions (in its interior) and this is how we pick out $\theta$. 
	
	By our choice of $\theta$, and $\eps$ small enough, the ball we just defined includes all of $A_g$ and does not include $x$. We claim that by taking small enough $\eps$ we can also guarantee that it does   include $A_b$. Indeed, denote $\delta>0$ the minimal distance of a point in $A_b$ and the cap $S(y,1)\setminus B(y+\eps_0\theta)$. The fact that it is not zero follows from the fact that the only accumulation point of $A_b$ which do not belong to $A_b$ are in $A_g$, which has a positive distance to this ball (when $\eps_0$ is chosen small enough, since $A_g \subset B(y,1)\cap B(z,1)$).
	
	Clearly if $\eps<\min (\delta, \eps_0)$ then $A_b\subset B(y+\eps\theta)$
	as well. This gives a contradiction to the fact that $x\in \cconv(A)$, proving the lemma. 	
\end{proof}

\begin{proof}[Proof of Theorem \ref{thm:cara-hard-neeD}]
	Denote $K = \cconv (A)$, and by \eqref{eq:out-and-outc} we have $\outrad(K) <1$. Clearly $K^c = A^c$. Let $x\in \partial K$. 
 There is some $y\in A^c$ with $x\in S(y,1)\cap K$ (namely any $x-u$ where $u\in N_K(x)$).  By Lemma \ref{lem:helpful-for-carahard} we see that 
 \[ x \in \cconv(A)\cap S(y,1) =  \cconv(A\cap S(y,1))\cap S(y,1). \]
 This means that the normal cone to $A^c$ at $y$ is the cone-convex-hull of the rays $\RR^+ (a-y)$ for $a\in A\cap S(y,1)$. We get that $\RR^+(x-y)$ belongs to this cone, and   by the classical Carath\'eodory's theorem it is a combination of $m\le n$ of these extremal rays. In particular, it can only be an extremal ray if $x - y = a-y$ for some $a\in A$, namely $x\in A$. Since $c$-extremal points for $K$ correspond to extremal rays of the cones (by Proposition \ref{prop:extremal-ray-and-extremality}) we see that $x$ can be $c$-extremal for $K$ only if   it belongs to $A$. 
\end{proof}

In the classical theory for convex hulls, one can convexify a set in ``stages'', the first iteration is the  set  containing all segments connecting two points in the original set, the second iteration contains all segments connecting two points in the first iteration set, etc. It is easy to check, simply rearranging coefficients, that for a set in $\RR^n$, after approximately $\log (n)$ iterations, one achieves the convex hull of the original set. 
While in the world of $c$-hulls we cannot work with coefficients as easily, the same phenomenon holds. 

\begin{thm}\label{thm:iterative-c-hull}
 Let $n\ge 2 $ and let $A\subset \R^n$ be closed with $\outrad(A)<1$. 
 Let $A_0 = A$ and define for $j=1, 2, \ldots$ the sets
\[ A_{j+1} = \cup \{\cconv(x,y): x,y\in A_{j} \}. \]
Then for all $j$ we have $A_j \subseteq \cconv (A)$, and for 
     $2^j >n$ we have that $A_j = \cconv(A)$. 
\end{thm}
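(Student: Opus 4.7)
First I would verify the easy inclusion $A_j \subseteq \cconv(A)$ by induction on $j$: one has $A_0 = A \subseteq \cconv(A)$, and if $A_j \subseteq \cconv(A) \in \S_n$, then for any $x, y \in A_j$ Proposition \ref{prop:ball and spindle convexity} gives $\cconv(x, y) \subseteq \cconv(A)$, so $A_{j+1} \subseteq \cconv(A)$.

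For the reverse inclusion under the hypothesis $2^j > n$, the plan is to first apply Theorem \ref{thm:cara-hard}: every $x \in \cconv(A)$ lies in $\cconv(\{x_1, \ldots, x_m\})$ for some $m \leq n+1$ points $x_i \in A$, and since $2^j > n$ is equivalent to $2^j \geq n+1 \geq m$, it suffices to prove the following sub-claim by induction on $k$: for any $m \leq 2^k$ points with out-radius less than $1$, $\cconv(\{x_1, \ldots, x_m\}) \subseteq A_k$. The bases $k = 0, 1$ are immediate from the definitions of $A_0$ and $A_1$. For the step with $m \leq 2^k$ points, I would split them into halves of size $\leq 2^{k-1}$, let $K_1, K_2 \in \S_n$ be their respective $c$-hulls (both contained in $A_{k-1}$ by the inductive hypothesis), and invoke the Key Lemma:
\[
\cconv(K_1 \cup K_2) \;=\; \bigcup_{u \in K_1,\, v \in K_2} \cconv(u, v),
\]
whose right-hand side then lies in $A_k$.

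The inclusion $\supseteq$ in the Key Lemma is trivial. For the nontrivial direction, the boundary case follows from the cone picture: for $x \in \partial \cconv(K_1 \cup K_2)$, pick a supporting point $y \in K_1^c \cap K_2^c$ with $x \in S(y, 1)$; Lemma \ref{lem:helpful-for-carahard} places $x$ in $\cconv((K_1 \cup K_2) \cap S(y, 1)) \cap S(y, 1)$, and the cone-sphere correspondence of Remark \ref{rem:unfounded} writes $x - y$ as a nonneg combination of unit vectors pointing to $K_1 \cap S(y, 1)$ and $K_2 \cap S(y, 1)$. Regrouping these into a $K_1$-contribution $w_1$ and a $K_2$-contribution $w_2$ and renormalizing produces $\hat{w}_1, \hat{w}_2 \in S^{n-1}$ with $y + \hat{w}_i \in K_i$ and $x$ on the 1-arc between them, yielding $x \in \cconv(y + \hat{w}_1, y + \hat{w}_2)$ with one endpoint in $K_1$ and the other in $K_2$ (the degenerate case $w_1 = 0$ or $w_2 = 0$ gives $x \in K_1$ or $x \in K_2$ directly).

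The main obstacle will be the interior case of the Key Lemma: for $x$ strictly inside $\cconv(K_1 \cup K_2)$ no supporting sphere passes through $x$, so the cone argument does not apply directly. The plan is to exploit that $S := \bigcup_{u, v} \cconv(u, v)$ is closed (as a continuous image of the compact $K_1 \times K_2$), contains $K_1 \cup K_2$ (take $u$ equal to the point and any $v$), contains $\partial \cconv(K_1 \cup K_2)$ by the boundary case, and dualizes to $S^c = K_1^c \cap K_2^c$, so that $S^{cc} = \cconv(K_1 \cup K_2)$. One would then show $S = S^{cc}$ either by a continuity / limiting argument extending the boundary pair $(u, v)$ inward using Corollary \ref{cor:c-hull-is-continuous}, or by a connectivity / Helly-type argument on the compact parameter space $K_1 \times K_2$, concluding that $S$ is $c$-convex and hence equals $\cconv(K_1 \cup K_2)$.
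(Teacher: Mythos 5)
Your reduction to $m\le n+1$ points via Theorem \ref{thm:cara-hard} followed by binary splitting is a genuinely different organization from the paper's proof, but it stands or falls with your ``Key Lemma'' $\cconv(K_1\cup K_2)=\bigcup_{u\in K_1,v\in K_2}\cconv(u,v)$, and the interior case of that lemma --- which you yourself flag as the main obstacle --- is not proved. Neither of your two suggested routes closes it as stated: knowing that $S=\bigcup_{u,v}\cconv(u,v)$ is compact, contains $K_1\cup K_2$ and contains $\partial\,\cconv(K_1\cup K_2)$ does not force $S=S^{cc}$ (a closed set can contain the boundary of its $c$-hull without being $c$-convex), and the chord trick that works for interior points elsewhere only exhibits an interior $x$ as lying in $\cconv(p,q)$ for $p,q\in\partial\,\cconv(K_1\cup K_2)$, not for $p\in K_1$, $q\in K_2$. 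If you patch this by spending one extra iteration per induction level to absorb interior points, the count degrades to roughly $2\log_2 n$ iterations rather than the stated $2^j>n$. Your boundary-case argument, by contrast, is essentially sound: it is the same cone--sphere mechanism the paper uses, and Lemma \ref{lem:helpful-for-carahard} does guarantee that $K_i\cap S(y,1)$ is spherically convex, so the regrouped unit vectors $\hat w_i$ land back in $K_i$.

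The paper sidesteps the need for any such global join lemma by doing the entire binary splitting \emph{inside a single normal cone}. For $x\in\partial\,\cconv(A)$ one passes to $y\in\partial A^c$ with $u=y-x\in N_{A^c}(y)$; the extremal rays of that cone point to points of $A$ (Theorem \ref{thm:cara-hard-neeD} together with Proposition \ref{prop:extremal-ray-and-extremality}), and the iterated Carath\'eodory lemma for cones (Lemma \ref{lem:cara-cones}) splits the conic combination in halves. Via the cone--sphere correspondence of Remark \ref{rem:unfounded}, every intermediate object in this splitting is a geodesic arc on the one sphere $S(y,1)$, i.e.\ genuinely a $\cconv$ of two previously constructed points, so no statement about $c$-hulls of unions of two general ball-bodies is ever needed; interior points of $\cconv(A)$ are then absorbed once, at the very end, by a single chord between two boundary points. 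To repair your write-up you should either supply a full proof of the Key Lemma (it is plausible, but it is a substantive statement about the ``spindle join'' of two ball-bodies, not a corollary of anything established in the paper) or restructure the induction so that the splitting happens at the level of rays in $N_{A^c}(y)$ rather than at the level of subsets of $A$.
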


To prove it, we first note that for convex cones, a similar fact holds by the usual Carath\'eodory argument (splitting the hull in two at every step).

 \begin{lem}\label{lem:cara-cones}
Let $K\subset S(0,1)\subset \RR^n$ be a spherically convex set (namely $K$ is contained in an open half-space and $\RR^+K$ is a proper cone in $\RR^n$). Consider the extremal rays $(R_\alpha)_{\alpha \in I}$ of $\RR^+ K$ where $R_{\alpha} = u_\alpha \RR^+$ for some $u_\alpha \in S^{n-1}$. Let 
\[ K_0 = \cup_{\alpha \in I} R_\alpha\]
and 
\[ K_{j+1} = \cup \{ \conv (R, R'): R, R' \in K_j {\rm ~~rays} \}. \]
Then for $j$ with $2^j \ge n$ we have $K_{j+1} = \conv_{\alpha \in I
 } R_\alpha = K$.       
 \end{lem}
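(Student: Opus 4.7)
The plan is to combine the classical Carath\'eodory theorem for convex cones with a divide-and-conquer doubling argument. The key point is that one application of the operation $(R,R')\mapsto \conv(R,R')$ produces a $2$-dimensional cone, so in particular each ray in $K_{j}$ arising from $K_{j-1}$ is a positive combination of two rays from $K_{j-1}$. Iterating, a ray in $K_j$ is a positive combination of at most $2^j$ extremal rays. Since the classical conic Carath\'eodory theorem says every vector in $\RR^+K$ is a positive combination of at most $n$ extremal rays of $\RR^+K$, as soon as $2^j\ge n$ we can reach all of $K$.

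First I would establish the easy inclusion $K_j\subseteq K$ by induction on $j$. The base case is definitional, and the inductive step follows from the convexity of the cone $\RR^+K$: if $R,R'\in K_j\subseteq K$, then every ray of $\conv(R,R')$ lies in the convex cone $\RR^+K$, hence in $K$.

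For the reverse inclusion, fix $j$ with $2^j\ge n$. Given a nonzero $v\in \RR^+K$, apply classical Carath\'eodory to write $v=\sum_{i=1}^{n}\lambda_i u_{\alpha_i}$ with $\lambda_i\ge 0$ and $u_{\alpha_i}$ directions of extremal rays (padding by zero coefficients, and by $2^j-n$ further dummy terms with zero coefficient, so that the sum has exactly $2^j$ entries). Define $v_i^{(0)}=\lambda_i u_{\alpha_i}$ and, for $k=0,1,\ldots,j-1$, set $v_i^{(k+1)}=v_{2i-1}^{(k)}+v_{2i}^{(k)}$. By induction on $k$, whenever $v_i^{(k)}\ne 0$ it lies on a ray in $K_k$: the inductive step uses that $v_i^{(k+1)}$ is a nonnegative combination of two vectors on rays in $K_k$, so it lies in the $2$-dimensional cone $\conv(R,R')$ for the corresponding $R,R'\in K_k$ (with the obvious convention when one of $v_{2i-1}^{(k)},v_{2i}^{(k)}$ vanishes, the other's ray is used). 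After $j$ steps we obtain $v=v_1^{(j)}$ on a ray in $K_j$, hence $v\in K_j$.

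The only subtlety, which is not a real obstacle, is bookkeeping with zero coefficients and ensuring partial sums are well-defined rays. This is guaranteed by the spherical convexity hypothesis: all the directions $u_{\alpha_i}$ lie strictly on one side of some hyperplane, so any nonnegative combination with at least one strictly positive coefficient is nonzero, ruling out accidental cancellation. Hence $K_j=K$ for every $j$ with $2^j\ge n$, which in particular gives the stated $K_{j+1}=K$.
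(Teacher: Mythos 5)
Your proof is correct and is exactly the argument the paper has in mind: the paper omits a written proof of this lemma, describing it only as "the usual Carath\'eodory argument (splitting the hull in two at every step)," which is precisely your binary doubling of the conic Carath\'eodory decomposition, with the conditional induction (or the open-half-space observation) handling the zero/cancellation bookkeeping.
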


\begin{proof}[Proof of Theorem \ref{thm:iterative-c-hull}]
Let $K : = \cconv A $ and let $x\in \partial K$. Then $x = y-u$ for $y\in \partial A^c$ and some $u\in N_{A^c}(y)$. The cone $N_{A^c}(y)$ is proper (since $A^c$ is not flat, as it is not a point). The extremal rays of $N_{A^c}(y)$ are, by Proposition \ref{prop:extremal-ray-and-extremality}, of the form $y-z$ for $z\in {\rm ext}_c(K)$. By Theorem \ref{thm:cara-hard-neeD} this means $z\in A$. We see that $u = y-x$ is in the convex hull (in the cone sense) of points $y-z$. Using Lemma \ref{lem:cara-cones} we see that $y-x \in K_j$ whenever $2^j \ge n$. 
As we have seen above (see Remark \ref{rem:unfounded}), 
convex combinations in cones amount to $c$-hulls in the corresponding sphere, we see that $u$ belongs to the $j^{th}$ element in the iterative $c$-hull of ${\rm ext} (N_{K^c}(y))\subset S(y,1)$. Therefore $x$ belongs to the  $j^{th}$ element in the iterative $c$-hull of $y - {\rm ext} (N_{K^c}(y))\subset A \subset \partial K$, as needed.

So far we have included only $x$ in the boundary of $K$. With one more iteration we can make sure also points which are in the interior are obtained; indeed, any interior point is in the usual convex hull of two boundary points (and thus also in their $c$-hull). One may even force one of these boundary points to be any specified point, for example a given point in $A$. 
\end{proof}

\subsection{Curvature at a pair of dual points}

When $K$ and $K^c$ are both smooth bodies (namely having a unique normal at every point), there is a one to one correspondence between $x\in \partial K$ and $y\in \partial K^c$ given by $x\mapsto x-n_K(x)=y$ (and $y-n_{K^c}(y)=x$). There are no $1$-arcs on $\partial K$ or on $\partial K^c$. Nevertheless, the curvature at a point $x\in \partial K$ can be $1$. 

Indeed, taking any $C^2_+$ body $K'$ (namely with continuous $\nabla^2 h_{K'}$ on $\RR^n\setminus \{0\}$ and non-zero curvature) we can find the minimal value of its curvature and rescale the body to be in $\S_n$, which will necessarily produce a body $K = aK'\in \S_n$ which is also smooth. If we postulate that $K$ cannot have $1$-arcs on its boundary (which is the same as asking for $K'$ not to have a circular $r$-arc on its boundary with minimal curvature) then the $c$-dual of $K$ (which is automatically in $\S_n$ of course) will be smooth as well.    

It will follow from the discussion below that for $C^2$ bodies $K,K^c\in \S_n$, given a point $y(x)\in \partial K^c$ corresponding to a point $x\in \partial K$ of curvature $1$, the  curvature of $K^c$ at $y(x)$ will be infinite (i.e., a smooth point of curvature $+\infty$). Before we formulate this in a more precise fashion, let us discuss a specific example, namely the dual of an ellipse in $\S_2$, where this phenomenon occurs. 

\begin{exm}
    Consider the ellipse $E\subset \RR^2$ given by 
    \[ \{ (x,y): \frac{x^2}{a^2} + \frac{y^2}{b^2} \le 1\}. \]
The curvature at the point $(x,y)$ is
\[ \kappa ={\frac {1}{a^{2}b^{2}}}\left({\frac {x^{2}}{a^{4}}}+{\frac {y^{2}}{b^{4}}}\right)^{-{\frac {3}{2}}} \]
so (for $b<a$) the radius of curvature is between $b^2/a$ and $a^2/b$. We set the maximal radius of curvature to be $1$ i.e. $a^2=b$ and $b^2 < \sqrt{b}$ meaning $b < 1$.
So, the ellipse is
\[
E = 
\left\{ (x,y): \frac{x^2}{b} + \frac{y^2}{b^2} \le 1\right\}.
\]

\begin{center}\begin{tikzpicture}
\draw[thick, blue] (0,0) ellipse (3.5cm and 2.5cm);
    \draw[densely dashed] (-3.5,0) -- (3.5,0) node[anchor=north west] {$a=\frac1{\sqrt{2}}$};
    \draw[densely dashed] (0,-2.5) -- (0,2.5) node[anchor=south east] {$b=\frac12$};
\end{tikzpicture}
\end{center}
By Theorem \ref{thm:char-curv},  $E\in\S_2$. The ellipse $E$ is a convex body with all boundary points   smooth and $c$-extremal. The radius of curvature of $E$ at $(0,1/2)$ equals $1$, and this  implies, as we shall see below, that the radius of curvature of $E^c$ at $(0,-1/2)$ equals $0$, meaning the curvature of $E^c$ diverges at its smooth boundary point $(0,-1/2)$.
\begin{figure}[h]
    \centering
    \includegraphics[scale=0.35]{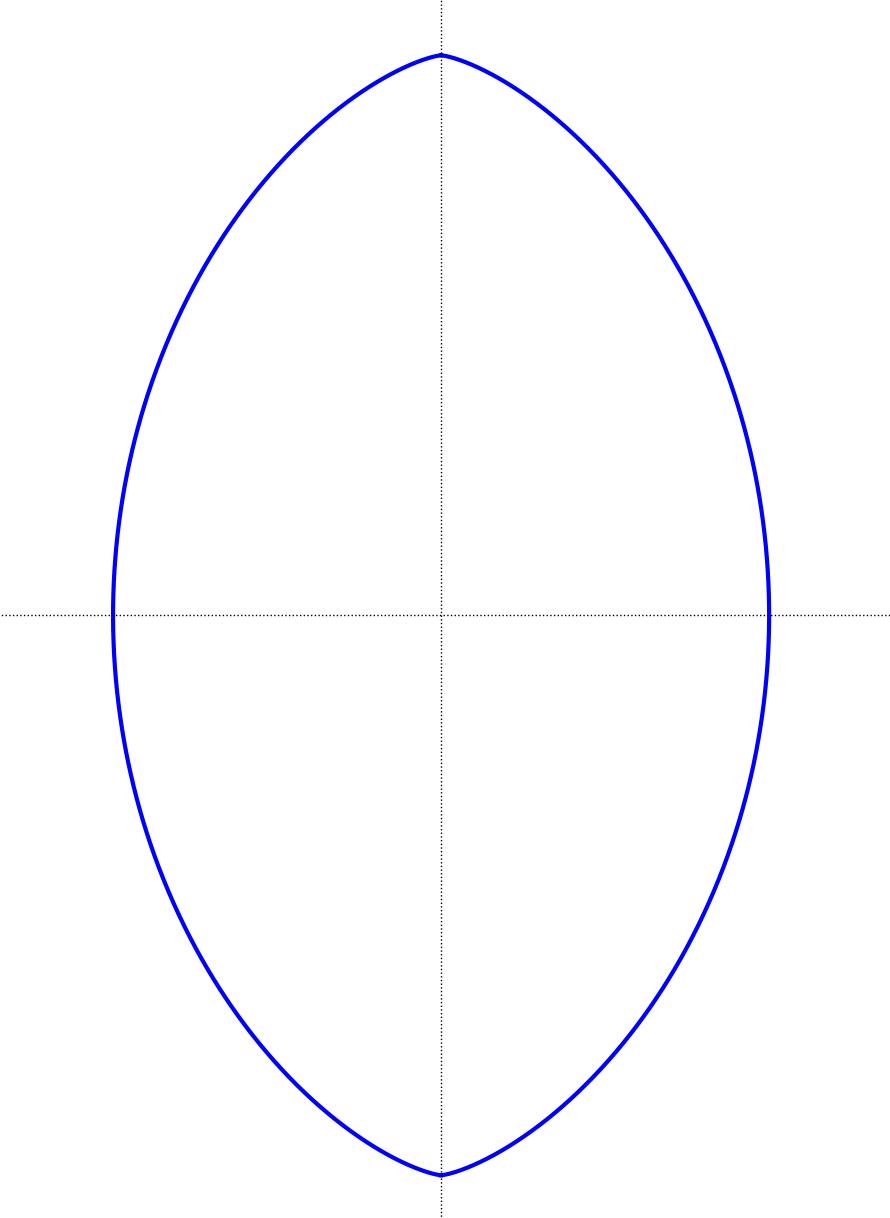}
    \caption{The $c$-dual of $E$, for $b=1/2$}
    \label{fig:ellipse-dual}
\end{figure}
Let us show it in this example explicitly. Parameterize the bottom half of $\partial E^c$ by taking a boundary point in the upper half of $\partial E$ and moving one unit along the (unique) inner normal. It turns out that the ``south pole'' $y=(0,b-1)$ of $E^c$ is a smooth point with $0$ radius of curvature, as the parametrization $(u(t),v(t))$ behaves near $y$ like
\[
v(u)\approx
-(1-b)
+ \frac32
\left(\frac{b}{2(1-b)}\right)^\frac13 \cdot |u|^\frac43.
\]
\end{exm}

The general phenomenon is that in $\RR^2$ the radii of curvature at (smooth) dual points sum to $1$, and similarly pairs of principal curvatures will sum to $1$ in higher dimensions.

More precisely we will see that for smooth points $x\in \partial K$ and $y\in \partial K^c$ with $y=x-n_K(x)$, the set of principal radii of curvature (ordered in an  increasing fashion)  satisfy $r_i^K(x)+r_{n-i}^{K^c}(y)=1$. This follows below from the relation 
$h_K(u) + h_{K^c}(-u) = 1$, and should be compared to the relation in Lemma \ref{lem:pairings}, where  $x$ lay in the interior of a $1$-arc, and then $y$ had to be a non-smoothness point (and in particular $r_1^{K^c}(y)=0$). We prove the following theorem (for an elaborate discussion on radii of curvatures see \cite[Section 2.5]{schneider2013convex}). 

\begin{thm}\label{thm:chakerian-dual-curvatures}
     Let $K\in \S_n$ and assume $u\in S^{n-1}$ is such that $h_K$ is twice continuously differentiable in a neighborhood of $u$. Then, letting $x = \nabla h_K(u)\in \partial K$ and $y = x-u\in \partial K^c$, and letting $0\le r_1\le \cdots \le r_{n-1}\le 1$ and $0\le s_1\le \cdots \le s_{n-1}\le 1$ be the principal radii of curvature of $K$ at $x$ and of $K^c$ at $y$, respectively, we have 
     \[ r_i + s_{n-i} = 1, \quad i=1, \ldots, n-1.\]
 \end{thm}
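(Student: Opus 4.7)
The plan is to leverage the identity from Proposition~\ref{prop:KminusK}, differentiated twice, together with the standard dictionary between the Hessian of the support function and the principal radii of curvature. Extending $h_K(u) + h_{K^c}(-u) = 1$ from $S^{n-1}$ to $\RR^n\setminus\{0\}$ by positive $1$-homogeneity, I obtain
\[ h_K(v) + h_{K^c}(-v) = |v|. \]
Since $h_K$ is assumed $C^2$ in a neighborhood of $u$, and $|v|$ is smooth away from the origin, this identity shows that $h_{K^c}$ is automatically $C^2$ in a neighborhood of $-u$. Differentiating once reproduces the relation \eqref{eq:nablah_K}; differentiating a second time (taking the Jacobian of the gradient in $v$) and then evaluating at $v = u \in S^{n-1}$ yields the Hessian relation
\[ \nabla^2 h_K(u) + \nabla^2 h_{K^c}(-u) \;=\; I - uu^T. \]

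The key ingredient from convex geometry (see \cite[Section 2.5]{schneider2013convex}) is that when $h_K$ is $C^2$ near $u \in S^{n-1}$, the principal radii of curvature of $K$ at $x = \nabla h_K(u)$ are precisely the eigenvalues of $\nabla^2 h_K(u)$ restricted to $u^\perp$; the direction $u$ is a trivial eigenvector with eigenvalue $0$ by Euler's identity for $1$-homogeneous functions. Applied to $K^c$ at the point $y = \nabla h_{K^c}(-u) = x - u$, the radii $s_1, \ldots, s_{n-1}$ are the eigenvalues of $\nabla^2 h_{K^c}(-u)$ restricted to $(-u)^\perp = u^\perp$.

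Finally I will restrict the displayed Hessian relation to the $(n-1)$-dimensional subspace $u^\perp$. The right-hand side $I - uu^T$ acts as the identity on $u^\perp$, so setting $A = \nabla^2 h_K(u)|_{u^\perp}$ and $B = \nabla^2 h_{K^c}(-u)|_{u^\perp}$, we get $A + B = I_{n-1}$. These are commuting symmetric matrices (since $B = I_{n-1} - A$), hence simultaneously diagonalizable, and the eigenvalues of $B$ are $1$ minus those of $A$. Ordering both sequences increasingly gives exactly $r_i + s_{n-i} = 1$ for $i = 1, \ldots, n-1$.

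I do not anticipate a substantive obstacle: the propagation of $C^2$ regularity from $h_K$ near $u$ to $h_{K^c}$ near $-u$ is a free consequence of the identity, the Hessian calculation is a single line, and the translation between the eigenvalues of $\nabla^2 h_K|_{u^\perp}$ and the principal radii of curvature is classical. The mild subtlety worth being careful about is that the paper's hypothesis is formulated in terms of $h_K$ being $C^2$ in a neighborhood of $u$ in $\RR^n$ (or equivalently, by $1$-homogeneity, on a spherical neighborhood of $u$); this is exactly what is needed both to form $\nabla^2 h_K(u)$ and to deduce the corresponding $C^2$ regularity of $h_{K^c}$ near $-u$.
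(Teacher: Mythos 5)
Your proposal is correct and follows essentially the same route as the paper: differentiate the homogeneous identity $h_K(v)+h_{K^c}(-v)=|v|$ twice to get $\nabla^2 h_K(u)+\nabla^2 h_{K^c}(-u)=I-u\otimes u$, then invoke the standard identification of the restricted Hessian's eigenvalues with the principal radii of curvature. Your remark that $B=I_{n-1}-A$ forces simultaneous diagonalizability, and hence the reversed ordering $r_i+s_{n-i}=1$, makes explicit a point the paper passes over quickly.
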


This theorem generalizes a result by Bonnesen and Fenchel pertaining to a self dual body (i.e. a body of constant width). See the discussion in Bonnesen and Fenchel \cite[Chapter 15, Section 63 in page 163]{BonnFen} and also Chakerian \cite{Chakerian}.

\begin{proof}[Proof of Theorem \ref{thm:chakerian-dual-curvatures}]

Recall  that as $K\in \S_n$ is strictly convex, we have that  $h_K\in C^1(\RR^n \setminus \{0\})$, and we saw in \eqref{eq:nablah_K} that  
$\nabla h_K(u) - \nabla h_{K^c}(-u) = u/|u|$, which can be interpreted as a relation between a pair of  dual points by letting $\nabla h_K(u) = x\in \partial K$ and $\nabla h_{K^c}(-u) = y\in \partial K^c$. 

Assume that   $h_K \in C^2(\RR^n \setminus \{0\})$. By Proposition \ref{prop:KminusK} this means that also $h_{K^c}\in C^2(\RR^n \setminus \{0\})$, and we can differentiate \eqref{eq:nablah_K} again to get
\begin{equation}\label{eq:hesses} \nabla^2 h_K(u) + \nabla^2 h_{K^c}(-u) =   \frac{1}{|u|}\left(I - \frac{u}{|u|}\otimes \frac{u}{|u|}\right)\end{equation}  
(where on the right hand side we wrote the differential of $u\mapsto u/|u|$).  
    The eigenvectors of $\nabla^2h_K$ at $u$ are closely connected to the principal radii of curvature of $K$ at $x = \nabla h_K(u)$. Indeed, by \cite[Corollary 2.5.2]{schneider2013convex} the eigenvectors of $\nabla^2h_K$ at $u\in S^{n-1}$ are the vector $u$ itself with eigenvalue $0$ and the eigenvectors of the reverse Weingarten map (see \cite[Section 2.5]{schneider2013convex}) with corresponding eigenvalues $r_1 ,\dots , r_{n-1}$, which are the principal radii of curvature of $K$ at $\nabla h_K(u)$. For $u\in S^{n-1}$ equation \eqref{eq:hesses}
 is  $\nabla^2 h_K(u) + \nabla^2 h_{K^c}(-u) =   \left(I - u\otimes u\right)$ and so the two matrices have the same eigenvectors, and the ones orthogonal to $u$ have eigenvalues summing to $1$.   This completes the proof. 
\end{proof} 

\begin{rem}
One may apply this type of argument also when 
$\nabla h_K(u)$ is not a smooth point for $K$, 
since (see again \cite[Section 2.5]{schneider2013convex}) while the principal curvatures are functions on the boundary of $K$, the principal radii of curvature are considered as functions of the outer unit normal vector, in other words, as functions on the spherical image. 
We then have to compare these radii at a pair of points with normals $u,-u$, but the radii of curvature at the non-smooth point should be properly understood and depend not only on the boundary point but also on the normal considered. 
\end{rem}

\subsection{Faces of other dimensions}

It is natural to extend the definition of a ``face'' to the setting of ball-bodies. A face of a convex set can be defined as the intersection of the set with some supporting hyperplane.

\begin{definition}
Let $n\ge 2$, $K\in \S_n$.
For $y\in K^c$, the set $\,S_{K,y} = S(y,1)\cap K \subset \partial K$ is called an exposed $c$-face of $K$ (opposite to $y$).
\end{definition}

The proof of the following lemma is immediate from Lemma \ref{lem:normals map boundary to boundary}. 
\begin{lem}
Let $n\ge 2$, $K\in \S_n$. A set $S\subset \partial K$ is  an exposed $c$-face of $K$ if and only if for some $y\in \partial K^c$ we have $S = y-N_{K^c}(y)$. In particular, an exposed $c$-face is a closed spherically convex subset of a sphere. 
\end{lem}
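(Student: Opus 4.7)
The plan is to reduce both directions of the biconditional to Lemma \ref{lem:normals map boundary to boundary}, invoked symmetrically on $K$ and on $K^c$ via the involution $K^{cc} = K$. Recall that lemma sets up a bijective correspondence $(x, u) \leftrightarrow (x-u, -u)$ between pairs $(x, u)$ with $x \in \partial K$, $u \in N_K(x)$, and pairs $(y, v)$ with $y \in \partial K^c$, $v \in N_{K^c}(y)$. Repackaging this correspondence geometrically is essentially the whole content of the present lemma.

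For the ``only if'' direction, I would fix a non-empty exposed $c$-face $S = S(y, 1) \cap K$ with $y \in K^c$. First, check that $y \in \partial K^c$: if $y$ were interior to $K^c$, then $y + \eps w \in K^c$ for all small $\eps$ and all $w \in S^{n-1}$, and choosing $w \perp (x - y)$ for some $x \in S$ would force $\|x - (y + \eps w)\|_2^2 = 1 + \eps^2 > 1$, contradicting $K \subseteq B(y + \eps w, 1)$. Next, given any $x \in S$, set $u = x - y \in S^{n-1}$; since $K \subseteq B(y, 1)$ and $x \in K \cap \partial B(y, 1)$, any supporting hyperplane of $B(y, 1)$ at $x$ also supports $K$ there, so $u \in N_K(x)$. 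Lemma \ref{lem:normals map boundary to boundary} then yields $-u \in N_{K^c}(y)$, i.e.\ $x = y - (-u) \in y - N_{K^c}(y)$.

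For the ``if'' direction (which also gives the reverse inclusion), fix $y \in \partial K^c$ and any $v \in N_{K^c}(y)$. Applying Lemma \ref{lem:normals map boundary to boundary} with $K^c \in \S_n$ in the role of $K$ (so $K^{cc} = K$ plays the role of $K^c$) gives $y - v \in \partial K$, and since $|v| = 1$ we have $\|(y - v) - y\|_2 = 1$, so $y - v \in S(y, 1) \cap K$. Combined with the previous paragraph, this establishes $S(y, 1) \cap K = y - N_{K^c}(y)$ for every $y \in \partial K^c$, which proves both directions of the equivalence.

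For the final assertion, $N_{K^c}(y)$ is the intersection of the closed convex outer normal cone $\overline{N}_{K^c}(y)$ with $S^{n-1}$, hence a closed spherically convex subset of the unit sphere; translating by $y$ yields a closed spherically convex subset of $S(y, 1)$. I expect no real obstacle: the only step needing actual verification (rather than a direct quote of Lemma \ref{lem:normals map boundary to boundary}) is the short geometric check in the forward direction that $u = x - y$ lies in $N_K(x)$ and that $y \in \partial K^c$, both of which follow immediately from $K \subseteq B(y, 1)$ and the fact that a supporting hyperplane of the ball at a boundary contact point supports any subset touching that point.
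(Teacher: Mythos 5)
Your proof is correct and follows exactly the route the paper intends: the paper states that the lemma "is immediate from Lemma \ref{lem:normals map boundary to boundary}", and your argument is precisely that reduction, applied once to $K$ and once (via the involution $K^{cc}=K$) to $K^c$, with the small supplementary checks (that $y\in\partial K^c$ for a non-empty face, and that $x-y\in N_K(x)$) carried out correctly.
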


As an example, two faces of the lens intersect at a sphere of lower dimension and smaller radius. But all the points in this sphere are extremal, and there is no lower dimensional face involved except $0$-dimensional. This is the general case as the following proposition implies. 

\begin{prop}
    Let $n\ge 2$ and $K\in \S_n$. Then letting ${\cal F}$ denote all the exposed $c$-faces of $K$ we have that $\partial K = \cup_{F\in \cal F} F$ and if $F_1,F_2\in \cal F$ with $F_1\neq F_2$ then any $x\in F_1\cap F_2$ is a $c$-extremal point for $K$. 
\end{prop}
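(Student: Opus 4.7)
The plan is to verify the two claims separately, both as quick consequences of lemmas already developed earlier in the paper (mainly Lemma \ref{lem:normals map boundary to boundary} and Proposition \ref{prop:not-extremal-means-smooth}).

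For the first claim, $\partial K = \cup_{F \in \cal F} F$, fix any $x \in \partial K$. Since $K \neq \RR^n$, the boundary point $x$ admits at least one unit outer normal $u \in N_K(x)$. By Lemma \ref{lem:normals map boundary to boundary}, the point $y := x - u$ lies in $\partial K^c$ and satisfies $K \subseteq B(y,1)$, so $\|x-y\|_2 = 1$ places $x$ in $S(y,1) \cap K = S_{K,y}$. Thus $x$ belongs to at least one exposed $c$-face. The reverse inclusion $\cup_{F \in \cal F} F \subseteq \partial K$ is immediate, since $S(y,1) \cap K \subseteq \partial K$ whenever $y \in K^c$.

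For the second claim, suppose $x \in F_1 \cap F_2$ with $F_i = S_{K,y_i}$ and $y_1 \neq y_2$. Then $\|x-y_1\|_2 = \|x-y_2\|_2 = 1$, and since $K \subseteq B(y_i,1)$ touches $B(y_i,1)$ at the boundary point $x$, the unit vectors $u_i := x - y_i$ are both outer normals of $K$ at $x$, i.e. $u_1, u_2 \in N_K(x)$. As $y_1 \neq y_2$ forces $u_1 \neq u_2$, the normal cone $N_K(x)$ is not a single ray, so $x$ is a non-smooth boundary point. By Proposition \ref{prop:not-extremal-means-smooth}, every such point is $c$-extremal for $K$, which gives the desired conclusion.

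Neither step looks to present a genuine obstacle; the whole content is really just a translation of the characterization of boundary points through contact balls $B(y,1)$ into the language of exposed $c$-faces. The only mild subtlety to state carefully is that one should rule out degenerate cases (e.g.\ $K = \RR^n$) at the outset, and observe that the excluded case $K = \emptyset$ makes both sides of $\partial K = \cup_{F \in \cal F} F$ trivially empty.
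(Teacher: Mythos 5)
Your proof is correct and follows essentially the same route as the paper's: the first inclusion via Lemma \ref{lem:normals map boundary to boundary} (each boundary point lies on the face opposite $x-u$ for $u\in N_K(x)$), and the second claim by observing that two distinct centers $y_1\neq y_2$ yield two distinct normals $x-y_i\in N_K(x)$, so $x$ is non-smooth and hence $c$-extremal by Proposition \ref{prop:not-extremal-means-smooth}. No gaps.
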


\begin{proof}
Since any $y\in \partial K$ satisfies $y\in x-N_{K^c}(x) \in {\cal F}$ for $x=y-u$ and $u\in N_K(y)$, we get the inclusion      $\partial K = \cup_{F\in \cal F} F$. Assume $F_1\neq F_2\in {\cal F}$ and $y\in F_1\cap F_2$. In this case we have two different points $x_1,x_2\in K^c$ and $y\in x_i - N_{K^c}(x_i)$ for $i=1,2$ which means $x_i-y\in N_K(y)$ so that $y$ is not a smooth point and in particular (by Proposition \ref{prop:not-extremal-means-smooth}, say) is $c$-extremal for $K$. 
\end{proof}

As in classical convexity (with the example of a ``stadium'' in $\RR^2$),  an extreme point need not be an exposed face. 

\begin{exm}
    There exists $K\in \S_2$ and $x\in \partial K$ which is $c$-extremal, however   $\{x\}$ is not an exposed $c$-face of $K$. Indeed, in Example \ref{ex:Naztel-body} the smooth boundary points $(-(1-1/\sqrt{2}),0)$ and $(0,-(1-1/\sqrt{2}))$ are not $c$-exposed but are nevertheless $c$-extremal. 
\end{exm}

Nevertheless, an extremal point $x$ which has a full dimensional normal cone is always an exposed $c$-face, since for any $u$ in the interior of the cone $N_K(x)$, $y=x-u\in K^c$ is a smooth point which is not extremal and hence with $N_{K^c} (y) = \{-u\}$
and $S(y,1)\cap K = \{x\}$.

It turns out that Minkowski averaging cannot produce large $c$-exposed faces, if these were not $c$-exposed in the bodies one is averaging. More precisely we show the following lemma, which we used in \cite{FUTUREWORK} to show that if the Minkowski average of two sets in $\S_n$ is an $(n-1)$-lens, they must be translates of the same lens, which we needed in order to characterize isometries on $\S_n$.

\begin{lem}\label{lem:average-cap}
	Let $n\ge 2$ let $K_0, K_1\in\S_n$, $\lambda\in (0,1)$, and set $M =  (1-\lambda)K_0+\lambda K_1$. Assume that $B_2^n$ is a supporting ball of $M$, and that $A\subset \partial M$ is a spherically convex subset of the sphere $\partial B_2^n$. Then there exists $x_0,y_0 \in \RR^n$ with $A+x_0\subseteq \partial K_0$, $A+y_0 \subseteq \partial K_1$, $(1-\lambda)x_0+\lambda y_0  = 0$.
\end{lem}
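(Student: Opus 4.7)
The plan is to first reduce, by a translation argument, to the case where $B(0,1)$ is a supporting ball of each of $K_0$, $K_1$, and $M$ simultaneously; and then show that on the contact set with $\partial B(0,1)$, any Minkowski decomposition of a boundary point forces both summands to coincide with the point itself.

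For the reduction, observe that ``$B(0,1)$ is a supporting ball of $M$'' means $0 \in M^c$. Theorem \ref{thm:minkowski-sum-c-is-linear} gives $M^c = (1-\lambda)K_0^c + \lambda K_1^c$, so $0 = (1-\lambda)p_0 + \lambda p_1$ for some $p_0 \in K_0^c$, $p_1 \in K_1^c$. By definition of the $c$-dual, $K_0 \subseteq B(p_0,1)$ and $K_1 \subseteq B(p_1,1)$. Setting $\tilde K_i := K_i - p_i$, both $\tilde K_0,\tilde K_1$ lie in $B(0,1)$, and since $(1-\lambda)p_0+\lambda p_1=0$, linearity of Minkowski addition gives $M = (1-\lambda)\tilde K_0 + \lambda \tilde K_1$.

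Next, fix $a \in A$, so $a \in \partial M \cap \partial B(0,1)$, and pick any Minkowski decomposition $a = (1-\lambda)\tilde a_0 + \lambda \tilde a_1$ with $\tilde a_i \in \tilde K_i$. Since $M \subseteq B(0,1)$ and $a \in \partial M \cap \partial B(0,1)$, the vector $a$ is an outer normal of $M$ at $a$, so $h_M(a) = \langle a,a\rangle = 1$. The chain
\[
1 = \langle a,a\rangle = (1-\lambda)\langle \tilde a_0,a\rangle + \lambda \langle \tilde a_1,a\rangle \le (1-\lambda)h_{\tilde K_0}(a) + \lambda h_{\tilde K_1}(a) = h_M(a) = 1,
\]
combined with $h_{\tilde K_i}(a) \le h_{B(0,1)}(a) = 1$, forces $h_{\tilde K_i}(a) = \langle \tilde a_i,a\rangle = 1$ for both $i$. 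With $|a|=1$ and $|\tilde a_i|\le 1$, Cauchy--Schwarz tightness forces $\tilde a_i = a$. Thus $a \in \tilde K_0 \cap \tilde K_1$, and since $\tilde K_i \subseteq B(0,1)$ while $a \in \partial B(0,1)$, we actually have $a \in \partial \tilde K_i$.

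Taking $x_0 = p_0$ and $y_0 = p_1$ and translating back then yields $A + x_0 \subseteq \partial K_0$, $A + y_0 \subseteq \partial K_1$, with $(1-\lambda)x_0 + \lambda y_0 = 0$ built in by construction. The main obstacle is the initial reduction: realizing $0 \in M^c$ via a \emph{compatible} pair $(p_0,p_1)$ of centers, one for each summand, is exactly the content of Theorem \ref{thm:minkowski-sum-c-is-linear}; without the linearity of $c$-duality on Minkowski averages one cannot guarantee the joint repositioning. Everything after this is a clean Cauchy--Schwarz tightness argument, and in fact the spherical convexity of $A$ is never used --- the same conclusion holds for any subset of $\partial M \cap \partial B(0,1)$.
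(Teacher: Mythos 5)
Your proof is correct. Both you and the paper lean on Theorem \ref{thm:minkowski-sum-c-is-linear}, but you deploy it differently, and the core of the argument is genuinely different. The paper decomposes each $u\in A$ into its unique support points $x_0(u)\in\partial K_0$, $x_1(u)\in\partial K_1$ in direction $u$, notes via Lemma \ref{lem:normals map boundary to boundary} that $x_i(u)-u\in\partial K_i^c$, and then shows that the offset $x_0(u)-u$ is independent of $u$ by contradiction: if two offsets differed, then $M^c=(1-\lambda)K_0^c+\lambda K_1^c$ would contain a pair of antipodal nonzero points together with $0\in\partial M^c$, contradicting the strict convexity of $M^c$. You instead invoke the linearity of the $c$-dual only once, to extract a single compatible pair $p_i\in K_i^c$ with $(1-\lambda)p_0+\lambda p_1=0$, and then close the argument by a direct equality-case analysis: the support-function chain forces $\langle\tilde a_i,a\rangle=1$ for both summands of any decomposition of $a$, and Cauchy--Schwarz tightness inside $B(0,1)$ forces $\tilde a_i=a$. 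Your route avoids both the uniqueness of support points (i.e., strict convexity of $K_0,K_1$) and the strict convexity of $M^c$; it needs only $\tilde K_i\subseteq B(0,1)$, so it is more elementary and in fact proves the slightly stronger statement that $\partial M\cap\partial B_2^n\subseteq\partial\tilde K_i$ for the translated bodies. Your observation that the spherical convexity of $A$ is never used is accurate --- the paper's proof does not use it either.
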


\begin{proof}
    Since $A\subseteq \partial M \cap \partial B_2^n$, and $M \subseteq B_2^n$ by assumption, every $u \in A$ is also the unique point of $M$ that has $u$ itself as a normal. By the definition of Minkowski averages, every $u\in A$ can be written uniquely as $u =  (1-\lambda)x_0(u) + \lambda x_1(u)$ where $x_0(u)$
    is the unique point on $K_0$ where $u$ is the normal, and $x_1(u)$
    is the unique point on $K_1$ with $u$ as the normal.
	
	Since $K_0, K_1\in \S_n$, we know that $x_0(u)-u \in \partial K_0^c$ and $x_1(u) - u \in \partial K_1^c$. 
	Moreover, as $(1-\lambda)x_0(u) + \lambda x_1(u) =  u$, we get that $(1-\lambda)(x_0(u)-u) = -\lambda (x_1(u)-u)$.
	Using the fact that  $c$-duality commutes with averages (Theorem \ref{thm:minkowski-sum-c-is-linear}) we get $M^c =  (1-\lambda)K_0^c +\lambda  K_1^c$, and we see that for all $u,v\in A$ it holds that $ (1-\lambda )(x_0(u) - u) + \lambda (x_1(v)-v)\in M^c$, which can equivalently be written as $(1-\lambda )(x_0(u) - u -x_0 (v) + v) 
    \in M^c$. 
	
	Assume by way of contradiction that for some $u,v\in A$ we have that $x_0(u)-u\neq x_0(v)-v$. Then all four sums of $(1-\lambda)(x_0(u)-u)$ or $(1-\lambda)(x_0(v)-v)$ with $\lambda(x_1(u)-u) = (1-\lambda) (u-x_0(u))$ or $\lambda(x_1(v)-v)= (1-\lambda)(v-x_0(v))$ lie in $M^c$, namely 
	\begin{equation}\label{eq:onpartialMc}
	  0,  (1-\lambda) (x_0(u)-x_0(v)+v - u), (1-\lambda) (x_0(v)-x_0(u)+u - v) \in M^c .
	\end{equation}
    Since $B_2^n$ is a supporting ball of $M$, we have that $0 \in \partial M^c$, 
    which means there cannot be two points $z,-z$ both in $M^c$ (as $M^c$ is strictly convex). So, equation \eqref{eq:onpartialMc} is in fact a contradiction.

	We conclude that the points $x_0(u)$ are all of the form $x_0 +u$ for some fixed $x_0$. This means $(x_1(u)-u) = \frac{(1-\lambda)}{\lambda} x_0 =:y_0$. We get that $x_0 + A \subseteq \partial K_0$ and   $y_0 + A \subset \partial K_1$, where $(1-\lambda)x_0+\lambda y_0  = 0$, as claimed. 
\end{proof}

While it is not very simple to understand the $c$-extremal points in a Minkowski-average pf two bodies, when one of them is a ball this is possible, as the following proposition states.

\begin{prop}
Let $n\ge 2$, let $K \in \S_n$ 
and  $\lambda \in (0,1)$. Then setting $K_\lambda  = (1-\lambda) K +  \lambda B_2^n$ we have
\[ {\rm ext}_c(K_\lambda) =
\left((1-\lambda) {\rm ext}_c (K) +  \lambda B_2^n\right)
\cap \partial K_\lambda.
\]
\end{prop}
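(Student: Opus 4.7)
The plan is to reduce everything to Proposition \ref{prop:extremal-ray-and-extremality} by exploiting the identity $K_\lambda^c = (1-\lambda) K^c$, which follows from Theorem \ref{thm:minkowski-sum-c-is-linear} and the fact that $(B_2^n)^c = \{0\}$. Since normal cones are invariant under positive scaling, $N_{K_\lambda^c}((1-\lambda)y) = N_{K^c}(y)$ for every $y \in \partial K^c$, with the same extremal rays. Applying Proposition \ref{prop:extremal-ray-and-extremality} to both $K$ and $K_\lambda$ (both of outer-radius strictly less than $1$, the degenerate cases $K \in \{\emptyset, \R^n\}$ or $K$ a unit ball being trivial) then produces
\[ {\rm ext}_c(K_\lambda) = \bigl\{(1-\lambda) y - u : y \in \partial K^c,\ u \in N_{K^c}(y) \cap S^{n-1} \text{ extremal}\bigr\}, \]
together with the analogous description of ${\rm ext}_c(K)$ obtained by replacing $(1-\lambda)$ by $1$.

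For the inclusion $\subseteq$, I would rewrite $p = (1-\lambda)y - u \in {\rm ext}_c(K_\lambda)$ as $(1-\lambda)(y - u) + \lambda(-u)$, noting $y - u \in {\rm ext}_c(K)$ and $-u \in S^{n-1} \subseteq B_2^n$, with $p \in \partial K_\lambda$ automatic. For the inclusion $\supseteq$, given $p \in \partial K_\lambda$ with a representation $p = (1-\lambda) e + \lambda w$, $e \in {\rm ext}_c(K)$, $w \in B_2^n$, I pick any unit outer normal $v \in N_{K_\lambda}(p)$; the identity $\nabla h_{K_\lambda}(v) = (1-\lambda)\nabla h_K(v) + \lambda v$ then provides a second representation $p = (1-\lambda) a + \lambda v$ in which $a$ is the unique (by strict convexity of $K \in \S_n$) point of $K$ having $v$ as an outer normal.

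The decisive step is to force $e = a$. Subtracting the two representations and pairing with $v$ yields
\[ (1-\lambda)\langle a - e, v \rangle = \lambda\bigl(\langle w, v\rangle - 1\bigr), \]
whose left-hand side is non-negative because $\langle e, v\rangle \le h_K(v) = \langle a, v\rangle$, and whose right-hand side is non-positive by Cauchy--Schwarz and $|w| \le 1 = |v|$. Both sides therefore vanish, and the equality case of Cauchy--Schwarz gives $w = v$; the original equation then reduces to $(1-\lambda)(a - e) = 0$, hence $a = e$. Consequently $a \in {\rm ext}_c(K)$, and the reasoning behind Proposition \ref{prop:extremal-ray-and-extremality} (were $-v$ non-extremal in $N_{K^c}(a-v)$, a spherical-convex combination would manufacture an open $1$-arc in $\partial K$ through $a$, contradicting $c$-extremality) makes $-v$ extremal in $N_{K^c}(a-v)$. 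Setting $y = a - v \in \partial K^c$ via Lemma \ref{lem:normals map boundary to boundary}, we read $p = (1-\lambda)a + \lambda v = (1-\lambda)y - (-v)$ as a point of ${\rm ext}_c(K_\lambda)$. The only genuinely substantive step is this forcing $e = a$; everything else is bookkeeping with $K_\lambda^c = (1-\lambda)K^c$ and Proposition \ref{prop:extremal-ray-and-extremality}.
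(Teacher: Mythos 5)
Your proof is correct, but it follows a genuinely different route from the paper's. The paper works entirely on the primal side: it writes each $x\in\partial K_\lambda$ canonically as $(1-\lambda)y+\lambda u$ with $y\in\partial K$ and $u$ the common normal, and shows via Lemma \ref{lem:average-cap} that an open $1$-arc through $x$ in $\partial K_\lambda$ splits into a translated arc through $y$ in $\partial K$ and a translated arc in $S^{n-1}$, so that $x$ fails to be $c$-extremal exactly when $y$ does. Your argument instead dualizes: the identity $K_\lambda^c=(1-\lambda)K^c$ together with the dilation-invariance of normal cones turns Proposition \ref{prop:extremal-ray-and-extremality} into an explicit parametrization of ${\rm ext}_c(K_\lambda)$ by the extremal rays of the normal cones of $K^c$, from which the inclusion $\subseteq$ is immediate. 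What your approach buys, besides bypassing Lemma \ref{lem:average-cap} entirely, is that it makes explicit a step the paper leaves implicit in the $\supseteq$ direction: the statement allows an arbitrary decomposition $p=(1-\lambda)e+\lambda w$ with $w\in B_2^n$, and one must check this forces $e$ and $w$ to be the canonical support points; your Cauchy--Schwarz computation pinning down $w=v$ and $e=a$ supplies exactly that uniqueness, which the paper only asserts. One small caveat: in the last step you need not merely the set equality of Proposition \ref{prop:extremal-ray-and-extremality} but the stronger fact that for a $c$-extremal point \emph{every} normal gives an extremal ray of the dual normal cone; you correctly note that this is what the proposition's proof actually establishes, so the appeal is legitimate, though it would be cleaner to cite that stronger form explicitly.
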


\begin{proof}
The body $K_\lambda$ is smooth. 
A point $x\in \partial K_\lambda$
is not $c$-extremal for $K_\lambda$ if and only if there is some open $1$-arc $A$ on $\partial K_\lambda$ (centered at $x - u$ for $u = n_{K_\lambda}(x)$) which includes $x$. 

Given some point $x\in \partial K_\lambda$ we may write it, uniquely, as $x = (1-\lambda) y + \lambda z$ for $y\in \partial K$ and $z\in S^{n-1}$, both with the same normal as $x$ (that is, $z = u$ and $y$ with $u\in N_K(y)$).

Assume $x$ is not $c$-extremal for $K_\lambda$, then  
Lemma \ref{lem:average-cap} applied to the $1$-arc $A$ passing through $x$ on $\partial K_\lambda$, implies that there exist $x_0,y_0 \in \RR^n$ such that $A + x_0 \subset \partial K $ and $A+  y_0 \subset S^{n-1}$ with $(1-\lambda)x_0 + \lambda y_0 = 0$. Since $A$ is centered at $x - u$ and $A+y_0$ is centered at $0$ we must have $x-u+y_0=0$, which means $y_0 = (1-\lambda)(u-y)$, and  
$x_0 = \lambda(y-u)$. 
Therefore, the arc $A+ x_0 \subset \partial K$  passes through $x + x_0 = 
y$ and $y$ is not $c$-extremal for $K$.

For the other side, assume that $y$ was not extremal for $K$. Then there is some open $1$-arc $A+ y-u\subset \partial K$ which includes $y$, centered at $y-u$. The translate of this arc (by $u-y$, which gives $A$) clearly belongs to $S^{n-1}$ and therefore $(1-\lambda)(A+y-u) + \lambda A\subset K_\lambda$, but the $1$-arc on the left hand side is simply $A + (1-\lambda)(y-u) = A + x-u$ so we get an open $1$-arc in $K_\lambda$ which includes $x$, which means $x$ is not $c$-extremal of $K_\lambda$, completing the proof. 

\end{proof}

\section{Steiner Symmetrizations and Shadow Systems}

Having established in Corollary \ref{cor:closed-under-Mink} that the class $\S_n$ is closed under Minkowski symmetrizations, it is natural to consider other forms of symmetrizations, the most well known and classical one being the Steiner symmetrization (see \cite{AGMBook}). We shall see that $\S_n$ is not closed under Steiner symmetrizations for $n\ge 3$, whereas in the plane it is. We will make use of ``moving shadows'' or ``linear parameter systems'' which are a generalization of Steiner symmetrization. 

\subsection{Linear parameter systems}\label{sec:lps}

For a set $A\subset \RR^n$, a vector $v$ and a function $\alpha:A\to \RR$ let 
\[ A_t =  \{x+ t \alpha(x)v: x\in A\},\,\, K_t =  {\rm conv} \{x+ t \alpha(x)v: x\in A\},\]
and 
\[ L_t =  \cconv \{x+ t \alpha(x)v: x\in A\}.\]

In classical convexity theory, the set $K_t$ is called a linear parameter system, and these were investigated in depth by Rogers and Shephard \cite{shephard1964shadow, rogers1958extremal}, where for example is was shown that $\vol(K_t)$ is a convex function in $t$ (as are the other quermassintegrals of $K_t$). The following proposition can be seen as an  analogue to the fact proved by Campi and Gronci in \cite{CampiGronchi2006} for the polar of a set, which is that  $1/\vol(K_t^\circ)$ is convex as a function of $t$.

\begin{prop}
Let $n\in {\mathbb N}$,  $A\subset \RR^n$,  $v\in \RR^n$ and  $\alpha:A\to \RR$. For $ t_\lambda  = (1-\lambda)t_0 + \lambda t_1$ it holds that 
	\[ L_{t_\lambda} \subseteq (1-\lambda)L_0 + \lambda L_1\qquad {\rm and} \qquad (1-\lambda)  L_{t_0} ^c + \lambda  L_{t_1} ^c \subseteq L_{t_\lambda}^c.\]
	In particular, $\vol(L_t^c)^{1/n}$ is  concave in $t$, as are the quermassintegrals $V_k(L_t^c)^{1/k}$.
\end{prop}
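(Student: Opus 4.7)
The plan is to first establish the inclusion $L_{t_\lambda}\subseteq (1-\lambda)L_{t_0} + \lambda L_{t_1}$, from which the second inclusion follows by taking $c$-duals and invoking Theorem \ref{thm:minkowski-sum-c-is-linear}, and the quermassintegral concavity follows from Brunn--Minkowski.

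First, the key elementary observation is that for each $x\in A$,
\[
x+t_\lambda \alpha(x)v = (1-\lambda)\bigl(x+t_0\alpha(x)v\bigr) + \lambda\bigl(x+t_1\alpha(x)v\bigr),
\]
so that $A_{t_\lambda}\subseteq (1-\lambda)A_{t_0}+\lambda A_{t_1}\subseteq (1-\lambda)L_{t_0}+\lambda L_{t_1}$. Assume for the moment that $L_{t_0},L_{t_1}\in\S_n\setminus\{\emptyset,\R^n\}$. Then by Theorem \ref{thm:minkowski-sum-c-is-linear}, the set $(1-\lambda)L_{t_0}+\lambda L_{t_1}$ lies in $\S_n$ and contains $A_{t_\lambda}$; in particular $\outrad(A_{t_\lambda})\le 1$. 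Since $L_{t_\lambda}$ is by definition the smallest element of $\S_n$ containing $A_{t_\lambda}$, we conclude
\[
L_{t_\lambda}\subseteq (1-\lambda)L_{t_0}+\lambda L_{t_1}.
\]
The degenerate cases in which some $L_{t_i}\in\{\emptyset,\R^n\}$ are immediate: if $A$ is empty the statement is vacuous, and if $\outrad(A_{t_i})>1$ for some $i\in\{0,1\}$ then $L_{t_i}=\R^n$, making the right-hand side $\R^n$ and the inclusion trivial.

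Now apply $c$-duality to the first inclusion. Since $c$-duality reverses inclusion, and by Theorem \ref{thm:minkowski-sum-c-is-linear} the $c$-dual of a Minkowski average of ball-bodies equals the average of the $c$-duals, we obtain
\[
L_{t_\lambda}^c\supseteq \bigl((1-\lambda)L_{t_0}+\lambda L_{t_1}\bigr)^c = (1-\lambda)L_{t_0}^c+\lambda L_{t_1}^c,
\]
which is the second inclusion.

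Finally, the concavity statements follow at once from the Brunn--Minkowski inequality (and its mixed-volume extension, as in Proposition \ref{lem:santalo-mixed}). Indeed, applying $\vol^{1/n}$ to both sides of the second inclusion yields
\[
\vol(L_{t_\lambda}^c)^{1/n}\ge \vol\bigl((1-\lambda)L_{t_0}^c+\lambda L_{t_1}^c\bigr)^{1/n}\ge (1-\lambda)\vol(L_{t_0}^c)^{1/n}+\lambda\vol(L_{t_1}^c)^{1/n},
\]
and the analogous chain with $V_k^{1/k}$ in place of $\vol^{1/n}$ gives concavity of $V_k(L_t^c)^{1/k}$. The only delicate point in the argument is the verification that the first inclusion passes through the non-degenerate hypotheses of Theorem \ref{thm:minkowski-sum-c-is-linear}, but as noted above this is handled by splitting into cases according to whether the $L_{t_i}$ equal $\emptyset$ or $\R^n$.
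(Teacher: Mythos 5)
Your proof is correct and follows essentially the same route as the paper: the pointwise identity giving $A_{t_\lambda}\subseteq(1-\lambda)A_{t_0}+\lambda A_{t_1}$, then Theorem \ref{thm:minkowski-sum-c-is-linear} to see the right-hand side is in $\S_n$ and to dualize, and finally Brunn--Minkowski for the concavity. Your explicit treatment of the degenerate cases is a minor addition the paper omits but does not change the argument.
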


\begin{proof}
Since by definition $A_{t_\lambda} \subseteq (1-\lambda)A_0 + \lambda A_1$, it holds that 
    $A_{t_\lambda} \subseteq (1-\lambda)L_0 + \lambda L_1$ and the right hand side belongs to $\S_n$ by Theorem \ref{thm:minkowski-sum-c-is-linear}. Therefore we also have that  $L_{t_\lambda} \subseteq (1-\lambda)L_0 + \lambda L_1$. Applying the $c$-duality to both sides, and using that the $c$-duality commutes with averaging, by Theorem \ref{thm:minkowski-sum-c-is-linear}, we see that   $L_{t_\lambda}^c \supseteq (1-\lambda)L_0^c + \lambda L_1^c$, as claimed.   The corresponding concavity follows from Brunn-Minkowski inequality. 
\end{proof}

It is natural to ask  whether in analogy to the classical theorem of Rogers and Shephard, the function $\vol(L_t)$ is convex. This question is intimatel tied with the convexity of Steiner symmetrization (more on this below). As we shall demonstrate shortly (in Theorem \ref{thm:RSconvexityofvolR2} and Section \ref{sec:stein-not-in-class}), the answer is that this is true in dimension $n\le 2$ and false in higher dimensions. However, when considering a system of just two points, the answer is yes in any dimension, as the following proposition states.

\begin{prop}\label{prop-vol-of-carambula-is-convex}
Let $n\ge 2$ and fix $y_0 \in \RR^n$. The function $x\mapsto \vol \left(\cconv(x,y_0)\right)$ is convex. 
\end{prop}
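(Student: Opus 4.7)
The plan is to reduce the statement to a one-variable convexity claim via rotational symmetry, and then verify it by an explicit volume computation. Since the $c$-hull commutes with rigid motions, and any $1$-lens $\cconv(p_1,p_2)$ is invariant under rotations fixing the axis through $p_1$ and $p_2$, the quantity $\vol(\cconv(x,y_0))$ depends only on $\|x-y_0\|_2$. Thus there is a function $V:[0,\infty)\to[0,+\infty]$ such that $\vol(\cconv(x,y_0))=V(\|x-y_0\|_2)$, where $V(\ell)=+\infty$ for $\ell>2$ (in which case $\cconv(x,y_0)=\RR^n$). By a classical characterization, a radial function $x\mapsto V(\|x-y_0\|_2)$ on $\RR^n$ is convex if and only if $V$ is convex and non-decreasing on $[0,\infty)$, so the task reduces to establishing these two properties of $V$.

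Next I would derive an explicit formula for $V$ by realizing the $1$-lens as a body of revolution. Placing the endpoints at $\pm d\, e_1$ with $d=\ell/2\in[0,1]$, the description of $\cconv(-de_1,de_1)$ as $\bigcap_z B(z,1)$ over $z\in B(de_1,1)\cap B(-de_1,1)$ can be replaced by intersection over the extreme points of that $(n-1)$-lens, which form the sphere $\sqrt{1-d^2}\,S^{n-2}\subset e_1^{\perp}$ (cf.\ Lemma \ref{lem:k-lens-properties}). A direct calculation of $\max_z \|x-z\|_2$ over this sphere then shows that the cross-section of the $1$-lens perpendicular to $e_1$ at axial level $s\in[-d,d]$ is an $(n-1)$-ball of radius $r(s)=\sqrt{1-s^2}-\sqrt{1-d^2}$, so integrating slice-by-slice yields
\[
V(\ell)=2\,\vol_{n-1}(B_2^{n-1})\int_0^{\ell/2}\left(\sqrt{1-s^2}-\sqrt{1-(\ell/2)^2}\right)^{n-1}ds.
\]

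Monotonicity and convexity of $V$ on $[0,2]$ would then follow by differentiating under the integral. Setting $\tilde V(d)=V(2d)$ and applying Leibniz's rule (the boundary term vanishes because $r(d)=0$ and $n\ge2$),
\[
\tilde V'(d)=2\,\vol_{n-1}(B_2^{n-1})\,(n-1)\cdot\frac{d}{\sqrt{1-d^2}}\cdot\int_0^d\left(\sqrt{1-s^2}-\sqrt{1-d^2}\right)^{n-2}ds.
\]
The prefactor $d/\sqrt{1-d^2}$ is non-negative and non-decreasing on $[0,1)$, and the integral factor is also non-negative and non-decreasing: its upper limit grows with $d$, and, for each fixed $s$, the integrand is non-decreasing in $d$ because $\sqrt{1-d^2}$ decreases. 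Being the product of two non-negative non-decreasing functions, $\tilde V'$ is non-decreasing; this makes $\tilde V$ (hence $V$) convex on $[0,2]$, and the formula for $\tilde V'$ shows that $V$ is non-decreasing as well. Extending $V$ by $+\infty$ on $(2,\infty)$ preserves both properties, concluding the proof.

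The main obstacle is isolating the body-of-revolution structure of the $1$-lens cleanly enough to obtain the radius $r(s)=\sqrt{1-s^2}-\sqrt{1-d^2}$ and the resulting integral formula for $V$; once this is in place, both convexity and monotonicity follow almost mechanically from the factorization of $\tilde V'$ into a product of two non-negative non-decreasing terms.
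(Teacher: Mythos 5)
Your proposal is correct and follows essentially the same route as the paper: both reduce to the explicit revolution formula $F_n(d)=c_n\int_0^d(\sqrt{1-s^2}-\sqrt{1-d^2})^{n-1}\,ds$ for $d=\|x-y_0\|_2/2$ and differentiate under the integral to get $F_n'(d)\propto \frac{d}{\sqrt{1-d^2}}\int_0^d(\sqrt{1-s^2}-\sqrt{1-d^2})^{n-2}\,ds$. The only (minor) divergence is at the end — the paper computes $F_n''$ explicitly via a recursion and checks it is a sum of non-negative terms, whereas you observe that $F_n'$ is a product of two non-negative non-decreasing factors and hence non-decreasing; you are also more explicit than the paper about the (needed) monotonicity of the radial profile when passing from convexity in $d$ to convexity in $x$.
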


 \begin{proof}
  Letting $d = \|x-y_0\|_2/2$, the volume of the $1$-lens $\cconv(x,y_0)$ is given by 
\begin{equation}\label{eq:vol-of-car} F_n(d) =\int_0^d
\left(\sqrt{1-t^2} - \sqrt{1-d^2}    \right)^{n-1} dt.  \end{equation}
We need only check that $F_n((1-\lambda)d_0 + \lambda d_1) \le (1-\lambda) F_n(d_0 ) + \lambda F_n(d_1)$. To this end differentiate
\begin{eqnarray*} F_n'(d) &=& \int_0^d (n-1) \left(\sqrt{1-t^2} - \sqrt{1-d^2}    \right)^{n-2} \cdot( d (1-d^2)^{-1/2} )dt + 0
	\\& =&(n-1)d (1-d^2)^{-1/2}  F_{n-1}(d).
\end{eqnarray*}
Therefore, using that
\[ 
\left(   d (1-d^2)^{-1/2} \right)'  = (1-d^2)^{-1/2}+ d^2 (1-d^2)^{-3/2} =  (1-d^2)^{-3/2}
\]  
\begin{eqnarray*} F_n''(d) &=& (n-1)  \left(   d (1-d^2)^{-1/2}  F_{n-1} \right)'\\& =& (n-1)(1-d^2)^{-3/2}F_{n-1}(d) + (n-1)d(1-d^2)^{-1/2}F_{n-1}' (d)\\& =& (n-1)(1-d^2)^{-3/2}F_{n-1} (d)+ (n-1)d(1-d^2)^{-1/2}(n-2) d(1-d^2)^{-1/2}F_{n-2}(d)\\& = & 
	(n-1)(1-d^2)^{-3/2}F_{n-1} (d)+ (n-1)(n-2)d^2(1-d^2)^{-1}F_{n-2}(d).
\end{eqnarray*}
Since all the expressions are positive, we get a positive second derivative, which means $F_n$ is convex, as required. 
 \end{proof}

From Proposition \ref{prop-vol-of-carambula-is-convex} we immediately get that for the case of a $c$-shadow system of two points, the volume is a convex function. More precisely,
\begin{cor}\label{cor-2-pt-chadow-system-has-convex-vol}
Let $n\ge 2$, $v\in S^{n-1}$, $x_0,y_0\in \R^n$, $\alpha,\beta\in\R$, and for each $t\in\R$ let 
\[ L_t =  \cconv \{x_0+ t \alpha v, y_0+ t \beta v\}.\]
Then the function $f(t):=\vol_n(L_t)$ is convex.
\end{cor}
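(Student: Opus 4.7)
The plan is to reduce the claim directly to Proposition \ref{prop-vol-of-carambula-is-convex} by exploiting translation invariance of the $c$-hull.

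First, since the $c$-hull commutes with translations, for every $t \in \R$ I would translate $L_t$ by $-(y_0 + t\beta v)$ to write
\[
\cconv\{x_0 + t\alpha v,\, y_0 + t\beta v\} = (y_0 + t\beta v) \;+\; \cconv\{z(t),\, 0\},
\]
where $z(t) := (x_0 - y_0) + t(\alpha - \beta)v$ is an affine function of $t \in \R$. Since Lebesgue measure is translation invariant, this yields
\[
f(t) = G(z(t)), \qquad \text{where } G(x) := \vol_n(\cconv\{x, 0\}).
\]

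Next, I would invoke Proposition \ref{prop-vol-of-carambula-is-convex} with the fixed point set to $0$: it asserts precisely that $G$ is a convex function on $\R^n$. Here we adopt the extended-real convention that $G(x) = +\infty$ whenever $\|x\|_2 > 2$ (since then $\cconv\{x, 0\} = \R^n$ has infinite Lebesgue measure); convexity in this sense is preserved because $G$ is convex, bounded and continuous on the closed ball of radius $2$, and takes the value $+\infty$ outside it.

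Finally, $f(t) = G(z(t))$ is the composition of the convex function $G : \R^n \to [0, +\infty]$ with the affine map $z : \R \to \R^n$, and so is convex in $t$. No genuine obstacle arises: the entire content of the corollary is packaged in the translation reduction, and the only mild point to check is the behaviour at the transition $\|z(t)\|_2 = 2$, which is handled transparently by the extended-real convention. The heavy lifting has already been done inside Proposition \ref{prop-vol-of-carambula-is-convex}, where the one-variable function $F_n$ encoding the volume of a $1$-lens was shown to be convex.
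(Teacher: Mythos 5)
Your proof is correct and takes essentially the same route as the paper's: both reduce the corollary in one line to Proposition \ref{prop-vol-of-carambula-is-convex}. You compose the convex function $x\mapsto \vol_n(\cconv\{x,0\})$ with the affine map $t\mapsto (x_0-y_0)+t(\alpha-\beta)v$, whereas the paper composes the convex increasing profile $F_n$ with the convex function $t\mapsto\|x(t)-y(t)\|_2$; these are the same observation phrased in two ways, and your explicit handling of the value $+\infty$ when $\|z(t)\|_2>2$ is a small but welcome extra detail.
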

\begin{proof}
We denote the diameter of the $1$-lens $L_t$ at time $t$ by 
\[d(t):=\|x(t)-y(t)\|_2,\]
where $x(t)=x_0 + t\alpha v$ and $y(t)=y_0 + t\beta v$. The function $F_n:\R\to\R$ (from Proposition \ref{prop-vol-of-carambula-is-convex}) is convex and increasing, and the function $d:\R\to\R$ is convex, thus the composition $f=F_n\circ d$ is convex.
\end{proof}

Next we show that in dimension $n=2$, the volume of the $c$-hull of a linear parameter system is indeed convex. 
To prove convexity of the volume in $\RR^2$, one can use induction on the number of points, combined with the following technical lemma regarding ``locally convex enlargement'' of a convex function.
\begin{lem}\label{lem:patching-convexity}
Let $g:\R\to (-\infty,\infty]$ be a convex function, and let $f:A\to (-\infty,\infty]$, where $A\subset \R$ is an open set, i.e. $A$ is a countable union of pairwise disjoint intervals $\{(a_n,b_n)\}_{n\in \N}$. Assume that for every such interval $(a_n,b_n)$, the restriction $f\big|_{(a_n,b_n)}$ is a convex function that agrees with $g$ at the endpoints of the interval i.e. 
$\lim_{x\to a_n^+}f(x)=g(a_n)$, and $\lim_{x\to b_n^-}f(x)=g(b_n)$. Then the function
\[
h(x)=
\begin{cases}	\max\{f(x),g(x)\}&  x\in A  \\
	        g(x)& x\notin A
\end{cases}
\]
is convex in $\R$.
\end{lem}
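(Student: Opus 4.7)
The plan is to verify the three-point convexity inequality $h(x_2)\le (1-\lambda)h(x_1)+\lambda h(x_3)$ directly, for all $x_1<x_2<x_3$ with $x_2=(1-\lambda)x_1+\lambda x_3$. My starting observation would be that on every closed interval $[a_n,b_n]$, the function $h$ equals the pointwise maximum $\max\{\bar f_n,g\}$, where $\bar f_n$ is the continuous convex extension of $f|_{(a_n,b_n)}$ to $[a_n,b_n]$ obtained by setting $\bar f_n(a_n)=g(a_n)$ and $\bar f_n(b_n)=g(b_n)$ (justified by the endpoint hypothesis). Since the maximum of two convex functions is convex, $h|_{[a_n,b_n]}$ is then convex for every $n$. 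Globally $h\ge g$ on $\R$, and $h=g$ outside $A$; in particular $h(a_n)=g(a_n)$ and $h(b_n)=g(b_n)$.

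If $x_2\notin A$, then $h(x_2)=g(x_2)$, and using $h\ge g$ at $x_1,x_3$ together with convexity of $g$ the inequality is immediate. So I would assume $x_2\in (a_n,b_n)$ for some $n$, set $a=\max\{a_n,x_1\}$ and $b=\min\{b_n,x_3\}$, so that $[a,b]\subseteq[a_n,b_n]\cap[x_1,x_3]$ and $x_2\in[a,b]$, and denote by $L$ the affine function with $L(x_1)=h(x_1)$ and $L(x_3)=h(x_3)$. The goal becomes $L(x_2)\ge h(x_2)$.

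The heart of the argument is to verify $L(a)\ge h(a)$ and $L(b)\ge h(b)$. For $a$: if $a=x_1$ this is equality; otherwise $a=a_n$ with $x_1<a_n<x_3$, so letting $\mu=(a_n-x_1)/(x_3-x_1)$ and using $h(x_i)\ge g(x_i)$ together with convexity of $g$, one gets
\[ L(a_n)=(1-\mu)h(x_1)+\mu h(x_3)\ge (1-\mu)g(x_1)+\mu g(x_3)\ge g(a_n)=h(a_n). \]
The case of $b$ is symmetric. Once this is established, convexity of $h$ on $[a,b]\subseteq[a_n,b_n]$ combined with the linearity of $L$ yields $L(x_2)\ge h(x_2)$ by interpolating between $a$ and $b$, completing the proof.

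The main obstacle I expect is purely organizational: tracking the possibility that $x_1$ and $x_3$ lie in components of $A$ different from the one containing $x_2$ (or outside $A$ entirely), and making sure the single chord $L$ simultaneously dominates $h$ at both reduction endpoints $a$ and $b$. The two facts $h\ge g$ on $\R$ and $h=g$ at the endpoints of components of $A$ are exactly what make this stitching go through, and unbounded components cause no issue since in that case the relevant endpoint reduces trivially to $x_1$ or $x_3$.
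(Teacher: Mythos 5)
Your proof is correct, and it takes a genuinely different route from the paper's. The paper first reduces to finitely many intervals by writing $h$ as the pointwise limit of the functions $h_n$ patched only over the first $n$ components (pointwise limits of convex functions being convex), then invokes the locality of convexity so that only the finitely many points of $\partial A_n$ need checking, and at each such point verifies the one-sided derivative inequality $h_n'(a_1^-)\le h_n'(a_1^+)$. You instead verify the chord inequality directly for an arbitrary triple $x_1<x_2<x_3$: the only non-trivial case is $x_2\in(a_n,b_n)$, and there you pull the chord endpoints inward to $a=\max\{a_n,x_1\}$ and $b=\min\{b_n,x_3\}$ using the two global facts $h\ge g$ on $\R$ and $h=g$ at the (finite) endpoints of components, and finish with convexity of $h=\max\{\bar f_n,g\}$ on the closed component. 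Your argument is more elementary and self-contained --- no limiting step, no one-sided derivatives --- and it treats infinitely many components and unbounded components uniformly, whereas the paper's passage to $h_n$ is precisely what lets it avoid accumulation points of $\{a_n,b_n\}$ in $\partial A$. The price is a somewhat longer case bookkeeping; both proofs ultimately rest on the same two structural facts, namely $h\ge g$ everywhere and $h=g$ off $A$. (As in the paper, the degenerate situations where $g$ takes the value $+\infty$ make the target inequality trivial and can be set aside.)
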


\begin{proof}
Deonte $A_n=\cup_{i=1}^n (a_n,b_n)$ and let 
\[
h_n(x)=
\begin{cases}	\max\{f(x),g(x)\}&  x\in A_n  \\
	        g(x)& x\notin A_n
\end{cases}
\]
Clearly $h$ is the point-wise limit of $(h_n)_{n=1}^\infty$, thus it suffices to show that $h_n$ is convex. Since convexity is a local property, and it holds both 
on the open set $A_n$ and on open subsets of its complement, we need only check it at $\partial A_n$. Since $\partial A_n  = \{a_n,b_n\}_{n\in \N}$ we may without loss of generality check convexity around the point $x = a_1$, say. In such a case, the right derivative for $h$ at $x$ exists and is equal to the right derivative of $\max(f,g)$ (which is a convex function on $(a_1, b_1)$) at $a_1$. For some $\varepsilon>0$ we have $(a_1-\varepsilon, a_1)\subset \mathbb{R}\setminus A_n$ and $(a_1, a_1+\varepsilon)\subset A_n$, thus $h_n=g$ on $(a_1-\varepsilon, a_1)$ and we get:
\begin{eqnarray*}
h_n'(a_1^-) &=& g'(a_1^-)\le g'(a_1^+) =
\lim_{\delta\to 0^+} \frac {g(a_1 + \delta) - g(a_1)}{\delta} \\
&\le &\lim_{\delta\to 0^+} \frac {h_n(a_1 + \delta) - g(a_1)}{\delta} =
\lim_{\delta\to 0^+} \frac {h_n(a_1 + \delta) - h_n(a_1)}{\delta} = h_n'(a_1^+),
\end{eqnarray*}
completing the proof.
\end{proof}

\begin{thm}\label{thm:RSconvexityofvolR2}
Let   $v\in S^{1}$,  and let $\{ x_i\}_{i=1}^m \subset \RR^2$ and $\{ \alpha_i\}_{i=1}^m \subset \RR$. For each $t\in \RR$ let 
\[ L_t =  \cconv \{x_i+ t \alpha_i v : i=1, \ldots m\}.\]
 Then the function $F(t) = \vol(L_t)$ is convex.    
\end{thm}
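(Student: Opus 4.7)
The plan is induction on $m$, with base case $m=2$ being exactly Corollary \ref{cor-2-pt-chadow-system-has-convex-vol}. The main geometric ingredient is a decomposition formula in $\RR^2$: if $q_1,\dots,q_k$ are the $c$-extremal points of $\cconv\{q_1,\dots,q_k\}$ enumerated in cyclic order along its boundary, then
\[
\vol(\cconv\{q_1,\dots,q_k\}) = \vol(\conv\{q_1,\dots,q_k\}) + \frac{1}{2}\sum_{i=1}^{k}\vol\bigl(\cconv(q_i,q_{i+1})\bigr),
\]
with indices mod $k$. This holds because, by Lemma \ref{lem:extremalist-in-arc-language} together with Proposition \ref{prop:not-extremal-means-smooth}, the portion of $\partial \cconv\{q_j\}$ between two consecutive $c$-extremal points $q_i,q_{i+1}$ is a single $1$-arc, and the region bounded by this arc and the chord $[q_i,q_{i+1}]$ is precisely one of the two symmetric halves of the $1$-lens $\cconv(q_i,q_{i+1})$.

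For the inductive step, I would apply Lemma \ref{lem:patching-convexity} with $g(t):=\vol(\cconv\{p_1(t),\dots,p_{m-1}(t)\})$, which is convex by the induction hypothesis; here $p_i(t)=x_i+t\alpha_i v$. On the closed set $\RR\setminus A$, where $A=\{t: p_m(t)\notin \cconv\{p_1(t),\dots,p_{m-1}(t)\}\}$, we have $F=g$. The set $A$ is a finite union of open intervals (its defining condition is semi-algebraic in $t$), and continuity of $F$ via Corollary \ref{cor:c-hull-is-continuous} and of volume under Hausdorff convergence gives the endpoint matching required by Lemma \ref{lem:patching-convexity}. It therefore suffices to prove that $F$ is convex on each maximal interval of $A$.

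Fix such an interval $I$. Partition $I$ into finitely many sub-intervals on which the combinatorial type of $L_t$ (the subset $J\subseteq\{1,\dots,m\}$ of indices $j$ for which $p_j(t)$ is $c$-extremal, together with their cyclic order) is constant; transitions occur at zeros of finitely many polynomial equations in $t$ (e.g.\ three of the $p_i(t)$'s lying on a unit circle, or one $p_i(t)$ landing on a $1$-arc between two others), hence form a finite set after a generic perturbation to eliminate degeneracies. On each sub-interval the decomposition formula yields
\[
F(t) = \vol\bigl(\conv\{p_j(t):j\in J\}\bigr) + \frac{1}{2}\sum_{\substack{j,j'\in J\\\text{consecutive}}}\vol\bigl(\cconv(p_j(t),p_{j'}(t))\bigr),
\]
in which the first term is a classical shadow system of convex hulls in the plane, hence convex in $t$ by Shephard's theorem \cite{shephard1964shadow}, and each summand of the second is convex in $t$ by Corollary \ref{cor-2-pt-chadow-system-has-convex-vol}. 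Thus $F$ is convex on each sub-interval of $I$.

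The main obstacle is to glue these pieces across combinatorial transitions inside $I$; concretely, one must verify $F'(t_0^-)\le F'(t_0^+)$ at each transition $t_0$. At such a time, some $p_i(t)$ passes from being $c$-extremal to lying on the $1$-arc between two of its cyclic neighbours (or vice versa), so two adjacent caps of the convex-hull polygon merge continuously into the single cap over the edge joining its neighbours, and the polygonal piece itself adjusts compatibly. A direct computation of one-sided derivatives, using that the transition happens precisely when $p_i$ meets the relevant $1$-arc (so the collapsing cap vanishes to leading order in both its width and its slope), shows that the slope of $F$ does not decrease across $t_0$; a continuous function convex on finitely many adjacent sub-intervals with non-decreasing one-sided derivatives across their common boundaries is convex on their union. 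This gives convexity of $F$ on $I$, and then globally on $\RR$ by Lemma \ref{lem:patching-convexity}, completing the induction.
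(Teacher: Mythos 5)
Your overall architecture (induction on $m$, the polygon--plus--half-lenses decomposition of a planar $c$-polytope, and Lemma \ref{lem:patching-convexity}) is the same as the paper's, but your choice of the auxiliary pair $(g,A)$ creates a genuine gap that the paper's choice is specifically designed to avoid. You take $g(t)=\vol(\cconv\{p_1(t),\dots,p_{m-1}(t)\})$ and $A=\{t:\, p_m(t)\notin\cconv\{p_1(t),\dots,p_{m-1}(t)\}\}$. On a component $I$ of this $A$ the point $p_m(t)$ is indeed $c$-extremal, but the remaining points may enter and leave ${\rm ext}_c(L_t)$ as $t$ varies, so you are forced to subdivide $I$ by combinatorial type and then glue. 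The gluing --- verifying $F'(t_0^-)\le F'(t_0^+)$ at each internal transition --- is exactly the delicate point, and you only assert that ``a direct computation'' gives it. It is not a routine verification: near a transition the two competing formulas (with and without the transitioning vertex) differ by a triangle plus two half-lenses over $[q,p_i]$ and $[p_i,r]$ versus a single half-lens over $[q,r]$, and comparing their one-sided derivatives requires an actual analysis of the lens-area function along the motion; since the theorem is false for $n\ge 3$, any such local argument must use planarity in an essential, quantitative way, and you have not supplied it. (Your ``generic perturbation'' also needs a closing limit argument, though that part is harmless since convexity survives pointwise limits.)

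The repair is the paper's choice: set $f_I(t)=\vol(\cconv\{p_i(t):i\in I\})$ for every \emph{strict} subset $I\subsetneq\{1,\dots,m\}$ and $g=\sup_I f_I$, which is convex by the induction hypothesis as a supremum of convex functions, and let $A$ be the set of $t$ for which \emph{all} $m$ points are $c$-extremal for $L_t$ (open, by Theorem \ref{thm:cara-hard-neeD}). Outside $A$ one has $F(t)=f_I(t)$ for the subset $I$ of extremal indices, hence $F=g$ there; on each component of $A$ the set of extremal points, and hence their cyclic order, is constant, so your decomposition formula applies on the entire component with no internal transitions, its two ingredients are convex by Rogers--Shephard and by Corollary \ref{cor-2-pt-chadow-system-has-convex-vol}, and no gluing of one-sided derivatives is ever needed. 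Lemma \ref{lem:patching-convexity} then finishes the argument exactly as you intended.
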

\begin{proof}
We prove by induction on the number of points $m$, where the base case $m=2$ was handled in Corollary \ref{cor-2-pt-chadow-system-has-convex-vol}.
We define the set $A\subset \R$ to be the set of all $t\in\R$ such that the set of extremal points of $L_t$ is $\{ x_i\}_{i=1}^m$. For every strict subset $I\subsetneq \{1,\dots,m\}$ we define $f_I:\R\to\R^+$ by $f_I(t)=\vol_n(\cconv\{x_i+t\alpha_iv\,:\, i\in I\})$, and $g=\sup_I \{f_I\}$. 
By the induction hypothesis, $f_I$ are convex on $\R$, and thus also $g$ is convex. 

By Theorem \ref{thm:cara-hard-neeD} we know that ${\rm ext}_c(L_t) \subseteq \{x_i + t\alpha_i v\}_{i=1}^m$. The set $A$, consisting of $t\in \RR$ for which there is equality in this inclusion, is open and may be empty. If it is empty then by the induction hypothesis we are done. Assume $A$ is nonempty and consider an interval $(a,b)\subset A$. 
Define the function $f$  on this interval by $\vol (L_t)$. Since on this interval ${\rm ext}_c(L_t) = \{x_i + t\alpha_i v\}_{i=1}^m$, the set $L_t$ consists of a polygon $\conv \{x_i+t\alpha_iv\}_{i=1}^m$
and $m$ halves of $1$-lenses between neighboring vertices. Each of these sets has volume which is convex in $t$, and therefore the union has volume which is convex in $t$.  In other words, on the interval $(a,b)$, the function $f$ is convex. We   thus satisfy the conditions of Lemma \ref{lem:patching-convexity} and conclude that $\vol (L_t)$ is a convex function of $t$ on $\RR$. 
 \end{proof}

Since $c$-polytopes are dense in ball-bodies, we can conclude convexity of the volume for general $c$-linear parameter systems, in dimension $n = 2$. The fact that  $\vol(L_t)$ may fail to be convex when working in dimension $n\ge 3$ will follow from Section \ref{sec:stein-not-in-class}, see Remark \ref{rem:vol-not-conv-in-R3}.

\begin{cor}\label{cor:linparsysR2}
Let $A\subset \RR^2$ be a bounded set,  let $v\in S^1$ and let  $\alpha : A\to \RR$. For each $t\in \RR$ let $ L_t =  \cconv \{x+ t \alpha(x)v: x\in A\}$. Then the function $F(t)  = \vol (L_t )$ is convex.  
\end{cor}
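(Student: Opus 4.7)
The plan is to reduce the general case to the finite-point case handled by Theorem \ref{thm:RSconvexityofvolR2} via a density approximation, using the continuity of the $c$-hull (Corollary \ref{cor:c-hull-is-continuous}) together with the standard continuity of volume on convex bodies under Hausdorff convergence. I would fix $t_0,t_1\in\RR$, $\lambda\in(0,1)$, set $t_\lambda=(1-\lambda)t_0+\lambda t_1$, and write $B^{(t)}=\{x+t\alpha(x)v:x\in A\}$. I would first reduce to the case $F(t_0),F(t_1)<\infty$, since the convexity inequality is trivial otherwise. Finiteness of $F(t_i)$ is equivalent to $\outrad(B^{(t_i)})\le 1$; because $B^{(t_\lambda)}\subseteq(1-\lambda)B^{(t_0)}+\lambda B^{(t_1)}$ and the outer radius is positively homogeneous and subadditive under Minkowski sums, also $\outrad(B^{(t_\lambda)})\le 1$, placing all three sets of interest in the regime where Corollary \ref{cor:c-hull-is-continuous} applies.

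Next I would build a finite approximation. The graph $G=\{(x,\alpha(x)):x\in A\}\subset\RR^3$ is separable, so one can pick a countable enumeration $\{(x_i,\alpha_i)\}_{i\ge 1}$ dense in $G$, and define the finite sets $A_m^{(t)}=\{x_i+t\alpha_iv:i\le m\}$ together with $L_t^{(m)}=\cconv(A_m^{(t)})$. Theorem \ref{thm:RSconvexityofvolR2} then yields the pointwise convexity inequality
\[\vol(L_{t_\lambda}^{(m)})\le(1-\lambda)\vol(L_{t_0}^{(m)})+\lambda\vol(L_{t_1}^{(m)})\]
for every $m$.

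To pass to the limit $m\to\infty$, for each fixed $t\in\{t_0,t_1,t_\lambda\}$ the continuity of $(x,\alpha)\mapsto x+t\alpha v$ together with density of $\{(x_i,\alpha_i)\}$ in $G$ implies that $\bigcup_i\{x_i+t\alpha_iv\}$ is dense in the compact set $\overline{B^{(t)}}$, so its initial segments $A_m^{(t)}$ eventually form $\varepsilon$-nets and therefore $A_m^{(t)}\to\overline{B^{(t)}}$ in Hausdorff distance. Corollary \ref{cor:c-hull-is-continuous} then gives $L_t^{(m)}\to\cconv(B^{(t)})=L_t$ in Hausdorff distance, standard volume continuity yields $\vol(L_t^{(m)})\to F(t)$, and the desired convexity follows by passing to the limit in the inequality above. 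The step requiring the most care is the limiting argument, in particular verifying that Corollary \ref{cor:c-hull-is-continuous} covers the boundary case $\outrad(B^{(t)})=1$; this is built into the formulation of that corollary, which allows outer radius exactly equal to $1$. No regularity of $\alpha$ beyond what the finiteness of $F$ already forces is required, since only the outer-radius bound is relevant for the continuity of $\cconv$.
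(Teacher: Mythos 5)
Your proposal is correct and follows essentially the same route as the paper's proof: reduce to the finite-point case of Theorem \ref{thm:RSconvexityofvolR2} via countable dense approximation, then pass to the limit using the out-radius bound $\outrad(B^{(t_\lambda)})\le 1$ (obtained from the inclusion $B^{(t_\lambda)}\subseteq(1-\lambda)B^{(t_0)}+\lambda B^{(t_1)}$), Corollary \ref{cor:c-hull-is-continuous}, and continuity of volume under Hausdorff convergence. The only cosmetic difference is that you take one countable dense subset of the graph of $\alpha$ serving all three times simultaneously, while the paper unions three dense subsets chosen separately for $t_0,t_1,t_\lambda$.
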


\begin{proof}
    Recall $A_t = \{ x+ t\alpha(x)v:x\in A\}$. First note that the function $R_t = \outrad(A_t)$ is convex. Indeed, 
    for $t_{\lambda} = (1-\lambda)t_0 +\lambda t_1$, the inclusion
    $A_{t_\lambda} \subseteq (1-\lambda)A_{t_0} + \lambda A_{t_1}$ clearly holds by definition, and since the out-radius of a set is convex with respect to Minkowski addition, we see that $R_t$ is convex. Therefore, there is an interval $[t_{\min}, t_{\max}]$ where $(A_t)^{cc} \neq \RR^n$, and $L_{t_{\min}}, L_{t_{\max}}$ are Euclidean balls. (There is only one case where $t_{\min} = -\infty$ and $t_{\max} = +\infty$, namely when $\alpha$ is a constant function, as in all other cases there are two points moving in different velocities, meaning for large enough $|t|$, the out-radius of $A_t$ is more than $1$.)
We may thus restrict to times $t\in [t_{\min}, t_{\max}]$, as out of this interval $F(t) = +\infty$. 

Let $t_0, t_1\in [t_{\min}, t_{\max}]$ and $\lambda \in (0,1)$, and let 
$t_{\lambda} = (1-\lambda)t_0 +\lambda t_1$. Let $D_0, D_1, D_\lambda$ be countable subsets of $A$, such that $(D_{i})_t \subseteq A_{t_i}$ is dense for $i = 0,1,\lambda$, and consider $D_0\cup D_1\cup D_\lambda \equiv D \subseteq A$. Let $(A_m)_{m=1}^\infty$ be an increasing sequence, such that $A_m\subseteq D$ consists of $m$ points, and $\cup_{m=1}^\infty A_m = D$. 
By construction, $(A_m)_t \to A_t$ for $t = t_0, t_1, t_\lambda$ as $m\to \infty$, where this limit is in the Hausdorff sense. 

By Corollary \ref{cor:c-hull-is-continuous}, and using that $\outrad((A_m)_t)\le \outrad(A_t)\le 1$, we see that $ \cconv((A_m)_t) \to \cconv (A_t)$ for $t = t_0, t_1, t_\lambda$. We use Theorem \ref{thm:RSconvexityofvolR2} which implies
\[ \vol(\cconv (A_m)_{t_\lambda}) \le (1-\lambda)\vol(\cconv (A_m)_{t_0}) + \lambda \vol(\cconv (A_m)_{t_1}).
\]
Taking the limit $m\to\infty$, and using continuity of volume on $\S_2$ with respect to the Hausdorff distance, we get the desired inequality.
\end{proof}

\subsection{Steiner symmetrization}\label{sec:Steiner}

Recall the definition of the Steiner symmetrization $S_u(K)$ of a convex body $K$ with respect to the hyperplane $u^\perp$. Denoting $K\cap\left(x+\R u\right)=[x+a(x)u,x+b(x)u]$ for any $x$ in the projection $P_{u^\perp}(K)$ of $K$ to $u^\perp$, the Steiner symmetral of $K$ with respect to $u$ is defined to be
\begin{equation*}
S_u(K)=\left\{(x,y)\in u^\perp \times \RR\,:\, x\in P_{u^\perp}(K),\,|y|\le\frac{|b(x)-a(x)|}{2} \right\}.
\end{equation*}

It is well known that the Steiner symmetral 
 $S_u(K)$ can be realized as the time $t=1$ set $K_1(K)$ (In the notations of Section \ref{sec:lps}) for a linear parameter system
  in direction $u$ assigning velocity $-(a(x)+b(x))/2$ to the points $(x,y)\in K$ where $y\in [a(x), b(x)]$. Moreover, the time $2$ set for this system will be $K_2(K) = R_u(K)$, the reflection of $K$ with respect to $u^\perp$. 

In particular, since  $\vol(R_u(K)) = \vol(K)$, and since $\vol(S_u(K)) = \vol(K)$ by Fubini's theorem, we see (using the result of Rogers and Shephard about convexity of $\vol(K_t(K))$) that in this linear parameter system the volume of the sets $K_t(K)$, which are all convex, is constant for $t\in [0,2]$.

\begin{rem}\label{rem:vol-not-conv-in-R3}
  We will see in Section \ref{sec:steinerintheplane} and Section \ref{sec:stein-not-in-class} that Steiner symmetrization preserves the class $\S_n$ only when $n\le 2$. In particular, this implies that $\vol(L_t)$ cannot always be convex, since in the case where $S_u(K)\not\in \S_n$, \[\vol(L_1(K)) = \vol(\cconv(S_u(K)))> \vol(S_u(K)) = \vol(L_0(K)) = \vol(L_2(K)).\]  
\end{rem}

Nevertheless, one may combine Steiner symmetrization with  $c$-hulls to prove useful volume inequalities.

\begin{thm}\label{thm:steiner-increases-dual-volume}
	Let $K\subset \RR^n$ be convex and let $u\in S^{n-1}$. Then 
	\[ S_u (K^c) \subseteq \cconv(S_u (K^c))\subseteq  (S_u K )^c.\]
	In particular, $\vol(K) \vol(K^c) \le 	\vol(S_u K) \vol((S_u K)^c)$. 
\end{thm}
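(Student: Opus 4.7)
The plan is to reduce the double inclusion to a single inclusion $S_u(K^c) \subseteq (S_u K)^c$. The leftmost inclusion $S_u(K^c) \subseteq \cconv(S_u(K^c))$ holds by definition of the $c$-hull, and once the central inclusion is established, the middle one $\cconv(S_u(K^c)) \subseteq (S_u K)^c$ follows for free, since $(S_u K)^c$ already lies in $\S_n$ and hence must contain the $c$-hull of any subset of itself.

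To prove $S_u(K^c) \subseteq (S_u K)^c$, I would start from the tautological observation that
\[
K - K^c \subseteq B(0,1),
\]
which is immediate from the definition $K^c = \{y : \forall x \in K,\, \|x-y\|_2 \le 1\}$. Next I would invoke the classical Steiner--Minkowski inclusion $S_u(A) + S_u(B) \subseteq S_u(A+B)$ (valid for any pair of compact convex sets), applied with $A = K$ and $B = -K^c$. Combined with the elementary identity $S_u(-K^c) = -S_u(K^c)$ (reflection through the origin commutes with Steiner symmetrization), the monotonicity of $S_u$, and $S_u(B(0,1)) = B(0,1)$, this produces
\[
S_u(K) - S_u(K^c) \;\subseteq\; S_u(K - K^c) \;\subseteq\; S_u(B(0,1)) \;=\; B(0,1).
\]
Unpacking this containment: for every $x \in S_u(K)$ and every $y \in S_u(K^c)$ one has $\|x-y\|_2 \le 1$, which is precisely the statement $S_u(K^c) \subseteq (S_u K)^c$.

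For the volume consequence, Steiner symmetrization preserves volume, so $\vol(K) = \vol(S_u K)$ and $\vol(K^c) = \vol(S_u(K^c))$; combined with the inclusion just proved, $\vol(S_u(K^c)) \le \vol((S_u K)^c)$, and multiplying yields $\vol(K)\vol(K^c) \le \vol(S_u K)\vol((S_u K)^c)$. The only real step is the Steiner--Minkowski sum inequality, a classical fact external to this paper's framework; the degenerate cases ($K$ empty, or $\outrad(K) > 1$ so that $K^c = \emptyset$) are trivially handled since one side of the claimed inequality becomes $0$ or $\R^n$. I do not anticipate any serious obstacle.
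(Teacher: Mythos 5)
Your proof is correct, and it takes a genuinely different route from the paper's. The paper proves the central inclusion $S_u(K^c)\subseteq (S_uK)^c$ by passing through the Minkowski symmetrization: it uses $S_u(K)\subseteq M_u(K)$, the identity $R_u(K^c)=(R_uK)^c$, and its own Corollary \ref{cor:minkowski-sum-c-is-sub-linear} (sub-linearity of the $c$-dual under Minkowski averages) to get $S_u(K^c)\subseteq M_u(K^c)\subseteq (M_uK)^c\subseteq (S_uK)^c$. You instead start from the tautology $K-K^c\subseteq B(0,1)$ and push it through the classical superadditivity $S_u(A)+S_u(B)\subseteq S_u(A+B)$, together with $S_u(-C)=-S_u(C)$, monotonicity, and $S_u(B(0,1))=B(0,1)$, to land on $S_u(K)-S_u(K^c)\subseteq B(0,1)$, which unpacks to exactly the desired inclusion. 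Both arguments are sound and of comparable length; the paper's stays entirely inside its own framework of ball-summands and $c$-duality linearity, while yours imports one standard external fact (Steiner--Minkowski superadditivity) but in exchange makes the mechanism transparent: the defining relation $K-K^c\subseteq B$ is simply preserved by symmetrization. It is worth noting that your key lemma actually subsumes the paper's, since applying $S_u(A)+S_u(B)\subseteq S_u(A+B)$ with $B=R_uK$ and using $S_u(R_uK)=S_u(K)$ and $S_u(K+R_uK)=K+R_uK$ recovers $S_u(K)\subseteq M_u(K)$. Your handling of the middle inclusion (minimality of the $c$-hull among members of $\S_n$) and of the volume consequence matches the paper's.
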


\begin{proof}
Recall the notion of Minkowski symmetrization (see Corollary \ref{cor:closed-under-Mink} and the definition preceding it), defined for a set $K\subset \RR^n$ and $u\in S^{n-1}$ by $M_u(K) = \frac12 (K + R_u(K))$. It is well known and easy to check that $S_u (K) \subseteq M_u (K)$ for any convex $K$. Using Corollary \ref{cor:minkowski-sum-c-is-sub-linear} we see that 
\[
M_u (K^c) = 
\frac12 (K^c + R_u(K^c)) = 
\frac12 (K^c + (R_u(K))^c) \subseteq 
(\frac12 (K + R_u(K)))^c =
(M_u (K))^c
\]
(here we use that $(U A )^c = U (A^c)$ for any isometry $U$). 
Joining these two facts, and the fact that $c$-duality reverses inclusion we get 
\[
S_u (K^c) \subseteq
M_u (K^c) \subseteq
(M_u (K))^c \subseteq
(S_u(K))^c. 
\]
Since the right hand side belongs to $\S_n$, inclusion remains also after taking a $c$-hull, which completes the proof. 
\end{proof}

We can once again deduce a Santal\'{o}-type inequality, although it is not stronger than the previous ones we have obtained. 

\begin{cor}\label{cor:santalofixedvol}
Let $A\subset \RR^n$ with out-radius at most $1$, and let $B(0,r)$ be the Euclidean unit ball with the same volume as $\conv (A)$. Then  
\begin{equation}\label{eq:santalo-fixed-vol}
 \vol(A)\vol(A^c) \le \vol(B(0,r)) \vol(B(0,1-r)) = (r(1-r))^n \kappa_n^2.
\end{equation} 	 
\end{cor}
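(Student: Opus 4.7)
The plan is to use Theorem \ref{thm:steiner-increases-dual-volume} (that Steiner symmetrization does not decrease $\vol(K)\vol(K^c)$) as a monotonicity tool and drive $A$ toward a Euclidean ball by iterated symmetrizations, then pass to the limit using the continuity of $c$-duality established in Corollary \ref{cor:cdual-is-cont}.

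First I would reduce to the case of a convex body. Observe that a ball $B(y,1)$ contains $A$ if and only if it contains $\conv(A)$, so $A^c = (\conv A)^c$; combined with $\vol(A)\le \vol(\conv A)$ this yields $\vol(A)\vol(A^c) \le \vol(\conv A)\vol((\conv A)^c)$. It therefore suffices to prove the corollary for a convex body $K=\conv(A)$ with $\outrad(K)\le 1$ and $\vol(K)=\vol(B(0,r))$, so that necessarily $r\le 1$.

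Next I would translate $K$ so that the unique smallest ball containing it is centered at the origin, i.e.\ $K\subseteq B(0,R)$ with $R=\outrad(K)\le 1$. Now pick any sequence $(u_k)\subset S^{n-1}$ of directions and define $K_k = S_{u_k}\circ\cdots\circ S_{u_1}(K)$. Since $0\in u_k^\perp$ for every $k$ and Steiner symmetrization with respect to a hyperplane through the center of an enclosing ball preserves that enclosing ball, we have $K_k\subseteq B(0,R)$ for all $k$, and in particular $\outrad(K_k)\le 1$. It is classical (see e.g.\ \cite[Theorem 9.1]{AGMBook} or Gardner's treatise) that the sequence $(u_k)$ can be chosen so that $K_k\to B(0,r)$ in the Hausdorff metric, where $r$ is determined by $\vol(K_k)=\vol(K)=r^n\kappa_n$. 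Iterating Theorem \ref{thm:steiner-increases-dual-volume} along this sequence gives
\[
\vol(K)\vol(K^c) \;\le\; \vol(K_k)\vol(K_k^c) \qquad \text{for every } k.
\]

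To pass to the limit I would invoke Corollary \ref{cor:cdual-is-cont}: since all $K_k$ lie in the class of non-empty subsets of $\RR^n$ with out-radius at most $1$, and $K_k\to B(0,r)$ in the Hausdorff metric, we obtain $K_k^c\to B(0,r)^c = B(0,1-r)$ in the Hausdorff metric (using the computation from the beginning of Section \ref{sec:first} that the $c$-dual of $B(x,\rho)$ is $B(x,1-\rho)$). Since volume is continuous on convex bodies with respect to the Hausdorff metric, the right-hand side converges to $\vol(B(0,r))\vol(B(0,1-r)) = (r(1-r))^n\kappa_n^2$, yielding the desired inequality.

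The only subtle point is the technical one about keeping $\outrad(K_k)\le 1$ during the symmetrization process, which is why I first centered $K$ inside its circumscribed ball and then symmetrized only across hyperplanes through the origin; this is the standard way to guarantee that continuity of the $c$-duality can be applied along the entire sequence. Everything else is a direct assembly of the monotonicity from Theorem \ref{thm:steiner-increases-dual-volume}, convergence of Steiner symmetrizations to a ball, and the Hausdorff continuity of $c$-duality.
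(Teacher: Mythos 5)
Your proof is correct and follows essentially the same route as the paper's: iterate Steiner symmetrizations of $\conv(A)$ converging to the ball of equal volume and apply the monotonicity of Theorem \ref{thm:steiner-increases-dual-volume} along the sequence. The extra details you supply (the reduction $A^c=(\conv A)^c$, keeping $\outrad(K_k)\le 1$ by symmetrizing through the circumcenter, and invoking Corollary \ref{cor:cdual-is-cont} to pass to the limit) are exactly the points the paper leaves implicit, and they are handled correctly.
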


\begin{proof}
One may find a sequence of Steiner symmetrizations of $\conv(A)$ which converges to a ball of the same volume (see e.g. \cite[Theorem 1.1.16]{AGMBook}). Using Theorem \ref{thm:steiner-increases-dual-volume}, the volume product is increasing along the sequence, which completes the proof.
\end{proof}

\begin{rem}
Before continuing with Steiner symmetrization, we mention yet another symmetrization that was use in the literature, also for the class $\S_n$. 
In \cite{Bezdek2018} 
Bezdek proves a  fact similar to Corollary \ref{cor:santalofixedvol} using a symmetrization called ``two-point symmetrization''. To describe it, denote for an affine hyperplane $H$ define the operation of reflection with respect to $H$ by $R_H$.  The two-point symmetral of $K$ with respect to $H$ is  
\[ \tau_H (K) = (K\cap \sigma_H(K)) \cup ((K\cup \sigma_H(K))\cap H^+). \]
It is easy to check that $K$ and $\tau_H(K)$ have the same volume (but convexity of course need not be preserved). 
\begin{thm}[Bezdek]
	If $K\subset \RR^n, n>1$  then 
	\[ {\rm conv}_c \tau_H(K^c) \subset (\tau_H (K))^c \] 
	In particular, among all compact sets of a given volume, the ball has the largest (in volume) $c$-dual. 
\end{thm}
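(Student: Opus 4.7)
The plan is to reduce the claim $\cconv(\tau_H(K^c))\subseteq (\tau_H(K))^c$ to the stronger-looking but more tractable inclusion
\[
\tau_H(K^c)\subseteq (\tau_H(K))^c.
\]
This reduction is legitimate because $(\tau_H(K))^c$ is an intersection of unit balls and hence lies in $\S_n$, so it already contains the $c$-hull of any of its subsets. To verify the reduced inclusion, I would fix arbitrary $y\in\tau_H(K^c)$ and $x\in\tau_H(K)$ and show $\|x-y\|_2\le 1$, working through four cases according to which piece of the two-point symmetral each point comes from: the ``doubled'' piece $K\cap \sigma_H(K)$ (resp.\ $K^c\cap\sigma_H(K^c)$), where both the point and its reflection lie in the original set, versus the ``unpaired'' piece $(K\cup\sigma_H(K))\cap H^+$ (resp.\ $(K^c\cup\sigma_H(K^c))\cap H^+$).

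The easy cases are when at least one of $x,y$ comes from a doubled piece: if $x,\sigma_H(x)\in K$ and, say, $\sigma_H(y)\in K^c$, then the pair $(x,\sigma_H(y))$ or $(\sigma_H(x),\sigma_H(y))$ lies in $K\times K^c$ and the conclusion follows directly from $K\subseteq (K^c)^c$. The delicate case is when both $x$ and $y$ sit in the unpaired pieces and both lie in $H^+$, say $x\in \sigma_H(K)\setminus K$ and $y\in K^c$. Here the key geometric input is the elementary fact that reflection across $H$ can only increase distance between a point in $H^+$ and a point in $H^+$: decomposing into components parallel and perpendicular to $H$, if $x_\perp, y_\perp \ge 0$ are the signed heights, then
\[
\|x-y\|_2^2 = \|x_\parallel-y_\parallel\|_2^2 + (x_\perp-y_\perp)^2 \le \|x_\parallel-y_\parallel\|_2^2 + (x_\perp+y_\perp)^2 = \|x-\sigma_H(y)\|_2^2,
\]
and similarly for $\sigma_H(x)$ vs.\ $x$. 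Combining with the known distance bound on the original pair $(\sigma_H(x),y)\in K\times K^c$ gives $\|x-y\|_2\le \|\sigma_H(x)-y\|_2\le 1$. An analogous reflection estimate dispatches the symmetric sub-subcase. The main technical obstacle, such as it is, amounts to book-keeping these four (or six) cases without letting any pair of ``unpaired'' points slip through.

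For the ``in particular'' statement, I would combine the inclusion just proved with two standard observations: first, $\tau_H$ preserves the $n$-dimensional volume of any measurable set $K$ (this is immediate from the definition), and second, the operations $\vol_n(K)$ and $\vol_n(\cconv(\cdot))$ are monotone, so the inclusion upgrades to
\[
\vol_n(K^c)\le \vol_n(\cconv(\tau_H(K^c)))\le \vol_n((\tau_H(K))^c),
\]
while $\vol_n(\tau_H(K))=\vol_n(K)$. Hence volume is preserved and $c$-dual volume weakly increases under each two-point symmetrization. Finally, I would invoke the classical fact that a suitably chosen countable sequence of two-point symmetrizations (with hyperplanes through a fixed point, dense in the space of directions) converges in Hausdorff distance to a Euclidean ball of the same volume; by continuity of the $c$-duality and of volume (Corollary \ref{cor:cdual-is-cont}), passing to the limit yields $\vol_n(K^c)\le \vol_n(B^c)$, which is the claimed extremality of the ball.
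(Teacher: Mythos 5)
The paper states this theorem without proof (it is quoted from Bezdek \cite{Bezdek2018}), so there is no in-paper argument to compare against; I am assessing your proposal on its own terms. Your proof of the inclusion is essentially correct. The reduction to $\tau_H(K^c)\subseteq(\tau_H(K))^c$ is legitimate since $(\tau_H(K))^c$ is by definition an intersection of unit balls and hence lies in $\S_n$, so it contains the $c$-hull of any of its subsets. The case analysis closes up once one observes that any point of $\tau_H(K)$ lying strictly in $H^-$ must belong to the doubled piece $K\cap\sigma_H(K)$, so the only configurations not handled by the isometry $\|\sigma_H(a)-\sigma_H(b)\|_2=\|a-b\|_2$ are the two mixed unpaired sub-cases with $x,y\in H^+$, and there your reflection estimate $\|x-y\|_2\le\|x-\sigma_H(y)\|_2=\|\sigma_H(x)-y\|_2$ does exactly the right job. (Two small slips: in the "easy case" the useful pair is $(\sigma_H(x),\sigma_H(y))$, not $(x,\sigma_H(y))$; and in the delicate case the displayed inequality bounds $\|x-y\|_2$ by $\|x-\sigma_H(y)\|_2$ while the pair you then invoke is $(\sigma_H(x),y)$ — harmless, since the two distances are equal.)

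The genuine gap is in the "in particular" step. You invoke Hausdorff convergence of a sequence of two-point symmetrizations of a general compact set to the ball of the same volume, and this is false. Take $K=B(0,\tfrac12)\cup\{p\}$ with $\|p\|_2=0.9$: every polarization either keeps $p$ or replaces it by $\sigma_H(p)$ (an isolated point of the same norm can never be absorbed into the doubled piece), so every iterate contains a point at distance $0.9$ from the origin and stays at Hausdorff distance at least $0.4$ from $B(0,\tfrac12)$. The actual convergence theorems for polarizations of measurable sets (Brock--Solynin, Van Schaftingen) give convergence in measure, and $c$-duality is badly discontinuous in that topology, so they cannot be substituted; moreover Corollary \ref{cor:cdual-is-cont} would also require controlling $\outrad(K_m)\le 1$ along the sequence. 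Fortunately the extremality of the ball needs no symmetrization at all: since $K^c=(K^{cc})^c$ and $\vol(K^{cc})\ge\vol(K)$, Lemma \ref{lem:santalo} applied to $K^{cc}\in\S_n$ gives $\vol(K^c)^{1/n}\le\kappa_n^{1/n}-\vol(K^{cc})^{1/n}\le\kappa_n^{1/n}-\vol(K)^{1/n}$, which equals $\vol(B(0,1-r))^{1/n}=\vol(B(0,r)^c)^{1/n}$ for the ball $B(0,r)$ of volume $\vol(K)$; the degenerate cases ($K$ empty, or $\outrad(K)>1$ forcing $K^c=\emptyset$) are trivial. I recommend replacing the limiting argument by this one-line deduction, keeping your (correct) inclusion as the monotonicity statement it is.
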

 Bezdek uses this theorem to prove a special case of the Knesser-Poulsen conjecture (See  \cite{bezdek2008kneser}, as well as our discussion in Section \ref{sec:KnesPoul}).
\end{rem}

\subsection{Steiner Symmetrization in the plane}\label{sec:steinerintheplane}

Consider the linear parameter systems $L_t(K)$ and $K_t(K)$ associated with the Steiner symmetrization $S_u(K)$ of $K$, as explained at the beginning of Section \ref{sec:Steiner}. 
Note that by Corollary \ref{cor:linparsysR2}, in $\RR^2$ the function $\vol(L_t(K))$ is convex, and on the other hand $\vol(L_t(K)) \ge \vol(K_t(K)) = \vol(K)$. This implies that in $\RR^2$ the function $\vol(L_t(K))$ must be constant on the interval $[0,2]$, and in particular that the bodies $L_1(K)$ and $K_1(K) = S_u(K)$ are the same, so that $S_u(K)\in \S_2$. This proves the following theorem.

\begin{thm}\label{thm:InR2Steiner-preserves-class}
	Let $K\in \S_2$ and let $u\in S^{1}$.
	The Steiner symmetral $S_u(K)$ of $K$ in direction $u$   also belongs to $\S_2$. 	
\end{thm}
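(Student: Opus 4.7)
The plan is to realize the Steiner symmetrization as the time-$1$ snapshot of a convex linear parameter system, compare it with the corresponding $c$-hull parameter system, and then use the convexity of volume for $c$-hull systems in the plane (Corollary \ref{cor:linparsysR2}) to sandwich the two systems together.

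First, I will set up the parameter system. For each $x \in P_{u^\perp}(K)$ write the chord $K \cap (x + \RR u) = [x + a(x)u, x + b(x)u]$, and assign to every point $p = x + yu \in K$ the velocity $\alpha(p) = -\frac{a(x)+b(x)}{2}$. Let $A_t = \{p + t\alpha(p)u : p \in K\}$, and define the two accompanying systems
\[
K_t = \conv(A_t), \qquad L_t = \cconv(A_t).
\]
By construction $K_0 = K$, $K_1 = S_u(K)$, $K_2 = R_u(K)$, and trivially $L_t \supseteq K_t$ for all $t$. Moreover, since the linear motion shifts chords rigidly along $u$, Fubini gives $\vol(K_t) = \vol(K)$ for every $t \in \RR$.

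Next I will pin down the endpoints of the $c$-hull system. At $t = 0$, $L_0 = \cconv(K) = K$, since $K \in \S_2$. At $t = 2$, the set $A_2$ is just $R_u(K)$, so $L_2 = \cconv(R_u(K)) = R_u(K)$, because $c$-duality (and hence $c$-hull) commutes with rigid motions and $K \in \S_2$. In particular $\vol(L_0) = \vol(L_2) = \vol(K)$.

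Now I invoke Corollary \ref{cor:linparsysR2}, which in the plane says $t \mapsto \vol(L_t)$ is convex. Combined with the lower bound $\vol(L_t) \ge \vol(K_t) = \vol(K)$ and the equality $\vol(L_0) = \vol(L_2) = \vol(K)$, convexity forces $\vol(L_t) = \vol(K)$ for all $t \in [0,2]$. Evaluating at $t = 1$, $\vol(L_1) = \vol(K_1) = \vol(S_u(K))$, while $S_u(K) = K_1 \subseteq L_1$ and both are convex bodies with the same area; therefore $S_u(K) = L_1 \in \S_2$.

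The only non-trivial ingredient is the convexity of $t \mapsto \vol(L_t)$ in dimension two, which is exactly where the planar restriction enters (and indeed, as the excerpt hints and as Section \ref{sec:stein-not-in-class} will show, this convexity, and hence the whole argument, fails in dimension $n \ge 3$). Everything else is bookkeeping about linear parameter systems and the fact that $\S_2$ is preserved by rigid motions.
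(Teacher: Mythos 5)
Your proposal is correct and is essentially identical to the paper's first proof: both realize $S_u(K)$ as the time-$1$ body of the linear parameter system, invoke Corollary \ref{cor:linparsysR2} for convexity of $\vol(L_t)$ in the plane, and conclude from $\vol(L_0)=\vol(L_2)=\vol(K)\le\vol(L_t)$ that $\vol(L_t)$ is constant on $[0,2]$, forcing $S_u(K)=K_1=L_1\in\S_2$. No gaps.
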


It is instructive to see a direct proof for Theorem \ref{thm:InR2Steiner-preserves-class}, and we provide one under the assumption that the body $K$ consists of the area between two graphs of twice differentiable functions. The general case follows by approximation, using Theorem \ref{thm:dense-are-the-smooth}. 
(Nevertheless, the   above explanation constitutes a full alternative proof.) 

\begin{proof}[Second proof of Theorem \ref{thm:InR2Steiner-preserves-class}]
	Sets in $\S_2\setminus \{\emptyset, \RR^2\}$ which are not points  are characterized as closed convex sets for which the  generalized curvature 
	at all  points  is at least $1$, as follows from Theorem \ref{thm:char-curv}. Assume $u = e_2$ and that $K$ is smooth, with boundary given by the graphs of two twice continuously differentiable concave functions $f$ and $-g$ with some support set $[a,b]$. Since there are no segments on the boundary of a set in $\S_2$, we have that $f(a)=-g(a)$ and $f(b) = -g(b)$. 
 
 For $x\in (a,b)$ 
 the curvatures at the points $(x,f(x))$ and $(x,-g(x))$ are given by $\kappa_f(x) = \frac{f''(x)}{(1+(f'(x))^2)^{3/2}}$ and $\kappa_g(x) = \frac{g''(x)}{(1+(g'(x))^2)^{3/2}}$.  The Steiner symmetral of $K$ has boundary given by the functions $h,-h$ on $[a,b]$ with $2h(x) = f(x) + g(x)$, and the curvature at points $(x,\pm h(x))$ satisfies
	\begin{eqnarray*}\label{eq:Steiner-is-in-S_2}
		\kappa_h(x) &=& \frac{h''(x)}{(1+(h'(x))^2)^{3/2}}
		=\frac{\frac12(f''(x)+g''(x))}{\left(1+\left(\frac{f'(x)+g'(x)}{2}\right)^2\right)^{3/2}}\ge \\
		&\ge& \frac{\frac12(f''(x)+g''(x))}
		{
			\frac12\left(1+f'(x)^2\right)^{3/2} +
			\frac12\left(1+g'(x)^2\right)^{3/2}
		}\ge
		\frac{\frac12(f''(x)+g''(x))}
		{\frac12f''(x)+\frac12g''(x)} = 1.
	\end{eqnarray*}
	The first inequality holds since the function $t\mapsto (1+t^2)^{3/2}$ is (strictly) convex, and the second inequality holds since $\kappa_f,\kappa_g\ge1$. 
    
To complete the proof in the case of a smooth $K$, we need to also consider the points $x=a$ and $x=b$. Start with the latter. By smoothness, the normal to $K$ at $(b,f(b))$ is in direction $e_1$ and   by assumption, the ball $B((b-1, f(b)), 1)$ contains $K$. Therefore $S_u K \subset S_u (B((b-1, f(b)), 1)) = B((b-1, 0), 1)$. This is a $1$-ball supporting $S_uK$ at $(b,h(b)) = (b,0)$. The same argument works for $x = a$ of course. Since Steiner symmetrization is continuous on bodies with no-empty interior (see e.g. \cite[Proposition A.5.1.]{AGMBook}), and since (by Theorem \ref{thm:dense-are-the-smooth}) smooth bodies are dense in $\S_n$, the proof is complete. 
\end{proof}

\subsection{A counterexample in dimension $3$}\label{sec:stein-not-in-class}

The previous section makes it natural to believe the Steiner symmetrization will preserve the class $\S_n$ is any dimension, since it respects inclusion by balls. However, as we shall see in this section, already in dimension $3$ the symmetral of a set in $\S_3$ might have some sectional curvature exceeding $1$. 
As a first step to establish 
whether $S_u(K)\in\S_n$ for any $K\in \S_n$, one easily sees that this would be equivalent to showing that $S_u(L)$ is in $\S_n$ for every lens $L$. Indeed,  fixing a fiber in direction $u$, namely $(x+ \RR u)\cap K$, we can find a lens $L$ which supports $K$ in both endpoints of the fiber (by simply intersecting the two supporting unit balls), and as $S_u$ preserves inclusion, $S_u(K) \subseteq S_u(L)$, and they both have the same intersection with $x + \RR u$. If $S_u(L)$ belongs to $\S_n$, this  gives the curvature conditions in these endpoints also for $S_u(K)$.  

From the opposite perspective, this means that if there exists $K\in \S_n$ with $S_u(K)\not\in\S_n$, we can already find a counterexample using a lens. In this section we do precisely this, and given a direction $u\in S^3$ we find a lens in $L\subset \RR^3$ whose Steiner symmetral $S_u(L)$ is not in the class $\S_3$.

For simplicity of the computation, we set $u=e_3$, and $L = B(c_0,1) \cap B(-c_0,1)$ where $c_0=(x_0,y_0,z_0)$. We write $B(c_0,1)=\{ (x,y,z):
-f_d(x,y) \le z \le f_u(x,y)
\}$ and likewise $B(-c_0,1)=\{ (x,y,z):
-g_d(x,y) \le z \le g_u(x,y)
\}$, where the functions $f_d,f_u:B\left((x_0,y_0),1 \right)\to\R$ and $g_d,g_u:B\left(-(x_0,y_0),1 \right)\to\R$ are given by
\begin{eqnarray*}
    f_u(x,y) &=& z_0 + \sqrt{1 - (x-x_0)^2 - (y-y_0)^2}\\    
    -f_d(x,y) &=& z_0 - \sqrt{1 - (x-x_0)^2 - (y-y_0)^2}\\
    g_u(x,y) &=& -z_0 + \sqrt{1 - (x+x_0)^2 - (y+y_0)^2}\\
    -g_d(x,y) &=& -z_0 - \sqrt{1 - (x+x_0)^2 - (y+y_0)^2}.
\end{eqnarray*} 
This means $L = \{ (x,y,z): \max(-f_d(x,y), -g_d(x,y))
\le z \le
\min(f_u(x,y), g_u(x,y))
\}$.
We shall make sure to pick $c_0, (x,y)$ such that $f_u(x,y)\le g_u(x,y) $ and $-g_d(x,y)\ge -f_d(x,y)$ so that when considering the fiber $\left((x,y,0)+\RR e_3 \right)\cap L$, we will be dealing with the interval $[(x,y,-g_d(x,y)), (x,y,f_u(x,y))]$. 

Our choice of parameters is
\[ x_0 = -0.2807,\,\,y_0 =0.2457 ,\,\,z-0 = 0.4,\,\, x = 0.4142,\,\, y = 0.7268. \,\, \]
for which we see 
\[ f_u(x,y) 
\simeq 0.134 \le   g_u(x,y)
\simeq 0.59 \]  
\[ -f_d(x,y) 
\simeq -0.934   \le 
  -g_d(x,y)
  \simeq 0.209.  \]

We can compute easily 
 \begin{eqnarray*}
  \nabla   f_u(x,y) &=& \frac{-((x-x_0), (y-y_0) )}{
  \sqrt{1 - (x-x_0)^2 - (y-y_0)^2}}\simeq  -(1.2995,0.8996)\\
   \nabla  g_d(x,y) &=& \frac{-((x+x_0), (y+y_0) )}{
  \sqrt{1 - (x+x_0)^2 - (y+y_0)^2}}\simeq - (0.6997,5.0948)
\end{eqnarray*}

The fiber replacing $[(x,y,-g_d(x,y)), (x,y,f_u(x,y))]$ in the Steiner symmetral will be $[(x,y,-h(x,y)), (x,y,h(x,y))]$ where $h =\frac{ f_u + g_d}{2}$. By the formula for sectional curvature (see \cite{schneider2013convex}) curvature, we see that the sectional curvature of $\varphi$ (which can be either $g_d$, $f_u$ or $h$) in direction $e_1$ is given by 
\[ 
\kappa_\varphi(x,e_1)  = \frac{ (\nabla^2 \varphi(x))_{1,1}}{\sqrt{1+\|\nabla \varphi(x)\|_2^2}}\left(\frac{1}{1 +\iprod{e_1}{\nabla \varphi(x)}^2}\right). 
\]
Since both $g_d$ and $f_u$ correspond to surfaces of a translated sphere, this expression for both of them will equal to $1$. At the same time, since $h = (f_u+g_d)/2$, also $\nabla h  = (\nabla f_u+\nabla g_d)/2$, as well as $\nabla^2 h  = (\nabla^2 f_u+\nabla^2 g_d)/2$. 

As a function of two variables, 
$\psi(s,t)= \sqrt{1+t^2+s^2}(1+t^2)$ is not convex, which is why when comparing (here $w = (x,y)\in \RR^2$)
\begin{eqnarray*}\label{eq:Steiner-wrong1}
	\kappa_h(w,e_1) &=&  \frac{ (\nabla^2 h(w))_{1,1}}{\sqrt{1+\|\nabla h(w)\|_2^2}}\left(\frac{1}{1 +\iprod{e_1}{\nabla h(w)}^2}\right) \\
	&=&
	\frac{  \frac12 \left((\nabla^2 f_u(w))_{1,1}+  (\nabla^2 g_d(w))_{1,1}\right) }{\sqrt{1+\|\frac{\nabla f_u(w) + \nabla g_d(w)}{2} \|_2^2}\left(1 + \iprod{e_1}{\frac{\nabla f_u(w) + \nabla g_d(w)}{2}}^2\right)}, 
\end{eqnarray*}
and 
\begin{eqnarray*}\label{eq:Steiner-wrong2} 
	1 & = &\frac{ \frac12 \left(\nabla^2 f_u(w))_{1,1} +  (\nabla^2 g_d(w))_{1,1}\right) }
	{ \frac12 \left( (\nabla^2 f_u(w))_{1,1}+ (\nabla^2 g_d(w))_{1,1} \right)} \\ 
    &=&  \frac{	\frac12\left(  (\nabla^2 f_u(w))_{1,1}+ (\nabla^2 g_d(w))_{1,1})\right) }
	{\frac{1}{2} \left(\sqrt{1+\|\nabla f_u(w)\|_2^2}(1 +\iprod{e_1}{\nabla f_u(w)}^2)+ \sqrt{1+\|\nabla g_d(w)\|_2^2}(1 +\iprod{e_1}{\nabla g_d(w)}^2)
		)\right)}, 
\end{eqnarray*}
we can make sure that $\kappa_h(w,e_1)<1$ by forcing 
an  inequality stating that the denominator of the former is in fact larger than the denominator of the latter. In other words, we need to make sure that the parameters were chosen so that 
\[ \psi (\frac12\left(\nabla f_u (w) + \nabla g_d(w) \right))> \frac12 \left(\psi(\nabla f_u(w))+\psi(\nabla g_d(w))\right).\]  

Since these expressions are explicit in our example, let us check
\[ \psi (\frac12\left(\nabla f_u (w) + \nabla g_d(w) \right)) 
= \psi(0.9996,2.9972) = 6.313
\]
and 
\[ \frac12 \left(\psi((1.2995,0.8996))+\psi((0.6997,5.0948))\right) \simeq  
\frac12 ( 4.251  + 7.658 ) = 5.9545.  \]  
So, indeed, this example works as desired.

\section{Application and open problems}

The class $\S_n$ is connected with an array of interesting open problems and conjectures. Some of these we have already touched upon in the text, such as Borisenko's conjecture as well as Mahler-type problems regarding $c$-duality, and various maximization and minimization problems of parameters of convex bodies within this class. In this section we aim to touch upon several other key directions in which $\S_n$ and the $c$-duality play a key role. These serve mainly as motivation to further study this class and the associated structures. 

\subsection{Measure transport}

Transportation of measure is a very active research area in close proximity to convexity theory, and was in fact part of our original motivation to study the class $\S_n$. We recall the basic setting so as to illustrate this. 
 
In the theory of measure transport, one is given a symmetric cost function on $X\times X$ for a measure space $X$, and two probability measures $\mu,\nu\in {\cal P}(X)$. The underlying question, going back to Monge \cite{Monge1781}, asks whether there exists a transport map, namely a function $T:X\to X$ satisfying $\mu(T^{-1}A)=\nu(A)$ for all measurable $A\subseteq X$, which is also optimal with respect to some cost function $c:X\times X\to \RR$, namely minimizing, over all such $T$, the cost $\int c(x,Tx)d\nu(x)$. 
The relaxation due to Kantorovich \cite{kantorovich1942transfer,kantorovich1948monge} has to do with transport {plans}, namely probability measures $\gamma\in { \cal P}(X\times X)$ with marginals $\mu$ and $\nu$, which we denote $\gamma\in\Pi(\mu,\nu)$. A transport plan always exists (e.g. $\mu\times \nu$) and the main questions regard optimal plans,
where the cost of a plan $\gamma$ is naturally given by
\[
C(\gamma) = \int c(x,y)d\gamma,
\]
and the cost of transporting $\mu$ to $\nu$ is defined by 
$ C(\mu, \nu) = \inf\left\{ C(\gamma) : \gamma \in \Pi(\mu, \nu)\right\}$.

For an overview of transportation theory see \cite{Villani2009}. For the  quadratic cost $c(x,y)=\|x-y\|_2^2$ in $\R^n$, it is well known that transport maps exist and have a special form, given in the well known and much used Brenier-McCann theorem \cite{brenier1991polar, mccann1995monotone}, see e.g. \cite[Section 1.3.2]{AGMBook}. 
 
Non-traditional cost function are costs which can attain infinite value, or, in other words, where certain pairings $x\mapsto y$ are not allowed. 
Such costs include some costs which are by now well used and studied, for example the polar cost on $\RR^n$ (see \cite{artstein2017differential}) and costs coming from geometric refractors on the sphere of the form $\log(\kappa \cos d(x,y) -1)$ see \cite{GutierrezHuang2009} and more generally the book \cite{Gutierrez2023}. Oliker \cite{Oliker2007} 
 showed that Alexandrov's problem of prescribing integral Gauss curvature of closed convex surfaces can be seen as a transport problem for the non-traditional cost $c(x,y)= \log \cos d(x,y)$.  

For  non-traditional  costs, even the existence of a finite-cost plan is not guaranteed, and necessary and sufficient conditions for a pair of measures $(\mu, \nu)$ to satisfy $C(\mu, nu)<\infty$ are usually developed separately for each cost depending on its structure. In \cite{artstein2023zoo} natural necessary conditions, and slightly stronger sufficient conditions, for the existence of a finite cost plan in the case of a non-traditional cost were given. The (easily verifiable) necessary condition is
\begin{equation}\label{eq:condition-exists-transport}
    \mu(A)+\nu(A^c)\le 1\quad \forall A\subset \R^n,
\end{equation}
where here $A^c=\{y\in\R^n:c(x,y)=\infty\}$ is a ``duality'' associated with the cost function $c$. Thus duality-type mappings are intimately connected with measure transportation with respect to non-traditional costs. In the special case where the cost function on $\RR^n$ is given by 
\begin{equation}\label{eq:costFunc}
c(x,y) = F(\|x-y\|_2) \quad {\rm with}\quad F|_{B(0,1)} = +\infty, \, F|_{\RR^n\setminus B(0,1)} < \infty, \end{equation}
this duality is precisely the $c$-duality of this note, and it is easy to see that   condition \eqref{eq:condition-exists-transport} is equivalent to the same condition restricted to $A$ in the class $\S_n$. 

In fact, the connection between optimal transport and $c$-duality runs much deeper, via Brenier-McCann type theorems or, more generally, the 
  Kantorovich Duality Theorem  \cite{kantorovich1942transfer,kantorovich1948monge}. In the case of the quadratic cost and some of its close relatives, this leads to very central geometric and functional inequalities such as Brunn-Minkowski, Pr\'ekopa-Leindler, and Brascamp-Lieb type inequalities (see \cite{AGMBook, AGA2}), as well as  concentration inequalities. Finding functional inequalities pertaining to costs of the form \eqref{eq:costFunc} will be pursued in future works. It should be emphasized, however, that specifying to the class $\S_n$, still allows for picking various functions $F:(1, \infty)\to \RR$ in \eqref{eq:costFunc}, which  affects the structure of the optimal plans (when it exists).

\subsection{Kneser-Poulsen type inequalities}\label{sec:KnesPoul}

In \cite{Gromov1987}, Gromov proves the following conjecture for $N\le n+1$ (and attributes it to Archimedes).
\begin{conj}
Let $n,N\in \N$. If $\left(x_i\right)_{i=1}^N,\left(y_i\right)_{i=1}^N\subset \R^n$ satisfy $\|x_i-x_j\|_2\le \|y_i-y_j\|_2$, then
\begin{equation}\label{eq:Knes-Poul-radii=1}
\vol \left( \cap_{i=1}^N B(y_i, 1)  \right)\le
\vol \left( \cap_{i=1}^N B(x_i, 1)  \right)
\end{equation}
\end{conj}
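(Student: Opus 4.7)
The plan is to reformulate the conjecture in the paper's $c$-duality framework and then follow Gromov's continuous-motion strategy for the range $N\le n+1$, making clear where the argument breaks down for larger $N$. First, observe that $\bigcap_{i=1}^N B(x_i,1)=\{x_1,\ldots,x_N\}^c=\cconv(\{x_1,\ldots,x_N\})^c$, and similarly for the $y_i$, so the inequality is really a statement about the $c$-duality map: under a contraction of the point set, the volume of the $c$-dual can only grow. After discarding trivial cases one may assume both configurations have out-radius at most $1$ (otherwise one of the intersections is empty or the other equals $\R^n$).

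For $N\le n+1$, I would realize the passage from $(y_i)$ to $(x_i)$ by a continuous family $(z_i(t))_{t\in[0,1]}\subset\R^n$ with $z_i(0)=y_i$, $z_i(1)=x_i$, along which every pairwise distance $\|z_i(t)-z_j(t)\|_2$ is monotone non-increasing. Such a motion exists in this regime by the standard embedding argument (following Gromov and in the plane Bezdek-Connelly): the $N$ points span an affine subspace of dimension at most $N-1\le n$, and one can construct a piecewise-linear path that at each instant moves along a direction strictly decreasing some pairwise distance while holding the active (saturated) distance constraints fixed; the available ambient dimensions guarantee that such a direction always exists. This is the step that genuinely uses $N\le n+1$.

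Given the motion, set $K_t:=\bigcap_{i=1}^N B(z_i(t),1)\in\S_n$. I would show that $t\mapsto \vol(K_t)$ is monotone non-decreasing via a Csik\'os-type derivative formula. Using Proposition \ref{prop:boundary-points-have-friends} and Lemma \ref{lem:normals map boundary to boundary}, decompose $\partial K_t$ into spherical faces $F_i(t)\subset S(z_i(t),1)$ indexed by the defining centers, and differentiate volume under the motion:
\[\frac{d}{dt}\vol(K_t)=-\sum_{i=1}^N\int_{F_i(t)}\iprod{n_i(x)}{\dot z_i(t)}\,d\sigma(x),\]
where $n_i(x)=x-z_i(t)$ is the outer unit normal of $K_t$ on $F_i(t)$. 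A divergence-theorem rearrangement on the $(n-1)$-spheres rewrites this as a sum over pairs of indices $(i,j)$ whose contribution carries a factor proportional to $\iprod{\dot z_i-\dot z_j}{z_i-z_j}=\tfrac12\frac{d}{dt}\|z_i(t)-z_j(t)\|_2^2\le 0$. Hence $\vol(K_t)$ is monotone non-decreasing from $t=0$ to $t=1$, which is the desired inequality.

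The hard part — and the reason the full conjecture remains a central open problem — is the very first step for $N>n+1$: a contraction of finitely many points in $\R^n$ need not be realizable by a continuous motion with monotonically non-increasing pairwise distances. The paper's framework does suggest alternative levers, for instance Theorem \ref{thm:minkowski-sum-c-is-linear} invites Minkowski interpolation between the two configurations (via $(1-t)y_i+t x_i$, though this need not preserve the distance constraint), and Theorem \ref{thm:steiner-increases-dual-volume} gives an inequality of the right flavor under Steiner symmetrization. However, Section 5.3 shows Steiner symmetrization exits $\S_n$ in dimension $n\ge 3$, so neither tool directly closes the gap. Overcoming the obstruction would require a genuinely new idea beyond the infinitesimal approach above, which is why only the $N\le n+1$ case and $n=2$ (Bezdek-Connelly) are currently known.
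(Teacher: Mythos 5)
This statement is presented in the paper as an open conjecture: the authors give no proof, citing Gromov \cite{Gromov1987} for the case $N\le n+1$ and stating explicitly that $N>n+1$ is open in general. You correctly recognize this, and your proposal is an honest sketch of the known partial result rather than a claimed proof of the full statement; in that sense it is consistent with the paper, which likewise offers nothing beyond the reduction to the $c$-duality language of \eqref{eq:KP-with-duakity-notation}.

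Two remarks on your sketch of the $N\le n+1$ case. First, the existence of a continuous contracting motion is better justified not by the ``move along a direction strictly decreasing some active constraint'' heuristic (which as stated does not obviously terminate or stay feasible) but by convexity of the cone of Euclidean squared-distance matrices: for $N\le n+1$ points \emph{every} symmetric matrix in that cone is realizable in $\RR^{N-1}\subseteq\RR^n$, so the linear interpolation $(1-t)D_Y+tD_X$ of squared-distance matrices stays realizable and is entrywise monotone, and one lifts it to a continuous path of configurations. This is exactly where $N\le n+1$ enters, as you say. Second, your derivative formula has a sign slip (the volume swept by the face $F_i(t)$ contributes $+\int_{F_i}\iprod{n_i}{\dot z_i}$, not its negative), and the ``divergence-theorem rearrangement'' is precisely Cs\'ik\'os's wall formula, whose proof is the real analytic content of that step; as written it is asserted rather than proved. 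Neither point affects the overall assessment: the full conjecture remains open, your proposal does not close it, and no proof in the paper exists for comparison.
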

The case of $N>n+1$ is open in general, and is part of a family of similar conjectures   posed independently by Poulsen \cite{poulsen1954problem} and Kneser \cite{kneser1955minkowski}.

\begin{conj}\label{Kneser-P-intersection}
If $\left(x_i\right)_{i=1}^N,\left(y_i\right)_{i=1}^N\subset \RR^n$ satisfy 
$\|x_i - x_j\|_2\le \|y_i-y_j\|_2$, 
and $(r_i)_{i=1}^N \subset \RR^+$ then
\begin{align}
   \vol\left(
\cap_{i=1}^N B(x_i,r_i)
\right)
&\ge
\vol\left(
\cap_{i=1}^N B(y_i,r_i)
\right), \qquad {\rm \it and} \label{eq:kneserpolusenfirst} \\ 
   \vol\left(
\cup_{i=1}^N B(x_i,r_i)
\right)
&\le 
\vol\left(
\cup_{i=1}^N B(y_i,r_i)
\right). \nonumber        
\end{align}
\end{conj}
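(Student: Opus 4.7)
The plan to attack Conjecture \ref{Kneser-P-intersection}, a longstanding open problem for $n\ge 3$, combines the $c$-duality framework of this paper with classical continuous-contraction methods. First, in the uniform-radii case $r_1=\cdots=r_N=1$ the problem lives directly in $\S_n$: letting $A=\{x_i\}$ and $B=\{y_i\}$ one has $\bigcap_i B(x_i,1)=A^c$ and $\bigcap_i B(y_i,1)=B^c$, so the intersection inequality asserts that passing from $B$ to $A$ by a distance-non-increasing bijection can only increase the volume of the corresponding ball-body. The union inequality is its natural dual and can be treated in parallel by the differential argument below; the case of general radii $r_i$ can be approached by the same machinery after a suitable scaling.

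The main analytical ingredient is Csik\'os's differential formula: for any $C^1$ family of centers $(x_i(t))_{t\in[0,1]}$ and fixed radii $r_i$,
\[
\frac{d}{dt}\vol\Bigl(\,\bigcap_{i=1}^N B(x_i(t),r_i)\,\Bigr) = -\sum_{i<j}\sigma_{ij}(t)\,\frac{d}{dt}\|x_i(t)-x_j(t)\|_2,
\]
with non-negative weights $\sigma_{ij}(t)\ge 0$ measuring certain $(n-1)$-dimensional facets on the boundary of the intersection, and an analogous formula (with the opposite sign) holds for unions. Consequently, if one can construct a continuous family joining $(y_i)$ to $(x_i)$ along which every pairwise distance is monotonically non-increasing, then the volume of the intersection (resp.\ union) is monotonically non-decreasing (resp.\ non-increasing), proving the conjecture. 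For $N=2$ the infinitesimal monotonicity is precisely Corollary \ref{cor-2-pt-chadow-system-has-convex-vol} together with the explicit formula \eqref{eq:vol-of-car}; for $N\le n+1$ Gromov constructed such a contracting family directly, and the case $n=2$ was handled by Bezdek and Connelly by embedding into $\R^4$, where contractions always exist, and then descending.

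The main obstacle, and the reason the conjecture has stood unresolved for $n\ge 3$, is that for $N>n+1$ a continuous distance-non-increasing path from $(y_i)$ to $(x_i)$ need not exist; the relevant configuration space is disconnected from its target, so there is simply nothing along which to run the infinitesimal argument. A plausible $c$-duality route to bypass this would combine iterated two-point symmetrizations (in the spirit of \cite{Bezdek2018}) with the continuity of the $c$-hull operation (Corollary \ref{cor:c-hull-is-continuous}) and the volume monotonicity of $c$-shadow systems from Section \ref{sec:lps}, aiming to reduce an arbitrary pair of configurations to a highly symmetric limiting configuration for which direct computation or a low-dimensional reduction becomes tractable. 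Any genuine advance, however, must at some point confront the dimension/connectivity obstacle, since the pointwise Csik\'os formula is by itself insufficient.
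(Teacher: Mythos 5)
You have not proved anything here, and you should be clear-eyed that no proof was available to be found: the statement is the Kneser--Poulsen conjecture, which the paper states explicitly as a \emph{conjecture} and does not prove. The paper only records that it is open for $N>n+1$ and points to the literature for the known partial cases, so there is no ``paper's own proof'' to compare against. Your text is an accurate survey of the standard attack --- Csik\'os's differential formula for the volume of intersections/unions of balls under a smooth motion of the centers, Gromov's resolution of the case $N\le n+1$ via an explicit contracting path \cite{Gromov1987}, and the Bezdek--Connelly resolution of the planar case by lifting to $\RR^4$ \cite{Bezdek-Connelly} --- and you correctly identify the genuine obstruction: for $N>n+1$ the configuration $(y_i)$ need not be joinable to $(x_i)$ by a continuous path along which all pairwise distances decrease monotonically, so the infinitesimal formula has nothing to integrate along. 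But identifying the obstruction is not the same as overcoming it.

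The concrete gap is therefore the entire final paragraph: the proposed ``$c$-duality route'' (two-point symmetrizations plus continuity of the $c$-hull plus monotonicity of $c$-shadow systems) is a heuristic wish, not an argument. You do not specify which symmetrizations to apply, why the iteration converges to a configuration where the inequality is checkable, or how the distance hypothesis $\|x_i-x_j\|_2\le\|y_i-y_j\|_2$ is preserved or exploited along the way; and the paper's own shadow-system monotonicity results (which concern $c$-hulls, i.e.\ the \emph{dual} quantity $\vol(A^{cc})$ rather than $\vol(A^c)$, and whose volume convexity fails in dimension $n\ge 3$ by the paper's Steiner counterexample) do not obviously supply the missing monotonicity. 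If you want to present something defensible in place of a proof, restrict the claim to the cases where it is actually known ($N\le n+1$, $n=2$, continuous contractions, or the uniform-contraction regime of \cite{Bezdek2018}) and cite those results rather than sketching a strategy for the open case.
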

Conjecture \ref{Kneser-P-intersection} has been verified in various particular cases, for details see e.g.~\cite{Bezdek-Connelly,Klee-Wagon,Four-problems}. Recently Aishwarya and Li \cite{Gautam} gave  an information-theoretic counterpart  to the Kneser-Poulsen conjecture, with extensive use of measure transport techniques. 
In the notations of this paper, inequality \eqref{eq:Knes-Poul-radii=1} can be stated as 
\begin{equation}\label{eq:KP-with-duakity-notation}
\vol((\left\{y_i\right\}_{i=1}^N)^c) \le \vol((\left\{x_i\right\}_{i=1}^N)^c).
\end{equation}
Letting $K = (\left\{x_i\right\}_{i=1}^N)^{cc}, T = (\left\{y_i\right\}_{i=1}^N)^{cc}\in \S_n$, the inclusion \eqref{eq:KP-with-duakity-notation} can be further written as $\vol(T^c) \le \vol (K^c)$. In other words, the Kneser-Poulsen problem asks about the $c$-dual volumes of two ``$c$-polytopes'', with some contractive relation between their ``vertices''. This point of view gives some new insights, for example the following
\begin{fact}
Let $n,M\in \N$ and let $\left(x_i\right)_{i=1}^N, \left( y_j\right)_{j=1}^N\subset \R^n$ such that
$x_i\in T = (\left\{y_j\right\}_{j=1}^N)^{cc}$ for all $i\in\{1,\ldots,N\}$. Then we have 
$
\vol \left( \cap_{y\in T(K)} B(y, 1)  \right)\le
\vol \left( \cap_{x\in K} B(x, 1)  \right)$.  (This is since $T^c \subseteq K^c = (\left\{x_i\right\}_{i=1}^N)^{c}$.)
\end{fact}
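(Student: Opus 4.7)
The plan is to derive this as a direct consequence of two already-established structural properties of the $c$-duality on $\S_n$: (i) order reversal, and (ii) the fact (spelled out in the Remark after Definition 1.2) that $A^{cc}$ is the smallest ball-body containing $A$.

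First, I would translate the hypothesis into an inclusion between ball-bodies. By assumption, $\{x_i\}_{i=1}^N \subseteq T$, and by hypothesis $T = (\{y_j\}_{j=1}^N)^{cc} \in \S_n$. Since $K := (\{x_i\}_{i=1}^N)^{cc}$ is the smallest member of $\S_n$ containing $\{x_i\}_{i=1}^N$, we conclude $K \subseteq T$.

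Next, I would apply order reversal of $c$-duality to get $T^c \subseteq K^c$, and identify both sides concretely. By the basic idempotence $A^{ccc} = A^c$ valid for any $A \subseteq \R^n$, we have
\[
T^c = (\{y_j\}_{j=1}^N)^{ccc} = (\{y_j\}_{j=1}^N)^c = \bigcap_{j=1}^N B(y_j,1),
\]
and similarly $K^c = \bigcap_{i=1}^N B(x_i,1)$. Taking volumes in the inclusion $T^c \subseteq K^c$ yields the claimed inequality.

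There is no real obstacle: the statement is essentially the definition of $c$-hull combined with order reversal, which is exactly what the parenthetical hint indicates. The only care needed is to keep the notation consistent (the inequality sign flips once because $c$-duality reverses inclusion, and the $x_i$'s appear on the larger side precisely because their $c$-hull is contained in the ball-body spanned by the $y_j$'s).
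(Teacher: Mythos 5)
Your proposal is correct and follows exactly the route the paper indicates in its parenthetical hint: minimality of the $c$-hull gives $K=\{x_i\}^{cc}\subseteq T$, order reversal gives $T^c\subseteq K^c$, and the idempotence $A^{ccc}=A^c$ (which holds since $A^c\in\S_n$ and $c$-duality is an involution on $\S_n$) identifies both sides as the stated intersections of balls. Nothing is missing.
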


This perspective inspires formulating the following variant of Conjecture \ref{Kneser-P-intersection}:
\begin{conj}\label{conj:KPweaker}
 Let $n\ge 2$ and let $T:\R^n\to \R^n$ be a contraction, and let $K\in\S_n$. Then
$
\vol((TK)^c) \le \vol(K^c).$   
\end{conj}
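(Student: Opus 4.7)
My plan is to reduce the conjecture to the classical Kneser--Poulsen intersection inequality for finite configurations, and then handle the finite case by a continuous-deformation argument combined with the Csik\'os formula.

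For the reduction, the denseness of $c$-polytopes in $\S_n$ (proved earlier in this paper) together with the continuity of the $c$-duality on subsets of outradius at most $1$ (Corollary \ref{cor:cdual-is-cont}) allows us to assume $K = \cconv(A)$ for a finite $A = \{x_1,\ldots,x_N\}$. Indeed, if $A_m \subseteq K$ are finite with $\cconv(A_m) \to K$ in Hausdorff distance, then $A_m^c \to K^c$; since $T$ is $1$-Lipschitz, $TA_m \to TK$ in Hausdorff distance as well, and so $(TA_m)^c \to (TK)^c$. In the reduced setting, $(TK)^c = \bigcap_{i=1}^N B(Tx_i,1)$ and $K^c = \bigcap_{i=1}^N B(x_i,1)$, and the conjecture becomes the intersection form of Kneser--Poulsen applied to the finite configurations $(x_i)$ and $(Tx_i)$, whose pairwise distances satisfy the contractive comparison $\|Tx_i - Tx_j\|_2 \le \|x_i - x_j\|_2$.

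For the finite case, extend $T|_A$ to a contraction $\widetilde T : \R^n \to \R^n$ via Kirszbraun's theorem and consider the linear interpolation $\Phi_t = (1-t)\mathrm{Id} + t\widetilde T$, which is a contraction for every $t \in [0,1]$ since convex combinations of contractions remain contractions. Setting $x_i(t) = \Phi_t(x_i)$ yields a continuous path of configurations, along which one would like every pairwise distance $d_{ij}(t) = \|x_i(t) - x_j(t)\|_2$ to be monotone. This monotonicity is not automatic along the straight interpolation, but following Bezdek and Connelly one may refine the path by passing through intermediate equilibrium configurations so that monotonicity holds piecewise. Along such a monotone path, Csik\'os' formula expresses
\[
\frac{d}{dt} \vol\!\Bigl(\bigcap_{i} B(x_i(t),1)\Bigr) = \sum_{i<j} \vol_{n-2}\!\bigl(F_{ij}(t)\bigr) \cdot \bigl(-\tfrac{1}{2}\dot d_{ij}(t)\bigr),
\]
where $F_{ij}(t) \subset \partial B(x_i(t),1) \cap \partial B(x_j(t),1)$ is the $(n-2)$-dimensional ridge appearing in the boundary of the intersection; monotonicity of the $d_{ij}$'s then translates directly to monotonicity of the volume, yielding the desired comparison between the endpoints.

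The hardest step is constructing the monotone path refinement, precisely the locus where the classical Kneser--Poulsen conjecture remains open for $n \ge 4$ (it is settled for $n \le 3$ by Bezdek--Connelly). A promising alternative route, specific to the $\S_n$-framework, is to exploit the Minkowski-linearity of $c$-duality from Theorem \ref{thm:minkowski-sum-c-is-linear} together with a polar decomposition of linear contractions: since $c$-duality commutes with orthogonal transformations, a linear contraction $T = UP$ reduces to a symmetric-positive-semidefinite $P$ with eigenvalues in $[0,1]$, hence to the diagonal case in a suitable basis. One could then iterate one-parameter deformations along coordinate axes and apply the concavity of $\vol(L_t^c)^{1/n}$ for linear parameter systems from Section \ref{sec:lps} to obtain the inequality for all linear contractions; bootstrapping from linear to general contractions along the path $\Phi_t$, using the derivative of the volume of the $c$-dual as the certificate, would then complete the argument.
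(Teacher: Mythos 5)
This statement is labeled a \emph{conjecture} in the paper: no proof is given, and the surrounding discussion explicitly identifies it as a variant of the Kneser--Poulsen conjecture, which is open for $n\ge 4$. Your proposal is therefore not a proof. Its core move is to reduce the claim, via $c$-polytope approximation and continuity of the $c$-duality, to the finite Kneser--Poulsen intersection inequality, and you yourself concede that the monotone-path refinement needed to run the Csik\'os argument ``is precisely the locus where the classical Kneser--Poulsen conjecture remains open for $n\ge 4$.'' A reduction to an open problem, followed by a sketch of how one might attack that open problem, does not establish the statement. Worse, the reduction discards the one hypothesis ($K\in\S_n$) that the paper points out makes this variant strictly weaker than full Kneser--Poulsen: by passing to arbitrary finite subsets of $K$ and invoking the full finite conjecture, you gain nothing from the restriction to ball-bodies. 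The ``alternative route'' in your last paragraph is also not an argument: concavity of $t\mapsto\vol(L_t^c)^{1/n}$ along a linear parameter system gives no endpoint comparison by itself (one needs an additional symmetry, as in the Steiner/Minkowski symmetrization arguments of the paper), and the ``bootstrapping from linear to general contractions'' is asserted without any mechanism.

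There is also a direction problem you should have caught. The Kneser--Poulsen intersection inequality says that bringing centers closer together \emph{increases} the volume of the intersection of the unit balls, i.e.\ for a contraction $T$ and a finite set $A$ it would give $\vol\bigl((TA)^c\bigr)\ge\vol(A^c)$ --- the reverse of the inequality $\vol((TK)^c)\le\vol(K^c)$ you set out to derive from it. Indeed, the statement as printed is falsified by $K=B(0,1)$ and $Tx=x/2$: then $K^c=\{0\}$ has volume $0$ while $(TK)^c=B(0,\tfrac12)$ has positive volume. The inequality consistent with the paper's own recasting \eqref{eq:KP-with-duakity-notation} of Kneser--Poulsen is $\vol((TK)^c)\ge\vol(K^c)$, so the stated direction is presumably a typo in the source; but your proposal claims to obtain the printed $\le$ from an inequality that points the other way, which is a sign that the logical chain was never checked end to end.
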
 
Note that without the assumption $K\in \S_n$, Conjecture 
\ref{conj:KPweaker} is in fact a reformulation of the original conjecture for intersections of $1$-balls, since one may extend a contraction on the finite number of points to a contraction on $\RR^n$. However, as stated it is weaker.

\subsection{Bodies of Constant Width}\label{sec:constantwidth}

Bodies of constant width are studied extensively, see e.g.~the  survey \cite{MartiniMontejanoOliveros2019} and references therein. As explained in Proposition \ref{prop:KminusK}, these are precisely the fixed points of the $c$-duality on $\S_n$, namely bodies for which $K^c=K$. Moreover, for any $K\in \S_n$ the set $\frac12 (K+K^c)$ is a set of constant width $1$. 
In the special case where $\diam(K) \le 1$, meaning $K\subseteq K^c$, the set $\frac12 (K+K^c)$ can be seen as an explicit Euclidean diametric completion of $K$. In fact,  for any non-empty $K\subset \RR^n$ with $\diam(K) \le 1$, it holds that $K\subseteq K^c$ and so $K\subseteq \frac12 (K^{cc}+K^c)$ where the latter is of constant width $1$, once again a Euclidean  diametric completion. 

Let us examine the opposite direction. If a convex body $K$ satisfies $w_K(u)\ge 1$ for all $u\in S^{n-1}$, then 
$K^c\subseteq K$.
Indeed, since $K$ and $K^c$ are two convex bodies which intersect, it suffices to show that the interior of $K^c$ does not intersect $\partial K$. Let $x\in \partial K$. If $u$ is a normal to $K$ at $x$, then since $w_K(u)\ge 1$, there is some $y\in K$ with $\|x-y\|_2\ge 1$. Therefore $B(y,1)$ can include $x$, if at all, only on its boundary. So we see indeed that $x\not\in \partial K^c$ and conclude $K^c\subseteq K$. 
As $K^c\subseteq K$ we also have $\frac12(K+K^c)\subseteq K$, however the left hand side need not be a set of constant width $1$, since $K$ was not assumed to be in $\S_n$. 

In fact, it is quite easy to construct a convex $K\subset \RR^n$ with $w_K(u) \ge 1$ for every $u$, which does not include a body of constant width $1$ (in particular, such ``diametric shaving'' does not exist). One such example is $K=B_2^n\cap (\RR^+)^n$, which can easily be seen to have width at least $1$ in every direction. Indeed, $h_K(u) = \|u_+\|_2$ where $u_+$ is the vector with $i^{th}$ coordinate $\max(u_i, 0)$, and so 
\[ w_K(u) = h_K(u) + h_K(-u) = \|u_+\|_2+ \|u_{-}\|_2\ge \sqrt{\|u_+\|_2^2+ \|u_{-}\|_2^2} = 1.
\]
The fact that $K$ does not contain a body of constant width $1$ is also easy to check. Indeed, let $C\subset K$ be a body of constant width. Then $e_i\in C$ for all $1\le i\le n$, since $w_K(e_i)=w_C(e_i) = 1$, and $e_i$ is the unique supporting point of $K$ in direction $e_i$. But alas, $e_1, e_2 \in C$ implies $w_u(C) \ge \sqrt{2}$ for $u = \frac{e_1-e_2}{\|e_1-e_2\|_2}$, thus $K$ does not contain any body of constant width $1$. 

It is worthwhile to mention (and is directly related to the topic of the next subsection) that $\vol(K) = \vol (\frac12 B_2^n)$, and that Nazarov \cite{Nazarov-blog} showed that the convex hull of $(1-\varepsilon) K$ and $-\varepsilon K$, 
is a convex body with width at least $1$ in every direction, which, for a suitable chosen $\eps$, has volume exponentially (in $n$) smaller than $\vol (\frac12 B_2^n)$.

 \subsubsection{The Blaschke-Lebesgue problem}

Perhaps the most famous basic question  regarding bodies of constant width, which remains open to this day,    is the Blaschke-Lebesgue problem of finding, in $\RR^n$, the bodies of least volume among bodies of constant width $1$ (by Urysohn's inequality, $\frac12 B_2^n$ has maximal volume). In dimension $n=2$ the minimizer is known to be the Reuleaux triangle (see \cite{lebesgue1914isoperimetres,blaschke1915breite}, we define it in Lemma \ref{lem:basinR}  below) and the problem is open for $n\ge 3$, where the best known lower bound is of the form $(\sqrt{3}-1)^n \vol(\frac12 B_2^n)$ due to Schramm \cite{schramm1988constantwidth}. For $n=3$ it is conjectured that the Meissner bodies are the unique minimizers (see e.g. \cite[Section 8.3.3 and Section 14.2]{MartiniMontejanoOliveros2019}). 
In \cite{schramm1988constantwidth}, Schramm asked whether there exist bodies of constant width $1$ with volume exponentially smaller than $\vol(\frac12 B_2^n)$, and such an example was recently found in \cite{arman2024small}. Finding optimal asymptotic behavior of the volume of the minimzer(s) remains an open problem. It is worth mentioning that the body of constant width given by $\frac12(\Delta^{cc} + \Delta^c)$, for a simplex $\Delta$ of edge-length $1$, is not a minimizer for the  Blaschke-Lebesgue problem (except in dimension $n=2$, in which this construction gives the Reuleaux triangle) and in fact some standard estimates for its volume can be made, showing that its volume is {\em not} exponentially smaller than $\vol(\frac12 B_2^n)$.

The Blaschke-Lebesgue problem is equivalent to finding a sharp constant replacing $\binom{2n}{n}$ in the  Rogers--Shephard inequality \cite{RS} 
$	\vol(K-K) \le \binom{2n}{n}\vol(K)$, for $K$ of constant width $1$,
since in this case the left hand side equals $\vol(B_2^n)$. In dimension $n=3$ the problem have several equivalent formulations, one of which is to find the bodies maximizing the mixed volume $V(K,K,-K)$, which is called the (first) Godbersen coefficient.
Indeed, since for bodies of constant width $1$ we have 
\begin{eqnarray*}
   \vol(B_2^n)& =&  \vol(K-K) = 2\vol(K) + 6V(K,K,-K),\\
   \vol(\partial K) & = & 3\vol(B_2^n, K, K)  =  3\vol(K) + 3\vol(K,K,-K),\\&=& 3\vol(K) + (\vol(B_2^n)- 2\vol(K))/2 = 2\vol(K)+ \vol(B_2^n)/2,
\end{eqnarray*}
the Blaschke-Lebesgue problem is also equivalent to minimizing surface area among all bodies of constant width.

Based on Section \ref{sec:isop-ineq} we can give  an elementary  lower bound for the Blaschke-Lebesgue constant,  which is worse than the one from \cite{schramm1988constantwidth}.
In Corollary \ref{cor:inrad-lower-bound-for-CW-bodies}   we showed constant width bodies satisfy $\inrad(K)\ge 1 -\sqrt{\frac12}\approx 0.293$, showing
\begin{cor}\label{cor:Schramm-type-bound}
If $K$ is a body of constant width $1$, then 
\[
0.586\approx
2-\sqrt{2} \le
\left( \frac{\vol(K)}{\vol(\frac12 B_2^n)}\right)^{\frac1n}.
\]
\end{cor}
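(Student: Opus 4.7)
The plan is to combine the in-radius lower bound from Corollary \ref{cor:inrad-lower-bound-for-CW-bodies} with the trivial observation that a body of in-radius $r$ has volume at least that of a ball of radius $r$.

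First, I would invoke Corollary \ref{cor:inrad-lower-bound-for-CW-bodies}, which states that any body of constant width $1$ satisfies $\inrad(K) \ge 1 - 1/\sqrt{2}$. By definition of in-radius, this means there exists $x_0 \in \RR^n$ such that $B(x_0, 1 - 1/\sqrt{2}) \subseteq K$, and therefore
\[
\vol(K) \ge \vol\bigl(B(x_0, 1 - 1/\sqrt{2})\bigr) = \bigl(1 - 1/\sqrt{2}\bigr)^n \kappa_n.
\]

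Next, I would divide by $\vol(\tfrac12 B_2^n) = (1/2)^n \kappa_n$ to obtain
\[
\frac{\vol(K)}{\vol(\tfrac12 B_2^n)} \ge \bigl(2 - \sqrt{2}\bigr)^n,
\]
and take the $n$-th root to conclude. The main (and only) obstacle is really contained in the earlier Corollary \ref{cor:inrad-lower-bound-for-CW-bodies}, which itself followed from Theorem \ref{thm:santalo-thereal} applied to the self-dual case $K = K^c$; the present statement is just a packaging of that bound in volumetric form. No further subtlety arises.
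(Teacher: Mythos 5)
Your proposal is correct and is exactly the paper's argument: the corollary is stated as an immediate consequence of the in-radius bound $\inrad(K)\ge 1-\sqrt{\tfrac12}$ from Corollary \ref{cor:inrad-lower-bound-for-CW-bodies}, combined with comparing the inscribed ball's volume to $\vol(\tfrac12 B_2^n)$. Nothing further is needed.
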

\noindent Recall that the currently best known lower bound is $\sqrt{3}-1\approx 0.732$, and was given by Schramm \cite{schramm1988constantwidth}, the argument of which (up to making some simple volume estimate) we have reproduced in the proof of Theorem \ref{thm:Schramm-general}. It is far from the upper bound given by Arman et.~al.~in \cite{arman2024small}.

 We mention also that there are several results in the literature which show that minimizers for the Blaschke-Lebesgue problem must have some special form. For example, sufficiently smooth points must have maximal curvature equal to $1$  (see \cite{Chakerian}) and should be ``tubular'' (see \cite{ShiohamaTakagi1970} for the definition). 

\subsubsection{Illuminating bodies of constant width}

The Boltyanskii-Hadwiger illumination conjecture, dating back to 1957 (see \cite{Hadwiger1957, Hadwiger1960, Boltyanski1960}, asks if the illumination number of every convex body $K\subset \RR^n$ is bounded by $2^n$ with equality only for parallelopipeds. (There is an equivalent formulation using covering numbers instead, called the Levi-Hadwiger conjecture.) The illumination number of a body $K$ is the minimal number $N$ of exterior points $p_1,\dots,p_N$ such that every boundary point of $K$ is {\em illuminated} by at least one of the points, where $x$ is ``illuminated'' by $p$ if the line through $p$ and $x$ intersects the interior of $K$, at a point not in $[x,p]$. Schramm proved the illumination conjecture for bodies of constant width in dimension $16$ and above \cite{Schramm1988illum}. Bezdek conjectured that for $K\in \S_n,\, I(K)\le (2-\varepsilon)^n$ for some positive $\varepsilon$, see \cite{bezdek2008kneser, Bezdek2012} (again, the best known bound is currently due to Schramm \cite{Schramm1988illum}).

\subsubsection{Basins of bodies of constant width}

It is of interest to understand which bodies are averaged to a give body of constant width, since by Brunn-Minkowski's inequality if $T_1+ T_2 = 2K$ this gives a lower bound on the volume of $K$, and for example if all three are different bodies  of constant width then $K$ cannot be a minimizer in the Blaschke-Lebesgue problem. This motivates the following definition. 
For a convex body $K\subset \RR^n$ of constant width $1$ define its ``basin''
\[ \bas(K)  = \{ T\in \S_n: T+T^c = 2K
\}.\]
\begin{lem}
	Let $n\ge 2$. Then $\bas(\frac12 B_2^n) = \{ T\in \S_n: T = -T\}$. 	
\end{lem}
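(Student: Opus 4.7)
The plan is to prove both inclusions by exploiting the identity $T-T^c = B_2^n$ from Proposition \ref{prop:KminusK}, which holds for every $T\in\S_n\setminus\{\emptyset,\RR^n\}$, in tandem with the fact that $c$-duality commutes with rigid motions (and in particular with the central reflection $x\mapsto -x$).

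For the easier direction $\{T\in\S_n:T=-T\}\subseteq\bas(\tfrac12 B_2^n)$, I would start with a centrally symmetric $T\in\S_n$. Since $c$-duality commutes with the rigid motion $x\mapsto -x$, the equality $T=-T$ passes to the dual: $T^c=(-T)^c=-(T^c)$. Therefore $T+T^c=T-(-T^c)=T-T^c$, and the right hand side is $B_2^n$ by Proposition \ref{prop:KminusK}. Hence $T+T^c=2\cdot\tfrac12 B_2^n$, i.e. $T\in\bas(\tfrac12 B_2^n)$.

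For the reverse inclusion I would take $T\in\bas(\tfrac12 B_2^n)$, so $T+T^c=B_2^n$, and combine this with the always-true identity $T-T^c=B_2^n$. Passing to support functions gives the two equations $h_T(u)+h_{T^c}(u)=|u|$ and $h_T(u)+h_{T^c}(-u)=|u|$ for every $u$; subtracting yields $h_{T^c}(u)=h_{T^c}(-u)$, i.e. $T^c=-T^c$. Feeding this back into the support-function form $h_{T^c}(u)=1-h_T(-u)$ of Proposition \ref{prop:KminusK}, one obtains $h_T(u)=1-h_{T^c}(-u)=1-h_{T^c}(u)=h_T(-u)$ for all $u\in S^{n-1}$, which is exactly $T=-T$.

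There is essentially no hard step: once Proposition \ref{prop:KminusK} is invoked, everything reduces to a two-line comparison of support functions. The only minor care needed is to exclude the degenerate cases $T=\emptyset$ and $T=\RR^n$ (so that Proposition \ref{prop:KminusK} applies), but these are ruled out automatically from the definition of $\bas(\tfrac12 B_2^n)$ since $T+T^c=B_2^n$ forces $T$ to be a nonempty compact summand of $B_2^n$.
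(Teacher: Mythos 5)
Your proof is correct and follows essentially the same route as the paper: both arguments rest on the identity $T-T^c=B_2^n$ from Proposition \ref{prop:KminusK} and the observation that $T+T^c=B_2^n$ is then equivalent to $T^c=-T^c$, hence to $T=-T$. You merely spell out, via support functions, the cancellation step that the paper leaves implicit.
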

\begin{proof}
	Note that  by Proposition \ref{prop:KminusK} for $T\in \S_n$
	\[ T -T^c = B_2^n.\]
	This means that $T + T^c = B_2^n$ is and only if $T^c = -T^c$ which happens for a set $T\in \S_n$ if and only if $T= - T$. 
\end{proof}

In fact, much more holds true. 
\begin{lem}\label{lem:basinR}
	Let $n\ge 2$ and let $K\subset \RR^n$ be a body of constant width $1$. Then 
	\[ \bas(K) = \{ T\in \S_n: h_K - h_T{\rm ~~is ~even }\}.\]
\end{lem}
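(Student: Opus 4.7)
The plan is to translate the condition $T+T^c=2K$ into an identity of support functions and use Proposition \ref{prop:KminusK} twice: once for the arbitrary body $T\in\S_n$ and once for the self-dual body $K$.

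First, I would rewrite $T+T^c=2K$ in terms of support functions. Since support functions are additive under Minkowski addition and homogeneous, the equality $T+T^c=2K$ is equivalent to $h_T(u)+h_{T^c}(u)=2h_K(u)$ for all $u\in S^{n-1}$. By Proposition \ref{prop:KminusK}, applied to $T$, this reads
\[
h_T(u)+1-h_T(-u)=2h_K(u),
\]
or, after rearranging,
\[
h_T(u)-h_T(-u)=2h_K(u)-1.
\]

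Next, I would use that $K$ itself has constant width $1$, i.e.\ $K=K^c$. Applying Proposition \ref{prop:KminusK} to $K$, we get $h_K(u)+h_K(-u)=1$, so $2h_K(u)-1=h_K(u)-h_K(-u)$. Hence the condition $T+T^c=2K$ becomes
\[
h_T(u)-h_T(-u)=h_K(u)-h_K(-u)\quad\text{for all }u\in S^{n-1},
\]
which, after subtracting, is exactly
\[
(h_K-h_T)(u)=(h_K-h_T)(-u)\quad\text{for all }u\in S^{n-1},
\]
the statement that $h_K-h_T$ is an even function (extended by $1$-homogeneity to $\R^n$, evenness on $S^{n-1}$ is equivalent to evenness on $\R^n$).

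There is no real obstacle here; the content is entirely a bookkeeping computation with support functions, powered by the identity $h_{T^c}(u)=1-h_T(-u)$ from Proposition \ref{prop:KminusK}. The only small point to be careful about is to note that the argument is reversible: given $T\in\S_n$ with $h_K-h_T$ even, one reads the computation backwards to conclude $h_{T+T^c}=2h_K$, hence $T+T^c=2K$ by the injectivity of the support function on convex bodies, so $T\in\bas(K)$.
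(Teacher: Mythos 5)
Your proof is correct and follows essentially the same route as the paper: both arguments reduce $T+T^c=2K$ to a support-function identity via $h_{T^c}(u)=1-h_T(-u)$ from Proposition \ref{prop:KminusK} and then use $h_K(u)+h_K(-u)=1$ to recognize the condition as the evenness of $h_K-h_T$. The paper merely organizes the same algebra through an explicit even/odd decomposition of $h_T$, so there is no substantive difference.
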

\begin{proof}
	For any $T\in \S_n$ we decompose $h_T|_{S^{n-1}} = \frac12 + f + g$ where $f$ is even and $g$ is odd (on $S^{n-1}$).
	We use Proposition \ref{prop:KminusK} stating $T-T^c = B_2^n$ so that $h_{-T^c} = 1- h_T$ which means 
	\[ h_{T^c}(u) = h_{-T^c} (-u) = 1 - (\frac12 + f(-u) + g(-u)) = \frac12 -f(u) + g(u).\] 
	Therefore
	\[h_{\frac{T+T^c}2} = \frac12 (h_T + h_{T^c}) = \frac12 +g. \]
Since $g$ is odd, we see once again that $\frac{T+T^c}{2}$ is a body of constant width. For $T$ to saisfy $\frac{T+T^c}{2} = K$, namely $\frac12 +g = h_K$, we see that the odd part of $g$ should equal to the odd part of $h_K$, which happens if and only if $h_T - h_K = f$ is even, as claimed.

\end{proof}

\begin{cor}
	Let $R\subset \RR^2$ denote Reuleaux triangle, given by the $c$-hull of the points $(1/\sqrt{3},0), (-1/2\sqrt{3}, 1/2), (-1/2\sqrt{3}, -1/2)$ (or equivalently by the $c$-dual of this triplet).   Then $\bas (R) = R$. 
\end{cor}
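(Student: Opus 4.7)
The plan is to invoke Lemma \ref{lem:basinR}, which gives $T \in \bas(R)$ if and only if $T \in \S_2$ and $f := h_T - h_R$ is even on $S^1$ (equivalently, $\pi$-periodic in the angle parameter $\theta$); it then suffices to prove $f \equiv 0$. The main tool will be the curvature measure $\rho_C := h_C + h_C''$ (taken distributionally) of a convex body $C \subset \RR^2$, which is a non-negative Borel measure on $S^1$. By Proposition \ref{prop:KminusK}, the property $T \in \S_2$ is equivalent to $B_2^2 - T$ being a convex body, which translates to $0 \le \rho_C \le d\theta$ as measures.

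First I would compute $\rho_R$ explicitly. The boundary of $R$ consists of three unit-circle arcs joining the three vertices $v_1, v_2, v_3$; strict convexity of $R$ gives $h_R \in C^1(S^1)$, so $\rho_R$ has no singular part. On the angular range $A_i \subset S^1$ of outer normal directions to the arc opposite $v_i$ (which lies on the unit circle centered at $v_i$), $h_R(\theta) = \iprod{v_i}{(\cos\theta,\sin\theta)} + 1$, giving $\rho_R|_{A_i} = d\theta$. On the normal cone $V_i \subset S^1$ at $v_i$, $h_R(\theta) = \iprod{v_i}{(\cos\theta,\sin\theta)}$, giving $\rho_R|_{V_i} = 0$. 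Thus $\rho_R = \mathbf{1}_A\, d\theta$ with $A = A_1 \cup A_2 \cup A_3$ and $V = V_1 \cup V_2 \cup V_3 = S^1 \setminus A$. The crucial geometric feature is that each $V_i$ and $A_i$ is a $\pi/3$-wide arc centered at $v_i/|v_i|$ and $-v_i/|v_i|$ respectively, so the antipodal map $\theta \mapsto \theta + \pi$ interchanges $V$ and $A$.

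With this setup, for $T \in \bas(R)$ the identity $\rho_T = \rho_R + (f + f'')$ combined with $0 \le \rho_T \le d\theta$ yields $f + f'' \le 0$ on $A$ and $f + f'' \ge 0$ on $V$. Since $f$ is even, the distribution $f + f''$ is also $\pi$-periodic, so the antipodal interchange $V \leftrightarrow A$ collapses the two inequalities into $f + f'' = 0$ on all of $S^1$. A standard Fourier computation (only the $|n|=1$ harmonics lie in the kernel of $1 - n^2$) shows the distributional solutions are exactly $f(\theta) = a\cos\theta + b\sin\theta$, which are anti-$\pi$-periodic; combined with the assumed $\pi$-periodicity of $f$ this forces $f \equiv 0$, and hence $T = R$.

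The main subtlety will be the distributional handling of $\rho_R$ and $f + f''$, specifically verifying that $\rho_R$ has no singular contribution at the vertex-arc transitions (which reduces to $h_R \in C^1$) and that the antipodal-symmetry argument applies rigorously at the measure-theoretic level. If one prefers to avoid distributions entirely, an approximation via Proposition \ref{thm:dense-are-the-smooth} is an alternative, since the Fourier argument repeats verbatim once $f$ is smoothed; everything else is a direct computation.
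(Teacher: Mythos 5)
Your proof is correct, but it runs along a genuinely different track from the paper's. The paper argues geometrically: it applies Lemma \ref{lem:average-cap} to the decomposition $2R=K+K^c$ to conclude that each $1$-arc of $\partial R$ appears, translated, on both $\partial K$ and $\partial K^c$; a $1$-arc on $\partial K^c$ forces (by the arc--vertex duality of Lemma \ref{lem:normals map boundary to boundary}) a vertex of $K$ carrying the antipodal normals, and tracing all of $S^1$ this way shows $K$ is a translate of $R$, whence $K=R$. You instead start from Lemma \ref{lem:basinR}, reduce to showing that $f=h_T-h_R$ even forces $f\equiv 0$, and exploit the planar curvature measure $\rho_C=h_C+h_C''$: the membership $T\in\S_2$ is exactly $0\le\rho_T\le d\theta$ (via Proposition \ref{prop:KminusK} and Minkowski additivity of the area measure in the plane), $\rho_R=\mathbf{1}_A\,d\theta$ with $A$ and its complement swapped by the antipodal map, and the $\pi$-periodicity of $f+f''$ then collapses the one-sided constraints to $f+f''=0$, leaving only the first harmonics, which are odd. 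The distributional points you flag are genuinely the only delicate ones and they do close: strict convexity gives $h_R\in C^1$ so $\rho_R$ has no atoms, and the bound $\rho_T\le d\theta$ by itself rules out atoms of $\rho_T$, so the signed measure $f+f''$ vanishing off the six transition points vanishes identically. What each approach buys: the paper's argument is shorter and purely synthetic, and transfers verbatim to other self-dual $c$-polygons (as the following remark notes); yours isolates the exact analytic mechanism --- a constant-width planar body has trivial basin precisely when its curvature density is $\{0,1\}$-valued with the two level sets antipodally exchanged --- so it covers all Reuleaux polygons at once and makes clear why smooth constant-width bodies (e.g.\ $\frac12 B_2^2$) have large basins. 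Your method is, however, confined to the plane, where the surface area measure is Minkowski-additive and one-dimensional Fourier analysis applies.
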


\begin{proof}
	Assume $ R = (K 	+ K^c) /2$, then by  Lemma \ref{lem:average-cap} the boundaries of both $K$ and $K^c$ include translates of the 
     1-arcs on the boundary of $R$. Since $K^c$ possesses a 1-arc, this implies that $K$ possesses a vertex with normals which are opposite to the normals of the 1-arc. But in this way we trace all the normals in $S^1$, implying that  these 1-arcs meet at vertices, and both $K$ and $K^c$ are translates of $R$. Since $(R+y)^c = R+y$, the proof is complete. 	
\end{proof}

 \begin{rem}
     The exact same proof applies for other self-dual $c$-polytopes in $\RR^2$. 
 \end{rem}

\subsection{An application to the intersection of $1$-lenses}\label{sec:application-carambula}

For reasons which are out of the scope of this paper, the authors were led to examine the question of the intersection of two $1$-lenses in $\RR^n$, and how it is affected by a specific perturbation. To describe the setting, assume one is given 
two translates of a $1$-lens which intersect. Without loss of generality, one may always choose the origin so that the $1$-lenses considered are given by $\pm\cconv([u_0,u_1])$ for some $u_0, u_1 \in \RR^n$.

We then consider the following perturbation of the vertices. For a vector $z\in\R^n$, we shift the vertices $\pm u_1$ by $z$ and shift the vertices $\pm u_0$ by $-z$.
As far as we could see, it  seems non-trivial to show, straightforwardly, that in such a case, considering the ``skewed'' picture of $L_1 = \cconv([u_0-z,u_1+z])$ and $L_2 = - \cconv([u_0+z,u_1-z])$, these also must intersect. We show that this is the case, by applying  of Lemma \ref{lem:CKT)}.

\begin{prop}\label{prop-1-MONTH-Car-INTERSECT}
Let $u_0,u_1,z\in\R^n$ such that $0\in \cconv[u_0,u_1]$. Then
	\[
	\cconv[u_1 + z, u_0 - z] \cap \cconv[- u_1 + z, -u_0 - z]
	\neq \emptyset. 
	\]
\end{prop}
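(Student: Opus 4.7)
The plan is to recast the desired intersection $K\cap T\ne\emptyset$, with $K=\cconv[u_1+z,u_0-z]$ and $T=\cconv[-u_1+z,-u_0-z]$, as the single containment $0\in M$, where $M:=\tfrac12\bigl(K+(-T)\bigr)$. This is the familiar equivalence $A\cap B\ne\emptyset\iff 0\in A-B$ and it sits naturally in the Minkowski-sum framework of Lemma~\ref{lem:CKT)} and Theorem~\ref{thm:minkowski-sum-c-is-linear}. A short case split handles the degenerate situations: if $\|u_0-u_1\|>2$ then $\cconv[u_0,u_1]=\R^n$ (so the hypothesis is vacuous), and by the triangle inequality $\|u_1-u_0+2z\|+\|u_0-u_1+2z\|\ge 2\|u_0-u_1\|>4$, so at least one of $\|u_1-u_0+2z\|$, $\|u_0-u_1+2z\|$ exceeds $2$; this forces at least one of $K,T$ to be all of $\R^n$ and the claim is immediate. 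I therefore assume $K,-T\in\S_n\setminus\{\emptyset,\R^n\}$.

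The crucial observation is that the vertices of $K$ and of $-T$ pair up to recover the original vectors $u_0,u_1$. Noting that $-T=\cconv[u_1-z,u_0+z]$ (since $c$-duality commutes with rigid motions), I compute
\[
u_0 \;=\; \tfrac{(u_0-z)+(u_0+z)}{2}\in M,\qquad u_1 \;=\; \tfrac{(u_1+z)+(u_1-z)}{2}\in M,
\]
where in each identity the first summand is a vertex of $K$ and the second a vertex of $-T$. The perturbations $\pm z$ cancel precisely because the ``opposite'' pairings are used; this is the only slightly clever step of the argument, and it aligns perfectly with the fact that the hypothesis is phrased in terms of the unperturbed points.

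Theorem~\ref{thm:minkowski-sum-c-is-linear}, applied with $\lambda=1/2$ to $K$ and $-T$, yields $M\in\S_n$, and then Proposition~\ref{prop:ball and spindle convexity} (ball-convexity equals spindle-convexity on $\S_n$) promotes the two-point inclusion $\{u_0,u_1\}\subseteq M$ to $\cconv[u_0,u_1]\subseteq M$. The hypothesis $0\in\cconv[u_0,u_1]$ then delivers $0\in M$, completing the argument. There is no real obstacle: once one spots the vertex pairing that cancels $z$, the ball-body machinery (closure of $\S_n$ under Minkowski averaging, together with spindle convexity) carries the rest for free.
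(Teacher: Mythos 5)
Your proof is correct, and it shares the paper's skeleton: both arguments reduce the claim to showing that $u_0$ and $u_1$ lie in the Minkowski average $M=\tfrac12\bigl(K+(-T)\bigr)$ (in the paper's notation, $\tfrac12(K^c+T^c)$), then use Theorem~\ref{thm:minkowski-sum-c-is-linear} to place $M$ in $\S_n$ and spindle convexity to upgrade $\{u_0,u_1\}\subseteq M$ to $\cconv[u_0,u_1]\subseteq M$, hence $0\in M$. Where you diverge is in how the two endpoints of the argument are handled. The paper routes both through Lemma~\ref{lem:CKT)}: it verifies $u_1\in C(K,T)$ by checking that $K-u_1$ and $u_1-T$ are each contained in $B(z,1)$, and it reads the conclusion $0\in C(K,T)$ as the existence of a single unit ball covering both lenses. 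You instead obtain the memberships directly from the definition of the Minkowski average via the vertex pairings $u_0=\tfrac12((u_0-z)+(u_0+z))$ and $u_1=\tfrac12((u_1+z)+(u_1-z))$, and you close with the elementary equivalence $A\cap B\neq\emptyset\iff 0\in A-B$. These are the same computations in different clothing (containment of $K-u_1$ and $u_1-T$ in $B(z,1)$ is exactly your pairing statement dualized), but your version is more self-contained --- it never invokes Lemma~\ref{lem:CKT)}, which the paper deliberately showcases here as an application --- and the pairing identity makes the cancellation of $z$ transparent. One small point of care: your case split disposes of $\|u_0-u_1\|>2$, but one of the perturbed segments can have length exceeding $2$ even when $\|u_0-u_1\|\le 2$; in that case the corresponding hull is $\R^n$ and the claim is again immediate, so the reduction to $K,-T\in\S_n\setminus\{\emptyset,\R^n\}$ stands (the paper's own reduction is no more explicit about this).
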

\begin{proof}
Note that if $\|u_1-u_0\|_2>2 $  then by convexity at least one of the vectors $u_1-u_0 \pm 2z$ is longer than $2$ and the $c$-hull of one of the two segments is all of $\RR^n$. We may thus assume $\|u_1-u_0\|_2\le 2 $.

	Let $K = B(u_1 + z) \cap B(u_0 - z) =  [u_1 + z, u_0 - z]^c$, and $T = B(u_1 - z) \cap B(u_0 + z) =  [u_0+z,u_1-z]^c$. Let $\Delta u = u_1 - u_0$.
	
	Note that $K - u_1 = B(z) \cap B(-\Delta u - z)$ and $u_1 - T = B(z) \cap B(\Delta u - z)$, so that in the notations of the proof of Lemma \ref{lem:CKT}, $u_1\in C(K,T)$, meaning $u_1 \in \frac12 (K^c+T^c)$. Similarly $K - u_0 = B(z + \Delta u) \cap B(-z)$ and $u_0 - T = B(z-\Delta u) \cap B(-z)$ and thus $u_0\in \frac12 (K^c+T^c)$. Using convexity of $\frac12 (K^c+T^c)$ and that $0\in \cconv[u_0, u_1]$ we get that 
 $0\in  \cconv[u_0, u_1] \subseteq C(K,T)$, which means in particular that there exists a unit ball containing both $K$ and $-T$, namely $K^c$ intersects $-T^c$. This in turn means precisely that $\cconv[u_0+z,u_1-z]$ intersects $\cconv[- u_1 + z, -u_0 - z]$.
\end{proof}

\begin{rem}
    It remains unclear, however, if as a function of $z\in \RR^n$, the intersection satisfies some convexity property (for example with respect to volume). 
\end{rem}

\section{Appendix - some special $c$-class bodies}\label{appendix-examples}

\subsection{On $k$-lenses}\label{sec:on-k-lenses}

We recall the definition of a $k$-lens (Definition \ref{def:klens} from Section \ref{sec:first}). We use $G_{n,k}$ to denote the Grassmannian, namely $k$-dimensional subspaces of $\RR^n$.

\begin{definition*} 
	Let $n\ge 2$, $1\le k\le n$, let $E\in G_{n,k}$, let $d\in [0,1]$ and let $x\in \RR^n$. The $k$-lens about $x$ of ``radius'' $d$ is defined to be $A^{cc}$ for $A = S(x,d) \cap (x+E)$, and is denoted by 
	$
	L_k(x,E,d)$.  
\end{definition*}

\begin{lem}\label{lem:k-lens-properties}
Let $n\ge 2$, $1\le k \le n$, let $E\in G_{n,k}$, let $d\in [0,1]$ and let $x\in \RR^n$. The $k$-lens $L_k(x,E,d)$ has out-radius $d$ and in-radius $1-\sqrt{1-d^2}$. In particular, these do not depend on $k$ but only on $d$.
If $E_k\in G_{n,k}$ and $E_{k+1}\in G_{n,k+1}$ satisfy $E_k\subset E_{k+1}$ then $L_k(x,E_k,d)\subset L_{k+1}(x, E_{k+1},d)$.
\end{lem}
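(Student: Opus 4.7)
My plan is to compute $A^c$ explicitly where $A = S(x,d) \cap (x+E)$, read off its out-radius, and then convert this into information about $L_k = A^{cc}$ via the duality identity $\outrad(K) + \inrad(K^c) = 1$ from Lemma \ref{lem:inplusout}. Throughout, I will translate so that $x = 0$, which is harmless since $c$-duality commutes with rigid motions. The monotonicity in $k$ will follow immediately from order-reversal of $c$-duality.

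\emph{Out-radius.} The set $A$ is a $(k-1)$-sphere of radius $d$ lying in the $k$-dimensional subspace $E$ (or a pair of antipodal points $\pm du$ if $k = 1$), so its smallest enclosing Euclidean ball in $\RR^n$ is $B(0,d)$, giving $\outrad(A) = d$. Applying \eqref{eq:out-and-outc} we obtain $\outrad(L_k) = \outrad(A^{cc}) = \outrad(A) = d$.

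\emph{In-radius.} I will describe $A^c$ as a body of revolution. Decompose $y = y_E + y_{E^{\perp}}$ with $y_E\in E$ and $y_{E^{\perp}} \in E^{\perp}$. For any unit vector $u \in E \cap S^{n-1}$ one has
\[
\|y - du\|_2^2 = \|y_E - du\|_2^2 + \|y_{E^{\perp}}\|_2^2,
\]
and $\max_{u\in E\cap S^{n-1}}\|y_E - du\|_2 = \|y_E\|_2 + d$ — the maximizer is $u = -y_E/\|y_E\|_2$ when $y_E \neq 0$, and in the degenerate case $k = 1$ this quantity is still attained on the two-point set $E\cap S^{n-1}= \{\pm e\}$, since $\max(\|y_E - de\|_2, \|y_E + de\|_2) = \|y_E\|_2 + d$. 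Consequently
\[
A^c = \bigl\{y \in \RR^n : (\|y_E\|_2 + d)^2 + \|y_{E^{\perp}}\|_2^2 \le 1\bigr\}.
\]
Assuming $k \le n-1$ so that $E^{\perp}$ is non-trivial, I maximize $\|y\|_2^2 = a^2 + b^2$ under the constraint $(a+d)^2 + b^2 \le 1$ with $a \ge 0$ and $b \ge 0$; a one-variable calculation shows the maximum equals $1-d^2$, attained at $a = 0$, $b = \sqrt{1-d^2}$. Thus $\outrad(A^c) = \sqrt{1-d^2}$, and by Lemma \ref{lem:inplusout} applied to $L_k$ (noting $L_k^c = A^{ccc} = A^c$),
\[
\inrad(L_k) = 1 - \outrad(L_k^c) = 1 - \sqrt{1-d^2}.
\]
In particular, both quantities depend only on $d$. (The case $k = n$ makes $E^{\perp}$ trivial, forcing $b = 0$, whence $A^c = B(0,1-d)$ and $L_k = B(0,d)$; this is consistent with the monotonicity statement, which only concerns $k \le n-1$.)

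\emph{Monotonicity in $k$.} If $E_k \subset E_{k+1}$ then $S(x,d)\cap(x+E_k) \subset S(x,d)\cap(x+E_{k+1})$, and since the $c$-hull is monotone under inclusion this at once yields $L_k(x,E_k,d) \subseteq L_{k+1}(x,E_{k+1},d)$.

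The only real subtlety is the explicit identification of $A^c$ as a body of revolution, in particular verifying the one-sided maximum $\max_{u\in E\cap S^{n-1}}\|y_E - du\|_2 = \|y_E\|_2 + d$ also in the degenerate case $k=1$; once this is in hand, everything reduces to a two-variable optimization and the general duality relation between in- and out-radii in $\S_n$.
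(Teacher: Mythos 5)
Your proof is correct and follows essentially the same strategy as the paper: out-radius via $\outrad(A^{cc})=\outrad(A)$, in-radius via the identity $\inrad(K)=1-\outrad(K^c)$ of Lemma \ref{lem:inplusout} applied to the explicitly computed dual, and monotonicity from order properties of the $c$-hull. The paper defers the in-radius to Lemma \ref{lem:k-lens-dual-to-n-k-lens}, which identifies $A^c$ all the way as an $(n-k)$-lens (and hence must also compute a double dual $(RS^E)^{cc}$); you only compute the single dual $A^c$ and read off its out-radius, which is a modest but genuine shortcut.

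One step needs a word of justification: you identify $\outrad(A^c)$ with $\max_{y\in A^c}\|y\|_2$, but a priori the maximum of $\|y\|_2$ only gives the radius of the smallest \emph{origin-centered} ball containing $A^c$, i.e.\ an upper bound on $\outrad(A^c)$ and hence only a lower bound on $\inrad(L_k)$. This is closed in one line: either observe that $A^c$ is invariant under $O(E)\times O(E^\perp)$, whose unique common fixed point is the origin, so the (unique) circumcenter must be $0$; or observe that $A^c$ contains the $(n-k-1)$-sphere $\sqrt{1-d^2}\,\bigl(S(0,1)\cap E^\perp\bigr)$ (set $a=0$, $b=\sqrt{1-d^2}$ in your constraint), which already has out-radius $\sqrt{1-d^2}$, giving the matching lower bound by monotonicity of the out-radius. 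Your explicit treatment of the degenerate case $k=n$ is a point in your favor: there $L_n=B(x,d)$ has in-radius $d$, so the stated formula $1-\sqrt{1-d^2}$ really does require $k\le n-1$, a restriction the paper's statement does not make explicit.
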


\begin{proof}
We may clearly assume $x = 0$ and $E = \R^k\times\{0\}\subset \R^n$.  Denote $S(0,1)\cap E= S^E$. 
First, by \eqref{eq:out-and-outc} we know $\outrad(dS^E)^{cc} = \outrad(dS^E) = d$.  The  in-radius can be computed directly but we omit this calculation as it will follow from the next lemma. Monotonicity of the $c$-hull completes the proof. 
\end{proof}

As mentioned in Section \ref{sec:first}, $k$-lenses are $c$-dual to $(n-k)$-lenses, and we show this now, together with some other representations for a $k$-lens. 

\begin{lem}\label{lem:k-lens-dual-to-n-k-lens}
Let $n\ge 2$, $1\le k \le n$, $E\in G_{n,k}$, $d\in [0,1]$ and  $x_0 \in \RR^n$.
The $c$-dual of the $k$-lens $
L_k(x_0, E,d)$ is the $(n-k)$-lens $L_{n-k}(x_0, E^\perp,\sqrt{1-d^2})$. Moreover, we have the following formula
	\begin{equation}\label{eq:carambula-k}
		L_k(x_0, E,d) = x_0 + \{ x: \|x\|_2^2 + 2\sqrt{1-d^2} \|P_{E^{\perp}}x\|_2   \le d^2 \}.
	\end{equation}
\end{lem}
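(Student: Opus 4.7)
The plan is to reduce to $x_0 = 0$ (since $c$-duality and the $k$-lens construction both commute with translations), and then use the orthogonal decomposition $\RR^n = E \oplus E^\perp$. The key observation is to consider the two ``defining spheres'' $A := S(0,d) \cap E$ and $A' := S(0, \sqrt{1-d^2}) \cap E^\perp$ side by side: by Pythagoras, every pair $x \in A$, $y \in A'$ satisfies $\|x-y\|_2^2 = d^2 + (1-d^2) = 1$, so $A' \subseteq A^c$ and $A \subseteq (A')^c$. Consequently $(A')^{cc} \subseteq A^c$ and $A^{cc} \subseteq (A')^c$.

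Next I would compute the two $c$-duals explicitly. For $y \in \RR^n$, one has $y \in A^c$ iff $\sup_{v \in S^{n-1}\cap E}\|y-dv\|_2 \le 1$; expanding the square and using $\sup_{v \in S^{n-1}\cap E}\langle v, y\rangle = \|P_E y\|_2$ yields
\[
A^c = \{y : \|y\|_2^2 + 2d\|P_E y\|_2 \le 1-d^2\},
\]
and by the identical argument with the roles of $E, d$ and $E^\perp, \sqrt{1-d^2}$ swapped,
\[
(A')^c = \{y : \|y\|_2^2 + 2\sqrt{1-d^2}\|P_{E^\perp}y\|_2 \le d^2\},
\]
which is precisely the right-hand side of \eqref{eq:carambula-k}. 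Both statements of the lemma will follow at once from the single identity $A^c = (A')^{cc}$: this is the claimed duality $L_k(0,E,d)^c = L_{n-k}(0, E^\perp, \sqrt{1-d^2})$, and dualizing gives $L_k(0,E,d) = A^{cc} = (A')^c$, which is the formula.

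The inclusion $(A')^{cc} \subseteq A^c$ is already in hand. The reverse inclusion is the technical core. Using $(A')^{cc} = \bigcap_{y \in (A')^c} B(y,1)$, it reduces to the algebraic claim: if $y$ satisfies $\|y\|_2^2 + 2\sqrt{1-d^2}\|P_{E^\perp}y\|_2 \le d^2$ and $z$ satisfies $\|z\|_2^2 + 2d\|P_E z\|_2 \le 1-d^2$, then $\|y - z\|_2 \le 1$. The strategy is to expand $\|y-z\|_2^2$, bound $-2\langle y,z\rangle \le 2\|P_E y\|_2\|P_E z\|_2 + 2\|P_{E^\perp}y\|_2\|P_{E^\perp}z\|_2$ via Cauchy--Schwarz on each orthogonal component, and add the two constraint inequalities. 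After cancellation, what remains to check is
\[
\bigl(\|P_E y\|_2 - d\bigr)\|P_E z\|_2 + \bigl(\|P_{E^\perp}z\|_2 - \sqrt{1-d^2}\bigr)\|P_{E^\perp}y\|_2 \le 0.
\]
Both bracketed factors are non-positive, since dropping the non-negative mixed terms in each constraint yields $\|y\|_2 \le d$ and $\|z\|_2 \le \sqrt{1-d^2}$, hence $\|P_E y\|_2 \le d$ and $\|P_{E^\perp}z\|_2 \le \sqrt{1-d^2}$.

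The main obstacle is this last algebraic verification, and even it turns out to be a short calculation once the orthogonal decomposition has been set up correctly. Everything else---the translation reduction, the direct computation of $A^c$ and $(A')^c$, and the double-duality bookkeeping that converts $A^c = (A')^{cc}$ into both halves of the lemma---is routine.
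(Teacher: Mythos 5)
Your proposal is correct and follows essentially the same route as the paper: both compute the $c$-dual of a $k$-dimensional sphere explicitly via the worst-case point on the sphere (equivalently, componentwise Cauchy--Schwarz), and both close the argument by adding the two defining inequalities and using the trivial bounds $\|P_E y\|_2\le d$, $\|P_{E^\perp}z\|_2\le\sqrt{1-d^2}$ to control the cross terms. The only organizational difference is that you establish $A^c=(A')^{cc}$ by double inclusion (with the Pythagorean observation supplying the easy direction), whereas the paper computes $A^{cc}$ directly and matches formulas; the content is the same.
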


\begin{proof}
First we compute the $c$-dual of a disk $dS^E$, for $E = \R^k\times\{0\}\subset \R^n$ and $x_0 = 0$. The $c$-dual consists of all points $(x,y)\in E \times E^\perp $ such that $B((x,y),1) \supseteq dS^E$, i.e. 
\[
\|x-dw\|_2^2 + \|y\|_2^2 \le 1 \quad \forall w\in S^{E}. 
\]
Since for a fixed $x$, the maximal value of $\|x-dw\|_2$ over $w\in S^E$ is attained when   $w= -x/\|x\|_2$, the condition $(x,y)\in (dS^E)^c$ is equivalent to $(\|x\|_2+d)^2 + \|y\|_2^2 \le 1$, namely
\begin{equation}\label{eq:dual-to-k-lens}
\left(L_k(0,E,d)\right)^c
=(dS^E)^c =
	\left\{ (x,y)\in E \times E^\perp\,\,:\,\,  (\|x\|_2+d)^2 + \|y\|_2^2 \le 1
	\right\}. 
\end{equation}
	 
Next we compute the convex hull of the disk $RS^E$.  
To this end we need to intersect all of the balls $B(z,1)$ where $z \in (RS^E)^c$, namely where $z = (x,y)\in E\times E^\perp$ and 
$(\|x\|_2+R)^2+\|y\|_2^2 \le 1 $. 
We  see that 
\begin{eqnarray*}
(RS^E)^{cc} = \{ (u,v)\in E\times E^\perp :&&  (\|u\|_2 + a)^2    + (\|v\|_2+b)^2 \le 1\\ && \forall (a,b) \in (\R^+)^2 ~{\rm s.t.}~ (a+R)^2 + b^2 \le 1   \}.
\end{eqnarray*}
We claim that it is enough to require the condition on the right hand side for the pair $(a,b) = (0, \sqrt{1-R^2})$. Indeed, assume $(u,v)\in E\times E^\perp$ satisfies 
\[ 
\|u\|_2^2 +( \|v\|_2 + \sqrt{1-R^2})^2\le 1 
\]
and let $(a,b)\in (\RR^+)^2$
satisfy $(a+R)^2 + b^2 \le 1$.  Adding these two equations we see that 
\[ (a+\|u\|_2)^2 + (b+\|v\|_2)^2 +2a(R - \|u\|_2) + 2\|v\|_2 (\sqrt{1-R^2} -b)\le 1. \]
Since clearly $\|u\|_2\le R$ and $b\le \sqrt{1-R^2}$, we get that 
\[ (a+\|u\|_2)^2 + (b+\|v\|_2)^2  \le 1,  \]
as required. 
We have thus shown that 
\begin{eqnarray*}
(RS^E)^{cc} &=& \{ (u,v)\in E\times E^\perp :  \|u\|_2^2    + (\|v\|_2+\sqrt{1-R^2})^2 \le 1  \}.
\end{eqnarray*}
Comparing this equation with  \eqref{eq:dual-to-k-lens} we see that 
$\left(L_k(0,E,d)\right)^c=
L_{n-k}(0, E^\perp,\sqrt{1-d^2})$, proving the duality of a $k$-lens and an $(n-k)$-lens claimed in Lemma \ref{lem:k-lens-dual-to-n-k-lens}.

To prove \eqref{eq:carambula-k} we need only notice that 
\begin{eqnarray*}
(RS^E)^{cc} &=& \{ (u,v)\in E\times E^\perp :  \|u\|_2^2    + (\|v\|_2+\sqrt{1-R^2})^2 \le 1  \}\\
& = & \{ (u,v)\in E\times E^\perp :  \|u\|_2^2    + \|v\|_2^2+2\sqrt{1-R^2}\|v\|_2 + 1 - R^2  \le 1  \}\\
& = & \{ (u,v)\in E\times E^\perp :  \|(u,v)\|_2^2  +2\sqrt{1-R^2}\|v\|_2 \le  R^2 \}. 
\end{eqnarray*}
This completes the proof. 	
\end{proof}

\begin{proof}[Completing the proof of Lemma \ref{lem:k-lens-properties}]
The in-radius $r$ of the $k$-lens $
L_k(x_0, E,d)$ satisfies by Lemma \ref{lem:inplusout} that $1-r$ is the out-radius of its dual $L_{n-k}(x_0, E^\perp,1-\sqrt{1-d^2})$, so that $r = 1-\sqrt{1-d^2}$. 
\end{proof}

The volume of a $k$-lens $L_k(x_0, E,d)$ can be computed, and we next show that it is a convex function in $d$, the special case $k=1$ of which was mentioned above in Proposition \ref{prop-vol-of-carambula-is-convex}. 

\begin{lem}\label{lem:}
Let $n\ge 2$, $1\le k\le n-1$, $E\in G_{n,k}$, $d\in [0,1]$ and  $x_0 \in \RR^n$. Then denoting $f(n, k,d) = \vol(L_k(x_0, E,d))$ we have
\[ 
f(n, k, d) = k \kappa_k \kappa_{n-k} \int_0^d \left(\sqrt{1-s^2} - \sqrt{1-d^2} \right)^{n-k}s^{n-k}ds. 
\]
The function $f(n,k,\cdot)$ is a convex on $[0,1]$.  
\end{lem}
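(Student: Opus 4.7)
My plan would be to first derive the volume formula by a direct slicing argument, and then establish convexity in $d$ by a pointwise convexity-under-the-integral argument.

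For the formula, I would use the explicit representation \eqref{eq:carambula-k} and, after translating so that $x_0 = 0$, write
\[
L_k(0, E, d) = \{(u, v) \in E \times E^\perp : \|u\|_2^2 + \|v\|_2^2 + 2\sqrt{1 - d^2}\,\|v\|_2 \le d^2\}.
\]
Completing the square in $\|v\|_2$, the slice at fixed $u \in E$ is a Euclidean ball in $E^\perp$ of radius $\sqrt{1 - \|u\|_2^2} - \sqrt{1 - d^2}$ (which is empty precisely when $\|u\|_2 > d$), of volume $\kappa_{n-k}\bigl(\sqrt{1-\|u\|_2^2}-\sqrt{1-d^2}\bigr)^{n-k}$. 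Fubini on $\R^n = E\oplus E^\perp$, followed by polar coordinates in the $k$-dimensional subspace $E$ (using the total spherical measure $k\kappa_k$ of $\partial B_2^k$), then produces the desired integral representation of $f(n, k, d)$ in one line.

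For the convexity, I would extend the integrand by zero to all $s \in [0, 1]$, setting
\[
\tilde h(s, d) := \bigl(\max\{\sqrt{1 - s^2} - \sqrt{1 - d^2},\, 0\}\bigr)^{n-k} \cdot s^{k-1},
\]
so that (up to the positive constant $k\kappa_k\kappa_{n-k}$) we have $f(n, k, d) = \int_0^1 \tilde h(s, d)\,ds$. It then suffices to show that for each fixed $s \in [0, 1]$, the map $d \mapsto \tilde h(s, d)$ is convex, whence $f(n, k, \cdot)$ is convex as an integral of convex functions. On $\{d \le s\}$ the map vanishes identically. On $\{d \ge s\}$, note that $\sqrt{1-d^2}$ is concave in $d$, so its negative is convex; hence $d \mapsto \sqrt{1-s^2} - \sqrt{1-d^2}$ is convex, and being also nonnegative and nondecreasing on $[s,1]$, its $(n-k)$-th power is again convex by composition with the nondecreasing convex function $x\mapsto x^{n-k}$ on $[0,\infty)$.

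The main technical point is convexity across the seam $d = s$, where the left one-sided derivative equals $0$ and the right equals $(n-k)\,(d/\sqrt{1-d^2})\bigl(\sqrt{1-s^2}-\sqrt{1-d^2}\bigr)^{n-k-1}s^{k-1}\big|_{d = s}$; this is $0$ when $n-k \ge 2$ and $s^k/\sqrt{1-s^2} \ge 0$ when $n-k = 1$. In either case the right derivative dominates the left, so convexity survives the junction. I expect no deeper obstacle. Two useful sanity checks are that $k = 1$ recovers $F_n$ from Proposition \ref{prop-vol-of-carambula-is-convex}, and that the boundary case $k = n$ gives the ball $B(x_0, d)$ with volume $\kappa_n d^n$, which is plainly convex.
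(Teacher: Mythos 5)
Your argument is correct, and it is worth separating its two halves. The volume computation is essentially the paper's own: both slice $L_k(0,E,d)$ over $u\in E$ using the representation \eqref{eq:dual-to-k-lens} (equivalently \eqref{eq:carambula-k}), identify each fiber as an $(n-k)$-ball of radius $\sqrt{1-\|u\|_2^2}-\sqrt{1-d^2}$, and pass to polar coordinates in $E$. One point you should make explicit rather than gloss over: polar integration produces the radial Jacobian $s^{k-1}$, exactly as in your $\tilde h$, so the exponent $s^{n-k}$ in the displayed statement is a typo (and the paper's own final line drops the factor altogether); your two sanity checks ($k=1$ recovering $F_n$ up to the constant $2\kappa_{n-1}$, and $k=n$ giving $\kappa_n d^n$) both confirm that $s^{k-1}$ is the correct factor, so you are not in fact producing "the desired representation" verbatim but a corrected one. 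For convexity your route genuinely differs from the paper's: the paper differentiates twice under the integral sign and expresses $f''$ through a recursion in $(n,k)$, checking that every term is nonnegative, whereas you prove pointwise convexity of $d\mapsto\tilde h(s,d)$ for each fixed $s$ (zero for $d\le s$, a nondecreasing convex power of the convex nonnegative function $\sqrt{1-s^2}-\sqrt{1-d^2}$ for $d\ge s$, with the one-sided derivatives correctly compared at the seam $d=s$) and then integrate. Your version is cleaner: it avoids justifying differentiation under the integral, avoids the bookkeeping of the recursion (which in the paper is not even literally consistent with the stated integrand), works verbatim for any real exponent $n-k\ge 1$, and causes no trouble at $d=1$ where $\sqrt{1-d^2}$ fails to be differentiable. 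The paper's computation, on the other hand, yields the explicit derivative formulas, which are reused nowhere here, so nothing is lost.
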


\begin{proof}
We may of course assume $x_0 = 0$ and $E = \RR^k \subset \RR^k \times \RR^{n-k} = \RR^n$. Since $L(0, \R^k, d)$ is a body of revolution, we can compute its volume using Fubini's theorem, using, say, the representation \eqref{eq:dual-to-k-lens} to get  
 \[ L_k(0,\RR^k,d) = 
	\left\{ (x,y)\in \RR^{k} \times \RR^{n-k}\,\,:\,\,  \|x\|_2^2+ (\|y\|_2+\sqrt{1-d^2})^2   \le 1
	\right\}. 
\]
Therefore, using polar integration, 
\begin{eqnarray*}
\vol (L(0, \R^k, d)) & = & \int_{B_k(0,d)} \vol(\{ y: \|y\|_2 + \sqrt{1-d^2}\le \sqrt{1-\|x\|_2^2}\})dx\\
& = & \int_{B_k(0,d)} \kappa_{n-k}  ( \sqrt{1-\|x\|_2^2} - \sqrt{1-d^2})^{n-k}dx\\
& = & k\kappa_k \kappa_{n-k} \int_0^d  ( \sqrt{1-s^2} - \sqrt{1-d^2})^{n-k}ds.
\end{eqnarray*} 
Since the functions are bounded and monotone, we can differentiate under the integral sign and we see that 
\begin{eqnarray*}
\frac{1}{k \kappa_k \kappa_{n-k}}\frac{\partial f}{\partial d}(n, k, d) &=&\frac{\partial  }{\partial d}\int_0^d \left(\sqrt{1-s^2} - \sqrt{1-d^2} \right)^{n-k}s^{n-k}ds\\
& = & (n-k)\frac{d}{\sqrt{1-d^2}} \int_0^d \left(\sqrt{1-s^2} - \sqrt{1-d^2} \right)^{n-k-1}s^{n-k}ds \\
&=& (n-k)\frac{d}{\sqrt{1-d^2}}  \frac{1}{(k-1) \kappa_{k-1} \kappa_{n-k}}f(n-1, k-1, d).
\end{eqnarray*}
Using this recursively we get 
\begin{eqnarray*}
&& \frac{1}{k \kappa_k}\frac{\partial^2 f}{\partial d^2}(n, k, d) = 
 (n-k)\frac{\partial }{\partial d }\left(\frac{d}{\sqrt{1-d^2}}  \frac{1}{(k-1) \kappa_{k-1}}f(n-1, k-1, d)\right)\\  
 && =
  (n-k)\left(
     \frac{({1-d^2})^{-3/2}}{(k-1) \kappa_{k-1}}f(k-1, d) 
  + \frac{d^2}{{1-d^2}} 
 \frac{(n-k-1)}{(k-2)\kappa_{k-2}}f(n-2,k-2,d)
  \right)
\end{eqnarray*}
   As all expressions are non-negative, the second derivative is non-negative and the function is convex.  
\end{proof}

Since $1$-lenses are the analogues of segments in classical convexity, it is useful to point out some of their basic properties. In particular, for a point $x$, to be included in such a ``$c$-convex segment'' is simply a question of the angle $x$ generates with the vertices of the segment.

\begin{lem}\label{lem-theta0-in-car}
	Let $n \ge 2$ and $x, x_0,x_1\in\R^n$ with $\|x_1 - x_0\|_2\le2$ and $x\not\in[x_0,x_1]$. Then $x\in [x_0,x_1]^{cc} $ if and only if
	\begin{equation}\label{eq-theta-0}
		\theta_0\le \theta :=\sphericalangle x_0xx_1  
	\end{equation}
	where $\theta_0 \in [\pi/2, \pi]$ is the angle satisfying 
	$  \sin(\theta_0) = \|x_1 -x_0\|_2/2$. 
\end{lem}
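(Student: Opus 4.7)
The plan is to use the explicit formula \eqref{eq:carambula-k} for the $1$-lens from Lemma \ref{lem:k-lens-dual-to-n-k-lens} to reduce to the plane by rotational symmetry, and then apply the classical inscribed angle theorem together with a short monotonicity argument.

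First, I would reduce to $n=2$. After an isometry sending $(x_0+x_1)/2$ to the origin and $x_1-x_0$ to $2de_1$, with $d=\|x_1-x_0\|_2/2$, formula \eqref{eq:carambula-k} shows that $[x_0,x_1]^{cc}=\{y:y_1^2+(\|P_{e_1^\perp}y\|+\sqrt{1-d^2})^2\le1\}$, a body of revolution about the line $\ell$ through $x_0,x_1$. Hence for $x\notin\ell$, both the condition $x\in[x_0,x_1]^{cc}$ and the angle $\sphericalangle x_0xx_1$ depend only on the position of $x$ in any $2$-plane through $\ell$ that contains $x$, so we may take $n=2$. The case $x\in\ell\setminus[x_0,x_1]$ is handled directly: specialising the formula to $\ell$ gives $[x_0,x_1]^{cc}\cap\ell=[x_0,x_1]$ for $d<1$ (and for $d=1$ the lens is the whole unit ball and points of $\ell\setminus[x_0,x_1]$ still lie outside it), so such $x$ is not in the lens, while $\sphericalangle x_0xx_1=0<\theta_0$.

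Next, in $\R^2$ with $x_0=(-d,0)$ and $x_1=(d,0)$, the lens is the intersection of the two unit disks centred at $(0,\pm\sqrt{1-d^2})$, and its upper boundary $\Gamma_+$ is an arc of the unit circle $C_+$ centred at $(0,-\sqrt{1-d^2})$. The chord $[x_0,x_1]$ subtends central angle $\alpha$ in $C_+$ with $\cos\alpha=1-2d^2$, so the inscribed angle theorem gives $\sphericalangle x_0xx_1=\pi-\alpha/2$ for every $x\in\Gamma_+\setminus\{x_0,x_1\}$. Since $\sin(\pi-\alpha/2)=\sin(\alpha/2)=d$ and $\pi-\alpha/2\in[\pi/2,\pi]$ because $\alpha\in[0,\pi]$, this constant is precisely the $\theta_0$ of the statement; the same holds on the lower arc $\Gamma_-$ by reflection.

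Finally, to pass from the equality on $\Gamma_\pm$ to the stated equivalence, one needs the angle-monotonicity: in the open upper half-plane the $\beta$-level sets of $x\mapsto\sphericalangle x_0xx_1$ are arcs through $x_0,x_1$ of circles whose radii strictly shrink as $\beta$ grows from $0$ to $\pi$. Consequently $\sphericalangle x_0xx_1\ge\theta_0$ holds iff $x$ lies in the closed circular segment bounded by $[x_0,x_1]$ and $\Gamma_+$, which is the closed upper half of the lens; reflecting in $\ell$ handles the lower half-plane. This monotonicity, which is the only non-trivial geometric input beyond the computation above, can be derived directly from the law of cosines $\cos\sphericalangle x_0xx_1=(a^2+b^2-4d^2)/(2ab)$ with $a=\|x-x_0\|_2$, $b=\|x-x_1\|_2$, or recognised as the converse of the inscribed angle theorem.
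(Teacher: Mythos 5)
Your proof is correct and follows essentially the same route as the paper's: restrict to the plane of $x,x_0,x_1$, where the lens is the intersection of two unit disks, apply the inscribed angle theorem to see that the angle is constant equal to $\theta_0$ on the two boundary arcs, and conclude by monotonicity of the angle; you are somewhat more careful than the paper about the reduction to $n=2$ (via the body-of-revolution formula) and about the degenerate collinear case. One small slip: the level-set circles through $x_0,x_1$ have radius $d/\sin\beta$, which is \emph{not} monotone in $\beta$ on $(0,\pi)$; what is monotone (strictly decreasing, equal to $d\cot(\beta/2)$) is the height of the arc above the chord, i.e.\ the circular segments are nested, and that is the fact your argument actually uses — your alternative law-of-cosines justification covers this correctly.
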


\begin{proof}
	We consider the $2$-dimensional affine space containing $x,x_0,x_1$. The intersection of $ [x_0,x_1]^{cc} $ with this subspace is an intersection of two disks, denoted $C$. The boundary of $C$ consists of two circular $1$-arcs meeting at $x_0$ and at $x_1$. The angle $\sphericalangle x_0yx_1$ is constant for $y\in \partial C$, since it is the angle opposite a chord in a circle of radius $1$. Clearly the angle is greater for points in the interior of $C$ and smaller outside of $C$. So we are left with computing this critical angle $\theta_0$, and we may choose $y$ to be the midpoint of one of the two $1$-arcs, assume it is part of a circle centered at the origin $O$. Note that $\|O-x_1\|_2=\|O-y\|_2=\|O-x_0\|_2=1$ so that the angles of the quadruple $O,x_0,y,x_1$ are $\theta_0/2, \theta_0, \theta_0/2, 2\pi-2\theta_0$. In particular, $\|x_1 - x_0\|_2/2 = \sin(\pi \theta_0) = \sin(\theta_0)$ which completes the proof.
\end{proof}

An equivalent description for the boundary of $ [x_0,x_1]^{cc} $ can be given in terms of the distance of a point to each of its vertices, and to the line connecting them.
\begin{thm}\label{thm-apolonius}
Let $x_0,x_1,y\in \R^n$ with $\|x_1-x_0\|_2 \le 2$. Denote  $a=\|y-x_0\|_2,\,b=\|y-x_1\|_2$, and let $h$ denote the distance between $y$ and the segment $[x_0 x_1]$. Then $z\in\partial  [x_0,x_1]^{cc} $ if and only if
$
2h=ab$.
In particular, if $y\in [x_0,x_1]^{cc} $ then $2h\le ab$.
\end{thm}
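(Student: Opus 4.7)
The plan is to pass to the two-dimensional affine plane $P$ spanned by $\{x_0,x_1,y\}$ (if $y$ lies on the line through $x_0,x_1$, both sides vanish or the statement is routine) and to combine the angular characterization of the $1$-lens from Lemma \ref{lem-theta0-in-car} with the classical area formula for a triangle. The first piece to establish is that $[x_0,x_1]^{cc}\cap P$ is exactly the 2D lens bounded by two circular $1$-arcs joining $x_0$ and $x_1$: the proof of Lemma \ref{lem-theta0-in-car} already records this, and it can also be seen from Corollary \ref{cor:cosed-under-projection} together with rotational symmetry about the axis $L$ through $x_0,x_1$, by checking that the unit disks obtained as $B(c,1)\cap P$ for the non-extreme centers $c$ (those lying off $P$) are redundant relative to the two extreme ones whose centers lie in $P$.

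Next, set $d=\|x_1-x_0\|_2$, let $\theta=\angle x_0yx_1$, and let $h'$ denote the distance from $y$ to the line $L$ through $x_0,x_1$. Computing the area of $\triangle x_0yx_1$ in two ways yields
\[\tfrac{1}{2}ab\sin\theta \;=\; \tfrac{1}{2}dh',\]
so $dh' = ab\sin\theta$. Corollary \ref{cor:cosed-under-projection} identifies the projection of $[x_0,x_1]^{cc}$ onto $L$ with the one-dimensional $c$-hull $\cconv_L\{x_0,x_1\}=[x_0,x_1]$; therefore for every $y\in[x_0,x_1]^{cc}$ the perpendicular foot of $y$ on $L$ lies inside the segment, and consequently $h=h'$.

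To conclude, apply Lemma \ref{lem-theta0-in-car}: $y\in[x_0,x_1]^{cc}$ if and only if $\theta\ge\theta_0$, where $\theta_0\in[\pi/2,\pi]$ satisfies $\sin\theta_0=d/2$. Since $\sin$ is strictly decreasing on $[\pi/2,\pi]$, the inequality $\theta\ge\theta_0\ge\pi/2$ gives $\sin\theta\le d/2$; substituting into $dh=dh'=ab\sin\theta$ yields $2h\le ab$, with equality precisely when $\theta=\theta_0$, i.e.\ when $y\in\partial[x_0,x_1]^{cc}$. The main technical obstacle is the full converse for an unconstrained $y\in\R^n$: the equation $\sin\theta=d/2$ has a second branch $\theta=\pi-\theta_0<\pi/2$ corresponding to a locus of points outside the lens, so the biconditional in the theorem is to be read with the tacit restriction $y\in[x_0,x_1]^{cc}$ (equivalently $\theta\ge\pi/2$), under which the argument above delivers both directions cleanly.
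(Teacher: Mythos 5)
Your argument is correct and is essentially the paper's own proof (the area of $\triangle(y,x_0,x_1)$ computed two ways, combined with the angle characterization of Lemma \ref{lem-theta0-in-car}), just spelled out in more detail, including the planar reduction and the identification $h=h'$ via the projection of the lens onto the line through $x_0,x_1$. Your closing caveat is well taken: the locus $2h=ab$ genuinely contains points outside the lens (for instance the far endpoint of the major arc of a unit circle through $x_0,x_1$, where one checks directly that $ab=2(1+\sqrt{1-d^2/4})=2h$), so the stated biconditional does require the tacit restriction $y\in[x_0,x_1]^{cc}$ (equivalently $\theta\ge\pi/2$) — a point the paper's one-line proof silently glosses over.
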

\begin{proof}
	The area of the triangle $\triangle(y,x_0,x_1)$ is given by $h\|x_1-x_0\|_2/2$ and also $ab\sin(\theta_0)/2$ and by the previous Lemma this completes the proof. 
\end{proof}

\subsection{Simplex-induced sets}

Another natural family of bodies to consider in $\S_n$ are those related to the standard simplex 
$\Delta_n\subset \RR^n$, the simplex of side-length $1$. While $\Delta_n$ itself does not belong to $\S_n$, we can associate with it three bodies in the class: $\Delta_n^{cc}, \Delta_n^c$ and  
$\frac12(\Delta_n^{cc}+\Delta_n^c)$ which is of constant width $1$. Incidentally, in dimension $n=2$ these three bodies coincide and are the Reuleaux triangle described in Corollary \ref{lem:basinR} (up to rotation). 
Since $\diam(\Delta_n) = 1$, we have as inclusion 
\[ \Delta_n^{cc}\subseteq \frac12(\Delta_n^{cc}+\Delta_n^c)\subseteq 
\Delta_n^c,  \]
and it is not hard to check that the boundaries of these three bodies intersect in $(n+1)$ regions (which are parts of caps $v_i + S^{n-1}$ where $v_i$ are the vertices of $\Delta_n$, and include the vertices themselves which belong to all of them. 

The diameter of $\Delta_n$, $\Delta_n^{cc}$ and also of $\frac12(\Delta_n^{cc}+\Delta_n^c)$
is equal to $1$ and each has  out-radius which is equal to 
\[ \outrad(\Delta_n) = \sqrt{\frac{n}{{2}(n+1)}}.\] 
This out-radius is also shared with $\Delta_n^c$, however its diameter is greater than $1$. 
The in-radii of the three bodies $\Delta_n^{cc}, \Delta_n^c$ and  
$\frac12(\Delta_n^{cc}+\Delta_n^c)$ are also equal (using the relation in Lemma \ref{lem:inplusout}) and we have  
\[ \inrad(\Delta_n^{cc}) = \inrad(\Delta_n^{c}) = 1 - \frac{n}{\sqrt{2(n+1)}}, \]
whereas the in-radius of $\Delta_n$ itself is much smaller, of course, 
\[ \inrad(\Delta_n) = \frac{1}{\sqrt{2n(n+1)}}.\]

These bodies are natural candidates to be extremizers of some isoperimetric-type inequalities in $\S_n$, since $\Delta_n$ itself is an extremizer, or a conjectures extremizer, for many comparison problems in geometry. Nevertheless, as we have already mentioned, the body $\frac12(\Delta_n^{cc}+\Delta_n^c)$, which is of constant width $1$, cannot be the minimizer in the Blaschke-Lebesgue problem, for example. Indeed, in dimension $3$ it is easy to check that this body is the Minkowski average of the two (essentially different) Meissner bodies, which are themselves of equal volume, so by the Brunn-Minkowski (together with its equality case), this body has volume strictly larger than that of the Meissner bodies.

We end this section with an unrelated remark about a simple minimization calculus problem we used in Corollary \ref{cor:mahler-plane}.   
\begin{rem}\label{rem:minimization-ex}
  We include, for completeness,  the proof that $g(x) = \sqrt{x-\sin(x)} + \sqrt{\pi-x-\sin(x)}$ has a unique minimum for $x\in (0,\pi)$ at $x=\pi/2$, which  is a calculus exercise. Let $f(x) = \sqrt{x-\sin(x)}$ (and it is clearly increasing) so that $g(x) = f(x) + f(\pi-x)$. Note that $f^2(x) + f^2(\pi-x) = \pi-2\sin(x)$, and differentiating both sides we see
    \[ 2f(x)f'(x) - 2f(\pi-x)f'(\pi-x)= 2\cos(x), \quad {\rm i.e.}\quad f'(\pi-x) = \frac{f(x)f'(x) - \cos(x)}{f(\pi-x)}.
    \]
We claim $g'(x) = 0$ only when $x = \pi/2$. Indeed, 
\begin{eqnarray*}
    g'(x) & = & f'(x) - f'(\pi - x) = f'(x) - \frac{f(x)f'(x) - \cos(x)}{f(\pi-x)}\\
    & = & f'(x) \left( 1 - \frac{f(x)}{f(\pi-x)} \right) +\frac{\cos(x)}{f(\pi-x)}.
\end{eqnarray*}
For $x = \pi/2$ this is clearly $0$, for $x<\pi/2$ both expressions are positive (since the volume of a lens is monotone in the angle) and for $x>\pi/2$
 both expressions are negative, which shows that $\pi/2$ is the unique point where $g'(x) = 0$, therefore the minimum must be obtained there. \end{rem}

 \noindent 
	 {\bf Acknowledgements:}   Support for this work was   provided by the ERC under the European Union’s Horizon 2020 research and innovation programme (grant agreement no. 770127), by ISF grant Number 784/20, and by the Binational Science Foundation (grant no. 2020329).

\bibliographystyle{plain}

{\small
\noindent S. Artstein-Avidan, 
\vskip 2pt
\noindent School of Mathematical Sciences, Tel Aviv University, Ramat
Aviv, Tel Aviv, 69978, Israel.\vskip 2pt
\noindent Email: shiri@tauex.tau.ac.il.
\vskip 2pt
\noindent D.I. Florentin, 
\vskip 2pt
\noindent Department of Mathematics, Bar-Ilan University, Ramat Gan,  52900, Israel.   \vskip 2pt
\noindent Email: dan.florentin@biu.ac.il.
}

\end{document}